\documentclass{amsart}

\oddsidemargin 6pt \evensidemargin 6pt \marginparwidth 48pt
\marginparsep 10pt
\topmargin -18pt \headheight 12pt \headsep 25pt  \footskip 30pt
\textheight 625pt \textwidth 431pt \columnsep 10pt \columnseprule 0pt

\usepackage[all]{xy}
\usepackage{mathabx}
\usepackage{appendix}

\theoremstyle{plain}
\newtheorem{hypo}{Hypothesis}
\newtheorem{thm}{Theorem}[subsection]
\newtheorem{prop}[thm]{Proposition}
\newtheorem{lemma}[thm]{Lemma}

\newtheorem{secthm}{Theorem}[section]

\theoremstyle{definition}
\newtheorem{dfn}[thm]{Definition}

\newtheorem{convention}[secthm]{Convention}

\theoremstyle{remark}
\newtheorem{rem}[thm]{Remark}

\newcommand{\HH}{\mathrm{H}}

\newcommand{\fl}{\mathrm{f\/l}}
\newcommand{\spl}{\mathrm{sp}}
\newcommand{\LL}{\mathbf{L}}
\newcommand{\fin}{\mathrm{f\/in}}

\begin{document}

\title{Deformations of complexes for finite dimensional algebras}

\author{Frauke M. Bleher}
\address{F.B.: Department of Mathematics\\University of Iowa\\
14 MacLean Hall\\Iowa City, IA 52242-1419, U.S.A.}
\email{frauke-bleher@uiowa.edu}
\thanks{The first author was supported in part by NSF Grant DMS-1360621.}
\author{Jos\'{e} A. V\'{e}lez-Marulanda}
\address{J.V.: Department of Mathematics \& Computer Science\\Valdosta State University\\
2072 Nevins Hall\\Valdosta, GA 31698-0040, U.S.A.}
\email{javelezmarulanda@valdosta.edu}
\thanks{The second author was
supported by the Release Time for Research Scholarship of the Office of Academic Affairs at the Valdosta State University.}

\subjclass[2010]{Primary 16G10; Secondary 16G20, 20C20}
\keywords{Deformation rings, complexes, finite dimensional algebras, derived equivalences, stable equivalences of Morita type}

\begin{abstract}
Let $k$ be a field and let $\Lambda$ be a finite dimensional $k$-algebra. We prove that every bounded complex $V^\bullet$
of finitely generated $\Lambda$-modules has a well-defined versal deformation ring $R(\Lambda,V^\bullet)$ which is
a complete local commutative Noetherian $k$-algebra with residue field $k$. We also prove that  nice two-sided tilting
complexes between $\Lambda$ and another finite dimensional $k$-algebra $\Gamma$ preserve these versal deformation
rings. Additionally, we investigate stable equivalences of Morita type between self-injective algebras in this context.
We apply these results to the derived equivalence classes of the members of a particular family of algebras of dihedral type
that were introduced by Erdmann and shown by Holm to be not derived equivalent to any block of a group algebra.
\end{abstract}

\maketitle


\section{Introduction}
\label{s:intro}
\setcounter{equation}{0}

The main objective of the theory of deformations of algebraic objects, such as modules or group representations, is
to study the behavior of these objects under perturbations. Suppose $\mathcal{O}$ is a complete local commutative Noetherian ring with residue field $k$, $\Lambda_{\mathcal{O}}$ is an $\mathcal{O}$-algebra 
and $\Lambda=k\otimes_{\mathcal{O}} \Lambda_{\mathcal{O}}$. If $V$ is a $\Lambda$-module of finite $k$-dimension,
the deformations of $V$ are defined to be isomorphism classes of lifts of $V$ over complete local commutative Noetherian 
$\mathcal{O}$-algebras $R$ with residue field $k$. Here a lift of $V$ over  $R$ is an
$R\otimes_{\mathcal{O}}\Lambda_{\mathcal{O}}$-module $M$ that is free over $R$, together with a 
$\Lambda$-module isomorphism $\phi:k\otimes_R M \to V$. 

Lifts and deformations of this kind were studied  by Green in \cite{green} in the 1950's  in  the case when $\mathcal{O}$ is a
ring of $p$-adic integers, for some prime number $p$, and $\Lambda_{\mathcal{O}}$ is the group algebra
of a group $G$ over $\mathcal{O}$. Green's work inspired Auslander, Ding and Solberg in \cite{ads} to
consider more general  $\mathcal{O}$-algebras $\Lambda_{\mathcal{O}}$ and more general lifting problems. 
In the 1970's Laudal developed a theory of formal moduli of algebraic structures, and he used Massey products
to describe deformations of $k$-algebras and their modules over complete local commutative Artinian $k$-algebras
with residue field $k$ (see \cite{laudal1983} and its references). In the 1980's Mazur developed a theory of
deformations of group representations to systematically study $p$-adic lifts of representations of profinite Galois groups
over finite fields of characteristic $p$ (see \cite{maz1,Maz}). Both Laudal and Mazur used Schlessinger's criteria in \cite{Sch}
for the pro-representability of functors of Artinian rings. One advantage of Mazur's approach is that
he uses a continuous deformation functor, which allows him to include the
deformations over arbitrary complete local commutative Noetherian $\mathcal{O}$-algebras with residue field $k$
directly in his functorial description and not just as inverse limits. In particular, Mazur proved in \cite{maz1} that a finite dimensional
Galois representation over a finite field always has a versal deformation ring, and in the case when the representation is
absolutely irreducible that this versal deformation ring is universal.
In \cite{blehervelez}, Mazur's approach was used by the authors
to study deformation rings and deformations of modules for arbitrary finite dimensional $k$-algebras $\Lambda$ when 
$\mathcal{O}=k$, and to provide additional structure theorems in the case when $\Lambda$ is self-injective or Frobenius.

Let now $k$ be a field of arbitrary characteristic, let $\mathcal{O}=k$, and let $\Lambda=\Lambda_{\mathcal{O}}$ 
be a finite dimensional $k$-algebra.
Our first goal is to generalize the deformation theory for finitely generated $\Lambda$-modules in \cite{blehervelez} 
to bounded complexes of finitely generated $\Lambda$-modules. We accomplish this goal in Section \ref{s:udr}.
Many of our techniques are based 
on the generalization of Mazur's deformation theory to bounded complexes of Galois representations in 
\cite{comptes,bcderived}. 

More precisely, let $D^-(\Lambda)$ be the derived category
of bounded above cochain complexes of pseudocompact $\Lambda$-modules (see Section \ref{s:setup} for
a review of pseudocompact rings and modules).  The following is our first main result (see Theorem \ref{thm:bigthm}
for a more precise statement):

\begin{secthm}
\label{thm:main1}
Let $V^\bullet$ be an object of $D^-(\Lambda)$ such that $V^\bullet$ only has finitely many non-zero cohomology groups,
all of which have finite $k$-dimension. Then $V^\bullet$ always has a versal deformation ring $R(\Lambda,V^\bullet)$ 
which is a complete local commutative Noetherian $k$-algebra with residue field $k$. Moreover, $R(\Lambda,V^\bullet)$ is 
universal if the endomorphism ring of $V^\bullet$ in $D^-(\Lambda)$ is isomorphic to $k$.
\end{secthm}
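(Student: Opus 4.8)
The plan is to adapt Mazur's approach via Schlessinger's criteria to the setting of bounded complexes. First I would set up the deformation functor $\widehat{\mathrm{F}}_{V^\bullet}$ on the category of complete local commutative Noetherian $k$-algebras with residue field $k$, together with its restriction $\mathrm{F}_{V^\bullet}$ to the Artinian quotient category; a deformation of $V^\bullet$ over $R$ should be an isomorphism class in $D^-(R\otimes_k\Lambda)$ of a complex $M^\bullet$ of pseudocompact $R\otimes_k\Lambda$-modules that is "quasi-lift" data in the sense of \cite{bcderived}, i.e.\ flat over $R$ in an appropriate derived sense, together with an isomorphism $k\otimes^{\LL}_R M^\bullet\cong V^\bullet$ in $D^-(\Lambda)$. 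The finiteness hypothesis on the cohomology of $V^\bullet$ is what guarantees we stay inside a manageable subcategory; I would first replace $V^\bullet$ by a quasi-isomorphic complex with good finiteness properties (e.g.\ a bounded-above complex of finitely generated projectives, or a "minimal" model), so that lifting questions become controllable.

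The core of the argument is to verify Schlessinger's conditions (H1)--(H4), or rather their Noetherian-functor analogues as formulated by Mazur, for the functor $\mathrm{F}_{V^\bullet}$ restricted to Artinian rings. The key technical inputs are: (i) a long exact sequence / obstruction calculus showing that the tangent space $t_{V^\bullet} = \mathrm{F}_{V^\bullet}(k[\epsilon])$ is finite-dimensional over $k$ — this follows because the tangent space is governed by a $\mathrm{Hom}$ or $\mathrm{Ext}^1$-type group in $D^-(\Lambda)$, specifically $\mathrm{Hom}_{D^-(\Lambda)}(V^\bullet, V^\bullet[1])$ up to the relevant modification, which is finite-dimensional thanks to the finiteness of the cohomology; and (ii) the behavior of lifts under pullbacks $R_1\times_{R_0}R_2$, which one checks by a direct diagram chase using the derived tensor product and the fact that $\mathrm{Tor}$-vanishing makes the relevant base-change spectral sequences degenerate. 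Once (H1)--(H3) hold and the tangent space is finite-dimensional, Mazur's version of Schlessinger's theorem yields a versal hull of $\mathrm{F}_{V^\bullet}$; passing from the Artinian functor to the full continuous functor $\widehat{\mathrm{F}}_{V^\bullet}$ on complete Noetherian $k$-algebras, and invoking that $\widehat{\mathrm{F}}_{V^\bullet}$ is continuous (commutes with the relevant inverse limits) because cohomology of a bounded complex commutes with these limits, produces the versal deformation ring $R(\Lambda,V^\bullet)$.

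For the universality statement, the standard criterion is that the versal hull is universal precisely when the functor satisfies the stronger condition (H4), which holds as soon as the "automorphism obstruction" vanishes — concretely, when lifts have no nontrivial automorphisms compatible with the reduction, equivalently when $\mathrm{Hom}_{D^-(\Lambda)}(V^\bullet,V^\bullet)\cong k$. Under this hypothesis one shows that the relative automorphism group of any deformation over an Artinian ring is trivial by an inductive argument along small extensions, using that the relevant group is a quotient of $\mathrm{Hom}_{D^-(\Lambda)}(V^\bullet, V^\bullet)\otimes_k \mathfrak{m}_R$ modulo the image of the identity; then the pullback condition (H1) upgrades from "surjective" to "bijective on isomorphism classes," which is exactly the pro-representability criterion.

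The main obstacle I anticipate is not the formal Schlessinger machinery but the foundational work of making the deformation functor well-defined for complexes: one must pin down the right notion of a lift in $D^-(R\otimes_k\Lambda)$, prove that two such lifts are "isomorphic" in a way that is both transitive and compatible with base change, and verify that reduction $k\otimes^{\LL}_R(-)$ behaves well — this requires care with pseudocompact structures and with the fact that derived tensor products of complexes do not interact as cleanly with isomorphism classes as honest tensor products of modules do. The technical heart is therefore establishing the obstruction theory: producing, for each small extension $0\to J\to R'\to R\to 0$ with $\mathfrak{m}_{R'}J=0$, a class in a suitable $\mathrm{Hom}_{D^-(\Lambda)}(V^\bullet, V^\bullet[2])\otimes_k J$ (or its analogue) whose vanishing is necessary and sufficient for a lift to exist, and checking this is compatible with the functoriality needed for versality — here I expect to lean heavily on the complex-theoretic deformation theory developed in \cite{bcderived} and \cite{comptes}, transporting their Galois-representation arguments to the finite-dimensional-algebra setting.
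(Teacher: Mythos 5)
Your plan follows the paper's own route: deformations are defined via quasi-lifts of finite pseudocompact $R$-tor dimension with an isomorphism $k\hat{\otimes}^{\LL}_R M^\bullet\to V^\bullet$, Schlessinger's $\mathrm{(H1)}$--$\mathrm{(H3)}$ are verified (with $\mathrm{(H4)}$ under $\mathrm{Hom}_{D^-(\Lambda)}(V^\bullet,V^\bullet)=k$, using that endomorphism rings of quasi-lifts over Artinian $R$ are then equal to $R$), the tangent space is identified with $\mathrm{Hom}_{D^-(\Lambda)}(V^\bullet,V^\bullet[1])$ and is finite dimensional because the cohomology of $V^\bullet$ is, and one then passes to the continuous functor on $\hat{\mathcal{C}}$. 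So the skeleton is correct and matches Propositions \ref{prop:step1}--\ref{prop:step4}.

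There is, however, one genuine gap: your justification of continuity. The issue is not whether ``cohomology of a bounded complex commutes with inverse limits''; it is whether the natural map $\hat{F}(R)\to \lim_{\stackrel{\longleftarrow}{i}}\hat{F}(R/m_R^i)$ is bijective, and the delicate direction is injectivity. If $(M^\bullet,\phi)$ and $(\widetilde{M}^\bullet,\widetilde{\phi})$ become isomorphic over every $R/m_R^i$ compatibly with the $\phi$'s, the chosen isomorphisms $f_i$ need not form a compatible inverse system, and without compatibility you cannot pass to the limit to get an isomorphism over $R$. The paper repairs this (Claims 1--3 in the proof of Proposition \ref{prop:step4}) by showing that the modules $Z_i$ of endomorphisms of $(R/m_R^i)\hat{\otimes}^{\LL}_R M^\bullet$ that die over $k$ are finitely generated nilpotent $(R/m_R^i)$-modules, that the system $\{Z_i\}$ satisfies the Mittag--Leffler condition, and then correcting the $f_i$ by units $\mathrm{Id}+\zeta$ with $\zeta$ nilpotent to obtain a genuinely compatible system, which is afterwards rigidified termwise (via the replacement lemmas and Lemma \ref{lem:Lemma14.3}) and passed to the limit; surjectivity likewise requires first replacing the $M_i^\bullet$ by bounded above complexes of abstractly free finitely generated $(R/m_R^i)\Lambda$-modules and adding acyclic complexes so the transition maps are surjective on terms before taking $\lim_{\stackrel{\longleftarrow}{i}}$. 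This is exactly the point the paper flags as needing more care than in the Galois-theoretic source. A secondary misplacement: the obstruction calculus in $\mathrm{Hom}_{D^-(\Lambda)}(V^\bullet,V^\bullet[2])$ that you call the technical heart is never needed—Schlessinger's hull requires only $\mathrm{(H1)}$--$\mathrm{(H3)}$—whereas the real technical load (besides continuity) is carried by the replacement results (Lemma \ref{lem:corollary3.6}, Remarks \ref{rem:corollary3.1011} and \ref{rem:Remark5.2}) that let one realize quasi-lifts and comparison maps by complexes of finitely generated free modules with morphisms surjective on terms, which is what makes the pullback gluing for $\mathrm{(H1)}$, $\mathrm{(H2)}$, $\mathrm{(H4)}$ and Proposition \ref{prop:liftendos} work.
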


Additionally, we prove that the case of modules corresponds to the case
when $V^\bullet$ has precisely one non-zero cohomology group. 
The main challenge of the proof of Theorem \ref{thm:main1} is to ensure that the arguments in \cite{comptes,bcderived}
can be modified to work for arbitrary finite dimensional $k$-algebras $\Lambda$ that may be neither Frobenius nor self-injective.

Note that we provide more details concerning the continuity of our deformation functor than were provided 
for the deformation functor defined in \cite{bcderived}. This can also be used to better explain the arguments used to prove
\cite[Prop. 7.2]{bcderived}; see Remark \ref{rem:therefereeaskedforit}.

There has been a lot of interest in classifying finite dimensional $k$-algebras up to derived or stable equivalences.
One of the most famous conjectures in this context is Brou\'{e}'s conjecture that blocks of group rings of finite groups $G$
with an abelian defect group $D$ are derived equivalent to blocks of the normalizer of $D$ in $G$
(see \cite{broue,rickardICM} and their references).
This conjecture has spurred a lot of work on derived equivalences for more general algebras. 
For example, Rickard proved in \cite{rickard1} that 
two finite dimensional $k$-algebras $\Lambda$ and $\Gamma$ are derived equivalent if and only if there is a
derived equivalence between them that is given by the left derived tensor product functor with a so-called two-sided tilting complex.
Such a derived equivalence is also called a standard derived  equivalence.
It is then a natural question to ask whether standard derived equivalences preserve versal deformation rings of 
complexes $V^\bullet$ as in Theorem \ref{thm:main1}.

In \cite{rickard2}, Rickard showed that if $\Lambda$ and $\Gamma$ are derived equivalent block algebras for 
finite groups then one can choose the two-sided tilting complex providing the standard derived equivalence
to be particularly nice (namely, to be a split-endomorphism two-sided tilting complex). In \cite{derivedeq}, it was shown that such nice  two-sided tilting complexes indeed preserve
versal deformation rings when $\Lambda$ and $\Gamma$ are block algebras.

Our second goal, accomplished in Section \ref{s:derivedequivalences}, is to provide a variation on ``niceness'' 
of two-sided tilting complexes that works for arbitrary finite dimensional algebras (see Definition \ref{def:nice2sidedtilting}).
This leads to our second main result (see Theorem \ref{thm:deformations} for a more precise statement):

\begin{secthm}
\label{thm:main2}
Suppose $\Gamma$ is another finite dimensional $k$-algebra such that $\Lambda$ and $\Gamma$ are derived equivalent.
Then there exists a nice two-sided tilting complex $P^\bullet$ of finitely generated $\Gamma$-$\Lambda$-bimodules such that
if $V^\bullet$ is a bounded complex of finitely generated $\Lambda$-modules and 
${V'}^\bullet=P^\bullet\otimes^{\LL}_\Lambda V^\bullet$,
then the versal deformation rings $R(\Lambda,V^\bullet)$ and $R(\Gamma,{V'}^\bullet)$ are isomorphic.
\end{secthm}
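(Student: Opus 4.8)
The plan is to split the argument into two parts: the construction of a suitably nice two-sided tilting complex, and the verification that any nice two-sided tilting complex transports deformation functors, and hence versal deformation rings. For the construction, note first that since $\Lambda$ and $\Gamma$ are derived equivalent, Rickard's theorem \cite{rickard1} provides a two-sided tilting complex of $\Gamma$-$\Lambda$-bimodules giving the equivalence. Following the constructions of Rickard \cite{rickard2} for block algebras and of \cite{derivedeq}, I would refine it to a bounded complex $P^\bullet$ of finitely generated $\Gamma$-$\Lambda$-bimodules, quasi-isomorphic to the original, whose terms are projective both as left $\Gamma$-modules and as right $\Lambda$-modules and which satisfies the additional splitting requirements isolated in Definition \ref{def:nice2sidedtilting}; the point here is to carry out such a refinement for arbitrary finite dimensional $\Lambda$ and $\Gamma$, with no self-injectivity or block-algebra hypotheses. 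One simultaneously produces a nice quasi-inverse $Q^\bullet$, with $P^\bullet\otimes^{\LL}_\Lambda Q^\bullet\simeq\Gamma$ and $Q^\bullet\otimes^{\LL}_\Gamma P^\bullet\simeq\Lambda$ in the respective derived categories of bimodules, and I would record the existence claim as a lemma preceding Theorem \ref{thm:deformations}.

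For the transport of deformations, fix such a nice $P^\bullet$ with nice quasi-inverse $Q^\bullet$. First note that ${V'}^\bullet=P^\bullet\otimes^{\LL}_\Lambda V^\bullet$ again satisfies the hypotheses of Theorem \ref{thm:main1}, since a derived equivalence preserves boundedness and finite dimensionality of cohomology, so $R(\Gamma,{V'}^\bullet)$ exists. Next, given a complete local commutative Noetherian $k$-algebra $R$ with residue field $k$ and a lift $M^\bullet$ of $V^\bullet$ over $R$, set ${M'}^\bullet:=(R\widehat{\otimes}_k P^\bullet)\otimes^{\LL}_{R\widehat{\otimes}_k\Lambda}M^\bullet$. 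Because the terms of $P^\bullet$ are projective on the right over $\Lambda$, extension of scalars along the flat map $\Lambda\to R\widehat{\otimes}_k\Lambda$ keeps them projective, so this derived tensor product is computed by an honest termwise tensor product of bounded complexes; each of its terms is then a direct summand of a finite direct sum of copies of a term of $M^\bullet$, hence free over $R$, so ${M'}^\bullet$ is a genuine lift. Base change along $R\to k$ gives $k\widehat{\otimes}_R {M'}^\bullet\simeq P^\bullet\otimes^{\LL}_\Lambda(k\widehat{\otimes}_R M^\bullet)\simeq P^\bullet\otimes^{\LL}_\Lambda V^\bullet={V'}^\bullet$, so ${M'}^\bullet$ lifts ${V'}^\bullet$; and the construction is functorial in $R$, so $M^\bullet\mapsto {M'}^\bullet$ defines a natural transformation $\mathrm{Def}_\Lambda(V^\bullet)\to\mathrm{Def}_\Gamma({V'}^\bullet)$ of deformation functors. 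The same construction applied to $Q^\bullet$ gives a natural transformation in the opposite direction, and the bimodule quasi-isomorphisms $P^\bullet\otimes^{\LL}_\Lambda Q^\bullet\simeq\Gamma$ and $Q^\bullet\otimes^{\LL}_\Gamma P^\bullet\simeq\Lambda$, which by niceness remain quasi-isomorphisms after extension of scalars to $R$, show that the two transformations are mutually inverse. Hence $\mathrm{Def}_\Lambda(V^\bullet)$ and $\mathrm{Def}_\Gamma({V'}^\bullet)$ are isomorphic as functors.

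To conclude, recall that by Theorem \ref{thm:main1}, proved via Schlessinger's criteria \cite{Sch}, the versal deformation ring is the pro-representable hull of the deformation functor, and such a hull is unique up to isomorphism; an isomorphism of deformation functors therefore yields $R(\Lambda,V^\bullet)\cong R(\Gamma,{V'}^\bullet)$. Since an isomorphism of functors carries a universal object to a universal object, the case in which the rings are universal is handled at the same time. The step I expect to be the main obstacle is calibrating the ``niceness'' of Definition \ref{def:nice2sidedtilting} so that it is at once weak enough to be attainable for arbitrary finite dimensional $\Lambda$ and $\Gamma$ (the construction part) and strong enough to force both the $R$-freeness of the terms of ${M'}^\bullet$ and the stability of the bimodule quasi-isomorphisms under arbitrary base change $k\to R$ (the transport part). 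Getting that balance right is really the heart of the theorem.
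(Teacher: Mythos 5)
Your proposal takes essentially the same route as the paper: produce a nice two-sided tilting complex by truncating a projective bimodule resolution of Rickard's tilting complex (the paper takes the quasi-inverse to be $\widetilde{P}^\bullet=\mathrm{Hom}_\Gamma(P^\bullet,\Gamma)$), show that $P_R^\bullet=R\otimes_k P^\bullet$ remains nice and that $P_R^\bullet\hat{\otimes}_{R\Lambda}-$ carries quasi-lifts of $V^\bullet$ to quasi-lifts of ${V'}^\bullet$ naturally in $R$, and conclude from the uniqueness of the pro-representable hull. The one point needing care is that the paper's deformations are quasi-lifts in $D^-(R\Lambda)$ of finite pseudocompact $R$-tor dimension rather than complexes with $R$-free terms, so your freeness argument must be preceded by replacing the quasi-lift by one with topologically free terms and truncating it to a bounded complex, which is exactly how the paper's proof of Theorem \ref{thm:deformations} proceeds.
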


To prove Theorem \ref{thm:main2}, the main challenge is again to modify the arguments in \cite{derivedeq} so that they 
work for arbitrary finite dimensional $k$-algebras $\Lambda$ that may be  neither Frobenius nor self-injective.

In \cite{rickard1}, Rickard proved that if $\Lambda$ and $\Gamma$ are self-injective derived equivalent $k$-algebras, 
then there is a stable equivalence of Morita type between them, as introduced by Brou\'{e} in \cite{broue1}. 
Such stable equivalences of Morita type provide especially well-behaved equivalences between the stable module
categories of $\Lambda$ and $\Gamma$. On the other hand, not every stable equivalence of Morita type between
self-injective algebras is induced by a 
derived equivalence (see, for example, \cite{dugas} and its references).
Therefore, we prove in Proposition \ref{prop:stabmordef} that arbitrary stable equivalences of Morita type between
{self-injective} algebras preserve versal deformation rings of modules. 

In Section \ref{s:examples}, we show how the main results
from Sections \ref{s:udr} and \ref{s:derivedequivalences} can be applied to the derived equivalence
classes of the members of a particular family of algebras of dihedral type that was introduced by Erdmann in \cite{erd} and denoted by $D(3\mathcal{R})$. Note
that Holm showed in \cite[Sect. 3.2]{holm} that none of the algebras in the family $D(3\mathcal{R})$ is derived equivalent to a block of
a group algebra. Let $\Lambda_0$ be a particular algebra in the family $D(3\mathcal{R})$. Theorems \ref{thm:derivedex2},
\ref{thm:derivedex3} and Proposition \ref{prop:derivedexvelez} demonstrate how the knowledge of the universal
deformation rings of certain $\Lambda_0$-modules can be used to determine the universal deformation
rings of modules for another algebra $\Lambda$ that is just known to be derived equivalent to $\Lambda_0$.

We would like to thank the referees for helpful comments which helped improve the readability of the paper.

\medskip

\begin{convention}
\label{con:complexes} 
Throughout this paper ``complex'' means ``cochain complex.'' The degree $n$ term of a 
complex $C^\bullet$ is denoted by $C^n$ and its degree $n$ differential is denoted by 
$\delta^n=\delta^n_C: C^n\to C^{n+1}$.

Even if we apply contravariant functors to cochain complexes, we shall assume, without saying this 
explicitly, that the terms of the resulting complexes are renumbered in order to regain cochain complexes. 
Thus, for example, the $k$-dual $\mathrm{Hom}_k(V^\bullet, k)$ of a bounded complex $V^\bullet$ of $\Lambda$-modules
will have degree $n$ term given by $\mathrm{Hom}_k(V^{-n}, k)$.

If $C^\bullet$ is a complex and $i$ is an integer, then $C^\bullet[i]$ is the complex obtained by
``shifting $C^\bullet$ to the left by $i$ places.'' More precisely, the degree $n$ term of $C^\bullet[i]$ is
$C^{n+i}$ and the degree $n$ differential of $C^\bullet[i]$ is $(-1)^i\delta^{n+i}_C$.

Our complexes are either bounded above, i.e. complexes $C^\bullet$ with $C^n=0$ for $n>>0$, or
bounded, i.e. complexes $C^\bullet$ with $C^n=0$ for all but finitely many $n$.
\end{convention}


\section{Versal deformation rings for complexes over finite dimensional algebras}
\label{s:udr}
\setcounter{equation}{0}

In \cite{comptes,bcderived}, it was proved that if $k$ is a field of positive characteristic,
$G$ is a profinite group satisfying a certain finiteness
condition and $V^\bullet$ is quasi-isomorphic to a bounded complex of pseudocompact 
$[[kG]]$-modules, then $V^\bullet$ always has a versal deformation ring.
Moreover, it was proved that if the endomorphism ring of $V^\bullet$ 
in the derived category of bounded above complexes of pseudocompact $[[kG]]$-modules 
is isomorphic to $k$, then this versal deformation ring is universal. 

It is the goal of this section to prove an analogous result when $k$ is  an arbitrary field
and $[[kG]]$ is replaced by an arbitrary finite dimensional $k$-algebra $\Lambda$.
In Section \ref{s:setup}, we will recall some results on pseudocompact rings and modules,
define quasi-lifts and deformations of complexes, and state our main result of this section, Theorem \ref{thm:bigthm}.
In Section \ref{s:quasilifts}, we analyze the structure of quasi-lifts over Artinian rings. In Section \ref{s:prelims},
we provide some more results on complexes and quasi-lifts. In Section
\ref{s:proofbigthm}, we prove Theorem \ref{thm:bigthm}. In Section \ref{s:12terms}, 
we consider quasi-lifts of one-term and two-term complexes, and completely split complexes.


\subsection{Pseudocompact rings and modules, quasi-lifts, and deformations of complexes}
\label{s:setup}

Recall from \cite{brumer} 
that a pseudocompact ring $\Omega$ is a complete Hausdorff topological ring that admits a system 
of open neighborhoods of $0$ consisting of two-sided ideals $I$ for which $\Omega/I$ is an Artinian ring.
In particular, every finite dimensional algebra $\Lambda$ over any field $k$
is a pseudocompact ring, by choosing all two-sided ideals of $\Lambda$ as an 
open neighborhood basis of $0$. In other words, $\Lambda$ is a pseudocompact ring with the discrete
topology. Moreover, since $k$ is a commutative pseudocompact ring (again with the discrete topology),
$\Lambda$ is a pseudocompact $k$-algebra.
A $\Lambda$-module $M$ is called pseudocompact if $M$ is a complete Hausdorff topological 
$\Lambda$-module that has a basis of open neighborhoods  of $0$ consisting of submodules $N$ for which
$M/N$ has finite length as $\Lambda$-module. In particular, each finitely generated $\Lambda$-module
$M$ is pseudocompact, by choosing all submodules of $M$ as an open neighborhood basis of $0$
(i.e. by giving $M$ the discrete topology). The category of pseudocompact $\Lambda$-modules is an abelian
category; see below for more details.
Let $D^-(\Lambda)$ be the derived category of bounded above complexes of pseudocompact
$\Lambda$-modules.

\begin{hypo}
\label{hypo:fincoh}
Throughout this paper, we assume that $k$ is an arbitrary field, $\Lambda$ is a finite dimensional
$k$-algebra, and $V^\bullet $ is a
complex in $D^-(\Lambda)$ that has  only finitely many non-zero cohomology
groups, all of which have finite $k$-dimension.
\end{hypo}

Define $\hat{\mathcal{C}}$ to be the category of all complete local commutative Noetherian 
$k$-algebras with residue field $k$. The morphisms in $\hat{\mathcal{C}}$ are 
continuous $k$-algebra homomorphisms that induce the identity on $k$.
Let $\mathcal{C}$ be the full subcategory of  Artinian rings in $\hat{\mathcal{C}}$.

For $R \in \mathrm{Ob}(\hat{\mathcal{C}})$, define $R\Lambda=R\otimes_k\Lambda$. 
Then $R$ is a commutative pseudocompact ring, and
$R\Lambda$ is a pseudocompact $R$-algebra. Define $\mathrm{PCMod}(R\Lambda)$ 
to be the category of pseudocompact $R\Lambda$-modules.

Pseudocompact rings, algebras and modules have been studied, for example, in 
\cite{ga1,ga2} and \cite{brumer}. For the convenience of the reader, we state some useful facts
from these references.

\begin{rem}
\label{rem:pseudocompact}
Let $R \in \mathrm{Ob}(\hat{\mathcal{C}})$.
\begin{enumerate}
\item[(i)] The ring $R\Lambda$ is the inverse limit 
of Artinian quotient rings. 
An $R\Lambda$-module is pseudocompact if and only if it is the inverse limit of 
$R\Lambda$-modules of finite length. 
Moreover, an $R\Lambda$-module has finite length 
if and only if it has finite length as an $R$-module.
The category $\mathrm{PCMod}(R\Lambda)$ is an abelian
category with exact inverse limits. 

\item[(ii)] A pseudocompact $R\Lambda$-module $M$ is said to be topologically free on a set
$X=\{x_i\}_{i\in I}$ if $M$ is isomorphic to the product of a family $(R\Lambda_i)_{i\in I}$ where
$R\Lambda_i=R\Lambda$ for all $i$.
Every topologically free pseudocompact $R\Lambda$-module is a projective object of 
$\mathrm{PCMod}(R\Lambda)$, and every pseudocompact
$R\Lambda$-module is the quotient of a topologically free $R\Lambda$-module. Hence
$\mathrm{PCMod}(R\Lambda)$ has enough projective objects. 

\item[(iii)] Suppose $M$ and $N$ are pseudocompact $R\Lambda$-modules. Then we define the right 
derived functors $\mathrm{Ext}^n_{R\Lambda}(M,N)$ by using a projective resolution of $M$ in $\mathrm{PCMod}(R\Lambda)$.
\end{enumerate}
\end{rem}

\begin{rem}
\label{rem:extrafree}
Let $R$ be an object of $\hat{\mathcal{C}}$ with maximal ideal $m_R$. Suppose that 
$(R/m_R^i)X_i$ denotes an abstractly free $(R/m_R^i)$-module on the finite topological space $X_i$
for all $i$,
and that $\{X_i\}_i$ forms an inverse system. Define $X=\displaystyle \lim_{\stackrel{
\longleftarrow}{i}} X_i$ and $[[R X]] = \displaystyle \lim_{\stackrel{\longleftarrow}{i}} \,
(R/m_R^i) X_i$. Then
$[[R X]]$ is a topologically free pseudocompact $R$-module on $X$. 
\end{rem}

\begin{rem}
\label{rem:topflat}
Suppose $R \in \mathrm{Ob}(\hat{\mathcal{C}})$, and $\Omega=R$ or $\Omega=R\Lambda$. 
Let $M$ be a right (resp. left) pseudocompact $\Omega$-module.
\begin{enumerate}
\item[(i)]
Let $\hat{\otimes}_{\Omega}$ denote the completed tensor product in the category 
$\mathrm{PCMod}(\Omega)$ (see \cite[Sect. 2]{brumer}). Then $M\hat{\otimes}_\Omega- $ 
(resp. $-\hat{\otimes}_\Omega M$) is a right exact functor. Moreover,  $M$ is said to be 
topologically flat, if the functor $M\hat{\otimes}_{\Omega}-$ (resp. $-\hat{\otimes}_\Omega M$)
is exact.

\item[(ii)]
By \cite[Lemma 2.1]{brumer} and \cite[Prop. 3.1]{brumer}, $M$ is topologically flat if and only 
if $M$ is projective as a pseudocompact $\Omega$-module.

\item[(iii)]
If $M$ is finitely generated as a pseudocompact $\Omega$-module, it follows from
\cite[Lemma 2.1(ii)]{brumer} that the functors
$M \otimes_\Omega -$ and $M\hat{\otimes}_\Omega -$ (resp. $-\otimes_\Omega M$ and 
$-\hat{\otimes}_\Omega M$) are naturally isomorphic.

\item[(iv)]
If $\Omega=R$ and $M$ is a pseudocompact $R$-module,
it follows from \cite[Proof of Prop. 0.3.7]{ga2} and \cite[Cor. 0.3.8]{ga2} that $M$ is 
topologically flat if and only if $M$ is topologically free if and only if $M$ is abstractly flat.
In particular, if $R$ is Artinian, a pseudocompact $R$-module is topologically flat  if and only
if it is abstractly free.
\end{enumerate}
\end{rem}

For $R \in \mathrm{Ob}(\hat{\mathcal{C}})$, let $C^-(R\Lambda)$
be the abelian category of complexes of pseudocompact $R\Lambda$-modules that are bounded above, let $K^-(R\Lambda)$ be the homotopy category of $C^-(R\Lambda)$, and
let $D^-(R\Lambda)$ be the derived category of $K^-(R\Lambda)$. 
Let $[1]$ denote the translation functor on $C^-(R\Lambda)$ (resp. $K^-(R\Lambda)$, 
resp. $D^-(R\Lambda)$), 
i.e. $[1]$ shifts complexes
one place to the left and changes the signs of the differentials (see Convention \ref{con:complexes}).
Recall that a homomorphism in $C^-(R\Lambda)$
is a quasi-isomorphism if and only if the induced homomorphisms on 
all the cohomology groups are bijective.

\begin{rem}
\label{rem:leftderivedtensor}
Let $R \in \mathrm{Ob}(\hat{\mathcal{C}})$, and let $\mathcal{P}_R$ be the additive subcategory of 
$\mathrm{PCMod}(R\Lambda)$ of projective objects.
By Remark \ref{rem:pseudocompact}(ii) and \cite[Thm. 10.4.8]{Weibel}, the natural functor 
$K^-(\mathcal{P}_R)\to D^-(R\Lambda)$ is an equivalence of triangulated categories. 
Let $\sigma_R: D^-(R\Lambda)\to K^-(\mathcal{P}_R)$ be a quasi-inverse.

Suppose $S$ is a pseudocompact $R$-module. In the case when $S\in \mathrm{Ob}(\hat{\mathcal{C}})$ and there
exists a morphism $\alpha:R\to S$ in $\hat{\mathcal{C}}$ defining the $R$-module structure on $S$, let $R_S=S$.
In all other cases, let $R_S=R$. Consider the completed tensor product functor
$$S\hat{\otimes}_R-:K^-(R\Lambda)\to K^-(R_S\Lambda).$$
By \cite[Thm. 10.5.6]{Weibel}, its left derived functor $S\hat{\otimes}^{\LL}_R-:
D^-(R\Lambda)\to D^-(R_S\Lambda)$ is the following
composition of functors of triangulated categories:
\begin{equation}
\label{eq:leftderiveddef}
D^-(R\Lambda)\xrightarrow{\sigma_R}K^-(\mathcal{P}_R)\xrightarrow{S\hat{\otimes}_R-}
K^-(R_S\Lambda)\xrightarrow{q_S}D^-(R_S\Lambda)
\end{equation}
where $q_S:K^-(R_S\Lambda)\to D^-(R_S\Lambda)$ is the localization functor.
In other words, if $X^\bullet$ is in $\mathrm{Ob}(K^-(R\Lambda))$, then 
there exists an isomorphism $\rho_X:X^\bullet\to \sigma_R(X^\bullet)$ in $D^-(R\Lambda)$ and
\begin{equation}
\label{eq:leftderived}
S\hat{\otimes}^{\LL}_R X^\bullet=S\hat{\otimes}_R\;\sigma_R(X^\bullet).
\end{equation}
\end{rem}

The following definitions and remarks are adapted from \cite[Sect. 2]{bcderived} to our situation.
Note that we follow Illusie's definition of finite tor dimension as given in \cite[Def. 5.2]{IllusieSGA6}
(see also \cite[Sect. 8.3.6]{GrothendieckFGAExplained}).

\begin{dfn}
\label{def:fintor}
We will say that a complex $M^\bullet$ in $K^-(R\Lambda)$
has \emph{finite pseudocompact $R$-tor dimension}, 
if there exists an integer $N$ such that for all pseudocompact
$R$-modules $S$, and for all integers $i<N$, ${\HH}^i(S\hat{\otimes}^{\LL}_R M^\bullet)=0$.
If we want to emphasize the integer $N$ in this definition, we say $M^\bullet$ has 
\emph{finite pseudocompact $R$-tor dimension at $N$}.
\end{dfn}

\begin{rem}
\label{rem:dumbdumb}
Suppose $M^\bullet$ is a complex in $K^-(R\Lambda)$ of topologically flat, hence topologically free, 
pseudocompact $R$-modules that has finite pseudocompact $R$-tor dimension
at $N$. Then the bounded complex ${M'}^\bullet$, which is
obtained from $M^\bullet$ by replacing $M^N$ by
${M'}^N=M^N/\delta^{N-1}(M^{N-1})$ and by setting ${M'}^i = 0$ if $i < N$,
is quasi-isomorphic to $M^\bullet$ and
has topologically free pseudocompact terms over $R$.
\end{rem}

\begin{dfn}
\label{def:lifts} 
Let $R$ be an object of $\hat{\mathcal{C}}$.
A \emph{quasi-lift}
of $V^\bullet$ over $R$ is a pair $(M^\bullet,\phi)$ consisting of a complex
$M^\bullet$ in
$D^-(R\Lambda)$ that has finite pseudocompact $R$-tor  dimension
together with an isomorphism
\hbox{$\phi: k \hat{\otimes}^{\LL}_R M^\bullet \to V^\bullet$} in $D^-(\Lambda)$.
Two
quasi-lifts $(M^\bullet, \phi)$ and $({M'}^\bullet,\phi')$ of $V^\bullet$ over $R$ are
\emph{isomorphic} if there is an isomorphism
$f:M^\bullet \to {M'}^\bullet$ in $D^-(R\Lambda)$ with
$\phi'\circ (k\,\hat{\otimes}_R^{\LL} f)=\phi$.
A \emph{deformation}
of $V^\bullet$ over $R$ is an isomorphism class of quasi-lifts of $V^\bullet$ over $R$.
We denote the deformation of $V^\bullet$ over $R$ represented by $(M^\bullet,\phi)$ by 
$[M^\bullet,\phi]$.

A \emph{proflat quasi-lift} of $V^\bullet$ over $R$ 
is a quasi-lift $(M^\bullet,\phi)$ of $V^\bullet$ over $R$ whose cohomology
groups are topologically flat, and hence topologically free, pseudocompact $R$-modules.
A \emph{proflat deformation} of $V^\bullet$ over $R$ is an isomorphism class
of proflat quasi-lifts of $V^\bullet$ over $R$.
\end{dfn}

\begin{rem}
\label{rem:difference}
There exist quasi-lifts that are not isomorphic to proflat quasi-lifts in $D^-(R\Lambda)$.
For example, suppose $\Lambda=k$
and $V^\bullet = k \xrightarrow{0} k$ is the two-term complex concentrated in the degrees
$-1$ and $0$ with trivial differential.  Then the two-term complex
$M^\bullet = k[[t]] \xrightarrow{t} k[[t]]$ concentrated in the degrees $-1$ and $0$
defines a quasi-lift of $V^\bullet$ over $k[[t]]$. However,  since $M^\bullet$ is isomorphic to
the one-term complex $k[[t]]/tk[[t]]\cong k$ concentrated in degree $0$,
this quasi-lift is not isomorphic to a  proflat quasi-lift of $V^\bullet$ over $k[[t]]$.
\end{rem}

\begin{rem}
\label{rem:profree}
The following two statements are proved in the same way as \cite[Lemmas 2.9 and 2.11]{bcderived}
by using Remark \ref{rem:pseudocompact}(ii) and the fact that
a bounded above complex of topologically free pseudocompact modules  whose cohomology groups are all topologically free splits completely.
\begin{enumerate} 
\item[(i)] Suppose $R \in \mathrm{Ob}(\hat{\mathcal{C}})$ and $(M^\bullet,\phi)$ is a quasi-lift of $V^\bullet$ 
over $R$. Then there exists a quasi-lift $(P^\bullet,\psi)$ of $V^\bullet$ over $R$ that is isomorphic to 
$(M^\bullet,\phi)$ such that the terms of $P^\bullet$ are topologically free pseudocompact 
$R\Lambda$-modules.
\item[(ii)] Suppose $R \in \mathrm{Ob}(\hat{\mathcal{C}})$ and $(M^\bullet,\phi)$ is a proflat quasi-lift of 
$V^\bullet$ over $R$. Then ${\HH}^n(M^\bullet)$ is an abstractly free $R$-module of rank $d_n=
\mathrm{dim}_k\,{\HH}^n(V^\bullet)$ for all $n$.
Moreover, for any $R'\in\mathrm{Ob}(\hat{\mathcal{C}})$ and for any morphism 
$\alpha:R\to R'$ in $\hat{\mathcal{C}}$, there is a natural
$R'$-linear isomorphism $R'\hat{\otimes}_R {\HH}^n(M^\bullet) \cong {\HH}^n(R'\hat{\otimes}_R^{\LL} 
M^\bullet)$.
\end{enumerate}
\end{rem}

\begin{dfn}
\label{def:functordef}
Let $\hat{F} = \hat{F}_{V^\bullet}:\hat{\mathcal{C}} \to \mathrm{Sets}$
(resp. $\hat{F}^{\fl} = \hat{F}^{\fl}_{V^\bullet}:\hat{\mathcal{C}} \to \mathrm{Sets}$)
be the map that sends an object $R$ of $\hat{\mathcal{C}}$ to the set
$\hat{F}(R)$ (resp. $\hat{F}^{\fl}(R)$) of all deformations (resp. all proflat deformations)
of $V^\bullet$ over $R$, and that sends
a morphism $\alpha:R\to R'$ in $\hat{\mathcal{C}}$ to the set map
$\hat{F}(R)\to \hat{F}(R')$ (resp. $\hat{F}^{\fl}(R)\to \hat{F}^{\fl}(R')$)
given by $[M^\bullet,\phi] \mapsto [R'\hat{\otimes}_{R,\alpha}^{\LL}
M^\bullet,\phi_\alpha]$. Here $\phi_\alpha$ denotes the composition
$k\hat{\otimes}^{\LL}_{R'} (R'\hat{\otimes}_{R,\alpha}^{\LL} M^\bullet)
\cong k \hat{\otimes}^{\LL}_R M^\bullet \xrightarrow{\phi} V^\bullet$.
Let $F = F_{V^\bullet}$ (resp. $F^{\fl} = F^{\fl}_{V^\bullet}$)
be the restriction of $\hat{F}$ (resp. $\hat{F}^{\fl}$) to the subcategory $\mathcal{C}$
of Artinian rings in $\hat{\mathcal{C}}$.
In the following, we will use the subscript $\mathcal{D}$ to denote the empty
condition if we consider the map $\hat{F}$, and the condition of having
topologically free cohomology groups
if we consider the map $\hat{F}^{\fl}$. In particular, the notation $\hat{F}_{\mathcal{D}}$
will be used to refer to both $\hat{F}$ and $\hat{F}^{\fl}$.

Let $k[\varepsilon]$, where $\varepsilon^2=0$, denote the ring of dual numbers over
$k$. The set $F_{\mathcal{D}}(k[\varepsilon])$ is called the \emph{tangent space} to 
$F_{\mathcal{D}}$, denoted by $t_{F_{\mathcal{D}}}$. 
\end{dfn}

\begin{rem}
\label{rem:functor}
Let ${F}_{\mathcal{D}}$ and
$\hat{F}_{\mathcal{D}}$ be as in Definition $\ref{def:functordef}$.
Note that $\hat{F}_{\mathcal{D}}(R)$ is indeed a set for each object $R$ of $\hat{\mathcal{C}}$,
as can be seen, for example, by using the concepts of \cite[Sect. 3A]{magurn}.
Using similar arguments as in the proof of \cite[Prop. 2.12]{bcderived}, it follows that
the map $\hat{F}_{\mathcal{D}}$ is a functor $\hat{\mathcal{C}}\to\mathrm{Sets}$.
Moreover, $\hat{F}^{\fl}$ is a subfunctor of $\hat{F}$ in the sense that there is
a natural transformation $\hat{F}^{\fl}\to\hat{F}$ that is injective.
If ${V'}^\bullet$ is a complex in $D^-(\Lambda)$ satisfying Hypothesis 
$\ref{hypo:fincoh}$ such
that there is an isomorphism $\nu:V^\bullet\to {V'}^\bullet$ in $D^-(\Lambda)$, 
then the natural transformation
$\hat{F}_{\mathcal{D},V^\bullet}\to \hat{F}_{\mathcal{D},{V'}^\bullet}$ $($resp.
${F}_{\mathcal{D},V^\bullet}\to {F}_{\mathcal{D},{V'}^\bullet}${}$)$ induced by
$[M^\bullet,\phi]\mapsto [M^\bullet,\nu\circ\phi]$ is an isomorphism of functors.
\end{rem}

The following theorem is the main result of Section \ref{s:udr}.

\begin{thm}
\label{thm:bigthm} Assume Hypothesis $\ref{hypo:fincoh}$, and let ${F}_{\mathcal{D}}$ and
$\hat{F}_{\mathcal{D}}$ be as in Definition $\ref{def:functordef}$.
\begin{enumerate}
\item[(i)] 
The functor
$F_{\mathcal{D}}$ has a pro-representable hull 
$R_{\mathcal{D}}(\Lambda,V^\bullet)\in \mathrm{Ob}(\hat{\mathcal{C}})$ 
$($c.f. \cite[Def. 2.7]{Sch} and \cite[Sect. 1.2]{Maz}$)$, and 
the functor $\hat{F}_{\mathcal{D}}$ is continuous
$($c.f. \cite{Maz}$)$. 

\item[(ii)] 
If $F_{\mathcal{D}}=F$, then $t_F$ is a vector space over $k$ and
there is a $k$-vector space isomorphism $h: t_F \to
\mathrm{Ext}^1_{D^-(\Lambda)}(V^\bullet,V^\bullet)$.
If $F_{\mathcal{D}}=F^{\fl}$, then 
the composition of the natural map $t_{F^{\fl}}\to t_F$ and $h$ induces an isomorphism between
$t_{F^{\fl}}$ and
the kernel of the natural map $\mathrm{Ext}^1_{D^-(\Lambda)}(V^\bullet,V^\bullet)
\to \mathrm{Ext}^1_{D^-(k)}(V^\bullet,V^\bullet)$ given by forgetting the
$\Lambda$-action.

\item[(iii)]
If $\mathrm{Hom}_{D^-(\Lambda)}(V^\bullet,V^\bullet)= k$, then $\hat{F}_{\mathcal{D}}$ is represented
by $R_{\mathcal{D}}(\Lambda,V^\bullet)$. 
\end{enumerate}
\end{thm}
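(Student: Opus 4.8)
The plan is to follow Schlessinger's criteria and Mazur's continuity framework, adapting the arguments of \cite{bcderived} to the present setting where $\Lambda$ is an arbitrary finite dimensional $k$-algebra. First I would establish part (i) by verifying Schlessinger's conditions $(H1)$--$(H3)$ for $F_{\mathcal{D}}$ (which gives a pro-representable hull) and noting that $(H4)$ together with $\mathrm{Hom}_{D^-(\Lambda)}(V^\bullet,V^\bullet)=k$ gives full pro-representability for part (iii). The key technical input is the analysis of quasi-lifts over Artinian rings developed in Section \ref{s:quasilifts}: given a small extension $R'\to R$ in $\mathcal{C}$ with kernel a principal ideal annihilated by $m_{R'}$, one shows that a quasi-lift of $V^\bullet$ over $R'$ is determined, up to the relevant equivalence, by a cocycle with values in $\mathrm{Ext}^\ast_{D^-(\Lambda)}(V^\bullet,V^\bullet)\otimes_k (\ker)$, and that the fiber product condition translates into the exactness statement needed for $(H1)$ and $(H2)$. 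Here I would use Remark \ref{rem:profree} to reduce to quasi-lifts with topologically free terms, and the equivalence $K^-(\mathcal{P}_R)\to D^-(R\Lambda)$ from Remark \ref{rem:leftderivedtensor} to compute $k\hat{\otimes}^{\LL}_R-$ concretely via projective resolutions.

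For part (ii), I would identify the tangent space $t_{F_{\mathcal{D}}} = F_{\mathcal{D}}(k[\varepsilon])$ explicitly. A quasi-lift of $V^\bullet$ over $k[\varepsilon]$ is given by a complex $M^\bullet$ of free $k[\varepsilon]\Lambda$-modules with $k\hat{\otimes}^{\LL}_{k[\varepsilon]}M^\bullet\cong V^\bullet$; writing $M^\bullet$ as an extension of $V^\bullet$ by $\varepsilon V^\bullet\cong V^\bullet$ in $C^-(\Lambda)$ and passing to homotopy, one gets a natural bijection $t_F\cong \mathrm{Ext}^1_{D^-(\Lambda)}(V^\bullet,V^\bullet)$, where the right-hand side is $\mathrm{Hom}_{D^-(\Lambda)}(V^\bullet,V^\bullet[1])$. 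The $k$-vector space structure on $t_F$ comes from Schlessinger (since $(H1)$, $(H2)$ hold) and matches the evident one on $\mathrm{Ext}^1$. For the proflat case $F^{\fl}$, the extra condition is precisely that the cohomology of $M^\bullet$ be free over $k[\varepsilon]$; tracing through the long exact cohomology sequence of $0\to \varepsilon V^\bullet\to M^\bullet\to V^\bullet\to 0$ shows this freeness is equivalent to the connecting maps on cohomology vanishing, i.e. to the image of the class in $\mathrm{Ext}^1_{D^-(k)}(V^\bullet,V^\bullet)$ being zero, which gives the kernel description.

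For the continuity statement in part (i), I would show $\hat{F}_{\mathcal{D}}(R)\cong \varprojlim_i \hat{F}_{\mathcal{D}}(R/m_R^i)$ for every $R\in\mathrm{Ob}(\hat{\mathcal{C}})$, following Mazur. Surjectivity requires assembling a compatible system of quasi-lifts $(M_i^\bullet,\phi_i)$ over the $R/m_R^i$ into a quasi-lift over $R$; here one uses Remark \ref{rem:pseudocompact}(i) (exact inverse limits in $\mathrm{PCMod}(R\Lambda)$) and Remark \ref{rem:extrafree} to form the inverse limit complex with topologically free terms, and one must check that finite pseudocompact $R$-tor dimension is preserved in the limit --- this is where Definition \ref{def:fintor}, Remark \ref{rem:dumbdumb}, and the uniform bound $N$ are essential. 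Injectivity amounts to lifting a compatible system of isomorphisms of quasi-lifts, again using exactness of inverse limits. I expect the main obstacle to be exactly this continuity argument: controlling tor dimension and the passage between $D^-(R\Lambda)$ and actual complexes of topologically free modules under inverse limits, and making precise the subtleties that \cite{bcderived} glossed over (cf. Remark \ref{rem:therefereeaskedforit}). Once $F_{\mathcal{D}}$ has a hull $R_{\mathcal{D}}(\Lambda,V^\bullet)$ which is Noetherian (its tangent space is finite dimensional by part (ii) since the cohomology of $V^\bullet$ is finite dimensional and $\Lambda$ is finite dimensional), continuity upgrades this hull to a pro-representing object for $\hat{F}_{\mathcal{D}}$, and the universality in (iii) follows from $(H4)$ in the standard way.
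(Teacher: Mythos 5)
Your overall route coincides with the paper's: verify Schlessinger's $\mathrm{(H1)}$, $\mathrm{(H2)}$, $\mathrm{(H4)}$ by analyzing quasi-lifts over Artinian rings after replacing them by complexes of free finitely generated modules, identify $t_F$ with $\mathrm{Ext}^1_{D^-(\Lambda)}(V^\bullet,V^\bullet)$ via a mapping-cone construction over $k[\varepsilon]$ (your description of $t_{F^{\fl}}$ agrees with Remark \ref{rem:bigthm}(i)), get $\mathrm{(H3)}$ from finite dimensionality of Ext groups over a finite dimensional algebra, and prove continuity of $\hat{F}_{\mathcal{D}}$; this is exactly the structure of Propositions \ref{prop:step1}--\ref{prop:step4}. (For (iii) you should also note that $\mathrm{(H4)}$ needs Proposition \ref{prop:liftendos}, i.e.\ that quasi-lifts over Artinian $R$ have endomorphism ring $R$ when $\mathrm{Hom}_{D^-(\Lambda)}(V^\bullet,V^\bullet)=k$.)

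The genuine gap is in the injectivity half of the continuity statement, the very step you flag as the main obstacle. If $[M^\bullet,\phi]$ and $[\widetilde{M}^\bullet,\widetilde{\phi}]$ over $R$ have the same image in every $\hat{F}(R/m_R^i)$, you are only handed, for each $i$, \emph{some} isomorphism $\gamma_i$ of quasi-lifts over $R/m_R^i$; nothing forces $(R/m_R^i)\hat{\otimes}^{\LL}_{R/m_R^{i+1}}\gamma_{i+1}=\gamma_i$, so ``lifting a compatible system of isomorphisms, again using exactness of inverse limits'' assumes data you do not have, and exactness of inverse limits cannot manufacture the missing compatibility. The paper's proof of Proposition \ref{prop:step4} devotes Claims 1--3 precisely to this point: the set $Z_i$ of endomorphisms of $(R/m_R^i)\hat{\otimes}^{\LL}_R M^\bullet$ killed by $k\hat{\otimes}^{\LL}_{R/m_R^i}-$ is a finitely generated nilpotent $(R/m_R^i)$-module (via Theorem \ref{thm:derivedresult} and truncation to a complex with finitely many terms of finite rank), hence the inverse system $\{Z_i\}$ satisfies the Mittag-Leffler condition, and one corrects the $\gamma_i$ by invertible factors of the form $\mathrm{id}+\zeta$ with $\zeta$ in the stable images $S_i$ to obtain a genuinely compatible system $f_i$; only then, after realizing the $f_i$ by strictly compatible cochain maps that are surjective on terms (Lemma \ref{lem:Lemma14.3}, Remark \ref{rem:Remark5.2}), can one pass to the inverse limit. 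Citing Remark \ref{rem:therefereeaskedforit} does not supply this argument: that remark records exactly that this step was left unexplained in \cite{bcderived} and has to be filled in, which is what Claims 1--3 do.
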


\begin{rem}
\label{rem:newrem}
By Theorem \ref{thm:bigthm}(i), there exists 
a quasi-lift $(U_{\mathcal{D}}(\Lambda,V^\bullet),\phi_U)$ of $V^\bullet$ over $R_{\mathcal{D}}(\Lambda,V^\bullet)$ 
with the following property. For each $R\in \mathrm{Ob}(\hat{\mathcal{C}})$, the map
$\mathrm{Hom}_{\hat{\mathcal{C}}}(R_{\mathcal{D}}(\Lambda,V^\bullet),R) \to \hat{F}_{\mathcal{D}}(R)$ 
given by $\alpha \mapsto [R\hat{\otimes}^{\LL}_{R_{\mathcal{D}}(\Lambda,V^\bullet),\alpha} U_{\mathcal{D}}(\Lambda,V^\bullet),\phi_{U,\alpha}]$ is surjective,
and this map is bijective if $R$ is the ring of dual numbers $k[\varepsilon]$ over $k$
where $\varepsilon^2=0$.

In general,
the isomorphism type of the pair consisting of the
pro-representable hull $R_{\mathcal{D}}(\Lambda,V^\bullet)$ 
and the deformation $[U_{\mathcal{D}}(\Lambda,V^\bullet),\phi_U]$ of $V^\bullet$ over $R_{\mathcal{D}}(\Lambda,V^\bullet)$ is
unique up to a non-canonical isomorphism.
If $R_{\mathcal{D}}(\Lambda,V^\bullet)$ represents $\hat{F}_{\mathcal{D}}$,
the pair $(R_{\mathcal{D}}(\Lambda,V^\bullet),[U_{\mathcal{D}}(\Lambda,V^\bullet),\phi_U])$ is uniquely determined up 
to a canonical isomorphism.
\end{rem}

\begin{dfn}
\label{def:newdef}
Using the notation of Theorem \ref{thm:bigthm} and Remark \ref{rem:newrem}, 
if $\hat{F}_{\mathcal{D}}=\hat{F}$ then we call 
$R_{\mathcal{D}}(\Lambda,V^\bullet)=R(\Lambda,V^\bullet)$
the \emph{versal deformation ring} of $V^\bullet$ and the deformation 
$[U(\Lambda,V^\bullet),\phi_U]$ is called the \emph{versal deformation} of $V^\bullet$.
If $\hat{F}_{\mathcal{D}}=\hat{F}^{\fl}$ then
we call $R_{\mathcal{D}}(\Lambda,V^\bullet)=R^{\fl}(\Lambda,V^\bullet)$ the 
\emph{versal proflat deformation ring} of $V^\bullet$ and 
the deformation $[U^{\fl}(\Lambda,V^\bullet),\phi_U]$ is called
the \emph{versal proflat deformation} of $V^\bullet$.

If $R_{\mathcal{D}}(\Lambda,V^\bullet)$ represents $\hat{F}_{\mathcal{D}}$, then
$R(\Lambda,V^\bullet)$ (resp. $R^{\fl}(\Lambda,V^\bullet)$) will be
called the \emph{universal deformation ring} (resp. \emph{the universal proflat deformation ring})
of $V^\bullet$, and the deformation
$[U(\Lambda,V^\bullet),\phi_U]$ (resp. $[U^{\fl}(\Lambda,V^\bullet),\phi_U]$) will be called the
\emph{universal deformation} (resp. the \emph{universal proflat deformation}) of $V^\bullet$.
\end{dfn}

\begin{rem}
\label{rem:bigthm}
\hspace*{1em}
\begin{itemize}
\item[(i)]
By part (ii) of Theorem \ref{thm:bigthm},
the tangent space $t_{F^{\fl}}$ consists of those elements
$$\gamma \in\mathrm{Ext}^1_{D^-(\Lambda)}(V^\bullet,V^\bullet)=\mathrm{Hom}_{D^-(\Lambda)}(
V^\bullet,V^\bullet[1])$$ that induce the trivial map on cohomology. In other words,
the $k$-vector space maps $\gamma^i:{\HH}^i(V^\bullet)\to {\HH}^{i+1}(V^\bullet)$ that are
induced by $\gamma$ have to be zero for all $i$.
\item[(ii)]
By part (i) of Theorem \ref{thm:bigthm}, there exists a non-canonical
continuous $k$-algebra homomorphism
$f_{\fl}: R(\Lambda,V^\bullet) \to R^{\fl}(\Lambda,V^\bullet)$. By part (ii) of  Theorem \ref{thm:bigthm},
it follows that the induced map between the Zariski cotangent spaces of these rings is surjective, implying that $f_{\fl}$
is surjective.
\item[(iii)]
If $V^\bullet$ consists of a single module $V^0$ in degree $0$,
the versal deformation ring $R(\Lambda,V^\bullet)$ is isomorphic to the versal deformation
ring $R(\Lambda,V^0)$ studied in \cite{blehervelez} (see Proposition \ref{prop:modulecase}).
\end{itemize}
\end{rem}

To prove Theorem \ref{thm:bigthm}, we adapt the argumentation in \cite{bcderived} to our situation.
Our first task is to adapt results from \cite[Sects.  3, 4 and 14]{bcderived} which
prove key properties of quasi-lifts of $V^\bullet$.

\subsection{Properties of quasi-lifts of $V^\bullet$}
\label{s:quasilifts}

In this subsection, 
we analyze the structure of quasi-lifts of $V^\bullet$ over Artinian rings $R$ in $\mathcal{C}$.
The following full subcategories of $C^{-}(R\Lambda)$, $K^{-}(R\Lambda)$ and 
$D^{-}(R\Lambda)$ play an important role in this situation.

\begin{dfn}
\label{dfn:fincategory}
Let $R \in \mathrm{Ob}(\mathcal {C})$ be Artinian.  Define $C^{-}_{\fin}(R\Lambda)$ 
(resp. $K^{-}_{\fin}(R\Lambda)$, resp. $D^{-}_{\fin}(R\Lambda)$) to
be the full subcategory of $C^{-}(R\Lambda)$ (resp. $K^{-}(R\Lambda)$, resp. 
$D^{-}(R\Lambda)$) whose objects are those complexes
$M^\bullet$ of finite pseudocompact $R$-tor dimension having finitely many 
non-zero cohomology groups, all of which have  finite $R$-length.  
\end{dfn}

\begin{rem}
\label{rem:whatever}
Suppose $R$ is an Artinian ring in $\mathrm{Ob}(\mathcal {C})$.
\begin{enumerate}
\item[(i)]
By Remark \ref{rem:pseudocompact}(i), an $R\Lambda$-module has finite
length if and only if it has finite length as an $R$-module.
Since $R$ is local Artinian, an $R$-module has finite $R$-length if 
and only if it has finite $k$-length. 
\item[(ii)] Suppose $N^\bullet$ is a complex in $D^-_{\fin}(R\Lambda)$ and
$X^\bullet$ is a complex in $D^-(R\Lambda)$ such that there is an isomorphism
$\xi:X^\bullet \to N^\bullet$ in $D^-(R\Lambda)$. Then $X^\bullet$ is an object of
$D^-_{\fin}(R\Lambda)$ and $\xi$ is an isomorphism in $D^-_{\fin}(R\Lambda)$. This follows
since having finite pseudocompact $R$-tor dimension is an invariant of isomorphisms in
$D^-(R\Lambda)$, and since such isomorphisms induce isomorphisms between
the cohomology groups. In particular, if $A^\bullet\to B^\bullet$ is a quasi-isomorphism
in $K^-(R\Lambda)$ and one of $A^\bullet$ or $B^\bullet$ is an object of $K^{-}_{\fin}(R\Lambda)$,
then so is the other.
\item[(iii)]
Let $N^\bullet,N_1^\bullet,N_2^\bullet$ be complexes in $D^-_{\fin}(R\Lambda)$ 
such that \textit{all their terms have finite $k$-length}, and let $g:N_1^\bullet\to N_2^\bullet$ be a
morphism in $D^{-}_{\fin}(R\Lambda)$. By Remark \ref{rem:pseudocompact}(ii), and 
since  $R\Lambda$ is Artinian,
there exist  bounded above complexes $M^\bullet$, $M_1^\bullet$ and 
$M_2^\bullet$ of abstractly free finitely generated $R\Lambda$-modules such that there
are isomorphisms $\beta:N^\bullet\to M^\bullet$ and
$\beta_i: N_i^\bullet\to M_i^\bullet$ in $D^{-}_{\fin}(R\Lambda)$ ($i=1,2$). 
Then $f=\beta_2\circ g\circ\beta_1^{-1}$ is a 
morphism $f:M_1^\bullet \to M_2^\bullet$ in $D^{-}_{\fin}(R\Lambda)$.
Let $\mathcal{P}_R$ be the additive subcategory of $\mathrm{PCMod}(R\Lambda)$ of 
projective objects.
By  \cite[Thm. 10.4.8]{Weibel}, the natural functor $K^-(\mathcal{P}_R)
\to D^-(R\Lambda)$ is an equivalence of categories. Hence $f$
can be taken to be a morphism in $K^{-}_{\fin}(R\Lambda)$. 
\end{enumerate}
\end{rem}

The following two results, Lemmas \ref{lem:corollary3.6} and \ref{lem:lemma3.8},
establish key properties of objects and morphisms in $D^{-}_{\fin}(R\Lambda)$.
They replace \cite[Cor. 3.6 and Lemma 3.8]{bcderived} in our situation. Since
$\Lambda$ is Artinian, some of the statements can be simplified.

\begin{lemma}
\label{lem:corollary3.6}
Suppose $R \in \mathrm{Ob}(\mathcal {C})$ is Artinian, and $N^\bullet$, 
$N_1^\bullet$ and $N_2^\bullet$ are objects in $D^{-}_{\fin}(R\Lambda)$. 
Let $g:N_1^\bullet \to 
N_2^\bullet$ be a morphism in $D^{-}_{\fin}(R\Lambda)$. 
\begin{enumerate}
\item[(i)]  There exists a bounded above complex $M^\bullet$ of abstractly free finitely generated 
$R\Lambda$-modules, and an isomorphism $\beta:N^\bullet \to M^\bullet$ in 
$D^{-}_{\fin}(R\Lambda)$.

\item[(ii)]  There exist bounded above complexes $M_1^\bullet$ and $M_2^\bullet$ of 
abstractly free finitely generated $R\Lambda$-modules, a morphism
$f:M_1^\bullet \to M_2^\bullet$ in $K^-_{\fin}(R\Lambda)$, and
isomorphisms $\beta_i:N_i^\bullet \to M_i^\bullet$ in
$D^{-}_{\fin}(R\Lambda)$ $(i=1,2)$ such that $f = \beta_{2}\circ g \circ
\beta_1^{-1}$ as morphisms in $D^{-}_{\fin}(R\Lambda)$. 
\end{enumerate}
\end{lemma}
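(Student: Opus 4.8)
The plan is to exploit the fact that, since $R$ is Artinian with residue field $k$, the ring $R\Lambda=R\otimes_k\Lambda$ is a finite-dimensional $k$-algebra, hence left and right Noetherian, and a finitely generated $R\Lambda$-module is precisely one of finite $R$-length, equivalently of finite $k$-dimension (Remark \ref{rem:whatever}(i)). This will allow us to resolve $N^\bullet$ by finitely generated free modules even though its individual terms need not be finite-dimensional over $k$.

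For part (i), I would show that every object of $D^{-}_{\fin}(R\Lambda)$ is isomorphic in $D^{-}(R\Lambda)$ to a bounded above complex of finitely generated free $R\Lambda$-modules by the standard Noetherian refinement of the resolution construction behind \cite[Thm.~10.4.8]{Weibel}. Concretely, write $N^i=0$ for $i>m$ and build a quasi-isomorphism $\pi\colon M^\bullet\to N^\bullet$ by descending induction on the degree: having constructed $M^j$ and the corresponding partial chain map for $j>i$, one takes $M^i$ to be a finitely generated free $R\Lambda$-module equipped with a map to $N^i$ which covers a chosen finite set of cocycle representatives of ${\HH}^i(N^\bullet)$ together with enough extra generators to surject onto the kernel that must be killed in degree $i+1$; this kernel is finitely generated because $R\Lambda$ is Noetherian, and lifting the extra generators along $\delta^i_N$ makes the relevant square commute. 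Since $N^\bullet$ has only finitely many non-zero, finitely generated cohomology groups, this produces the desired bounded above complex $M^\bullet$ of finitely generated free $R\Lambda$-modules together with a quasi-isomorphism $\pi\colon M^\bullet\to N^\bullet$; let $\beta\colon N^\bullet\to M^\bullet$ be the inverse of $\pi$ in $D^{-}(R\Lambda)$. Because having finite pseudocompact $R$-tor dimension together with finite-length cohomology is an invariant of isomorphisms in $D^{-}(R\Lambda)$ (Remark \ref{rem:whatever}(ii)), $M^\bullet$ lies in $D^{-}_{\fin}(R\Lambda)$ and $\beta$ is an isomorphism there. (Alternatively one may first replace $N^\bullet$ by a bounded above complex of topologically free pseudocompact $R\Lambda$-modules via Remark \ref{rem:pseudocompact}(ii) and \cite[Thm.~10.4.8]{Weibel}, and then truncate it to a bounded complex with topologically free $R$-terms using Remark \ref{rem:dumbdumb} before resolving, but this detour is not needed.)

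For part (ii), I would apply part (i) to $N_1^\bullet$ and $N_2^\bullet$ to obtain bounded above complexes $M_1^\bullet,M_2^\bullet$ of finitely generated free $R\Lambda$-modules and isomorphisms $\beta_i\colon N_i^\bullet\to M_i^\bullet$ in $D^{-}_{\fin}(R\Lambda)$. Then $h:=\beta_2\circ g\circ\beta_1^{-1}$ is a morphism $M_1^\bullet\to M_2^\bullet$ in $D^{-}_{\fin}(R\Lambda)$, hence in $D^{-}(R\Lambda)$. Finitely generated free $R\Lambda$-modules are topologically free, hence projective, pseudocompact $R\Lambda$-modules, so the $M_i^\bullet$ lie in $K^{-}(\mathcal{P}_R)$; since the natural functor $K^{-}(\mathcal{P}_R)\to D^{-}(R\Lambda)$ is an equivalence (Remark \ref{rem:pseudocompact}(ii) and \cite[Thm.~10.4.8]{Weibel}; c.f. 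Remark \ref{rem:whatever}(iii)), and in particular fully faithful, $h$ is the image of a unique genuine chain map $f\colon M_1^\bullet\to M_2^\bullet$, i.e.\ a morphism in $K^{-}(R\Lambda)$. As $M_1^\bullet,M_2^\bullet$ belong to $K^{-}_{\fin}(R\Lambda)$, so does $f$, and $f=\beta_2\circ g\circ\beta_1^{-1}$ already holds in $D^{-}_{\fin}(R\Lambda)$ because $D^{-}_{\fin}(R\Lambda)$ is a full subcategory of $D^{-}(R\Lambda)$.

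The main obstacle is part (i): one must produce a bounded above complex of \emph{finitely generated free} $R\Lambda$-modules isomorphic to $N^\bullet$ even though the terms $N^i$ themselves may be infinite-dimensional over $k$. The resolution is to resolve the finite-dimensional cohomology of $N^\bullet$ rather than its terms, which stays within finitely generated free modules precisely because $R\Lambda$ is Noetherian, so that every syzygy met in the degreewise construction is finitely generated; making the bookkeeping of that construction precise, and verifying that the resulting chain map is a quasi-isomorphism, is the only genuinely technical point. Once (i) is established, part (ii) is immediate from the fullness of $K^{-}(\mathcal{P}_R)\to D^{-}(R\Lambda)$, exactly as indicated in Remark \ref{rem:whatever}(iii).
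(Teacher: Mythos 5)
Your proposal is correct, but for part (i) it follows a genuinely different route from the paper. The paper proceeds in two steps: it first proves (following \cite[Lemma 3.4(i)]{bcderived}) that $N^\bullet$ admits an acyclic subcomplex $U^\bullet$ with quotient a \emph{bounded} complex ${N'}^\bullet$ whose terms have finite $k$-length, i.e.\ a quasi-isomorphism $N^\bullet\to{N'}^\bullet$ that is surjective on terms, and only then invokes Remark \ref{rem:whatever}(iii) to replace ${N'}^\bullet$ by a bounded above complex of abstractly free finitely generated $R\Lambda$-modules; part (ii) is obtained by running this reduction on $N_1^\bullet$, $N_2^\bullet$ and transporting $g$. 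You instead resolve $N^\bullet$ directly by the standard Noetherian (pseudo-coherence) induction: since $R\Lambda$ is a finite dimensional, hence Noetherian, $k$-algebra and the cohomology of $N^\bullet$ has finite length, every syzygy met in the degreewise construction is finitely generated, and the chosen maps out of finitely generated (hence finite length, hence discrete) free modules are automatically continuous, so the construction stays inside $C^-(R\Lambda)$; Remark \ref{rem:whatever}(ii) then puts $M^\bullet$ and $\beta$ in $D^-_{\fin}(R\Lambda)$. Both arguments are sound. Your version is shorter and self-contained for the lemma itself, at the price of redoing the resolution bookkeeping by hand; the paper's detour through the surjective-on-terms quasi-isomorphism onto a bounded complex with finite-length terms is not wasted, since exactly that intermediate output is reused in its proof of part (ii) and fits the ``replacement'' pattern (cf.\ Remarks \ref{rem:corollary3.1011} and \ref{rem:Remark5.2}) used repeatedly later. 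For part (ii) the two arguments essentially coincide: both rest on the full faithfulness of $K^-(\mathcal{P}_R)\to D^-(R\Lambda)$ (\cite[Thm.\ 10.4.8]{Weibel}) to represent $\beta_2\circ g\circ\beta_1^{-1}$ by a homotopy class of chain maps between the free replacements, which lies in $K^-_{\fin}(R\Lambda)$ because its source and target do; your word ``unique'' should of course be read as uniqueness of the homotopy class, not of the chain map.
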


\begin{proof}
In view of Remark \ref{rem:whatever}(iii), the main ingredient in the proof is the following claim,
which is proved using similar arguments  as
in the proof of \cite[Lemma 3.4(i)]{bcderived}.

\medskip

\noindent
\textit{Claim $1$.} 
Suppose  $N^\bullet$ is an object of  $D^{-}_{\fin}(R\Lambda)$ satisfying
${\HH}^j(N^\bullet)=0$ for $j<n$. Then there exists an exact sequence of complexes
\begin{equation}
\label{eq:cutting}
0 \to U^\bullet \xrightarrow{\iota} N^\bullet \to  {N'}^\bullet \to 0
\end{equation}
in $C^{-}_{\fin}(R\Lambda)$ such that $U^\bullet$ is acyclic, and such that the terms of ${N'}^\bullet$
have finite $k$-length and satisfy ${N'}^j=0$ for $j<n$. 

\medskip

Part (i) of Lemma \ref{lem:corollary3.6} now follows from Claim 1 and Remark \ref{rem:whatever}(iii).
To prove part(ii) of Lemma \ref{lem:corollary3.6}, we use Claim 1 to see that
there exist bounded complexes
${N_1'}^\bullet$ and ${N_2'}^\bullet$ such that all their terms have finite $k$-length, together
with quasi-isomorphisms $\gamma_i:N_i^\bullet \to {N'_i}^\bullet$ in $C^{-}_{\fin}(R\Lambda)$
($i=1,2$) that are surjective on terms. Let $g'=\gamma_2 \circ g \circ \gamma_1^{-1}:{N'_1}^\bullet
\to {N'_2}^\bullet$, so $g'$ is a morphism in $D^{-}_{\fin}(R\Lambda)$.
Using Remark \ref{rem:whatever}(iii),
there exist bounded above complexes $M_1^\bullet$ and $M_2^\bullet$ of 
abstractly free finitely generated $R\Lambda$-modules, a morphism
$f:M_1^\bullet \to M_2^\bullet$ in $K^-_{\fin}(R\Lambda)$, and
isomorphisms $\beta'_i:{N_i'}^\bullet \to M_i^\bullet$ in
$D^{-}_{\fin}(R\Lambda)$ $(i=1,2)$ such that $f = \beta'_{2}\circ g'\circ
{\beta'_1}^{-1}$ as morphisms in $D^{-}_{\fin}(R\Lambda)$. Letting $\beta_i=\beta_i'\circ\gamma_i$
($i=1,2$), part (ii) follows.
\end{proof}

\begin{dfn}
\label{def:Definition3.7}
In the situation of Lemma \ref{lem:corollary3.6}(i), we say \emph{we can replace $N^\bullet$
by  $M^\bullet$}. In the situation of Lemma \ref{lem:corollary3.6}(ii), we say
\emph{we can replace $N_i^\bullet$ by $M_i^\bullet$ $(i=1,2)$, and 
$g$ by $f$}.
\end{dfn}

\begin{lemma}
\label{lem:lemma3.8}
Suppose $R \in \mathrm{Ob}(\mathcal {C})$ is Artinian, and 
$M^\bullet$ is an object of $D^{-}_{\fin}(R\Lambda)$ such that $\HH^j(M^\bullet)=0$
for $j<n$. Then $M^\bullet$ has finite pseudocompact $R$-tor dimension at $n$.
\end{lemma}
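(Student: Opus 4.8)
The plan is to replace $M^\bullet$ by a complex with finitely generated free terms, prove that the cokernel of a single differential has finite flat dimension over the Artinian local ring $R$, and conclude that this cokernel is in fact free.

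By Lemma \ref{lem:corollary3.6}(i) and Remark \ref{rem:whatever}(ii) I may assume from the start that $M^\bullet$ is a bounded above complex of abstractly free finitely generated $R\Lambda$-modules; this replacement changes neither the hypothesis $\HH^j(M^\bullet)=0$ for $j<n$ nor the conclusion, both being invariant under isomorphisms in $D^-(R\Lambda)$. Since $R\Lambda=R\otimes_k\Lambda$ is finitely generated and free over $R$, each term $M^j$ is then a finitely generated free $R$-module, and, being a finite direct sum of copies of $R\Lambda$, is projective in $\mathrm{PCMod}(R\Lambda)$. Hence, by Remark \ref{rem:leftderivedtensor} and Remark \ref{rem:topflat}(iii), for every pseudocompact $R$-module $S$ the object $S\hat\otimes^{\LL}_R M^\bullet$ is computed by the termwise ordinary tensor product $S\otimes_R M^\bullet$.

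Now put $C=\mathrm{coker}(\delta^{n-1}_M\colon M^{n-1}\to M^n)$, a finitely generated $R$-module. The hypothesis $\HH^j(M^\bullet)=0$ for $j<n$ says precisely that the sequence $\cdots\to M^{n-2}\to M^{n-1}\to M^n\to C\to 0$ is exact, i.e. it is a free resolution of $C$ over $R$ in which $M^{n-i}$ sits in homological degree $i$. Applying $S\otimes_R-$ and comparing with $M^\bullet$ in degrees $\le n$ therefore gives, for every pseudocompact $R$-module $S$ and every $i\ge 1$, natural isomorphisms $\mathrm{Tor}^R_i(S,C)\cong\HH^{n-i}(S\otimes_R M^\bullet)\cong\HH^{n-i}(S\hat\otimes^{\LL}_R M^\bullet)$. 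Since $M^\bullet$ has finite pseudocompact $R$-tor dimension, say at $N$, the rightmost group vanishes whenever $n-i<N$; and as $R$ is Artinian, every $R/I$ has finite length and so is a pseudocompact $R$-module. Consequently $\mathrm{Tor}^R_i(R/I,C)=0$ for all ideals $I$ and all $i>\max(0,n-N)$, so $C$ has finite flat dimension over $R$.

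Because $R$ is Artinian local and $C$ is finitely generated of finite flat, equivalently finite projective, dimension over $R$, the module $C$ is free --- for instance by the Auslander--Buchsbaum formula $\mathrm{pd}_R(C)\le\mathrm{depth}(R)=0$. Then $\mathrm{Tor}^R_i(S,C)=0$ for all $i\ge1$ and all pseudocompact $S$, so the isomorphisms above give $\HH^j(S\hat\otimes^{\LL}_R M^\bullet)=0$ for all $j<n$ and all pseudocompact $S$, which is exactly the claim. The step requiring the most care is the identification $\mathrm{Tor}^R_i(S,C)\cong\HH^{n-i}(S\hat\otimes^{\LL}_R M^\bullet)$ for $i\ge1$: one must check that $M^\bullet$ really is, in degrees $\le n$, a flat resolution of $C$, and that $S\otimes_R M^\bullet$ genuinely computes $S\hat\otimes^{\LL}_R M^\bullet$ via Remark \ref{rem:leftderivedtensor} and the agreement of $\hat\otimes_R$ with $\otimes_R$ on finitely generated modules. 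The only external ingredient, the vanishing of projective dimension for finitely generated modules over an Artinian local ring, is classical.
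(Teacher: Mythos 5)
Your proof is correct and follows essentially the same route as the paper: after the identical reduction via Lemma \ref{lem:corollary3.6}(i) to a bounded above complex of finitely generated free $R\Lambda$-modules, everything comes down to showing that $C=M^{n}/\delta^{n-1}(M^{n-1})$ is a free $R$-module, which is exactly the step the paper delegates to the proof of \cite[Lemma 3.8]{bcderived}. Your identification $\mathrm{Tor}^R_i(S,C)\cong \HH^{n-i}(S\hat{\otimes}^{\LL}_R M^\bullet)$ for $i\ge 1$, combined with the finite pseudocompact $R$-tor dimension of $M^\bullet$ and the Auslander--Buchsbaum argument over the Artinian local ring $R$, supplies that step correctly and also gives the conclusion directly, where the paper instead invokes the truncation of Remark \ref{rem:dumbdumb}.
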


\begin{proof}
By Lemma \ref{lem:corollary3.6}(i), we may assume that $M^\bullet$ is a bounded above
complex of abstractly free finitely generated $R\Lambda$-modules. Hence all terms
of $M^\bullet$ are abstractly free finitely generated $R$-modules.
By Remark \ref{rem:dumbdumb}, there exists an integer $n_1\leq n$ such that $M^{n_1}/
\delta^{n_1-1}(M^{n_1-1})$ is a topologically free pseudocompact  $R$-module. Since $M^{n_1}$
is a finitely generated $R$-module, it follows that this is an abstractly free finitely generated $R$-module. 
To prove Lemma \ref{lem:lemma3.8}, it is enough to show that  $M^{n}/\delta^{n-1}(M^{n-1})$ is an 
abstractly free finitely generated $R$-module. 
This is proved exactly in the same way as in the proof of 
\cite[Lemma 3.8]{bcderived}.
\end{proof}

\begin{rem}
\label{rem:stupid}
Suppose $R \in \mathrm{Ob}(\mathcal {C})$ is Artinian.
Using Lemma \ref{lem:corollary3.6}(i) together with Remark \ref{rem:dumbdumb} and Lemma \ref{lem:lemma3.8},
it follows that every object $M^\bullet$ of $D^{-}_{\fin}(R\Lambda)$ is isomorphic in $D^{-}_{\fin}(R\Lambda)$
to a bounded complex ${M'}^\bullet$ such that all terms of ${M'}^\bullet$ are abstractly free finitely generated 
$R$-modules and such that all its terms, except possibly its leftmost non-zero term, 
are actually abstractly free finitely generated $R\Lambda$-modules.
\end{rem}

The following remark replaces  \cite[Cors. 3.10 and 3.11]{bcderived} in our situation. Note that since
$\Lambda$ is already Artinian, we do not need to deal with quotient algebras of $\Lambda$.

\begin{rem}
\label{rem:corollary3.1011}
Suppose $R,S,T\in\mathrm{Ob}(\mathcal{C})$ are Artinian rings with morphisms 
$R\xrightarrow{\alpha} T \xleftarrow{\beta} S$ in $\mathcal{C}$. 
Let $X^\bullet$ be an object of $D^{-}_{\fin}(R\Lambda)$, and let $Z^\bullet$ be an object of
$D^{-}_{\fin}(S\Lambda)$. 
Suppose $\tau:T\hat{\otimes}^{\LL}_{S} Z^\bullet \to T\hat{\otimes}^{\LL}_{R} 
X^\bullet$ is a morphism in $D^-_{\fin}(T\Lambda)$. By Remark \ref{rem:leftderivedtensor},
$$ T\hat{\otimes}^{\LL}_{R} X^\bullet= T\hat{\otimes}_R\,\sigma_R(X^\bullet)
\quad\mbox{and}\quad
T\hat{\otimes}^{\LL}_{S} Z^\bullet =T\hat{\otimes}_S\,\sigma_S(Z^\bullet)$$
where $\sigma_R(X^\bullet)$ (resp. $\sigma_S(Z^\bullet)$) is an object of 
$K^-(\mathcal{P}_R)$ (resp. $K^-(\mathcal{P}_S)$) and there exists an isomorphism
$\rho_X:X^\bullet\to \sigma_R(X^\bullet)$ in $D^-(R\Lambda)$
(resp. $\rho_Z:Z^\bullet\to\sigma_S(Z^\bullet)$ in $D^-(S\Lambda)$).
By Remark \ref{rem:whatever}(ii), $\sigma_R(X^\bullet)$ and $\rho_X$ (resp. $\sigma_S(Z^\bullet)$
and $\rho_Z$) are in $D^-_{\fin}(R\Lambda)$ (resp. $D^-_{\fin}(S\Lambda)$).

By Lemma \ref{lem:corollary3.6}(i), there exists a bounded above complex $\tilde{X}^\bullet$
(resp. $\tilde{Z}^\bullet$) of abstractly free finitely generated $R\Lambda$-modules
(resp. $S\Lambda$-modules) and an isomorphism 
$\tilde{\beta}: \tilde{X}^\bullet\to X^\bullet$ in $D^{-}_{\fin}(R\Lambda)$
(resp. $\tilde{\gamma}: \tilde{Z}^\bullet\to Z^\bullet$ in $D^{-}_{\fin}(S\Lambda)$). 
Let $\beta=\rho_X\circ\tilde{\beta}$ and $\gamma=\rho_Z\circ\tilde{\gamma}$.
Then $\beta:\tilde{X}^\bullet\to \sigma_R(X^\bullet)$ (resp. 
$\gamma:\tilde{Z}^\bullet\to\sigma_S(Z^\bullet)$) is an isomorphism in $D^{-}_{\fin}(R\Lambda)$
(resp. $D^{-}_{\fin}(S\Lambda)$). Since $\tilde{X}^\bullet,\sigma_R(X^\bullet)$ (resp.
$\tilde{Z}^\bullet,\sigma_S(Z^\bullet)$) are objects in $K^-(\mathcal{P}_R)$ (resp. $K^-(\mathcal{P}_S)$),
$\beta$ (resp. $\gamma$) can be taken to be a morphism in $K^{-}_{\fin}(R\Lambda)$
(resp. $K^{-}_{\fin}(S\Lambda)$). Moreover, $T\hat{\otimes}_R\beta=T\hat{\otimes}^{\LL}_{R} \beta$ 
(resp. $T\hat{\otimes}_S\gamma=T\hat{\otimes}^{\LL}_{S} \gamma$)
is an isomorphism in $K^{-}_{\fin}(T\Lambda)$, and $\tau:T\hat{\otimes}_S\,\sigma_S(Z^\bullet)
\to T\hat{\otimes}_R\,\sigma_R(X^\bullet)$ can be taken to be a morphism in $K^{-}_{\fin}(T\Lambda)$.
Define
\begin{equation}
\label{eq:replace}
\tilde{\tau}=(T\hat{\otimes}_R\beta)^{-1}\circ\tau\circ(T\hat{\otimes}_S\gamma):\quad
T\hat{\otimes}_S\tilde{Z}^\bullet \to T\hat{\otimes}_R\tilde{X}^\bullet .
\end{equation}
Then we can replace $X^\bullet$ (resp. $Z^\bullet$) by $\tilde{X}^\bullet$ (resp. $\tilde{Z}^\bullet$),
in the sense of Definition \ref{def:Definition3.7}, and we can replace $\tau$ by $\tilde{\tau}$.
Note that $\beta$ (resp. $\gamma$) only depends on $\tilde{X}^\bullet$ and 
$\tilde{\beta}$ (resp. $\tilde{Z}^\bullet$ and $\tilde{\gamma}$).
\end{rem}

Using Remark \ref{rem:profree}(i), the following result is proved in a similar way to \cite[Lemma 3.1]{bcderived}.

\begin{lemma}
\label{lem:lemma3.1}
Suppose $(M^\bullet,\phi)$ is a quasi-lift of $V^\bullet$ over some Artinian ring
$R \in \mathrm{Ob}(\mathcal{C})$. Then $M^\bullet$ is an object of $D^{-}_{\fin}(R\Lambda)$. 
More precisely, ${\HH}^i(M^\bullet)$ is a subquotient of  an abstractly
free $R$-module of rank $d_i=\mathrm{dim}_k\,{\HH}^i(V^\bullet)$ for all $i$.
\end{lemma}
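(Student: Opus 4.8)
The plan is to reduce everything to statements about cohomology of a left-derived tensor product and then apply the structural results already established. Let $(M^\bullet,\phi)$ be a quasi-lift of $V^\bullet$ over an Artinian $R\in\mathrm{Ob}(\mathcal C)$. By definition $M^\bullet$ has finite pseudocompact $R$-tor dimension, so there is an integer $N$ with $\HH^i(S\hat\otimes^{\LL}_R M^\bullet)=0$ for all pseudocompact $R$-modules $S$ and all $i<N$; in particular, taking $S=k$, we get $\HH^i(k\hat\otimes^{\LL}_R M^\bullet)=0$ for $i<N$. Since $\phi:k\hat\otimes^{\LL}_R M^\bullet\to V^\bullet$ is an isomorphism in $D^-(\Lambda)$ and $V^\bullet$ satisfies Hypothesis \ref{hypo:fincoh} (finitely many non-zero cohomology groups, all of finite $k$-dimension), the complex $k\hat\otimes^{\LL}_R M^\bullet$ also has only finitely many non-zero cohomology groups of finite $k$-dimension, so without loss of generality we may assume $N$ was chosen with $\HH^i(k\hat\otimes^{\LL}_R M^\bullet)=0$ for $i\le N$, and hence also below.

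First I would show $M^\bullet\in D^-_{\fin}(R\Lambda)$. We already have finite pseudocompact $R$-tor dimension, so it remains to check that $M^\bullet$ has only finitely many non-zero cohomology groups, all of finite $R$-length (equivalently finite $k$-length, by Remark \ref{rem:whatever}(i)). By Remark \ref{rem:profree}(i) we may replace $(M^\bullet,\phi)$ by an isomorphic quasi-lift whose terms are topologically free pseudocompact $R\Lambda$-modules, hence topologically flat, hence topologically flat over $R$; so $M^\bullet$ is a bounded above complex of topologically free pseudocompact $R$-modules of finite pseudocompact $R$-tor dimension at $N$. By Remark \ref{rem:dumbdumb}, truncating at $N$ yields a bounded complex ${M'}^\bullet$ quasi-isomorphic to $M^\bullet$ with topologically free pseudocompact $R$-terms; in particular $M^\bullet$ has only finitely many non-zero cohomology groups and these agree with those of the bounded complex ${M'}^\bullet$. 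It then remains to see that each $\HH^i(M^\bullet)$ has finite $R$-length, which will follow from the rank bound established next.

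The heart of the matter is the rank bound: I claim $\HH^i(M^\bullet)$ is a subquotient of an abstractly free $R$-module of rank $d_i=\dim_k\HH^i(V^\bullet)$. Working with the truncated complex ${M'}^\bullet$, which is a bounded complex of abstractly free (since topologically free pseudocompact over the Artinian ring $R$, use Remark \ref{rem:topflat}(iv)) $R$-modules, I would induct downward from the bottom degree $N$ (where ${M'}^\bullet$ vanishes below) upward. The key input is the base-change spectral sequence / the comparison between $\HH^i(k\hat\otimes^{\LL}_R M^\bullet)$ and $k\otimes_R\HH^i(M^\bullet)$ together with $\mathrm{Tor}$-terms: since ${M'}^\bullet$ is a complex of flat $R$-modules, $k\hat\otimes^{\LL}_R M^\bullet\cong k\otimes_R{M'}^\bullet$, and there is a short exact sequence
$$0\to k\otimes_R\HH^i({M'}^\bullet)\to \HH^i(k\otimes_R{M'}^\bullet)\to \mathrm{Tor}_1^R(k,\HH^{i+1}({M'}^\bullet))\to 0.$$
Downward induction on $i$ (starting from large $i$ where all groups vanish) shows that $\mathrm{Tor}_1^R(k,\HH^{i+1}({M'}^\bullet))$ injects into $\HH^{i+1}(k\otimes_R{M'}^\bullet)\cong \HH^{i+1}(V^\bullet)$, and then $k\otimes_R\HH^i({M'}^\bullet)$ is a subquotient of $\HH^i(k\otimes_R{M'}^\bullet)\cong\HH^i(V^\bullet)$, so $\dim_k(k\otimes_R\HH^i({M'}^\bullet))\le d_i$. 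By Nakayama's lemma over the local ring $R$, $\HH^i({M'}^\bullet)$ is generated by at most $d_i$ elements, hence is a quotient of $R^{d_i}$; in particular it has finite $R$-length, completing the proof that $M^\bullet\in D^-_{\fin}(R\Lambda)$, and giving the precise "subquotient of a free module of rank $d_i$" statement. (One should double-check whether "subquotient" versus "quotient" is what is wanted; the argument naturally gives a quotient, which is a fortiori a subquotient.) The main obstacle I anticipate is handling the pseudocompact completed tensor product carefully enough to justify the identification $k\hat\otimes^{\LL}_R M^\bullet\cong k\otimes_R{M'}^\bullet$ and the $\mathrm{Tor}$ exact sequence in the pseudocompact setting; here Remark \ref{rem:topflat}(iii)--(iv) (finitely generated or $R$-module case: completed and ordinary tensor products agree) and the fact that all relevant modules are finitely generated over the Artinian ring $R$ should let one pass freely between pseudocompact and abstract module theory, so this is a technical rather than conceptual difficulty. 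This is exactly how \cite[Lemma 3.1]{bcderived} is proved, and the reference in the statement to that lemma indicates the authors will simply invoke that argument.
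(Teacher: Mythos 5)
Your reduction steps (Remark \ref{rem:profree}(i) to get topologically free terms, Remark \ref{rem:dumbdumb} to truncate, Remark \ref{rem:topflat}(iv) to see the terms are abstractly free over the Artinian ring $R$) are fine and agree with the paper's setup. The gap is in the heart of your argument: the universal coefficient short exact sequence
$0\to k\otimes_R\HH^i({M'}^\bullet)\to \HH^i(k\otimes_R{M'}^\bullet)\to \mathrm{Tor}_1^R(k,\HH^{i+1}({M'}^\bullet))\to 0$
is simply not available over a general Artinian local $k$-algebra $R$; it requires $R$ to be hereditary or the cycles/boundaries of ${M'}^\bullet$ to be flat. For a bounded complex of flat modules over such an $R$ one only has the hyper-Tor spectral sequence $E_2^{p,q}=\mathrm{Tor}^R_{-p}(k,\HH^q({M'}^\bullet))\Rightarrow \HH^{p+q}(k\otimes_R{M'}^\bullet)$, and the edge map $k\otimes_R\HH^i({M'}^\bullet)\to\HH^i(k\otimes_R{M'}^\bullet)$ need not be injective. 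In fact the conclusion you extract from it is false: take $\Lambda=k$, $R=k[x,y]/(x,y)^2$, and $M^\bullet:\ R\xrightarrow{\,r\mapsto(xr,yr)\,}R^2$ concentrated in degrees $-1,0$, which is a quasi-lift of $V^\bullet=k\otimes_RM^\bullet$ (a complex with zero differential). Here $d_{-1}=\dim_k\HH^{-1}(V^\bullet)=1$, but $\HH^{-1}(M^\bullet)=m_R\cong k^2$, which is not cyclic, so it is not a quotient of $R^{d_{-1}}$ and $k\otimes_R\HH^{-1}(M^\bullet)$ has dimension $2>d_{-1}$. So your Nakayama step ("generated by at most $d_i$ elements") cannot be salvaged by being more careful with completed tensor products; only the weaker "subquotient of $R^{d_i}$" asserted in the lemma is true (here $m_R\subseteq R$), and your parenthetical remark that the argument "naturally gives a quotient" is exactly the sign that too much is being claimed.

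This is also not how \cite[Lemma 3.1]{bcderived} (and hence the paper) argues. The actual route is induction on the length of $R$: choose an ideal $J\subseteq R$ with $J\cong k$ as $R$-modules and set $R_0=R/J$; after replacing $M^\bullet$ by a complex with topologically free terms, tensoring $0\to J\to R\to R_0\to 0$ with $M^\bullet$ gives a short exact sequence of complexes $0\to k\hat{\otimes}_RM^\bullet\to M^\bullet\to R_0\hat{\otimes}_RM^\bullet\to 0$; the cohomology of the left-hand term is $\HH^i(V^\bullet)$ via $\phi$, the right-hand term underlies a quasi-lift of $V^\bullet$ over $R_0$, so its $\HH^i$ is a subquotient of $R_0^{d_i}$ by induction, and the long exact cohomology sequence then exhibits $\HH^i(M^\bullet)$ as an extension controlled by these two, yielding the subquotient statement and in particular finite length, i.e.\ membership in $D^-_{\fin}(R\Lambda)$. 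You should redo the key estimate along these lines rather than via a K\"unneth-type sequence.
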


The following result  summarizes the main properties of quasi-lifts and proflat quasi-lifts
of $V^\bullet$ over arbitrary objects $R$ in $\hat{\mathcal{C}}$. 
The proof is very similar to the proof of \cite[Thm. 2.10]{obstructions},
once we replace the results from \cite{bcderived} by the corresponding results stated above.
For the convenience of the reader, we provide some of the details.

\begin{thm} 
\label{thm:derivedresult}  
Suppose that $\HH^i(V^\bullet) = 0$ unless $n_1 \le i \le n_2$.  Every quasi-lift of $V^\bullet$ over 
an object $R$ of $\hat{\mathcal{C}}$ is isomorphic to a quasi-lift $(P^\bullet, \psi)$ for
a complex $P^\bullet$ with the following properties:
\begin{enumerate}
\item[(i)] The terms of $P^\bullet$ are topologically free $R\Lambda$-modules.
\item[(ii)] The cohomology group $\HH^i(P^\bullet)$ is finitely generated 
as an abstract $R$-module for all $i$, and $\HH^i(P^\bullet) = 0$ unless $n_1 \le i \le n_2$. 
\item[(iii)]   One has $\HH^i(S\hat{\otimes}^{\LL}_RP^\bullet)=0$  for all pseudocompact $R$-modules 
$S$ unless $n_1 \le i \le n_2$.
\end{enumerate}
\end{thm}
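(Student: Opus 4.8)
The plan is to reduce to the case of an Artinian ring first, and then pass to the limit, following the structure of the proof of \cite[Thm. 2.10]{obstructions}. So let $(M^\bullet,\phi)$ be a quasi-lift of $V^\bullet$ over $R\in\mathrm{Ob}(\hat{\mathcal{C}})$. By Remark \ref{rem:profree}(i), I may assume right away that the terms of $M^\bullet$ are topologically free $R\Lambda$-modules; this already gives property (i) on the nose, so the real content is (ii) and (iii). Let $N$ be an integer witnessing that $M^\bullet$ has finite pseudocompact $R$-tor dimension, so that ${\HH}^i(S\hat{\otimes}^{\LL}_R M^\bullet)=0$ for all pseudocompact $R$-modules $S$ and all $i<N$; in particular, taking $S=k$ and using $\phi$, we see $N\le n_1$. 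Since $M^\bullet$ has topologically free pseudocompact terms over $R$, Remark \ref{rem:dumbdumb} lets me truncate: replace $M^N$ by $M^N/\delta^{N-1}(M^{N-1})$ and kill everything in degrees $<N$, obtaining a \emph{bounded} complex ${M'}^\bullet$, quasi-isomorphic to $M^\bullet$, still with topologically free pseudocompact $R$-terms. Now $(M'^\bullet,\phi)$ is the quasi-lift I will refine further; renaming, I assume from the start that $M^\bullet$ is bounded with topologically free pseudocompact $R$-terms and $M^i=0$ for $i<N$ where $N\le n_1$.

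The next step is to cut off the top of the complex, i.e. to arrange $\HH^i(M^\bullet)=0$ for $i>n_2$. The complex $M^\bullet$ is bounded, say $M^i=0$ for $i>m$. If $m>n_2$, then since $\HH^m(V^\bullet)=0$ and $k\hat{\otimes}^{\LL}_R M^\bullet\cong V^\bullet$, a Nakayama-type argument over the pseudocompact ring $R$ shows $\HH^m(M^\bullet)=0$: concretely, $M^\bullet$ being a bounded complex of topologically free pseudocompact $R\Lambda$-modules concentrated in degrees $\le m$, one has $k\hat{\otimes}_R M^\bullet = k\hat{\otimes}^{\LL}_R M^\bullet$ in $D^-(\Lambda)$ (the terms are topologically flat), so $\HH^m(k\hat{\otimes}_R M^\bullet)=0$; since $\delta^{m-1}$ is a map of topologically free pseudocompact $R$-modules that becomes surjective modulo $m_R$, it is surjective (right-exactness of $\hat\otimes$ plus pseudocompactness), hence $\HH^m(M^\bullet)=0$. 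Then $\delta^{m-1}\colon M^{m-1}\to M^m$ is a surjection of topologically free pseudocompact $R\Lambda$-modules; by projectivity (Remark \ref{rem:pseudocompact}(ii), Remark \ref{rem:topflat}(ii)) it splits, so $M^\bullet$ is quasi-isomorphic (indeed homotopy equivalent) to the complex obtained by deleting $M^m$ and replacing $M^{m-1}$ by $\ker\delta^{m-1}$, which is again topologically free since it is a direct summand of $M^{m-1}$. Iterating downward gives a bounded complex $P^\bullet$, quasi-isomorphic to $M^\bullet$, with topologically free $R\Lambda$-terms, concentrated in degrees $[N',n_2]$ for some $N'\le n_1$, and with $\HH^i(P^\bullet)=0$ for $i>n_2$. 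The vanishing $\HH^i(P^\bullet)=0$ for $i<n_1$ follows because $P^\bullet$ still has finite pseudocompact $R$-tor dimension at some $N\le n_1$ (this is an isomorphism invariant in $D^-(R\Lambda)$), so $\HH^i(P^\bullet)=\HH^i(R\hat{\otimes}^{\LL}_R P^\bullet)=0$ for $i<N$, and then for $i<n_1$ one argues as before using that $k\hat{\otimes}^{\LL}_R P^\bullet\cong V^\bullet$ has $\HH^i=0$: at the bottom degree, $\delta$ modulo $m_R$ is injective on $\HH$, which by a topological Nakayama argument forces $\HH^i(P^\bullet)=0$ for $i<n_1$. This gives (ii) except for finite generation of the $\HH^i$, and most of (iii).

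For the finite generation in (ii) and the sharp tor-dimension statement (iii), I would argue as follows. Property (iii) for $i>n_2$ is immediate since $P^\bullet$ lives in degrees $\le n_2$. For $i<n_1$: since $P^\bullet$ has finite pseudocompact $R$-tor dimension at some $N$, I claim in fact it has finite pseudocompact $R$-tor dimension at $n_1$. This is the Artinian-ring computation of Lemma \ref{lem:lemma3.8} upgraded to $\hat{\mathcal{C}}$: for any pseudocompact $R$-module $S$, write $S=\varprojlim S/ (\text{f.l. quotients})$ and use that $\hat\otimes$ commutes with the relevant inverse limits (Remark \ref{rem:pseudocompact}(i)) to reduce to $S$ of finite length, i.e. to an $R/I$ with $R/I\in\mathrm{Ob}(\mathcal{C})$; then $S\hat{\otimes}^{\LL}_R P^\bullet = S\hat{\otimes}^{\LL}_{R/I}(R/I\hat{\otimes}^{\LL}_R P^\bullet)$, and $R/I\hat{\otimes}^{\LL}_R P^\bullet$ lies in $D^-_{\fin}(R/I\,\Lambda)$ with $\HH^j=0$ for $j<n_1$ (by base change of the cohomology vanishing just established and right-exactness), so Lemma \ref{lem:lemma3.8} gives $\HH^i(S\hat{\otimes}^{\LL}_R P^\bullet)=0$ for $i<n_1$. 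Finally, for finite generation of $\HH^i(P^\bullet)$ as an abstract $R$-module: each $\HH^i(P^\bullet)$ is a pseudocompact $R$-module with $\dim_k(k\hat{\otimes}_R \HH^i(P^\bullet))<\infty$ — indeed $k\hat{\otimes}_R\HH^i(P^\bullet)$ surjects onto a subquotient controlled by $\HH^i(V^\bullet)$ via the spectral sequence / exact sequences for $k\hat{\otimes}^{\LL}_R$, cf. the argument of Lemma \ref{lem:lemma3.1} — and a pseudocompact $R$-module whose reduction mod $m_R$ is finite-dimensional is finitely generated over $R$ by the topological Nakayama lemma. This closes all three points; setting $\psi=\phi$ transported along the quasi-isomorphism $M^\bullet\xrightarrow{\sim}P^\bullet$ (and noting $k\hat{\otimes}^{\LL}_R(-)$ sends it to an isomorphism, so it may be composed into $\phi$) produces the desired quasi-lift $(P^\bullet,\psi)$.

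I expect the main obstacle to be the passage from the Artinian case to general $R\in\mathrm{Ob}(\hat{\mathcal{C}})$ in step (iii): one must justify carefully that $S\hat{\otimes}^{\LL}_R(-)$ commutes with the inverse limits presenting $S$ and that the base-change identity $S\hat{\otimes}^{\LL}_R P^\bullet\cong S\hat{\otimes}^{\LL}_{R/I}(R/I\hat{\otimes}^{\LL}_R P^\bullet)$ holds at the level of $D^-$, so that Lemma \ref{lem:lemma3.8} can actually be invoked; the description of $S\hat{\otimes}^{\LL}_R$ via $\sigma_R$ in Remark \ref{rem:leftderivedtensor} together with the fact that $P^\bullet$ already consists of projective (topologically free) objects, so $\sigma_R(P^\bullet)\simeq P^\bullet$, is what makes this tractable. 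The downward-truncation argument, by contrast, is routine given that projective objects in $\mathrm{PCMod}(R\Lambda)$ split off and that topological Nakayama is available.
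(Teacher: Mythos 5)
Your overall architecture (pass to Artinian quotients, invoke Lemma \ref{lem:lemma3.8}, and recover the statement over $R$ by an inverse limit) is close to the paper's, but two key justifications do not work as written. First, to apply Lemma \ref{lem:lemma3.8} over $R/I$ you need $\HH^j\bigl((R/I)\hat{\otimes}^{\LL}_R P^\bullet\bigr)=0$ for $j<n_1$, and you justify this by ``base change of the cohomology vanishing just established,'' i.e.\ from $\HH^j(P^\bullet)=0$ for $j<n_1$. That implication is false in general: vanishing of cohomology of $P^\bullet$ in low degrees does not survive $-\hat{\otimes}_R R/I$; the paper's Remark \ref{rem:difference} is exactly such an example, with $\HH^{-1}(M^\bullet)=0$ over $k[[t]]$ but $\HH^{-1}(k\hat{\otimes}_R M^\bullet)\neq 0$. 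The correct source of the vanishing over the Artinian quotient is that $(R/m_R^n)\hat{\otimes}^{\LL}_R P^\bullet$, with the induced isomorphism, is itself a quasi-lift of $V^\bullet$ over $R/m_R^n$, so Lemma \ref{lem:lemma3.1} gives $\HH^j=0$ outside $[n_1,n_2]$ there and Lemma \ref{lem:lemma3.8} then gives tor dimension at $n_1$; this is the step your proposal never invokes, and it is how the paper proceeds, concluding via exactness of inverse limits in $\mathrm{PCMod}(R)$ that $\HH^i(S\hat{\otimes}_R P^\bullet)=\lim_{\stackrel{\longleftarrow}{n}}\HH^i\bigl((S/m_R^nS)\hat{\otimes}_{R/m_R^n}((R/m_R^n)\hat{\otimes}_R P^\bullet)\bigr)$. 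Second, your claim that $\HH^i(P^\bullet)=0$ for $i<n_1$ follows because ``$\delta$ modulo $m_R$ is injective on $\HH$, which by a topological Nakayama argument forces'' the vanishing is not a proof: Nakayama gives surjectivity/generation statements, not injectivity. What is actually needed here is either the Artinian-quotient-plus-limit argument just described, or a lemma asserting that a morphism of topologically free pseudocompact $R$-modules whose reduction mod $m_R$ is injective is a split injection (lift a splitting using projectivity, then invert an endomorphism congruent to the identity using completeness); you state and prove neither.

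A smaller but real issue: your truncations destroy property (i). Remark \ref{rem:dumbdumb} only guarantees that the new bottom term is topologically free over $R$, not over $R\Lambda$, and a direct summand of a topologically free $R\Lambda$-module is merely projective, not topologically free in general; so the complex you end with is not a legitimate witness for the theorem as stated. Since (ii) and (iii) are invariant under isomorphism in $D^-(R\Lambda)$, this is repairable by proving them for your truncated complex and then returning to the untruncated complex supplied by Remark \ref{rem:profree}(i) — but note that the paper avoids the issue entirely by never truncating: it keeps the bounded above complex with topologically free $R\Lambda$-terms and proves (ii) and (iii) directly through the Artinian reductions and the inverse-limit formula.
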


\begin{proof}
Let $R$ be an object of $\hat{\mathcal{C}}$, and let $(M^\bullet,\phi)$ be a quasi-lift of $V^\bullet$ over $R$.
By Remark \ref{rem:profree}(i), it follows that there exists a quasi-lift $(P^\bullet,\psi)$ of $V^\bullet$
over $R$ that is isomorphic to the quasi-lift $(M^\bullet,\phi)$ such that $P^\bullet$ satisfies property (i).
It remains to verify properties (ii) and (iii). By (i), we can assume that the terms of $P^\bullet$ 
are topologically free pseudocompact $R\Lambda$-modules. In particular, the functors 
$-\hat{\otimes}^{\LL}_RP^\bullet$ and $-\hat{\otimes}_RP^\bullet$ are
naturally isomorphic. Let $m_R$ denote the maximal ideal of $R$, and let $n$ be an
arbitrary positive integer.
By Lemmas \ref{lem:lemma3.8} and \ref{lem:lemma3.1}, $\HH^i((R/m_R^n)\hat{\otimes}_RP^\bullet)=0$ for $i>n_2$ 
and $i<n_1$. Moreover, for $n_1\le i\le n_2$,
$\HH^i((R/m_R^n)\hat{\otimes}_RP^\bullet)$ is a subquotient of  an abstractly free $(R/m_R^n)$-module
of rank $d_i=\mathrm{dim}_k\,\HH^i(V^\bullet)$,
and $(R/m_R^n)\hat{\otimes}_RP^\bullet$ has finite pseudocompact $(R/m_R^n)$-tor dimension at
$N=n_1$. Since 
$P^\bullet\cong \displaystyle \lim_{\stackrel{\longleftarrow}{n}}\, (R/m_R^n)\hat{\otimes}_RP^\bullet$ 
and since by Remark \ref{rem:pseudocompact}(i), the category $\mathrm{PCMod}(R)$ has 
exact inverse limits, it  follows that for all pseudocompact $R$-modules $S$
$$\HH^i(S\hat{\otimes}_RP^\bullet)= \lim_{\stackrel{\longleftarrow}{n}}\,\HH^i\left(
(S/m_R^nS)\hat{\otimes}_{R/m_R^n}\left((R/m_R^n)\hat{\otimes}_R P^\bullet\right)\right)$$ 
for all $i$. Hence Theorem \ref{thm:derivedresult} follows.
\end{proof}

\subsection{More results on complexes and quasi-lifts}
\label{s:prelims}

In this subsection, we first provide some results from Milne \cite{milne}, which
are adapted from \cite[Sect. 14]{bcderived} to our situation.
Note that in  \cite[Lemma VI.8.17]{milne} (resp.
\cite[Lemma VI.8.18]{milne}), the condition ``$\pi$ is surjective on
terms'' (resp. ``$\psi$ is surjective on terms'') is necessary in the statement.

\begin{lemma}
\label{lem:Lemma14.1}
{\rm (\cite[Lemma VI.8.17]{milne})}
Let $R\in\mathrm{Ob}(\hat{\mathcal{C}})$, and
let $M^\bullet \xrightarrow{\phi} L^\bullet \xleftarrow{\pi} N^\bullet$
be morphisms in $C^-(R\Lambda)$ such
that $\pi$ is a quasi-isomorphism that is surjective on terms.
If $M^\bullet$ is a complex of topologically free pseudocompact $R\Lambda$-modules,
there exists a morphism $\psi:M^\bullet\to N^\bullet$ in $C^-(R\Lambda)$ such that $\pi\circ\psi=\phi$.
\end{lemma}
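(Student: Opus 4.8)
The plan is to construct the lifting morphism $\psi$ degree by degree, moving down from the top (since complexes are bounded above, there is a largest degree in which anything nonzero happens). The key input is that each $M^n$ is a projective object in $\mathrm{PCMod}(R\Lambda)$ (being topologically free, by Remark \ref{rem:pseudocompact}(ii)), so any map out of $M^n$ can be lifted along an epimorphism in $\mathrm{PCMod}(R\Lambda)$. Since $\pi$ is surjective on terms, each $\pi^n\colon N^n\to L^n$ is such an epimorphism, which is exactly what is needed.

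First I would set up the descending induction. Choose $N_0$ with $M^n=0$ for all $n>N_0$; then for those degrees take $\psi^n=0$. Suppose inductively that $\psi^{n+1},\psi^{n+2},\dots$ have been defined so that $\pi^j\psi^j=\phi^j$ for $j\ge n+1$ and the square relations $\psi^{j+1}\delta_M^j=\delta_N^j\psi^j$ hold for $j\ge n+1$. To define $\psi^n\colon M^n\to N^n$, I would look for a lift of $\phi^n\colon M^n\to L^n$ along $\pi^n\colon N^n\twoheadrightarrow L^n$ that is additionally compatible with the differential, i.e.\ such that $\delta_N^n\psi^n=\psi^{n+1}\delta_M^n$. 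The standard move is to first observe that $\delta_N^n\circ\psi^{n+1}\delta_M^n$ need not literally equal what we want, so instead one produces $\psi^n$ as a lift of a suitably corrected map. Concretely: the pair $(\phi^n,\psi^{n+1}\delta_M^n)$ defines a map from $M^n$ into the fiber product $N^n\times_{L^{n+1}? }\dots$; more cleanly, consider the map $M^n\to L^n$ given by $\phi^n$ and check that $\psi^{n+1}\delta_M^n\colon M^n\to N^{n+1}$ satisfies $\pi^{n+1}\psi^{n+1}\delta_M^n=\phi^{n+1}\delta_M^n=\delta_L^n\phi^n$, so $(\phi^n,\psi^{n+1}\delta_M^n)$ lands in the pullback of $N^n\xrightarrow{\pi^n} L^n$ and (the relevant cocycle condition). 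Using projectivity of $M^n$ and the fact that, because $\pi$ is a surjective quasi-isomorphism, the map from the mapping-cocone-type object built from $N^n$ down to the analogous object built from $L^n$ is an epimorphism with acyclic kernel, one lifts $(\phi^n,\psi^{n+1}\delta_M^n)$ to the desired $\psi^n$.

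The cleanest way to organize this, and the one I would actually write, is via the mapping cone: let $C^\bullet=\mathrm{Cone}(\pi)[-1]$, which is acyclic because $\pi$ is a quasi-isomorphism, and note that the projection $C^\bullet\to N^\bullet$ together with the canonical map gives, for each $n$, a short exact sequence relating $C^n$, $N^n$ and $L^n$; surjectivity of $\pi$ on terms makes the relevant sequence $0\to C^{n-1}\to N^n\oplus L^{n-1}\to\dots$ term-wise split as pseudocompact modules (or at least term-wise epimorphic where needed). A morphism $M^\bullet\to N^\bullet$ lifting $\phi$ is the same as a null-homotopy-compatible lift; since $M^\bullet$ is a complex of projectives and $C^\bullet$ is acyclic and bounded above, $\mathrm{Hom}_{K^-(R\Lambda)}(M^\bullet,C^\bullet)=0$, and chasing the long exact sequence obtained by applying $\mathrm{Hom}(M^\bullet,-)$ to $0\to C^\bullet\to (\text{cone of }\pi\text{ shifted})\to\dots$ yields that $\phi$ lifts through $\pi$ on the level of actual chain maps, not just up to homotopy. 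I would cite Remark \ref{rem:pseudocompact}(ii) for projectivity and the term-wise surjectivity hypothesis on $\pi$ for the needed epimorphisms; the surjectivity is precisely what fails in general and is why the authors flag it as necessary.

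The main obstacle is bookkeeping the compatibility with differentials while lifting: a naive term-wise lift of $\phi^n$ along $\pi^n$ exists immediately by projectivity, but it will not commute with the differentials, and the correction to enforce commutativity must be carried along the induction without ever obstructing the next lifting step. Handling this correctly is exactly the content of the acyclicity of $C^\bullet=\mathrm{Cone}(\pi)[-1]$: the obstruction to extending the partial lift one more degree lies in a cohomology group of $C^\bullet$, which vanishes. So the real work is to phrase the descending induction so that at stage $n$ the map being lifted along the epimorphism $\pi^n$ is the one whose lift automatically satisfies $\delta_N^n\psi^n=\psi^{n+1}\delta_M^n$, and to verify that this map does land in the domain where projectivity of $M^n$ applies. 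I expect no genuine difficulty beyond this, since the statement is the pseudocompact analogue of the classical fact that a bounded-above complex of projectives maps to any object quasi-isomorphic to a given one, with the quasi-isomorphism allowed to be replaced along a term-wise surjection; the reference \cite[Lemma VI.8.17]{milne} supplies the template and only the replacement of ordinary modules by pseudocompact $R\Lambda$-modules (using Remark \ref{rem:pseudocompact}) needs checking.
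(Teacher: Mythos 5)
Your argument is correct and is essentially the proof the paper relies on: the paper gives no independent argument but simply cites Milne's Lemma VI.8.17, whose standard proof is exactly your descending induction --- lift the corrected pair $(\phi^n,\psi^{n+1}\delta_M^n)$ along the surjection from $N^n$ onto the appropriate pullback, with surjectivity supplied by the acyclicity of $\ker(\pi)$ (equivalently of the shifted cone) and the lifting supplied by projectivity of the topologically free terms $M^n$ via Remark \ref{rem:pseudocompact}(ii). Your alternative packaging through the Hom-complex (vanishing of $\mathrm{H}^1$ of $\mathrm{Hom}^\bullet(M^\bullet,\ker\pi)$, using term-wise surjectivity of $\pi$ to get a short exact sequence of Hom-complexes) also works and yields the chain-level, not merely homotopy-level, lift.
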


\begin{rem}
\label{rem:Remark14.2}
Suppose $R,R_0\in\mathrm{Ob}(\hat{\mathcal{C}})$
such that $R_0$ is a quotient ring
of $R$.  We write $X\to X_0$, $\phi\to \phi_0$ for the functor
$R_0\hat{\otimes}_R -$.
Suppose $M$, $N$ are
topologically free pseudocompact $R\Lambda$-modules.
Then every continuous $R_0\Lambda$-module homomorphism $\pi:M_0\to N_0$
can be lifted to a continuous $R\Lambda$-module homomorphism $\phi:M\to N$
such that $\pi=\phi_0$.
\end{rem}

\begin{lemma}
\label{lem:Lemma14.3}
{\rm (\cite[Sublemma VI.8.20]{milne})}
Let $R,R_0\in\mathrm{Ob}(\hat{\mathcal{C}})$ such that
$R_0$ is a quotient ring of $R$. As in Remark $\ref{rem:Remark14.2}$,
we write $X\to X_0$, $\phi\to
\phi_0$ for the functor $R_0\hat{\otimes}_R -$.
Let $\phi:L^\bullet\to M^\bullet$ be a
morphism in $C^-(R\Lambda)$ of complexes of topologically free pseudocompact
$R\Lambda$-modules. Then any morphism $L_0^\bullet\to M_0^\bullet$ 
in $C^-(R_0\Lambda)$ that is homotopic to
$\phi_0$ is of the form $\psi_0$, where $\psi:L^\bullet\to M^\bullet$ is a morphism
in $C^-(R\Lambda)$ that is homotopic to $\phi$.
\end{lemma}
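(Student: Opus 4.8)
The plan is to mimic the proof of the classical lifting result, Sublemma VI.8.20 in Milne, in the pseudocompact setting, using the preceding Lemma \ref{lem:Lemma14.1} and Remark \ref{rem:Remark14.2} as the two key inputs. Suppose we are given $\phi:L^\bullet\to M^\bullet$ in $C^-(R\Lambda)$ between complexes of topologically free pseudocompact $R\Lambda$-modules, and a morphism $\bar\psi:L_0^\bullet\to M_0^\bullet$ in $C^-(R_0\Lambda)$ with $\bar\psi$ homotopic to $\phi_0$. Say the homotopy is $s^\bullet:L_0^\bullet\to M_0^{\bullet-1}$, so that $\bar\psi^n-\phi_0^n=\delta^{n-1}_{M_0}s^n+s^{n+1}\delta^n_{L_0}$. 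The idea is to first lift the homotopy $s^\bullet$ to a collection of continuous $R\Lambda$-module homomorphisms $\tilde s^n:L^n\to M^{n-1}$, which is possible term by term by Remark \ref{rem:Remark14.2} since each $L^n$ and $M^{n-1}$ is topologically free pseudocompact over $R\Lambda$; no compatibility between different degrees is required since a homotopy is just a graded map with no closure condition.

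Once the $\tilde s^n$ are chosen, I would define $\psi^\bullet:L^\bullet\to M^\bullet$ by $\psi^n=\phi^n+\delta^{n-1}_M\tilde s^n+\tilde s^{n+1}\delta^n_L$. A routine check shows that $\psi^\bullet$ is a genuine chain map: $\delta^n_M\psi^n-\psi^{n+1}\delta^n_L=\delta^n_M\phi^n-\phi^{n+1}\delta^n_L+\delta^n_M\tilde s^{n+1}\delta^n_L-\tilde s^{n+2}\delta^{n+1}_L\delta^n_L-\delta^n_M\delta^{n-1}_M\tilde s^n$, and all terms cancel using $\phi$ being a chain map and $\delta\delta=0$. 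By construction $\psi^\bullet$ is homotopic to $\phi^\bullet$ via $\tilde s^\bullet$. It remains to verify that $\psi^\bullet$ reduces to $\bar\psi^\bullet$ modulo the kernel of $R\to R_0$, i.e. that $\psi^\bullet_0=\bar\psi^\bullet$. But $\psi^n_0=\phi^n_0+\delta^{n-1}_{M_0}\tilde s^n_0+\tilde s^{n+1}_0\delta^n_{L_0}=\phi^n_0+\delta^{n-1}_{M_0}s^n+s^{n+1}\delta^n_{L_0}=\bar\psi^n$, exactly using that $\tilde s^n$ was chosen to lift $s^n$ and that reduction is a functor commuting with differentials.

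I do not expect any serious obstacle here; the main thing to be careful about is that all lifts in sight are continuous $R\Lambda$-module homomorphisms between pseudocompact modules, which is precisely what Remark \ref{rem:Remark14.2} guarantees, and that the reduction functor $R_0\hat\otimes_R-$ is additive and commutes with the differentials (so it sends the formula defining $\psi^\bullet$ to the corresponding formula over $R_0$). The only slightly subtle point is that we must use topological freeness of the terms of $L^\bullet$ and $M^\bullet$ over $R\Lambda$ to invoke Remark \ref{rem:Remark14.2} for the termwise lifting of the homotopy; this is part of the hypothesis, so there is nothing to arrange. Lemma \ref{lem:Lemma14.1} is not actually needed for this argument, although it is the companion statement from the same source; the proof above is entirely self-contained given Remark \ref{rem:Remark14.2}. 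Thus the whole proof is a direct adaptation of Milne's argument, with ``module homomorphism'' replaced throughout by ``continuous homomorphism of pseudocompact modules.''
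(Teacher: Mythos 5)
Your proof is correct and is exactly the argument the paper intends: the lemma is quoted from Milne (Sublemma VI.8.20), whose proof is precisely this homotopy-lifting argument, with Remark \ref{rem:Remark14.2} supplying the termwise lifts of the homotopy between topologically free pseudocompact $R\Lambda$-modules. The verification that $\psi=\phi+\delta\tilde s+\tilde s\delta$ is a chain map homotopic to $\phi$ reducing to the given morphism is routine and correctly carried out.
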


\begin{lemma}
\label{lem:Lemma14.4}
{\rm (\cite[Lemma VI.8.18]{milne})}
Let $R,R_0\in\mathrm{Ob}(\mathcal{C})$ be Artinian such that $R_0$ is a quotient ring of $R$.
As in Remark $\ref{rem:Remark14.2}$, we write $X\to X_0$, $\phi\to
\phi_0$ for the functor $R_0\hat{\otimes}_R -$. Let $M^\bullet$ $($resp. $N^\bullet${}$)$ be
a {bounded above} complex of abstractly free finitely generated  $R\Lambda$-modules $($resp.
$R_0\Lambda$-modules$)$, and let $\psi$ be
a quasi-isomorphism $\psi:M_0^\bullet {\rightarrow}N^\bullet$ in $C^-(R_0\Lambda)$
that is surjective on terms. Then there
exist a {bounded above} complex $L^\bullet$ of {abstractly free finitely generated}
$R\Lambda$-modules,
a quasi-isomorphism
$\phi:M^\bullet\to L^\bullet$ in $C^-(R\Lambda)$, and an isomorphism 
$\rho:L^\bullet_0 {\rightarrow}
N^\bullet$ in $C^-(R_0\Lambda)$, such that $\rho\circ \phi_0=\psi$.
\end{lemma}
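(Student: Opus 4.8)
\emph{Plan.} I would realize $L^\bullet$ as a quotient of $M^\bullet$ by a lifted copy of $\ker\psi$, so that $\phi$ is literally a quotient chain map that automatically reduces to $\psi$; the freeness of the terms of $L^\bullet$ and the quasi-isomorphism property then follow formally. Write $I_0=\ker(R\Lambda\to R_0\Lambda)$, which is nilpotent because $R$ is Artinian; by a standard d\'{e}vissage along a chain of square-zero extensions between $R$ and $R_0$ one may assume $I_0^2=0$ whenever a lifting step is performed. Put $K^\bullet=\ker\psi\subseteq M_0^\bullet$. Since each $N^i$ is $R_0\Lambda$-projective, $K^\bullet$ is a termwise-split subcomplex with each $K^i$ a direct summand of the free module $M_0^i$, hence finitely generated projective; and $K^\bullet$ is acyclic by the long exact cohomology sequence, since $\psi$ is a quasi-isomorphism. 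A bounded above acyclic complex of projectives is contractible, so $K^\bullet$ is contractible, hence isomorphic to a direct sum of complexes of the form $0\to Q\xrightarrow{\mathrm{id}}Q\to 0$ with $Q$ finitely generated projective over $R_0\Lambda$.

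\textbf{Lifting $\ker\psi$.} Because idempotents lift along the surjection $R\Lambda\to R_0\Lambda$ with nilpotent kernel, $K^\bullet$ lifts to a contractible complex $\tilde K^\bullet$ of finitely generated projective $R\Lambda$-modules with $\tilde K_0^\bullet\cong K^\bullet$. I would then lift the inclusion $\iota_0\colon K^\bullet\hookrightarrow M_0^\bullet$, a split monomorphism of complexes, to a chain map $\tilde\iota\colon\tilde K^\bullet\to M^\bullet$: first lift $\iota_0$ termwise (Remark \ref{rem:Remark14.2}) to a graded map $j$; its failure $h=\delta_M j-j\delta_{\tilde K}$ to commute with differentials is a morphism of complexes $\tilde K^\bullet\to M^\bullet[1]$ that reduces to $0$, hence is valued in $I_0 M^\bullet$, and correcting $j$ by $-h\eta$, where $\eta$ is a contracting homotopy of $\tilde K^\bullet$, produces the desired chain map $\tilde\iota$ lifting $\iota_0$ (this is the point where contractibility of $\tilde K^\bullet$, nilpotence of $I_0$ and topological freeness of the terms of $M^\bullet$ all enter, cf. Lemmas \ref{lem:Lemma14.1}--\ref{lem:Lemma14.3}). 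Since $\tilde\iota$ reduces to a split monomorphism and $I_0$ is nilpotent, $\tilde\iota$ is a termwise-split monomorphism of complexes.

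\textbf{Constructing $L^\bullet$, $\phi$, $\rho$ and finishing.} Set $L^\bullet=M^\bullet/\tilde\iota(\tilde K^\bullet)$ with $\phi\colon M^\bullet\to L^\bullet$ the quotient chain map; then $\phi_0$ is the quotient $M_0^\bullet\to M_0^\bullet/K^\bullet$, and since $\ker\psi=K^\bullet$ the first isomorphism theorem supplies a canonical isomorphism of complexes $\rho\colon L_0^\bullet\xrightarrow{\sim}N^\bullet$ with $\rho\circ\phi_0=\psi$. Each $L^i$ is a direct summand of the free module $M^i$, hence finitely generated projective, and $L_0^i\cong N^i$ is free; since $R\Lambda$ is Artinian, Krull--Schmidt holds and a finitely generated projective $R\Lambda$-module is determined up to isomorphism by its reduction modulo $I_0$, so each $L^i$ is in fact abstractly free. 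Finally, $\mathrm{Cone}(\phi)$ has free terms and reduces to $\mathrm{Cone}(\psi)$, a bounded above acyclic complex of projectives, hence contractible; lifting a contracting homotopy termwise to a graded endomorphism $s$ of $\mathrm{Cone}(\phi)$, the chain endomorphism $\mathrm{id}-(\delta s+s\delta)$ is valued in $I_0\cdot\mathrm{Cone}(\phi)$ and therefore nilpotent, so $\delta s+s\delta$ is an invertible chain map and $\mathrm{Cone}(\phi)$ is contractible, in particular acyclic; thus $\phi$ is a quasi-isomorphism and the lemma follows.

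\emph{Main obstacle.} The only non-formal step is lifting the termwise-split subcomplex $\ker\psi\subseteq M_0^\bullet$ to a termwise-split subcomplex of $M^\bullet$; once that is done, freeness of the terms of $L^\bullet$ (via Krull--Schmidt) and the quasi-isomorphism property (via the cone argument) are automatic. Alternatively, the entire construction can be carried out by a downward induction on degrees as in \cite[Lemma VI.8.18]{milne}, lifting at each stage the relevant surjections of abstractly free modules by Nakayama's lemma and splitting them; the square-zero reduction and the cone argument above then identify the resulting $\phi$ as a quasi-isomorphism.
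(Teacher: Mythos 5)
Your argument is correct, and it takes a different route from the one the paper relies on: the paper simply invokes Milne's Lemma VI.8.18, whose proof is the downward induction on degrees that you mention only in your closing remark, whereas you give a global, structural proof. Concretely, you split off $K^\bullet=\ker\psi$ as a termwise direct summand of $M_0^\bullet$ (using freeness of the $N^i$), observe it is a bounded above acyclic complex of projectives, hence contractible and a sum of disk complexes, lift it along the nilpotent kernel $I_0=\ker(R\Lambda\to R_0\Lambda)$ to a contractible complex $\tilde K^\bullet$, lift the inclusion to a chain map by a homotopy correction, and define $L^\bullet=M^\bullet/\tilde\iota(\tilde K^\bullet)$; freeness of the $L^i$ then comes from projective covers (Krull--Schmidt over the Artinian ring $R\Lambda$), and $\rho$, $\phi_0$ behave as required because the extension is termwise split so reduction is exact. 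This buys a proof in which the compatibility $\rho\circ\phi_0=\psi$ is automatic, at the cost of the one genuinely nontrivial step you isolate, lifting the split subcomplex; Milne's induction avoids that step but has to track the compatibility degree by degree. Three small points: (1) the square-zero d\'evissage is unnecessary, since contractibility of $\tilde K^\bullet$ makes $\mathrm{Hom}^\bullet(\tilde K^\bullet,I_0M^\bullet)$ acyclic, so the cocycle $h=\delta_M j-j\delta_{\tilde K}$ is exactly a coboundary for nilpotent $I_0$ (your correction term should be $+h\eta$ rather than $-h\eta$ with the usual conventions, but this is only a sign); (2) nilpotence of $I_0$ and freeness of the $M^i$ are not what make the correction work --- they are used instead for idempotent lifting, for the split-mono and retraction arguments, and for the freeness of the $L^i$; (3) the cone argument at the end can be skipped: $0\to\tilde K^\bullet\to M^\bullet\to L^\bullet\to 0$ is a termwise split exact sequence of complexes with $\tilde K^\bullet$ acyclic, so the long exact cohomology sequence already shows $\phi$ is a quasi-isomorphism.
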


The following remark (which replaces \cite[Remark 5.2]{bcderived} in our situation)
shows how one can relate a morphism $f$ in $C^-(R\Lambda)$ to 
a morphism $g$ in $C^-(R\Lambda)$ that is surjective on terms.

\begin{rem}
\label{rem:Remark5.2}
Suppose $R\in\mathrm{Ob}(\hat{\mathcal{C}})$. Let $M^\bullet$ and $N^\bullet$ be
two bounded above complexes of pseudocompact $R\Lambda$-modules, and
let $f:M^\bullet\to N^\bullet$ be a morphism in $C^-(R\Lambda)$. 
Let $P^\bullet$ be a bounded above complex  of topologically free pseudocompact
$R\Lambda$-modules such that there is a quasi-isomorphism
$P^\bullet \to N^\bullet$ in $C^-(R\Lambda)$
that is surjective on terms. Then
the mapping cone $C^\bullet$ of  $P^\bullet[-1] \xrightarrow{\mathrm{id}} P^\bullet[-1]$
is an acyclic complex, and there is a morphism $\pi: C^\bullet\to N^\bullet$ 
in $C^-(R\Lambda)$ that is
surjective on terms. Define $g:M^\bullet\oplus C^\bullet\to N^\bullet$ by $g=(f,\pi)$, and 
define $s:M^\bullet\to M^\bullet\oplus C^\bullet$ by $s=\left(\begin{array}{c}\mathrm{id}_{
M^\bullet}\\0
\end{array}\right)$. Then $g$ is surjective on terms, $s$ is a quasi-isomorphism and $g\circ s=f$.

Suppose there is a surjective morphism $R_1\to R$ in $\hat{\mathcal{C}}$, 
and there is a bounded above complex
$X^\bullet$ of topologically free pseudocompact $R_1\Lambda$-modules such that $M^\bullet=
R\hat{\otimes}_{R_1}X^\bullet$. Since $R\Lambda=R\hat{\otimes}_{R_1}R_1\Lambda$, there exists
a bounded above complex $Q^\bullet$ of topologically free pseudocompact
$R_1\Lambda$-modules with
$P^\bullet=R\hat{\otimes}_{R_1}Q^\bullet$. Hence $C^\bullet=R\hat{\otimes}_{R_1}D^\bullet$,
where $D^\bullet$ is the mapping cone of $Q^\bullet[-1] \xrightarrow{\mathrm{id}} Q^\bullet[-1]$, and $M^\bullet\oplus C^\bullet=R\hat{\otimes}_{R_1}(X^\bullet\oplus Q^\bullet)$.
\end{rem}

Next, we look at quasi-lifts of $V^\bullet$ in the case when  the endomorphism ring of $V^\bullet$ 
in $D^-(\Lambda)$ is isomorphic to $k$. We  need the following remark.

\begin{rem}
\label{rem:Lemma4.2}
Define $\hat{F}_1$ (resp. $F_1$) to be the functor from
$\hat{\mathcal{C}}$ (resp. $\mathcal{C}$) to the category $\mathrm{Sets}$
that sends $R$ to the set $\hat{F}_1(R)$ (resp.
$F_1(R)$) of isomorphism classes of quasi-lifts of $V^\bullet$
over $R$ that are represented by bounded  above complexes of topologically free pseudocompact $R\Lambda$-modules.

Using Remark \ref{rem:profree}(i), it follows as in the proof of \cite[Lemma 4.2]{bcderived} that the 
natural transformation $\hat{F}_1\to \hat{F}$ $($resp.
$F_1 \to F${}$)$ is an isomorphism of functors.
\end{rem}

Using Remark \ref{rem:Lemma4.2} and Lemma \ref{lem:Lemma14.3},
the following result is proved in a similar way to \cite[Prop. 4.3]{bcderived}.

\begin{prop}
\label{prop:liftendos}
Suppose $\mathrm{Hom}_{D^-(\Lambda)}(V^\bullet,V^\bullet)= k$.
Then $\mathrm{Hom}_{D^-(R\Lambda)}(M^\bullet,M^\bullet)= R$
for every quasi-lift $(M^\bullet,\phi)$ of $V^\bullet$ over an Artinian ring
$R\in\mathrm{Ob}(\mathcal{C})$.
\end{prop}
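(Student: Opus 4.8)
The plan is to prove, by induction on $\dim_k R$, the stronger statement that the natural ring homomorphism $R\to\mathrm{Hom}_{D^-(R\Lambda)}(M^\bullet,M^\bullet)$, $r\mapsto r\cdot\mathrm{id}_{M^\bullet}$, is an isomorphism for every quasi-lift $(M^\bullet,\phi)$ of $V^\bullet$ over an Artinian ring $R$. When $R=k$ the datum $\phi$ exhibits an isomorphism $M^\bullet\cong V^\bullet$ in $D^-(\Lambda)$, so the claim is exactly the hypothesis $\mathrm{Hom}_{D^-(\Lambda)}(V^\bullet,V^\bullet)=k$. For the inductive step I fix a minimal non-zero ideal $J$ of $R$; since $R$ is local with residue field $k$, one has $J\cong k$ as an $R$-module and $m_RJ=0$, and $R_0:=R/J$ satisfies $\dim_k R_0=\dim_k R-1$. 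By Remark \ref{rem:Lemma4.2} I may replace $(M^\bullet,\phi)$ by an isomorphic quasi-lift in which $M^\bullet$ is a bounded above complex of topologically free, hence projective and $R$-topologically flat, pseudocompact $R\Lambda$-modules; then $\mathrm{Hom}_{D^-(R\Lambda)}(M^\bullet,-)=\mathrm{Hom}_{K^-(R\Lambda)}(M^\bullet,-)$ and I work with genuine chain maps modulo homotopy.

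Next I set $M_0^\bullet=R_0\hat{\otimes}_R M^\bullet$ with the induced isomorphism $\phi_0$. Since $M^\bullet$ is $R$-topologically flat, applying $-\hat{\otimes}_R M^\bullet$ to $0\to J\to R\to R_0\to 0$ yields a short exact sequence of complexes $0\to J\hat{\otimes}_R M^\bullet\xrightarrow{\iota}M^\bullet\xrightarrow{\pi}M_0^\bullet\to 0$, and $J\hat{\otimes}_R M^\bullet\cong k\hat{\otimes}_R M^\bullet=k\hat{\otimes}_R^{\LL}M^\bullet\cong V^\bullet$ in $D^-(\Lambda)$ because $J\cong k$. Using Lemmas \ref{lem:lemma3.1} and \ref{lem:lemma3.8}, Theorem \ref{thm:derivedresult}, and the identification $S\hat{\otimes}_{R_0}^{\LL}M_0^\bullet\cong S\hat{\otimes}_R^{\LL}M^\bullet$ for pseudocompact $R_0$-modules $S$, one checks that $(M_0^\bullet,\phi_0)$ is again a quasi-lift of $V^\bullet$, so the inductive hypothesis gives $\mathrm{Hom}_{D^-(R_0\Lambda)}(M_0^\bullet,M_0^\bullet)=R_0$. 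Since $M_0^\bullet$ consists of topologically free $R_0\Lambda$-modules, every chain map $M^\bullet\to M_0^\bullet$ and every homotopy factors through $\pi$, whence $\mathrm{Hom}_{K^-(R\Lambda)}(M^\bullet,M_0^\bullet)\cong\mathrm{Hom}_{D^-(R_0\Lambda)}(M_0^\bullet,M_0^\bullet)=R_0$; likewise, since $V^\bullet$ is $m_R$-torsion and $k\hat{\otimes}_R M^\bullet$ is a complex of projective $\Lambda$-modules isomorphic to $V^\bullet$ in $D^-(\Lambda)$, every chain map and homotopy $M^\bullet\to V^\bullet$ factors through the canonical reduction $q\colon M^\bullet\to k\hat{\otimes}_R M^\bullet$, giving $\mathrm{Hom}_{K^-(R\Lambda)}(M^\bullet,V^\bullet)\cong\mathrm{Hom}_{D^-(\Lambda)}(V^\bullet,V^\bullet)=k$.

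For surjectivity, let $f\colon M^\bullet\to M^\bullet$ be a chain map. Its reduction $f_0$ is, by the inductive hypothesis, homotopic to $r_0\cdot\mathrm{id}_{M_0^\bullet}$ for some $r_0\in R_0$; choosing a lift $r\in R$ of $r_0$, the chain map $(f-r\cdot\mathrm{id}_{M^\bullet})_0$ is null-homotopic. Applying Lemma \ref{lem:Lemma14.3} to the zero morphism $M_0^\bullet\to M_0^\bullet$, which is homotopic to $(f-r\cdot\mathrm{id}_{M^\bullet})_0$, produces a chain map $g\colon M^\bullet\to M^\bullet$, homotopic to $f-r\cdot\mathrm{id}_{M^\bullet}$, with $g_0=0$; hence $g$ has image in $JM^\bullet$ and, because $m_RJ=0$, kills $m_RM^\bullet$, so $g=\iota\circ\bar g\circ q$ for a chain map $\bar g\colon k\hat{\otimes}_R M^\bullet\to J\hat{\otimes}_R M^\bullet$ of $k\Lambda$-complexes. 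Identifying both $k\hat{\otimes}_R M^\bullet$ and $J\hat{\otimes}_R M^\bullet$ with $V^\bullet$ in $D^-(\Lambda)$, the class of $\bar g$ lies in $\mathrm{Hom}_{D^-(\Lambda)}(V^\bullet,V^\bullet)=k$, so $\bar g$ is homotopic over $k\Lambda$ — hence over $R\Lambda$ — to a scalar multiple of the tautological identification $k\hat{\otimes}_R M^\bullet\xrightarrow{\sim}J\hat{\otimes}_R M^\bullet$; feeding this back shows $g$ is homotopic to $s\cdot\mathrm{id}_{M^\bullet}$ for some $s\in J$, and therefore $f$ is homotopic to $(r+s)\cdot\mathrm{id}_{M^\bullet}$. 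Injectivity is obtained by a parallel induction: if $s\cdot\mathrm{id}_{M^\bullet}$ is null-homotopic then so is $(s\cdot\mathrm{id}_{M^\bullet})_0$, so $s\in J$ by the inductive hypothesis, and one excludes $s$ being a generator of $J$ by running the same factorization, which forces the generator of $J$ to act non-trivially on $M^\bullet$ precisely because $M^\bullet$ is quasi-isomorphic to the non-zero complex $V^\bullet$ and $\mathrm{Hom}_{D^-(\Lambda)}(V^\bullet,V^\bullet)$ is spanned by the identity.

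The main obstacle is the application of Lemma \ref{lem:Lemma14.3}: one must know that an $R_0\Lambda$-level null-homotopy of $(f-r\cdot\mathrm{id})_0$ lifts to an $R\Lambda$-level homotopy between $f-r\cdot\mathrm{id}$ and a chain map that \emph{literally} factors through $k\hat{\otimes}_R M^\bullet$, so that the last correction term is controlled by $\mathrm{Hom}_{D^-(\Lambda)}(V^\bullet,V^\bullet)$ rather than by some larger $\mathrm{Hom}$-group over $R\Lambda$. This is the reason one insists throughout on complexes of topologically free $R\Lambda$-modules (Remark \ref{rem:Lemma4.2}), so that all $\mathrm{Hom}$-spaces in $D^-$ are computed by chain maps modulo homotopy and the identifications $\mathrm{Hom}_{K^-(R\Lambda)}(M^\bullet,V^\bullet)\cong\mathrm{Hom}_{D^-(\Lambda)}(V^\bullet,V^\bullet)$ and $\mathrm{Hom}_{K^-(R\Lambda)}(M^\bullet,M_0^\bullet)\cong\mathrm{Hom}_{D^-(R_0\Lambda)}(M_0^\bullet,M_0^\bullet)$ are available. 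A secondary point, handled by Lemmas \ref{lem:lemma3.1} and \ref{lem:lemma3.8} together with Theorem \ref{thm:derivedresult}, is checking that $(M_0^\bullet,\phi_0)$ is a quasi-lift in the sense of Definition \ref{def:lifts}, so that the inductive hypothesis genuinely applies.
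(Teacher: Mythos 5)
Your surjectivity argument is correct and is essentially the intended one: the paper's proof is exactly the small-extension induction of \cite[Prop.\ 4.3]{bcderived}, using Remark \ref{rem:Lemma4.2} to reduce to complexes of topologically free pseudocompact $R\Lambda$-modules and Lemma \ref{lem:Lemma14.3} to replace $f-r\cdot\mathrm{id}$ by a chain map $g$ with $g_0=0$, which you then factor as $\iota\circ\bar g\circ q$ through $k\hat{\otimes}_RM^\bullet$ and $J\hat{\otimes}_RM^\bullet$ and control by $\mathrm{Hom}_{D^-(\Lambda)}(V^\bullet,V^\bullet)=k$. The auxiliary identifications you use (maps into $m_R$-torsion or $J$-torsion complexes factor through the corresponding quotients; $\mathrm{Hom}_{K^-}=\mathrm{Hom}_{D^-}$ on bounded above complexes of projectives; $(M_0^\bullet,\phi_0)$ is again a quasi-lift) are all fine.

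The genuine gap is the injectivity half. Your justification --- that one ``excludes $s$ being a generator of $J$ by running the same factorization, which forces the generator of $J$ to act non-trivially on $M^\bullet$ precisely because $M^\bullet$ is quasi-isomorphic to the non-zero complex $V^\bullet$'' --- rests on a false principle: a socle generator can act null-homotopically on a complex of free $R$-modules whose special fibre is non-zero. Concretely, take $\Lambda=k$, $R=k[t]/(t^2)$ and $M^\bullet=(R\xrightarrow{t}R)$ in degrees $-1,0$; then $H=\mathrm{id}_R\colon M^0\to M^{-1}$ satisfies $\delta H+H\delta=t\cdot\mathrm{id}_{M^\bullet}$, so $t\cdot\mathrm{id}_{M^\bullet}\simeq 0$ although $k\otimes_RM^\bullet=(k\xrightarrow{0}k)\neq 0$ and $M^\bullet$ is a quasi-lift of $V^\bullet\cong k\oplus k[1]$. (Here $\mathrm{Hom}_{D^-(\Lambda)}(V^\bullet,V^\bullet)\cong k^2$, so the proposition is not contradicted, but the example shows that the map $\mathrm{Hom}_{D^-(\Lambda)}(V^\bullet,V^\bullet)\to\mathrm{Hom}_{D^-(R\Lambda)}(M^\bullet,M^\bullet)$, $\alpha\mapsto\iota\circ\alpha\circ q$, need not be injective, which is exactly what your ``same factorization'' tacitly assumes.) A real argument using the hypothesis is needed here: for instance over $k[\varepsilon]$, writing $M^\bullet$ as the cone attached to a class $\alpha\in\mathrm{Hom}_{D^-(\Lambda)}(V^\bullet,V^\bullet[1])$ as in Proposition \ref{prop:step2}, an $\varepsilon$-linear null-homotopy of $\varepsilon\cdot\mathrm{id}_{M^\bullet}$ produces $h\in\mathrm{Hom}_{D^-(\Lambda)}(V^\bullet,V^\bullet[-1])$ with $\alpha h+h\alpha=\mathrm{id}_{V^\bullet}$ in $\mathrm{End}_{D^-(\Lambda)}(V^\bullet)=k$, so one of the two summands is a non-zero scalar and $V^\bullet$ becomes a direct summand of $V^\bullet[1]$ or $V^\bullet[-1]$, which is impossible by looking at the top or bottom non-vanishing cohomology of $V^\bullet$; for a general small extension one needs the analogous argument with the connecting morphism of $0\to JM^\bullet\to M^\bullet\to M_0^\bullet\to 0$. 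Since your text neither states nor substitutes for this step, the claimed isomorphism $R\cong\mathrm{Hom}_{D^-(R\Lambda)}(M^\bullet,M^\bullet)$ is only half proved: you have surjectivity, but not injectivity.
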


\subsection{Proof of Theorem \ref{thm:bigthm}}
\label{s:proofbigthm}

In this subsection, we prove Theorem \ref{thm:bigthm}. We follow the argumentation in
Sections 5 through 7 of \cite{bcderived} and explain how the key steps are adapted to our situation.
As in \cite{bcderived}, we use Schlessinger's criteria $\mathrm{(H1)}$ - $\mathrm{(H4)}$ for the pro-representability,
respectively for the existence of a pro-representable hull, of a functor of Artinian rings.
We refer to \cite[Thm. 2.11]{Sch} for a precise description of these criteria $\mathrm{(H1)}$ - $\mathrm{(H4)}$.

\begin{prop}
\label{prop:step1}
Schlessinger's criteria $\mathrm{(H1)}$ and $\mathrm{(H2)}$
are always satisfied for $F_{\mathcal{D}}$. In the case when 
$\mathrm{Hom}_{D^-(\Lambda)}(V^\bullet,V^\bullet)= k$,
$\mathrm{(H4)}$ is also satisfied.
\end{prop}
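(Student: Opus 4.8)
I would verify Schlessinger's conditions (H1), (H2) and (H4) directly in terms of quasi-lifts, following the template of \cite[Sects. 5--6]{bcderived} but substituting the results of Sections \ref{s:quasilifts} and \ref{s:prelims} for their counterparts over $[[kG]]$. Recall that (H1) and (H2) concern the natural map
\[
F_{\mathcal{D}}(R'\times_R R'') \;\longrightarrow\; F_{\mathcal{D}}(R')\times_{F_{\mathcal{D}}(R)} F_{\mathcal{D}}(R'')
\]
for small surjections $R''\to R$ in $\mathcal{C}$: (H1) asks that this map be surjective (in fact for all such maps, but it suffices to check it when $R''\to R$ is a small extension), and (H2) asks that it be bijective when $R = k$ and $R'' = k[\varepsilon]$. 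Condition (H4) asks, in addition, that the map be bijective whenever $R' = R''$ and the two structure maps $R''\to R$ coincide.

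\textbf{First steps.} By Remark \ref{rem:Lemma4.2} I may represent every deformation over an Artinian ring by a bounded above complex of topologically free pseudocompact modules, so fix small surjections $p':R'\to R$ and $p'':R''\to R$ in $\mathcal{C}$ with, say, $p''$ the small extension, and let $\tilde{R}=R'\times_R R''$. Given quasi-lifts $(M'^\bullet,\phi')$ over $R'$ and $(M''^\bullet,\phi'')$ over $R''$ together with an isomorphism in $D^-_{\fin}(R\Lambda)$ between their reductions to $R$, I would first use Lemma \ref{lem:corollary3.6} (applied in the form of Remark \ref{rem:corollary3.1011}) to replace $M'^\bullet$, $M''^\bullet$ and the comparison morphism $\tau$ by bounded above complexes of abstractly free finitely generated modules and a genuine morphism of complexes. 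Then Lemma \ref{lem:Lemma14.4} lets me arrange that the reduction $M''^\bullet_0 \to N^\bullet$ is a quasi-isomorphism surjective on terms, and after further replacement (using Remark \ref{rem:Remark5.2} to make things surjective on terms, and Remark \ref{rem:Remark14.2} to lift homomorphisms along $R'\to R$) I can assume the reductions literally agree, so that the fibre product complex $M'^\bullet\times_{N^\bullet} M''^\bullet$ is defined over $\tilde R$ with topologically free terms. Checking that this fibre product has finite pseudocompact $\tilde R$-tor dimension and the correct reduction to $k$ — hence defines a quasi-lift over $\tilde R$ mapping to the given pair — gives surjectivity, i.e. (H1); the tor-dimension bound comes from Theorem \ref{thm:derivedresult} together with Lemma \ref{lem:lemma3.8}.

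\textbf{Bijectivity (H2) and (H4).} For (H2) one takes $R = k$, so $N^\bullet = V^\bullet$ and the two complexes are over $k[\varepsilon]$; injectivity amounts to showing that two fibre-product quasi-lifts giving isomorphic deformations over $R'$ and over $R''$ are already isomorphic over $\tilde R$, which follows by choosing the isomorphisms of complexes, reducing them modulo the small ideals, and patching using Lemma \ref{lem:Lemma14.3} to adjust one of them by a homotopy so that the two reductions to $R$ coincide — exactly the mechanism of \cite[Sect. 6]{bcderived}. For (H4) the same patching argument is used, but now one must also move the two isomorphisms over the common $R''$ into agreement, and here is where the hypothesis $\mathrm{Hom}_{D^-(\Lambda)}(V^\bullet,V^\bullet)=k$ enters: by Proposition \ref{prop:liftendos} the endomorphism ring of the relevant quasi-lift over $R$ is $R$ itself, so any discrepancy between the two comparison isomorphisms over $R$ is multiplication by a unit of $R$, which lifts to a unit of $R''$ and can be absorbed. \textbf{The main obstacle} is the bookkeeping of the successive ``replacements'': one must check that each application of Lemmas \ref{lem:corollary3.6}, \ref{lem:Lemma14.4} and Remark \ref{rem:Remark5.2} preserves the property of being a quasi-lift (finite tor dimension, correct reduction to $k$) and is compatible with the reductions along $p'$ and $p''$ simultaneously, so that at the end the fibre-product construction is genuinely functorial in the data; this is where the Artinian hypotheses on $R,R',R''$ and the finiteness in Hypothesis \ref{hypo:fincoh} are used to stay inside $D^-_{\fin}$ throughout.
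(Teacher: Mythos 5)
Your proposal is correct and follows essentially the same route as the paper: replace quasi-lifts and comparison maps by bounded above complexes of (abstractly/topologically) free modules via Lemma \ref{lem:corollary3.6}/Remark \ref{rem:corollary3.1011}, use Remark \ref{rem:Remark5.2} to make the comparison quasi-isomorphism surjective on terms, glue via the fibre-product complex over the pullback ring for (H1), and patch isomorphisms using the homotopy-lifting lemmas together with Proposition \ref{prop:liftendos} (the unit-discrepancy argument) for (H2) and (H4), exactly as in the adaptation of \cite[Lemmas 5.3 and 5.4]{bcderived} that the paper carries out.
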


\begin{proof}
The proof of Proposition \ref{prop:step1} closely follows the proof of \cite[Prop. 5.1]{bcderived}.
By Remark \ref{rem:functor} and Lemma \ref{lem:corollary3.6}(i),
we may assume, without loss of generality,
that $V^\bullet$ is a bounded above complex of abstractly free finitely generated 
$\Lambda$-modules.
Suppose $A,B,C$ are Artinian rings in $\mathrm{Ob}(\mathcal{C})$ and that we have a diagram in $\mathcal{C}$
$$
\xymatrix @-1pc {
A\ar[dr]_{\alpha}&&B\ar[dl]^{\beta}\\
&C&
}$$
Let $D$ be the pullback $\displaystyle D=A\times_C B = \{(a,b)\in A\times B\; \big| \;
\alpha(a)=\beta(b)\}$.
Consider the natural map
$$ \chi_{\mathcal{D}}: F_{\mathcal{D}}(D) \to F_{\mathcal{D}}(A)\times_{F_{\mathcal{D}}(C)} F_{\mathcal{D}}(B) .$$

\medskip

\noindent
\textit{Claim $1$.} If $\beta$ is surjective, then $\chi_{\mathcal{D}}$ is surjective. In particular, this implies that $\mathrm{(H1)}$ 
is always satisfied for $F_{\mathcal{D}}$.

\medskip

To prove Claim 1, suppose $[X_A^\bullet,\xi_A]\in F_{\mathcal{D}}(A)$ and $[X_B^\bullet,\xi_B]\in F_{\mathcal{D}}(B)$
such that there exists an
isomorphism $$\tau:C\hat{\otimes}^{\LL}_B X_B^\bullet
\to C\hat{\otimes}^{\LL}_A X_A^\bullet$$ in $D^-(C\Lambda)$ with $\xi_A\circ(k\hat{\otimes}^{\LL}_C\tau)
=\xi_B$. 
By Remark \ref{rem:corollary3.1011}, we can assume
the following.
The complex $X_A^\bullet$ (resp. $X_B^\bullet$) is a bounded above complex of 
abstractly free finitely generated $A\Lambda$-modules (resp. $B\Lambda$-modules),
$\tau$ is given by a quasi-isomorphism in $C^-(C\Lambda)$, and $\xi_A$ 
(resp. $\xi_B$) is given by a quasi-isomorphism in $C^-(\Lambda)$. By Remark \ref{rem:Remark5.2}, 
we can add to $X_B^\bullet$ an acyclic complex of abstractly free 
finitely generated $B\Lambda$-modules to be able to assume that $\tau$ is surjective 
on terms.
We can now complete the proof of Claim 1 
in a similar way to the proof of \cite[Lemma 5.3]{bcderived}.

\medskip

\noindent
\textit{Claim $2$.} If $\beta$ is surjective, and either
$\mathrm{Hom}_{D^-(\Lambda)}(V^\bullet,V^\bullet)= k$
or $C=k$, then $\chi_{\mathcal{D}}$ is injective. In particular, this implies that $\mathrm{(H2)}$ is always satisfied  for $F_{\mathcal{D}}$,
and $\mathrm{(H4)}$ is satisfied if $\mathrm{Hom}_{D^-(\Lambda)}(V^\bullet,V^\bullet)= k$.

\medskip

Since $F^{\fl}$ is a subfunctor of $F$ by Remark \ref{rem:functor}, it
is enough to prove Claim 2 in the case when $F_{\mathcal{D}}=F$.
Suppose $[X_D^\bullet,\xi]$ and $[Z_D^\bullet,\zeta]$ are two elements in $F(D)$
such that there is an isomorphism
$\tau_A:A\hat{\otimes}^{\LL}_D Z_D^\bullet\to A\hat{\otimes}^{\LL}_D X_D^\bullet$
(resp. $\tau_B:B\hat{\otimes}^{\LL}_D Z_D^\bullet\to B\hat{\otimes}^{\LL}_D X_D^\bullet$) in
$D^-(A\Lambda)$
(resp. $D^-(B\Lambda)$) with $\xi\circ (k\hat{\otimes}^{\LL}_A \tau_A) = \zeta$
(resp. $\xi\circ (k\hat{\otimes}^{\LL}_B \tau_B) = \zeta$)
in $D^-(\Lambda)$. In other words
$(A\hat{\otimes}^{\LL}_D Z_D^\bullet,\zeta)$ and $(A\hat{\otimes}^{\LL}_D X_D^\bullet,\xi)$
(resp. $(B\hat{\otimes}^{\LL}_D Z_D^\bullet,\zeta)$ and $(B\hat{\otimes}^{\LL}_D X_D^\bullet,\xi)$)
are isomorphic as quasi-lifts of $V^\bullet$ over $A$ (resp. $B$).
Consider $\varphi_C:C\hat{\otimes}^{\LL}_D Z_D^\bullet \to C\hat{\otimes}^{\LL}_D Z_D^\bullet$
in $D^-(C\Lambda)$, defined by $\varphi_C=(C\hat{\otimes}^{\LL}_A\tau_A)^{-1}\circ
(C\hat{\otimes}^{\LL}_B\tau_B)$. If $C=k$, then 
$$\varphi_k=(k\hat{\otimes}^{\LL}_A\tau_A)^{-1}\circ(k\hat{\otimes}^{\LL}_B\tau_B)=
(\zeta^{-1}\circ\xi)\circ(\xi^{-1}\circ\zeta)=\mathrm{id}_{k\hat{\otimes}^{\LL}_D Z_D^\bullet}$$
in $D^-(\Lambda)$. If $\mathrm{Hom}_{D^-(\Lambda)}(V^\bullet,V^\bullet)= k$, then, by Proposition
\ref{prop:liftendos}, 
$$\mathrm{Hom}_{D^-(C\Lambda)}(C\hat{\otimes}^{\LL}_D Z_D^\bullet,
C\hat{\otimes}^{\LL}_D Z_D^\bullet)=C.$$ 
Hence, in either case there exists a unit $\alpha_C\in C$, with image $1$ in $k$,
such that $\varphi_C$ is multiplication by $\alpha_C$ in $D^-(C\Lambda)$.
By Remark \ref{rem:corollary3.1011}, we can assume the following.
\begin{enumerate}
\item[(i)] The complexes $X_D^\bullet$ and $Z_D^\bullet$ are bounded above complexes of 
abstractly free finitely generated $D\Lambda$-modules,
\item[(ii)] The morphisms $\xi$ and $\zeta$ are given by quasi-isomorphisms 
$\xi:k\otimes_D X_D^\bullet\to V^\bullet$ and $\zeta:k\otimes_D
Z_D^\bullet\to V^\bullet$ in $C^-(\Lambda)$.
\item[(iii)] The morphism $\tau_A$ (resp. $\tau_B$) is given by a
quasi-isomorphism $\tau_A:A\otimes_D Z_D^\bullet \to A\otimes_D X_D^\bullet$
(resp. $\tau_B:B\otimes_D Z_D^\bullet \to B \otimes_D X_D^\bullet$)
in $C^-(A\Lambda)$ (resp. in $C^-(B\Lambda)$) such that
$\xi\circ(k\otimes_A \tau_A) =\zeta$ (resp. $\xi\circ(k\otimes_B \tau_B)=\zeta$)
in $K^-(\Lambda)$.
\end{enumerate}
Since $\beta$ is surjective, $D\to A$ is also surjective.
By Remark \ref{rem:Remark5.2}, we can add to $Z_D^\bullet$ an acyclic complex of abstractly free 
finitely generated $D\Lambda$-modules to be able to assume that $\tau_A$ is surjective 
on terms. 
We can now complete the proof of Claim 2 in a similar way to the proof of \cite[Lemma 5.4]{bcderived}.
\end{proof}

\begin{prop}
\label{prop:step2}
Let $k[\varepsilon]$ be the ring of dual numbers over $k$
where $\varepsilon^2=0$.
\begin{enumerate}
\item[(i)] The tangent space $t_F$ is a vector space over $k$, and
there is a $k$-vector space isomorphism 
$$h: \;t_F = F(k[\varepsilon]) \longrightarrow \mathrm{Hom}_{D^-(\Lambda)}(V^\bullet,V^\bullet[1])=
\mathrm{Ext}^1_{D^-(\Lambda)}(V^\bullet, V^\bullet).$$
\item[(ii)]
Composing the natural injection $t_{F^{\fl}}\to t_F$ with $h$, we obtain an isomorphism between 
$t_{F^{\fl}}$ and the kernel of the natural forgetful map
\begin{equation}
\label{eq:forget}
\mathrm{Ext}^1_{D^-(\Lambda)}(V^\bullet,V^\bullet)
\to \mathrm{Ext}^1_{D^-(k)}(V^\bullet,V^\bullet).
\end{equation}
\end{enumerate}
\end{prop}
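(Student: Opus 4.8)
The plan is to identify the tangent space $t_F = F(k[\varepsilon])$ with $\mathrm{Ext}^1_{D^-(\Lambda)}(V^\bullet, V^\bullet)$ by explicitly constructing the isomorphism $h$ from quasi-lifts over $k[\varepsilon]$. By Remark \ref{rem:Lemma4.2}, every element of $F(k[\varepsilon])$ is represented by a quasi-lift $(M^\bullet,\phi)$ where $M^\bullet$ is a bounded above complex of topologically free pseudocompact $k[\varepsilon]\Lambda$-modules; and by Lemma \ref{lem:corollary3.6}(i) combined with Remark \ref{rem:functor}, I may further assume $V^\bullet$ itself is a bounded above complex of abstractly free finitely generated $\Lambda$-modules. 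For such an $M^\bullet$, topological freeness over $k[\varepsilon]$ means each term sits in a short exact sequence $0\to \varepsilon M^n \to M^n \to M^n/\varepsilon M^n\to 0$ with $\varepsilon M^n \cong M^n/\varepsilon M^n$; using $\phi$ to identify $k\hat\otimes^{\LL}_{k[\varepsilon]}M^\bullet$ with $V^\bullet$, this realizes $M^\bullet$ as an extension of $V^\bullet$ by $V^\bullet$ in $C^-(\Lambda)$ (a ``square-zero'' extension). The differential of $M^\bullet$ then has the block form $\delta^n_M = \begin{pmatrix}\delta^n_V & \gamma^n\\ 0 & \delta^n_V\end{pmatrix}$ where $\gamma^\bullet = (\gamma^n)_n$ is a degree-$1$ map $V^\bullet\to V^\bullet$; the condition $\delta^2=0$ forces $\gamma^\bullet$ to be a cocycle in $\mathrm{Hom}^\bullet_\Lambda(V^\bullet,V^\bullet)$, i.e. $\delta_V\gamma + \gamma\delta_V = 0$, and changing the chosen splittings of the extensions alters $\gamma^\bullet$ by a coboundary. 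Since $V^\bullet$ is a bounded above complex of projectives, $\mathrm{Ext}^1_{D^-(\Lambda)}(V^\bullet,V^\bullet) = \mathrm{Hom}_{K^-(\Lambda)}(V^\bullet, V^\bullet[1])$ is exactly the first cohomology of $\mathrm{Hom}^\bullet_\Lambda(V^\bullet,V^\bullet)$, so this assignment gives a well-defined map $h: t_F\to \mathrm{Ext}^1_{D^-(\Lambda)}(V^\bullet,V^\bullet)$.

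Next I would check that $h$ is a bijection and $k$-linear. Surjectivity is clear: given any cocycle $\gamma^\bullet$, the block differential above defines a complex of topologically free $k[\varepsilon]\Lambda$-modules which is a quasi-lift of $V^\bullet$ (it has finite pseudocompact $k[\varepsilon]$-tor dimension because $V^\bullet$ is bounded with finite-dimensional cohomology, cf. Lemma \ref{lem:lemma3.8}). Injectivity amounts to showing that if $\gamma^\bullet$ is a coboundary then the resulting quasi-lift is isomorphic to the one with $\gamma^\bullet = 0$, which is a direct computation producing the isomorphism of complexes from the contracting homotopy; here Lemma \ref{lem:Lemma14.3} guarantees that homotopies over $k$ lift to the needed maps over $k[\varepsilon]$ when one needs to pass between $D^-$ and $C^-$ representatives. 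For the vector space structure on $t_F$: the scalar action of $c\in k$ on $F(k[\varepsilon])$ is defined via the $k$-algebra endomorphism $\varepsilon\mapsto c\varepsilon$ of $k[\varepsilon]$, and addition is defined via the map $k[\varepsilon]\to k[\varepsilon_1]\times_k k[\varepsilon_2]$ together with the $\mathrm{(H1)}$/$\mathrm{(H2)}$ identification from Proposition \ref{prop:step1}; tracing these through the block-matrix description shows they correspond to scalar multiplication and addition of cocycles, so $h$ is a $k$-linear isomorphism. This proves (i).

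For part (ii), the point is that $t_{F^{\fl}}\subseteq t_F$ consists of those deformation classes whose quasi-lift has topologically free — equivalently, abstractly free over $k[\varepsilon]$ — cohomology groups. In the block description, $\HH^n(M^\bullet)$ sits in an exact sequence coming from the long exact cohomology sequence of $0\to V^\bullet\to M^\bullet\to V^\bullet\to 0$, whose connecting homomorphism $\HH^n(V^\bullet)\to \HH^{n+1}(V^\bullet)$ is precisely the map $\gamma^n_*$ on cohomology induced by $\gamma^\bullet$. One checks that $\HH^n(M^\bullet)$ is free over $k[\varepsilon]$ of rank $d_n = \dim_k\HH^n(V^\bullet)$ for all $n$ if and only if every connecting map $\gamma^n_*$ vanishes (if some $\gamma^n_*\neq 0$ the rank drops); this is the content of Remark \ref{rem:bigthm}(i). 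But $\gamma^\bullet$ inducing the zero map on cohomology is exactly the statement that the image of $[\gamma^\bullet]$ under the forgetful map $\mathrm{Ext}^1_{D^-(\Lambda)}(V^\bullet,V^\bullet)\to \mathrm{Ext}^1_{D^-(k)}(V^\bullet,V^\bullet)$ is zero, because over the field $k$ the complex $V^\bullet$ is formal (it splits as the direct sum of its cohomology with zero differentials) and $\mathrm{Ext}^1_{D^-(k)}(V^\bullet,V^\bullet) = \bigoplus_n \mathrm{Hom}_k(\HH^n(V^\bullet),\HH^{n+1}(V^\bullet))$, with the forgetful map sending $[\gamma^\bullet]$ to the tuple $(\gamma^n_*)_n$. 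Hence $h$ restricts to an isomorphism $t_{F^{\fl}}\xrightarrow{\sim}\ker(\text{forgetful map})$.

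\textbf{Main obstacle.} The principal technical point — and the place I would spend the most care — is making rigorous the passage between the abstract derived-category description of a quasi-lift over $k[\varepsilon]$ and the concrete ``square-zero extension with block differential'' picture, so that the cocycle $\gamma^\bullet$ is genuinely well-defined up to coboundary and compatible with the functorial operations defining the vector space structure. This requires carefully invoking Remark \ref{rem:Lemma4.2} to get a topologically free representative, Lemma \ref{lem:corollary3.6}(i)/Remark \ref{rem:functor} to normalize $V^\bullet$, and Lemma \ref{lem:Lemma14.3} to control homotopies under base change $k[\varepsilon]\to k$; bookkeeping the pseudocompact topologies throughout (rather than working with abstract modules) is where the argument of \cite[Sect. 6]{bcderived} genuinely has to be re-examined for a general finite dimensional $\Lambda$.
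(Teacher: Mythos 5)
Your construction is essentially correct and identifies the same underlying object as the paper: both arguments read off the class of the short exact sequence $0\to\varepsilon M^\bullet\to M^\bullet\to M^\bullet/\varepsilon M^\bullet\to 0$ attached to a topologically free representative of the quasi-lift (Remark \ref{rem:Lemma4.2}), and your surjectivity construction (block complex with $\varepsilon(a,b)=(0,a)$) is literally the paper's mapping cone $C(\alpha[-1])^\bullet$. The difference is in implementation. The paper, following \cite{bcderived}, never chooses splittings: it forms mapping cones and uses the triangle axioms to produce a canonical morphism $\hat{f}\colon M^\bullet/\varepsilon M^\bullet\to\varepsilon M^\bullet[1]$ in $D^-(k[\varepsilon]\Lambda)$, transports it via $\phi$ and the isomorphism $\psi\colon\varepsilon M^\bullet\to M^\bullet/\varepsilon M^\bullet$ to an element of $\mathrm{Hom}_{D^-(k[\varepsilon]\Lambda)}(V^\bullet,V^\bullet[1])$, and only then restricts to $\Lambda$, using injectivity of the inflation map to get injectivity of $h$. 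Your route chooses termwise splittings and extracts an explicit degree-one cocycle $\gamma^\bullet$. The place where your sketch is not yet rigorous --- and which the paper's formulation is engineered to avoid --- is the assertion that $\phi$ realizes $M^\bullet$ as an extension of $V^\bullet$ by $V^\bullet$ in $C^-(\Lambda)$: $\phi$ is only an isomorphism in $D^-(\Lambda)$, so the diagonal of your block differential is $\delta_{M^\bullet/\varepsilon M^\bullet}$ rather than $\delta_V$, and the cocycle must be transported along the homotopy equivalence $\phi$ (both complexes have projective pseudocompact terms), with independence of the representative, the splittings and the transport checked via Lemmas \ref{lem:Lemma14.3} and \ref{lem:Lemma14.4}; you flag exactly this as your main obstacle, and that is where the genuine work lies, so your proposal should be regarded as a correct plan whose key verification is deferred rather than done. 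Two smaller remarks: the addition on $t_F$ is induced by the ring homomorphism $k[\varepsilon]\times_k k[\varepsilon]\to k[\varepsilon]$ (not by a map out of $k[\varepsilon]$) together with criterion $\mathrm{(H2)}$ from Proposition \ref{prop:step1}, as in the paper's appeal to Schlessinger; and in part (ii) your criterion that $\HH^n(M^\bullet)$ is free over $k[\varepsilon]$ if and only if all induced maps $\gamma^n_*$ vanish should be proved directly from the long exact cohomology sequence rather than quoted from Remark \ref{rem:bigthm}(i), which the paper states as a consequence of this very proposition. The paper instead argues part (ii) by comparing with the trivial deformation in $D^-(k[\varepsilon])$ after forgetting the $\Lambda$-action, using that $V^\bullet$ splits completely in $D^-(k)$; your connecting-map argument buys a concrete description, while the paper's buys brevity, and both are valid.
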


\begin{proof}
The proof of part (i) closely follows the proof of  \cite[Lemma 6.1]{bcderived}.
By Remark \ref{rem:functor}, we may assume that $V^\bullet$ is a bounded above complex of 
topologically free pseudocompact $\Lambda$-modules.
Suppose $(M^\bullet,\phi)$ is a quasi-lift of $V^\bullet$ over $k[\varepsilon]$.
By Remark \ref{rem:Lemma4.2}, we can assume that $M^\bullet$
is a bounded above complex of topologically free pseudocompact $k[\varepsilon]\Lambda$-modules.
We have a short exact sequence 
$$0\to \varepsilon M^\bullet \xrightarrow{\iota}{} M^\bullet \xrightarrow{\pi}{}
M^\bullet/\varepsilon M^\bullet \to 0 $$
in $C^-(k[\varepsilon]\Lambda)$.
The mapping cone of $\iota$ is $C(\iota)^\bullet=\varepsilon M^\bullet[1]\oplus
M^\bullet$ with $i$-th differential
$$\delta_{C(\iota)}^i=\left(\begin{array}{cc} -\delta_M^{i+1}&0\\
\iota^{i+1}&\delta_M^i\end{array}\right)  .$$
We obtain a distinguished triangle in $K^-(k[\varepsilon]\Lambda)$
\begin{equation}
\label{eq:hyp2}
\varepsilon M^\bullet \xrightarrow{\iota} M^\bullet \xrightarrow{g} C(\iota)^\bullet \xrightarrow{f}
\varepsilon M^\bullet[1]
\end{equation}
where $g^i(b)=(0,b)$ and $f^i(a,b)=-a$. 
We define two morphisms in $C^-(k[\varepsilon]\Lambda)$
\begin{eqnarray}
\label{eq:hyp3}
(0,\pi)&:& C(\iota)^\bullet=\varepsilon M^\bullet[1]\oplus M^\bullet
\to M^\bullet/\varepsilon M^\bullet \\
\psi&:& \varepsilon M^\bullet \to M^\bullet/\varepsilon M^\bullet \nonumber
\end{eqnarray}
by $(0, \pi)^i(a,b)=\pi^i(b)$ and $\psi^i(\varepsilon x)=\pi^i(x)$.
The kernel of $(0,\pi)$ is the mapping cone of
$\varepsilon M^\bullet \xrightarrow{\mathrm{id}} \varepsilon M^\bullet$ which is
acyclic; hence $(0,\pi)$ is a quasi-isomorphism. 
The morphism $\psi$ is an isomorphism of complexes
with inverse $\psi^{-1}$ given by $(\psi^{-1})^i(\pi^i(x))=\varepsilon x$.
Let $C(f)^\bullet$ be the mapping cone of $f$ from (\ref{eq:hyp2}). By the triangle axioms
(TR2) and (TR3) (see, for example, \cite[Def. 10.2.1]{Weibel}) and by the 5-lemma for
distinguished triangles (see, for example, \cite[Ex. 10.2.2]{Weibel}), 
there exists an isomorphism $\rho: C(f)^\bullet\to M^\bullet[1]$ in
$K^-(k[\varepsilon]\Lambda)$, which is represented by 
a quasi-isomorphism in $C^-(k[\varepsilon]\Lambda)$.
Therefore, we get a distinguished triangle in $K^-(k[\varepsilon]\Lambda)$
\begin{equation}
\label{eq:hyp6}
\xymatrix @-1pc {
C(\iota)^\bullet \ar[r]^{f} \ar[d]_{(0,\pi)}& \varepsilon M^\bullet[1]\ar@{=}[d] \ar[r]&
C(f)^\bullet\ar[r]\ar[d]^{\rho}&C(\iota)^\bullet[1]\ar[d]^{(0,\pi)[1]}\\
M^\bullet/\varepsilon M^\bullet&\varepsilon M^\bullet[1]&
M^\bullet[1]&(M^\bullet/\varepsilon M^\bullet)[1]
}
\end{equation}
where the downward arrows are quasi-isomorphisms in $C^-(k[\varepsilon]\Lambda)$.
Hence the diagram
\begin{equation}
\label{eq:hyp7}
\xymatrix @-1pc {
&C(\iota)^\bullet \ar[dl]_{(0,\pi)} \ar[dr]^{f}&\\
M^\bullet/\varepsilon M^\bullet&&\varepsilon M^\bullet[1]
}
\end{equation}
defines a morphism $\hat{f}:
M^\bullet/\varepsilon M^\bullet\to \varepsilon M^\bullet[1]$ in
$D^-(k[\varepsilon]\Lambda)$.
Because of (\ref{eq:hyp6}), we obtain a distinguished triangle 
$$M^\bullet/\varepsilon M^\bullet\xrightarrow{\hat{f}}\varepsilon M^\bullet[1]
\to M^\bullet[1]\to (M^\bullet/\varepsilon M^\bullet)[1] $$
in $D^-(k[\varepsilon]\Lambda)$.
Using the isomorphism $\phi:M^\bullet/\varepsilon M^\bullet
\to V^\bullet$ in $D^-(\Lambda)$, we obtain a morphism
$$\hat{f}_1\in\mathrm{Hom}_{D^-(k[\varepsilon]\Lambda)}(V^\bullet, V^\bullet[1])
$$
associated to $\hat{f}$, namely $\hat{f}_1=\phi'\circ\hat{f}\circ
\phi^{-1}$, where $\phi'=\phi[1]\circ\psi[1]$ and $\psi$ is as
in (\ref{eq:hyp3}).
We define a map 
\begin{eqnarray}
\label{eq:hyp17}
\hat{h}:\quad F(k[\varepsilon]) &\to& \mathrm{Hom}_{D^-(k[\varepsilon]\Lambda)}
(V^\bullet, V^\bullet[1])\, ,\\
 {[M^\bullet,\phi]} &\mapsto& \hat{f}_1 \; .\nonumber
\end{eqnarray}
As in the proof of \cite[Lemma 6.1]{bcderived}, it follows that $\hat{h}$ is a well-defined injective set map.
Moreover, it follows that if $(X^\bullet,\xi)$ is given by $X^\bullet=k[\varepsilon]\hat{\otimes}_k V^\bullet$ and
$\xi:k\hat{\otimes}_{k[\varepsilon]}X^\bullet\cong V^\bullet \xrightarrow{\mathrm{id}} V^\bullet$, i.e.
$[X^\bullet,\xi]$ is the trivial deformation of $V^\bullet$ over $k[\varepsilon]$, then
$$\hat{f}_1=\hat{h}([X^\bullet,\xi]) +\mathrm{Inf}_{\Lambda}^{k[\varepsilon]\Lambda}
\mathrm{Res}_{k[\varepsilon]\Lambda}^{\Lambda}(\hat{f}_1)$$
in $\mathrm{Hom}_{D^-(k[\varepsilon]\Lambda)}(V^\bullet,V^\bullet[1])$.
Because the map $\hat{h}$ in (\ref{eq:hyp17}) is injective
and the inflation map
$$\mathrm{Inf}_\Lambda^{k[\varepsilon]\Lambda}:\quad
\mathrm{Hom}_{D^-(\Lambda)}(V^\bullet,V^\bullet[1])
\to \mathrm{Hom}_{D^-(k[\varepsilon]\Lambda)}(V^\bullet,V^\bullet[1])$$
is injective, 
we obtain an injective map
\begin{eqnarray*}
h:\quad F(k[\varepsilon]) &\to& \mathrm{Hom}_{D^-(\Lambda)}(V^\bullet,V^\bullet[1]) \, ,\\
{[M^\bullet,\phi]}&\mapsto& \mathrm{Res}_{k[\varepsilon]\Lambda}^{\Lambda}(\hat{f}_1)
\; . \nonumber
\end{eqnarray*}
Since Schlessinger's criterion $\mathrm{(H2)}$ is valid by Proposition \ref{prop:step1}, it follows from 
\cite[Lemma 2.10]{Sch} that $t_F=F(k[\varepsilon])$ has a vector space structure. Hence we obtain as in the
proof of \cite[Lemma 6.1]{bcderived} that the map $h$ is $k$-linear and surjective.
More precisely, an element $\alpha\in \mathrm{Hom}_{D^-(\Lambda)}(V^\bullet, V^\bullet[1])$
defines a quasi-lift, and hence a deformation, of $V^\bullet$ over $k[\varepsilon]$ as follows.
Since $V^\bullet$ is assumed to be a bounded above complex of topologically free
pseudocompact  $\Lambda$-modules, $\alpha:V^\bullet \to V^\bullet[1]$
can be represented by a morphism in  $C^-(\Lambda)$.
Define an $\varepsilon$-action on the mapping cone $C(\alpha[-1])^\bullet$
by $\varepsilon(a,b)=(0,a)$ for all $(a,b)\in C(\alpha[-1])^i=V^i\oplus V^i$. Then the complex 
$M^\bullet=C(\alpha[-1])^\bullet$ defines a quasi-lift $(M^\bullet,\phi)$ of $V^\bullet$ over $k[\varepsilon]$
whose deformation is sent to $\alpha$ under $h$.
This proves part (i) of Proposition \ref{prop:step2}.

Part (ii) is proved similarly to \cite[Lemma 6.3]{bcderived}.
Namely, by part (i), $t_{F^{\fl}}$ is isomorphic
to the subspace of $\mathrm{Ext}^1_{D^-(\Lambda)}(V^\bullet,V^\bullet)$
consisting of those elements that define proflat deformations of $V^\bullet$
over $k[\varepsilon]$. Let $(M^\bullet,\phi)$ be a quasi-lift of $V^\bullet$ over
$k[\varepsilon]$. Then $(M^\bullet,\phi)$ is a proflat quasi-lift if the
cohomology groups of $M^\bullet$ are topologically free pseudocompact, and hence 
abstractly free, $k[\varepsilon]$-modules.
In this case $M^\bullet$ is isomorphic in $D^-(k[\varepsilon])$ to a bounded
complex with trivial differentials whose term in degree $n$ is a free $k[\varepsilon]$-module
of rank $\mathrm{dim}_k\, {\HH}^n(V^\bullet)$ for all integers $n$.
Therefore, all proflat quasi-lifts $(M^\bullet,\phi)$ of $V^\bullet$ over $k[\varepsilon]$
are isomorphic in $D^-(k[\varepsilon])$, when we forget the $\Lambda$-action.
This means that the tangent space $t_{F^{\fl}}$ is mapped to $\{0\}$
under the forgetful map (\ref{eq:forget}). Suppose now that $\alpha\in
\mathrm{Ext}^1_{D^-(\Lambda)}(V^\bullet,V^\bullet)$ is mapped to the zero morphism
under (\ref{eq:forget}). Then it follows from the proof of part (i) that
the deformation $[M^\bullet,\phi]$ of $V^\bullet$ over $k[\varepsilon]$ corresponding
to $\alpha$ is equal to the trivial deformation $[X^\bullet,\xi]$ of $V^\bullet$ over $k[\varepsilon]$
where $X^\bullet=k[\varepsilon]\hat{\otimes}_k V^\bullet$ and
$\xi:k\hat{\otimes}_{k[\varepsilon]}X^\bullet\cong V^\bullet \xrightarrow{\mathrm{id}} V^\bullet$. 
Since $V^\bullet$ is completely split in
$D^-(k)$, it follows that the cohomology groups of $X^\bullet$ are abstractly free, and hence 
topologically free, over $k[\varepsilon]$. Thus $[M^\bullet,\phi]$ is a proflat deformation of 
$V^\bullet$ over $k[\varepsilon]$.
This completes the proof of part (ii), and hence of Proposition \ref{prop:step2}.
\end{proof}

\begin{prop}
\label{prop:step3}
Schlessinger's criterion $\mathrm{(H3)}$ is satisfied, i.e.
the $k$-dimension of the tangent space $t_{F_{\mathcal{D}}}$ is finite.
\end{prop}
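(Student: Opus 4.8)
The plan is to compute $t_{F_{\mathcal{D}}}$ via Proposition \ref{prop:step2} and reduce the finiteness statement to a finiteness statement about $\mathrm{Ext}^1$ in $D^-(\Lambda)$. By Proposition \ref{prop:step2}(i), $t_F$ is isomorphic as a $k$-vector space to $\mathrm{Ext}^1_{D^-(\Lambda)}(V^\bullet,V^\bullet)=\mathrm{Hom}_{D^-(\Lambda)}(V^\bullet,V^\bullet[1])$, and by Proposition \ref{prop:step2}(ii) the space $t_{F^{\fl}}$ is isomorphic to a subspace of the same group; so in either case it suffices to show that $\mathrm{Hom}_{D^-(\Lambda)}(V^\bullet,V^\bullet[1])$ is finite dimensional over $k$. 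This is the content I would establish, following the corresponding step in \cite{bcderived} (Section 7).

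First I would replace $V^\bullet$, using Hypothesis \ref{hypo:fincoh}, Remark \ref{rem:dumbdumb} and Lemma \ref{lem:corollary3.6}(i), by a bounded complex $P^\bullet$ of finitely generated $\Lambda$-modules that is quasi-isomorphic to $V^\bullet$; this is possible because $V^\bullet$ has finitely many nonzero cohomology groups all of finite $k$-dimension, and because $\Lambda$ is Artinian so that finitely generated modules have finite $k$-length. Since $\Lambda$ is finite dimensional, $\mathrm{PCMod}(\Lambda)$ has enough projectives and each finitely generated module has a projective resolution by finitely generated projectives; replacing $P^\bullet$ by such a resolution, I may assume $P^\bullet$ is a bounded above complex of finitely generated projective $\Lambda$-modules that is still bounded \emph{below} up to quasi-isomorphism in the sense that its cohomology is concentrated in finitely many degrees. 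Then $\mathrm{Hom}_{D^-(\Lambda)}(V^\bullet,V^\bullet[1])$ is computed as a hypercohomology group, i.e. as a subquotient of $\bigoplus_n \mathrm{Hom}_\Lambda(P^n, P^{n+1})$ after passing to homotopy classes; more precisely there is a hyper-Ext spectral sequence with $E_2^{p,q}=\bigoplus_i \mathrm{Ext}^p_\Lambda(\HH^i(V^\bullet),\HH^{i+q}(V^\bullet))$ converging to $\mathrm{Ext}^{p+q}_{D^-(\Lambda)}(V^\bullet,V^\bullet)$. Each $\HH^i(V^\bullet)$ is a finite dimensional $\Lambda$-module, and $\Lambda$ being a finite dimensional $k$-algebra, every $\mathrm{Ext}^p_\Lambda$ between finite dimensional modules is finite dimensional over $k$; since only finitely many pairs $(i,i+q)$ contribute (the cohomology is concentrated in finitely many degrees) and only finitely many $(p,q)$ with $p+q=1$ and $p\ge 0$ occur, the abutment in total degree $1$ is finite dimensional.

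Alternatively, and perhaps more directly in the spirit of the excerpt, I would argue at the level of complexes of projectives: after the replacement above, $V^\bullet$ is represented by a bounded above complex $Q^\bullet$ of finitely generated projective $\Lambda$-modules with $\HH^i(Q^\bullet)=0$ for $i<n_1$, and by Remark \ref{rem:dumbdumb} one may further truncate on the left so that $Q^\bullet$ is genuinely bounded with finitely generated terms. Then $\mathrm{Hom}_{D^-(\Lambda)}(V^\bullet,V^\bullet[1])=\mathrm{Hom}_{K^-(\Lambda)}(Q^\bullet,Q^\bullet[1])$, which is a subquotient of the finite dimensional $k$-vector space $\prod_n\mathrm{Hom}_\Lambda(Q^n,Q^{n+1})$ (a finite product since $Q^\bullet$ is bounded, and each factor finite dimensional since $Q^n,Q^{n+1}$ are finite dimensional). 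Hence $t_F$, and a fortiori $t_{F^{\fl}}$, is finite dimensional, which is exactly Schlessinger's criterion $\mathrm{(H3)}$. The main obstacle I anticipate is the bookkeeping needed to legitimately pass from $V^\bullet\in D^-(\Lambda)$ to an honest \emph{bounded} complex of finitely generated projectives: one must invoke Hypothesis \ref{hypo:fincoh} to bound the cohomology, then use the truncation of Remark \ref{rem:dumbdumb} together with Lemma \ref{lem:corollary3.6}(i) to control the tor-dimension and hence cut the complex down on the left, all while staying inside $D^-_{\fin}$ so that the Hom-computation is valid; once that is in place, the finite dimensionality is immediate from $\Lambda$ being finite dimensional over $k$.
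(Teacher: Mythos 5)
Your primary argument is essentially the paper's own proof in different packaging: the paper reduces, via Proposition \ref{prop:step2}, to bounding $\mathrm{Ext}^1_{D^-(\Lambda)}(V^\bullet,V^\bullet)$, proves that $\mathrm{Ext}^j_\Lambda(M_1,M_2)$ is finite dimensional for finite dimensional pseudocompact $\Lambda$-modules (its Claim 1), and then runs the d\'{e}vissage on the finitely many finite dimensional cohomology groups by hand, as an induction on the number of degrees in which the complexes are concentrated (its Claim 2); your hyper-Ext spectral sequence is exactly that finite d\'{e}vissage, and it converges here precisely because the cohomology is bounded, so the filtration is finite. The only point you gloss over, which the paper addresses, is that $\mathrm{Ext}$ in this setting is defined via projective resolutions in $\mathrm{PCMod}(\Lambda)$, so one should note (as in Claim 1) that for a finite dimensional module one may resolve by abstractly free finitely generated $\Lambda$-modules and thereby identify the pseudocompact $\mathrm{Ext}$ with the usual finite dimensional one.

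Your ``alternative'' argument, however, has a genuine gap. You cannot in general arrange for $Q^\bullet$ to be simultaneously a bounded complex and a complex of projective $\Lambda$-modules quasi-isomorphic to $V^\bullet$: the truncation of Remark \ref{rem:dumbdumb} replaces the leftmost term by $Q^{n_1}/\delta^{n_1-1}(Q^{n_1-1})$, which is only guaranteed to be (topologically) free over $R=k$, not projective over $\Lambda$; keeping all terms projective would force the cohomology of $V^\bullet$ to have finite projective dimension, which fails for general finite dimensional $\Lambda$. Once the leftmost term is not projective, the identification $\mathrm{Hom}_{D^-(\Lambda)}(V^\bullet,V^\bullet[1])=\mathrm{Hom}_{K^-(\Lambda)}(Q^\bullet,Q^\bullet[1])$ breaks down: for $\Lambda=k[x]/(x^2)$ and $V^\bullet$ the simple module $k$ in degree $0$, truncation gives $Q^\bullet=k$, the right-hand side is $0$, while $\mathrm{Ext}^1_\Lambda(k,k)\cong k$. (If instead you keep $Q^\bullet$ projective and merely bounded above, then $\prod_n\mathrm{Hom}_\Lambda(Q^n,Q^{n+1})$ is an infinite product and the subquotient argument gives nothing.) The repair is to treat source and target asymmetrically: write $\mathrm{Hom}_{D^-(\Lambda)}(V^\bullet,V^\bullet[1])=\mathrm{Hom}_{K^-(\Lambda)}(P^\bullet,{V'}^\bullet[1])$, where $P^\bullet$ is a bounded above complex of finitely generated projectives resolving $V^\bullet$ and ${V'}^\bullet$ is a bounded complex with finite dimensional terms quasi-isomorphic to $V^\bullet$ (Remark \ref{rem:stupid} with $R=k$); the degree-one term of the Hom complex is $\prod_n\mathrm{Hom}_\Lambda(P^n,{V'}^{n+1})$, a finite product of finite dimensional spaces since ${V'}^\bullet$ is bounded, and finiteness of $t_{F_{\mathcal{D}}}$ follows.
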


\begin{proof}
The proof of Proposition \ref{prop:step3} closely follows the proof of  \cite[Prop. 6.4]{bcderived}. 
By Proposition \ref{prop:step2}
it is enough to find an upper bound for the $k$-dimension of
$\mathrm{Ext}^1_{D^-(\Lambda)}(V^\bullet,V^\bullet)$.
By truncating and shifting, we can assume that $V^\bullet$
has the form
$$V^\bullet: \quad  \cdots 0 \to V^{-n} \to V^{-n+1} \to \cdots \to V^0
\to 0 \cdots\, .$$
We first prove the following claim, which replaces the assumption in \cite{bcderived}
that $G$ has finite pseudocompact cohomology in our situation.

\medskip

\noindent
\textit{Claim $1$.} If $M_1,M_2$ are 
pseudocompact $\Lambda$-modules that are finite dimensional over $k$, then
$\mathrm{Ext}^j_\Lambda(M_1,M_2)$ is finite dimensional over $k$ for all integers $j$.

\medskip

To prove Claim 1, we note that
by Remark \ref{rem:pseudocompact}(iii), $\mathrm{Ext}^j_\Lambda(M_1,M_2)$ is computed by using
a projective resolution of $M_1$ in $\mathrm{PCMod}(\Lambda)$. 
Since $\mathrm{dim}_k\,M_1$ is finite, there exists a resolution of
$M_1$ in $\mathrm{PCMod}(\Lambda)$ consisting of abstractly free finitely generated
$\Lambda$-modules. In other words, $\mathrm{Ext}^j_\Lambda(M_1,M_2)$ can be identified
with the corresponding $j$-th Ext group in the category of finitely generated $\Lambda$-modules.
The latter group is known to be a finite dimensional $k$-vector space, which proves Claim 1.

\medskip

\noindent
\textit{Claim $2$.} Suppose $n$ is a non-negative integer, and
$L_1^\bullet$ (resp. $L_2^\bullet$) is a complex of pseudocompact $\Lambda$-modules
whose terms are concentrated between the degrees $-n_1$ and $-n_1+n$
(resp. between $-n_2$ and $-n_2+n$), for integers $n_1$ and $n_2$.
Then for all integers $j$, $\mathrm{Ext}^j_{D^-(\Lambda)}(L_1^\bullet,
L_2^\bullet)$ has finite $k$-dimension, if all cohomology groups  of $L_1^\bullet$
and of $L_2^\bullet$ have finite $k$-dimension.

\medskip

Claim 2 is proved by induction on $n$.
If $n=0$, then $L_1^\bullet$ (resp. $L_2^\bullet$) is a module in degree $-n_1$
(resp. $-n_2$). Hence
$$\mathrm{Ext}^j_{D^-(\Lambda)}(L_1^\bullet,L_2^\bullet) \cong
\mathrm{Ext}^j_{\Lambda}({\HH}^{-n_1}(L_1^\bullet)[n_1],{\HH}^{-n_2}(L_2^\bullet)[n_2])\cong
\mathrm{Ext}^{j+n_2-n_1}_{\Lambda}(L_1^{-n_1},L_2^{-n_2})$$
which has finite $k$-dimension by Claim 1.
We can now complete the proof of Claim 2 in a similar way to the proof of 
\cite[Prop. 6.4]{bcderived}.

By setting $L_1^\bullet=V^\bullet=L_2^\bullet$ and $j=1$, Proposition \ref{prop:step3}
follows from Claim 2.
\end{proof}

\begin{prop}
\label{prop:step4}
The functor $\hat{F}_{\mathcal{D}}:\hat{\mathcal{C}}\to \mathrm{Sets}$
is continuous. In other words, for all objects $R$ in $\hat{\mathcal{C}}$
with maximal ideal $m_R$ we have
$$\hat{F}_{\mathcal{D}}(R)=\lim_{\stackrel{\longleftarrow}{i}} \hat{F}_{\mathcal{D}}(R/m_R^i)\, .$$
\end{prop}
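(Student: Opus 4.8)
The plan is to prove the stronger statement that for every $R\in\mathrm{Ob}(\hat{\mathcal{C}})$ with maximal ideal $m_R$ the natural comparison map
$$\Phi_R:\ \hat{F}_{\mathcal{D}}(R)\ \longrightarrow\ \lim_{\stackrel{\longleftarrow}{i}}\hat{F}_{\mathcal{D}}(R/m_R^i)$$
induced by the projections $R\to R/m_R^i$ (and functorial by Remark~\ref{rem:functor}) is a bijection; since $R=\lim_{\stackrel{\longleftarrow}{i}} R/m_R^i$ and the $R/m_R^i$ are Artinian, this is exactly the continuity assertion. Following \cite[Sect. 7]{bcderived} but supplying the extra bookkeeping promised in the introduction, I would throughout use Remark~\ref{rem:profree}(i) (resp.\ Remark~\ref{rem:Lemma4.2}) to represent every quasi-lift by a bounded above complex of topologically free pseudocompact modules, and, over the Artinian rings $R/m_R^i$, use Remark~\ref{rem:whatever}(iii) together with Lemma~\ref{lem:corollary3.6} to further replace these by bounded above complexes of abstractly free finitely generated $(R/m_R^i)\Lambda$-modules --- the form in which the Milne-type Lemmas~\ref{lem:Lemma14.1}, \ref{lem:Lemma14.3} and \ref{lem:Lemma14.4} and Remarks~\ref{rem:Remark14.2}, \ref{rem:Remark5.2} apply.

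\emph{Surjectivity of $\Phi_R$.} Given a compatible system $([M_i^\bullet,\phi_i])_i$ in $\lim_{\stackrel{\longleftarrow}{i}}\hat{F}_{\mathcal{D}}(R/m_R^i)$, I would build by induction on $i$ representatives $(N_i^\bullet,\psi_i)$ with $N_i^\bullet$ a bounded above complex of abstractly free finitely generated $(R/m_R^i)\Lambda$-modules together with isomorphisms of complexes $\eta_i:(R/m_R^i)\hat{\otimes}_{R/m_R^{i+1}}N_{i+1}^\bullet\to N_i^\bullet$ compatible with the $\psi_i$; the inductive step is where Lemma~\ref{lem:Lemma14.4} (to upgrade the comparison quasi-isomorphism to an honest isomorphism of complexes, after first enlarging by an acyclic complex of frees as in Remark~\ref{rem:Remark5.2}) and Remark~\ref{rem:Remark14.2} (to lift transition maps to $(R/m_R^{i+1})\Lambda$) are used. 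Setting $N^\bullet=\lim_{\stackrel{\longleftarrow}{i}}N_i^\bullet$ yields, by Remark~\ref{rem:extrafree}, a bounded above complex of topologically free pseudocompact $R\Lambda$-modules with $(R/m_R^i)\hat{\otimes}_R N^\bullet=N_i^\bullet$. Using the exactness of inverse limits in $\mathrm{PCMod}(R)$ (Remark~\ref{rem:pseudocompact}(i)) together with Lemmas~\ref{lem:lemma3.8} and \ref{lem:lemma3.1} applied to each $N_i^\bullet$, one checks --- just as in Theorem~\ref{thm:derivedresult} --- that $N^\bullet$ has finite pseudocompact $R$-tor dimension and that $k\hat{\otimes}^{\LL}_R N^\bullet=k\hat{\otimes}_R N^\bullet=N_1^\bullet$, so the $\psi_i$ assemble into an isomorphism $\psi:k\hat{\otimes}^{\LL}_R N^\bullet\to V^\bullet$ in $D^-(\Lambda)$. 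In the proflat case $\mathcal{D}=\fl$ one additionally invokes Remark~\ref{rem:profree}(ii) and Theorem~\ref{thm:derivedresult} to see that topological freeness of the cohomology of the $N_i^\bullet$ passes to $N^\bullet$. Then $[N^\bullet,\psi]\in\hat{F}_{\mathcal{D}}(R)$ maps to the prescribed system.

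\emph{Injectivity of $\Phi_R$.} Suppose $[M^\bullet,\phi]$ and $[{M'}^\bullet,\phi']$ in $\hat{F}_{\mathcal{D}}(R)$ become isomorphic deformations over every $R/m_R^i$. Choosing representatives with topologically free terms, I would inductively lift a chosen isomorphism over $R/m_R^i$ to one over $R/m_R^{i+1}$: Remark~\ref{rem:Remark14.2} lifts the chain map, Lemma~\ref{lem:Lemma14.3} arranges agreement up to homotopy with the previously chosen isomorphism, and correcting by that homotopy produces an inverse system of isomorphisms of complexes compatible with the data $\phi,\phi'$; passing to the inverse limit gives an isomorphism $M^\bullet\to{M'}^\bullet$ of complexes of topologically free pseudocompact $R\Lambda$-modules inducing the identification of $\phi$ with $\phi'$, hence an isomorphism of quasi-lifts over $R$.

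\emph{Main obstacle.} The difficulty is entirely in the two inductions: at each stage the chosen complex $N_i^\bullet$, transition isomorphism $\eta_i$, lifted chain map, and correcting homotopy must be made simultaneously compatible with all earlier choices and with the structural isomorphisms $\psi_i$ (resp.\ $\phi,\phi'$), so that the inverse limits genuinely exist; this is precisely where Lemmas~\ref{lem:Lemma14.1}--\ref{lem:Lemma14.4} and Remark~\ref{rem:Remark5.2} must be applied in the correct order. Once the towers are in place, taking inverse limits and verifying finite pseudocompact $R$-tor dimension and cohomology base change are routine given Remarks~\ref{rem:pseudocompact}(i), \ref{rem:extrafree}, \ref{rem:profree} and Theorem~\ref{thm:derivedresult}.
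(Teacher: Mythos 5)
Your surjectivity argument follows the paper's proof (and that of \cite[Prop. 7.2]{bcderived}) in all essentials: replace by complexes of abstractly free finitely generated modules over the Artinian quotients, make the transition maps surjective on terms via Remark \ref{rem:Remark5.2}, build the tower, and take inverse limits. That half is fine.

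The injectivity half has a genuine gap, and it sits exactly at the point the paper singles out as the delicate one (see Remark \ref{rem:therefereeaskedforit}). You propose to ``inductively lift a chosen isomorphism over $R/m_R^i$ to one over $R/m_R^{i+1}$'' using Remark \ref{rem:Remark14.2} and Lemma \ref{lem:Lemma14.3}. But Remark \ref{rem:Remark14.2} lifts a single module homomorphism, not a chain map: lifting a morphism of complexes along the nilpotent surjection $R/m_R^{i+1}\to R/m_R^i$ is obstructed (the termwise lifts need not commute with the differentials), so the lift you need is not known to exist. The only way to get an isomorphism at level $i+1$ is to use the one provided by the hypothesis, $\gamma_{i+1}$; the real problem is then that the $\gamma_i$ at the various levels are not compatible with one another, and the discrepancy $\gamma_i^{-1}\circ\bigl((R/m_R^i)\hat{\otimes}_{R/m_R^{i+1}}\gamma_{i+1}\bigr)-\mathrm{Id}$ is an element of the module $Z_i$ of endomorphisms of $(R/m_R^i)\hat{\otimes}_R^{\LL}M^\bullet$ that die on the special fiber, and such an element need not lift along $Z_{i+1}\to Z_i$. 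Your proposal contains no mechanism for resolving this. The paper's proof devotes Claims 1--3 to precisely this issue: $Z_i$ is a finitely generated nilpotent module over the Artinian ring $R/m_R^i$, so the system $\{Z_i\}$ satisfies the Mittag-Leffler condition; setting $S_i=\bigcap_{j\ge i}\tau_i^j(Z_j)$, the maps $S_{i+1}\to S_i$ are surjective, and by taking $\tilde f_i$ to be the reduction of $\gamma_{N_i}$ for suitable $N_i\gg i$ the discrepancies land in $S_i$, where they can be corrected compatibly up the tower by composing with the units $(\mathrm{Id}+\zeta)^{-1}$ (invertible by nilpotence). Only after this produces a genuinely compatible system $f_i$ in the homotopy categories does the Lemma \ref{lem:Lemma14.3} homotopy-correction induction that you invoke apply, yielding strictly compatible quasi-isomorphisms $h_i$ whose inverse limit gives the isomorphism over $R$. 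Without an argument of this kind (or an equivalent device) your induction cannot start past the first step, so the injectivity claim is not established.
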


\begin{proof}
The proof of Proposition \ref{prop:step4} closely follows the proof of \cite[Prop. 7.2]{bcderived}.
By Remark \ref{rem:functor} and Lemma \ref{lem:corollary3.6}(i),
we may assume, without loss of generality,
that $V^\bullet$ is a bounded above complex of abstractly free finitely generated 
$\Lambda$-modules.
Let $R$ be an object of $\hat{\mathcal{C}}$ with maximal ideal $m_R$.
Consider the natural map
\begin{equation}
\label{eq:themap}
\Xi_{\mathcal{D}}: \hat{F}_{\mathcal{D}}(R) \to \lim_{\stackrel{\longleftarrow}{i}}
\hat{F}_{\mathcal{D}}(R/m_R^i)
\end{equation}
defined by $\Xi_{\mathcal{D}}([M^\bullet,\phi])= 
\{\hat{F}_{\mathcal{D}}(\pi_i)([M^\bullet,\phi])\}_{i=1}^\infty=
\{[(R/m_R^i) \hat{\otimes}^{\LL}_R M^\bullet,\phi_{\pi_i} ]\}_{i=1}^\infty$ when $\pi_i:R\to R/m_R^i$ is the natural surjection for all $i$.

We first show that $\Xi_{\mathcal{D}}$ is surjective. Suppose we have a sequence
of deformations
$\{[M_i^\bullet,\phi_i]\}_{i=1}^\infty$ with $[M_i^\bullet,\phi_i] \in
\hat{F}_{\mathcal{D}}(R/m_R^i)=F_{\mathcal{D}}(R/m_R^i)$
for all $i$ such that there is an isomorphism
$$\alpha_i: (R/m_R^{i})\hat{\otimes}^{\LL}_{R/m_R^{i+1}} M_{i+1}^\bullet \to
M_{i}^\bullet$$
in $D^-((R/m_R^{i})\Lambda)$ with 
$\phi_{i}\circ (k\hat{\otimes}^{\LL}_{R/m_R^{i}}\alpha_i) = \phi_{i+1}$.
We need to construct a quasi-lift $(M^\bullet,\phi)$ with
$[M^\bullet,\phi]\in \hat{F}_{\mathcal{D}}(R)$ such that, for all $i$, there is an isomorphism
$\beta_i:(R/m_R^{i})\hat{\otimes}^{\LL}_{R}M^\bullet
\to M_i^\bullet$ in $D^-((R/m_R^{i})\Lambda)$ with $\alpha_{i}\circ ((R/m_R^{i})\hat{\otimes}^{\LL}_{R/
m_R^{i+1}}\beta_{i+1})=\beta_i$ and $\phi_i\circ(k\hat{\otimes}^{\LL}_{R/m_R^{i}}\beta_i) 
=\phi$. We now construct $(M^\bullet,\phi)$ inductively.

By Remark \ref{rem:corollary3.1011}, 
we can replace $M_i^\bullet$ by a bounded above  complex $N_i^\bullet$
of abstractly free finitely generated $(R/m_R^i)\Lambda$-modules,  
and we can replace $\phi_i$ by a quasi-isomorphism
$\psi_i:k\hat{\otimes}_{R/m_R^i}N_i^\bullet\to V^\bullet$ in $C^-(\Lambda)$.
Suppose $\gamma_i:  N_i^\bullet\to M_i^\bullet$ is an
isomorphism in $D^-((R/m_R^i)\Lambda)$ associated to these replacements.
Then the diagram
\begin{equation}
\label{eq:musthaveit2}
\xymatrix @-1pc @C6pc{
(R/m_R^{i})\hat{\otimes}_{R/m_R^{i+1}}{N}_{i+1}^\bullet \ar[r]^{(R/m_R^i)\hat{\otimes}^{\LL}\gamma_{i+1}} 
& (R/m_R^{i})\hat{\otimes}^{\LL}_{R/m_R^{i+1}}
M_{i+1}^\bullet\ar[d]^{\alpha_i}\\
&M_i^\bullet\ar[d]^{\gamma_i^{-1}}\\ &N_i^\bullet
}
\end{equation}
defines an isomorphism $\tilde{\alpha}_i:(R/m_R^{i})\hat{\otimes}_{R/m_R^{i+1}}{N}_{i+1}^\bullet 
\to  N_i^\bullet$ in $D^-((R/m_R^{i})\Lambda)$, and $\tilde{\alpha}_i$ replaces $\alpha_i$.
Moreover, $\psi_i=\phi_i\circ (k\hat{\otimes}^{\LL}_{R/m_R^i}\gamma_i)$ satisfies
$$\psi_i\circ(k\hat{\otimes}_{R/m_R^i}\tilde{\alpha}_i)=\psi_{i+1}\,.$$
Since $N_{i+1}^\bullet$ is a bounded above  complex
of abstractly free finitely generated $(R/m_R^{i+1})\Lambda$-modules,  
we can assume that $\tilde{\alpha}_i$ is represented by a quasi-isomorphism 
in $C^-((R/m_R^i)\Lambda)$. By Remark \ref{rem:Remark5.2}, we can add
to $N_{i+1}^\bullet$ a suitable acyclic bounded above complex of 
abstractly free finitely generated $(R/m_R^{i+1})\Lambda$-modules 
to be able to assume that  $\tilde{\alpha}_i$ is surjective on terms. 
Continuing this process inductively, we obtain an inverse system of quasi-lifts
$$\{( N_i^\bullet,\psi_i)\}_{i=1}^\infty \quad\mbox{  where } \quad [N_i^\bullet, \psi_i]\in F_{\mathcal{D}}(R/m_R^i) .$$
Further, $N_i^\bullet$
is a complex of abstractly free finitely generated $(R/m_R^i)\Lambda$-modules
such that in the diagram
\begin{equation}
\label{eq:cont2}
\xymatrix @-1pc @C1.5pc{
{N_{i+1}^\bullet} \ar[d]&\\
{(R/m_R^{i})\hat{\otimes}_{R/m_R^{i+1}}{N}_{i+1}^\bullet} \ar[r]^>>>>{\tilde{\alpha}_i}&
{N_i^\bullet}
}
\end{equation}
all arrows are morphisms in $C^-((R/m_R^{i+1})\Lambda)$
that are surjective on terms.
Define 
$$M^\bullet=\lim_{\stackrel{\longleftarrow}{i}}\, N_i^\bullet\quad\mbox{ and }\quad
\phi=\lim_{\stackrel{\longleftarrow}{i}}\,\psi_i\,.$$
Then we obtain in a similar way as in the proof of \cite[Prop. 7.2]{bcderived} that
all terms of $M^\bullet$ are topologically free pseudocompact $R$-modules
and that $[M^\bullet,\phi]\in \hat{F}_{\mathcal{D}}(R)$. 
Letting for each $i$
$$\tilde{\beta}_i:\;(R/m_R^{i})\hat{\otimes}_{R}M^\bullet \;\longrightarrow\;  N_i^\bullet$$
be the natural isomorphism in  $C^-((R/m_R^{i})\Lambda)$, 
it follows that  $\tilde{\alpha}_{i}
\circ ((R/m_R^{i})\hat{\otimes}_{R/m_R^{i+1}}\tilde{\beta}_{i+1})=\tilde{\beta}_i$,
where $\tilde{\alpha}_i$ is as defined in (\ref{eq:musthaveit2}), and 
$\psi_i\circ (k\hat{\otimes}_{R/m_R^{i}}\tilde{\beta}_i) =\phi$. 
Hence, defining $\beta_i=\gamma_i\circ\tilde{\beta}_i$, we have
$\alpha_{i}\circ ((R/m_R^{i})\hat{\otimes}^{\LL}_{R/
m_R^{i+1}}\beta_{i+1})=\beta_i$ and $\phi_i\circ(k\hat{\otimes}^{\LL}_{R/m_R^{i}}\beta_i) 
=\phi$. 
Therefore, the map $\Xi_{\mathcal{D}}$ in (\ref{eq:themap}) is surjective.

We now show that $\Xi_{\mathcal{D}}$ is injective. Since $\hat{F}^{\fl}$
is a subfunctor of $\hat{F}$ by Remark \ref{rem:functor}, it is enough
to show that $\Xi_{\mathcal{D}}$ is injective in the case when $\hat{F}_{\mathcal{D}}=\hat{F}$. 
We notice that in the proof of \cite[Prop. 7.2]{bcderived} an assumption was made to arrive at a morphism 
$f_i$ as in \cite[Eq. (7.5)]{bcderived}. This needs more explanation, which we will now provide in our
situation. We first prove three claims.

\medskip

\noindent
\textit{Claim $1$.} Let $[M^\bullet, \phi]\in\hat{F}(R)$. For all positive integers $i$, define
$$Z_i=\{\zeta_i\in\mathrm{End}_{D^-((R/m_R^i)\Lambda)}((R/m_R^i)\hat{\otimes}^{\LL}_R {M}^\bullet)\;
|\;k\hat{\otimes}^{\LL}_{R/m_R^{i}}\zeta_i=0\mbox{ in } D^-(\Lambda)\}.$$
Then $Z_i$ is a finitely generated nilpotent $(R/m_R^i)$-module.

\medskip

To prove Claim 1, we first use Remarks \ref{rem:leftderivedtensor} and  \ref{rem:Lemma4.2} to
assume without loss of generality that $M^\bullet$ is a bounded
above complex of topologically free pseudocompact $R\Lambda$-modules. Hence
\begin{equation}
\label{eq:nicerzi}
Z_i=\{\zeta_i\in\mathrm{End}_{K^-((R/m_R^i)\Lambda)}((R/m_R^i)\hat{\otimes}_R {M}^\bullet)\;
|\;k\hat{\otimes}_{R/m_R^{i}}\zeta_i=0\mbox{ in } K^-(\Lambda)\}.
\end{equation}
It is obvious that $Z_i$ is an $(R/m_R^i)$-module. Suppose $\HH^j(V^\bullet)=0$ for $j<n_1$ and
$j>n_2$. By Theorem \ref{thm:derivedresult}, it follows that also 
$\HH^j((R/m_R^i)\hat{\otimes}_R {M}^\bullet)=0$ for $j<n_1$ and $j>n_2$.
By Lemma \ref{lem:corollary3.6}(i),  there exists 
a bounded above complex $N_i^\bullet$ of abstractly free finitely generated 
$(R/m_R^i)\Lambda$-modules such that there is an isomorphism
$(R/m_R^i)\hat{\otimes}_R {M}^\bullet\to N_i^\bullet$ in 
$K^{-}_{\fin}((R/m_R^i)\Lambda)$. Hence we can truncate $N_i^\bullet$ at $n_1$ and $n_2$ to be able to 
assume that $N_i^j=0$ for $j<n_1$ and $j>n_2$ and that $N_i^j$ is an abstractly free finitely generated 
$(R/m_R^i)$-module for $n_1\le j\le n_2$. We obtain that $Z_i$ is an $(R/m_R^i)$-submodule of
$$\mathrm{End}_{K^-(R/m_R^i)}((R/m_R^i)\hat{\otimes}_R {M}^\bullet)\cong
\mathrm{End}_{K^-(R/m_R^i)}(N_i^\bullet)$$
which is isomorphic to a quotient module of $\mathrm{End}_{C^-(R/m_R^i)}(N_i^\bullet)$. This, in
turn, is a submodule of $\prod_{j=n_1}^{n_2}\mathrm{End}_{R/m_R^i}(N_i^j)$, which is a free
$(R/m_R^i)$-module of finite rank. Since $R/m_R^i$ is Noetherian, it follows that
$Z_i$ is a finitely generated $(R/m_R^i)$-module. 
Since $(\zeta_i)^i=0$ in $K^-((R/m_R^i)\Lambda)$ for all $\zeta_i\in Z_i$, Claim 1 follows.

\medskip

\noindent 
\textit{Claim $2$.} Let $[M^\bullet,\phi]$ and $Z_i$ be as in Claim 1. 
For all positive integers $i$ and for all $j\ge i$, define $\tau^j_i: Z_j\to Z_i$ to be the map 
that sends $\zeta_j\in Z_j$ to $(R/m_R^i)\hat{\otimes}^{\LL}_{R/m_R^j}\zeta_j$. Then
there exists $N\ge i$ such that for all $j\ge N$, $\tau^j_i(Z_j)=\tau^N_i(Z_N)$. In other words,
the inverse system $\{Z_i, \tau^j_i\}$ satisfies the Mittag-Leffler condition.

\medskip

Claim 2 follows almost immediately from Claim 1. Namely,
by Claim 1, $Z_i$ is a finitely generated module over the Artinian ring 
$R/m_R^i$. In particular,
$Z_i$ is Artinian. Consider the descending chain of submodules
$$Z_i= \tau^i_i(Z_i)\supseteq \tau^{i+1}_i(Z_{i+1})\supseteq \tau^{i+2}_i(Z_{i+2})\supseteq \ldots$$
Since $Z_i$ is Artinian, this must stabilize after finitely many steps, proving Claim 2.

\medskip

\noindent
\textit{Claim $3$.} Suppose $\hat{F}_{\mathcal{D}}=\hat{F}$ and 
$[M^\bullet,\phi], [\widetilde{M}^\bullet,\widetilde{\phi}]
\in \hat{F}(R)$ are such that $\Xi_{\mathcal{D}}([M^\bullet,\phi])=
\Xi_{\mathcal{D}}([\widetilde{M}^\bullet,\widetilde{\phi}])$. Then for all $i$, there exist isomorphisms
$f_i: {(R/m_R^i)\hat{\otimes}^{\LL}_R {M}^\bullet} \to 
{(R/m_R^i)\hat{\otimes}^{\LL}_R \widetilde{M}^\bullet}$ in $D^-((R/m_R^i)\Lambda)$ such that
$(R/m_R^i)\hat{\otimes}^{\LL}_{R/m_R^{i+1}}f_{i+1}= f_i$ in $D^-((R/m_R^i)\Lambda)$ and 
$\widetilde{\phi}\circ (k\hat{\otimes}^{\LL}_{R/m_R^{i}}f_{i})=\phi$ in $D^-(\Lambda)$.

\medskip

To prove Claim 3, we can assume, as in the proof of Claim 1, 
that $M^\bullet$ and 
$\widetilde{M}^\bullet$ are bounded
above complexes of topologically free pseudocompact $R\Lambda$-modules. In particular, $Z_i$ is
given as in $(\ref{eq:nicerzi})$. For each positive integer $i$, define 
$$S_i=\bigcap_{j\ge i} \tau_i^j(Z_j)$$
which is a subset of $Z_i$, and define $\sigma_i:S_{i+1}\to S_i$ by the restriction of
$\tau_i^{i+1}$ to $S_i$. Then $\sigma_i$ is a well-defined surjecive map for all $i\ge 1$. 
By Claim 2, for each $i\ge 1$ there exists a positive integer $N_i\ge i$ such that
$S_i=\tau_i^{N_i}(Z_{N_i})$. Assume $N_i\ge i$ is chosen to be smallest with this property.
Note that by construction, $N_i\le N_{i+1}$ for all $i$.

By our assumptions for Claim 3, $\Xi_{\mathcal{D}}([M^\bullet,\phi])=\Xi_{\mathcal{D}}([\widetilde{M}^\bullet,\widetilde{\phi}])$, which means
that for each positive integer $i$, there exists an isomorphism
$$\gamma_i:{(R/m_R^i)\hat{\otimes}_R {M}^\bullet} \to {(R/m_R^i)\hat{\otimes}_R \widetilde{M}^\bullet}$$
in $K^-((R/m_R^i)\Lambda)$ such that
$\widetilde{\phi}\circ (k\hat{\otimes}_{R/m_R^{i}}\gamma_{i})=\phi$ in $K^-(\Lambda)$. 

For each positive integer $i$, let $\mathrm{Id}_i$ be the identity cochain map
of $(R/m_R^i)\hat{\otimes}_RM^\bullet$ and define 
\begin{equation}
\label{eq:fi}
\tilde{f}_i=(R/m_R^i)\hat{\otimes}_{R/m_R^{N_i}}\,\gamma_{N_i}
\end{equation}
and
\begin{equation}
\label{eq:xi}
\zeta_i=\tilde{f}_i^{-1}\circ ((R/m_R^i)\hat{\otimes}_{R/m_R^{i+1}}\,\tilde{f}_{i+1})-\mathrm{Id}_i\,.
\end{equation}
Then $$\zeta_i=(R/m_R^i)\hat{\otimes}_{R/m_R^{N_i}}\left(
\gamma_{N_i}^{-1}\circ \left((R/m_R^{N_i})\hat{\otimes}_{R/m_R^{N_{i+1}}} \gamma_{N_{i+1}}\right)-
\mathrm{Id}_{N_i}\right)$$ 
is an element of $\tau_i^{N_i}(Z_{N_i})=S_i$, since
$(k\hat{\otimes}_{R/m_R^{i}}\gamma_{i})=\widetilde{\phi}^{-1}\circ\phi$
in $K^-(\Lambda)$ for all $i\ge 1$. 

We define $f_1=\tilde{f}_1$ and $f_2=\tilde{f}_2$. Then
$$(R/m_R)\hat{\otimes}_{R/m_R^{2}}f_2=k\hat{\otimes}_{R/m_R^{N_2}}\,\gamma_{N_2}
=\widetilde{\phi}^{-1}\circ\phi=k\hat{\otimes}_{R/m_R^{N_1}}\,\gamma_{N_1}=f_1.$$
For all $j\ge 2$, define $\zeta_j^{(j)}=\zeta_j$ to be the element in $S_j$ defined in (\ref{eq:xi}). 
Inductively, for all $\ell\ge 1$ and all $j\ge \ell+2$, use that $\sigma_{j-1}:S_j\to S_{j-1}$ is surjective to choose 
an element $\zeta_{j-\ell}^{(j)}\in S_j$ with $\sigma_{j-1}(\zeta_{j-\ell}^{(j)}) =\zeta_{j-\ell}^{(j-1)}$.
In particular, this means that for all $j\ge 3$ and all $2\le i< j$, the element $\zeta_i^{(j)}\in S_j$ is such that
for all $t$ with $i\le  t< j$, $\left(\sigma_t\circ\sigma_{t+1}\circ\cdots\circ\sigma_{j-1}\right)(\zeta_i^{(j)})=\zeta_i^{(t)}$. 
For all $j\ge 3$, we define
$$f_j=\tilde{f}_j\circ\left(\mathrm{Id}_j+\zeta_{j-1}^{(j)}\right)^{-1}\circ \left(\mathrm{Id}_j+\zeta_{j-2}^{(j)}\right)^{-1}\circ\cdots\circ
\left(\mathrm{Id}_j+\zeta_2^{(j)}\right)^{-1}$$
where we use Claim 1 to see that,  for all $2\le i\le j$, $\zeta_i^{(j)}\in S_j\subseteq Z_j$ is nilpotent, 
and hence $(\mathrm{Id}_j+\zeta_{i}^{(j)})$ is invertible in $K^-((R/m_R^j)\Lambda)$.
We obtain
\begin{eqnarray*}
(R/m_R^{j-1})\hat{\otimes}_{R/m_R^{j}}f_j&=&
\left((R/m_R^{j-1})\hat{\otimes}_{R/m_R^{j}}\tilde{f}_j\right)\circ
\left(\mathrm{Id}_{j-1}+\zeta_{j-1}^{(j-1)}\right)^{-1}\circ \\
&&\qquad\left(\mathrm{Id}_{j-1}+\zeta_{j-2}^{(j-1)}\right)^{-1}\circ\cdots\circ\left(\mathrm{Id}_{j-1}+\zeta_2^{(j-1)}\right)^{-1}\\
&=&\tilde{f}_{j-1}\circ \left(\mathrm{Id}_{j-1}+\zeta_{j-2}^{(j-1)}\right)^{-1}\circ \cdots\circ \left(\mathrm{Id}_{j-1}+\zeta_2^{(j-1)}\right)^{-1}\\
&=&f_{j-1}
\end{eqnarray*}
in $K^-((R/m_R^{j-1})\Lambda)$, where the second to last equality follows from $(\ref{eq:xi})$. Moreover, we have 
\begin{eqnarray*}
\widetilde{\phi}\circ(k\hat{\otimes}_{R/m_R^{j}}f_{j})&=&
\widetilde{\phi}\circ(k\hat{\otimes}_{R/m_R^{j}}\tilde{f}_j)\circ
\left(\mathrm{Id}_{1}+k\hat{\otimes}_{R/m_R^{j-1}}\zeta_{j-1}\right)^{-1}\circ \cdots\circ\left(\mathrm{Id}_{1}+k\hat{\otimes}_{R/m_R^{2}}\zeta_2\right)^{-1}\\
&=&\widetilde{\phi}\circ(k\hat{\otimes}_{R/m_R^{j}}\tilde{f}_j)\;=\;
\widetilde{\phi}\circ(k\hat{\otimes}_{R/m_R^{N_j}}\,\gamma_{N_j})\;=\;\phi
\end{eqnarray*}
in $K^-(\Lambda)$, where the second equation follows since all $\zeta_i\in S_i\subseteq Z_i$. 
This proves Claim 3.

\bigskip

We are now ready to prove that $\Xi_{\mathcal{D}}$ in (\ref{eq:themap}) is injective
in the case when $\hat{F}_{\mathcal{D}}=\hat{F}$.
Suppose $[M^\bullet,\phi], [\widetilde{M}^\bullet,\widetilde{\phi}]$
in $\hat{F}(R)$ satisfy $\Xi_{\mathcal{D}}([M^\bullet,\phi])=\Xi_{\mathcal{D}}([\widetilde{M}^\bullet,\widetilde{\phi}])$.
As in the proof of Claim 1, we can assume without loss of generality that $M^\bullet$ and 
$\widetilde{M}^\bullet$ are bounded above complexes of topologically free pseudocompact $R\Lambda$-modules.
Hence it follows from Claim 3 that for all $i\ge 1$,
there exist isomorphisms 
$$f_i: {(R/m_R^i)\hat{\otimes}_R {M}^\bullet} \to {(R/m_R^i)\hat{\otimes}_R \widetilde{M}^\bullet}$$ 
in $K^-((R/m_R^i)\Lambda)$ such that
$(R/m_R^i)\hat{\otimes}_{R/m_R^{i+1}}f_{i+1}= f_i$ in $K^-((R/m_R^i)\Lambda)$ and 
$\widetilde{\phi}\circ(k\hat{\otimes}_{R/m_R^{i}}f_{i})=\phi$ in $K^-(\Lambda)$.
Suppose that for all $i$, $f_i$ is represented by a quasi-isomorphism in $C^-((R/m_R^i)\Lambda)$.
By Remark \ref{rem:Remark5.2}, we can add to $M^\bullet$ a suitable acyclic
bounded above complex of topologically free pseudocompact $R\Lambda$-modules
to be able to assume that all $f_i$ are surjective on terms. Define $h_1=f_1$.
As in the proof of \cite[Prop. 7.2]{bcderived}, we can construct inductively, for all $i$, a quasi-isomorphism
$$h_i: (R/m_R^i)\hat{\otimes}_R
{M}^\bullet\to (R/m_R^i)\hat{\otimes}_R \widetilde{M}^\bullet$$
in $C^-((R/m_R^i)\Lambda)$ that is surjective on terms
such that $h_i$ is homotopic to $f_i$ and such that
$$(R/m_R^j)\hat{\otimes}_{R/m_R^i} h_i=h_j$$ for all $j<i$.
It follows that
$$h=\lim_{\stackrel{\longleftarrow}{i}}\, h_i:\quad
M^\bullet = \lim_{\stackrel{\longleftarrow}{i}}\, ((R/m_R^i)\hat{\otimes}_R M^\bullet)
\;\longrightarrow\; \lim_{\stackrel{\longleftarrow}{i}}\, ((R/m_R^i)\hat{\otimes}_R
\widetilde{M}^\bullet) = \widetilde{M}^\bullet$$
is an isomorphism in $K^-(R\Lambda)$ with
$\displaystyle \widetilde{\phi}\circ(k\hat{\otimes}_R h)=\phi$ in $K^-(\Lambda)$.
This completes the proof of Proposition \ref{prop:step4}.
\end{proof}

By \cite[Sect. 2]{Sch}, Theorem \ref{thm:bigthm} now follows from Propositions \ref{prop:step1} through \ref{prop:step4}.

\begin{rem}
\label{rem:therefereeaskedforit}
Note that we can prove claims that are similar to Claims 1 - 3 in the proof of Proposition \ref{prop:step4} to also provide 
more details in the proof of the continuity of the deformation functor defined in \cite{bcderived}.
As mentioned above, the main point is that in the proof of the injectivity of the map $\Gamma_{\mathcal{D}}$ defined in
the proof of \cite[Prop. 7.2]{bcderived} an assumption was made to arrive at a morphism 
$f_i$ as in \cite[Eq. (7.5)]{bcderived} that needs more explanation. We now briefly sketch the necessary arguments.

Suppose $G$ is a profinite group with finite pseudocompact cohomology, as defined in \cite[Def. 2.13]{bcderived}. 
Make the assumptions on $k$ and $\hat{\mathcal{C}}$ as in \cite{bcderived}. Assume $V^\bullet $ is a
complex in $D^-([[kG]])$ that has  only finitely many non-zero cohomology
groups, all of which have finite $k$-dimension. 
Let $\hat{F}_{\mathcal{D}}=\hat{F}_{\mathcal{D},V^\bullet}:\hat{\mathcal{C}} \to \mathrm{Sets}$ be the 
deformation functor defined in \cite[Def. 2.10]{bcderived}. We want to show that $\hat{F}_{\mathcal{D}}$ is continuous.

By \cite[Prop. 2.12 and Cor. 3.6(i)]{bcderived} we may assume, without loss of generality,
that there is a closed normal subgroup $\Delta$ of finite index in $G$ such that
$V^\bullet$ is a bounded above complex of abstractly free finitely generated 
$[k(G/\Delta)]$-modules.
Let $R$ be an object of $\hat{\mathcal{C}}$ with maximal ideal $m_R$, and consider the natural map
$$\Gamma_{\mathcal{D}}: \hat{F}_{\mathcal{D}}(R) \to \lim_{\stackrel{\longleftarrow}{i}}
\hat{F}_{\mathcal{D}}(R/m_R^i)
$$
defined by  $\Gamma_{\mathcal{D}}([M^\bullet,\phi])= \{[(R/m_R^i) \hat{\otimes}^{\LL}_R
M^\bullet,\phi_{\pi_i} ]\}_{i=1}^\infty$ when $\pi_i:R\to R/m_R^i$ is the natural surjection for all $i$.
The proof that $\Gamma_{\mathcal{D}}$ is surjective is the same as in the proof of \cite[Prop. 7.2]{bcderived}.
For the proof that  $\Gamma_{\mathcal{D}}$ is injective, we use that  $\hat{F}^{\fl}$
is a subfunctor of $\hat{F}$ by \cite[Prop. 2.12]{bcderived}. Hence it suffices to
show that $\Gamma_{\mathcal{D}}$ is injective in the case when $\hat{F}_{\mathcal{D}}=\hat{F}$. 

To prove this, we modify Claims 1 - 3 in the proof of Proposition \ref{prop:step4} to arrive at the three claims 
Claims $G.1$ - $G.3$ below. Using $f_i$ from Claim $G.3$, instead of the morphism $f_i$ in \cite[Eq. (7.5)]{bcderived}, 
the remainder of the proof of the injectivity of $\Gamma_\mathcal{D}$ follows then as in the proof of
\cite[Prop. 7.2]{bcderived}.

\medskip

\noindent
\textit{Claim $G.1$.} Let $[M^\bullet, \phi]\in \hat{F}(R)$. For all positive integers $i$, define
$$Z_i=\{\zeta_i\in\mathrm{End}_{D^-([[(R/m_R^i)G]])}((R/m_R^i)\hat{\otimes}^{\LL}_R {M}^\bullet)\;
|\;k\hat{\otimes}^{\LL}_{R/m_R^{i}}\zeta_i=0\mbox{ in } D^-([[kG]])\}.$$
Then $Z_i$ is a finitely generated nilpotent $(R/m_R^i)$-module.

\medskip

\noindent
\textit{Claim $G.2$.} Let $[M^\bullet,\phi]$ and $Z_i$ be as in Claim $G.1$. 
For all positive integers $i$ and for all $j\ge i$, let $\tau^j_i: Z_j\to Z_i$ be the map 
that sends $\zeta_j$ to $(R/m_R^i)\hat{\otimes}^{\LL}_{R/m_R^j}\zeta_j$. Then
there exists $N\ge i$ such that for all $j\ge N$, $\tau^j_i(Z_j)=\tau^N_i(Z_N)$. In other words,
the inverse system $\{Z_i, \tau^j_i\}$ satisfies the Mittag-Leffler condition.

\medskip

\noindent
\textit{Claim $G.3$.} Suppose $\hat{F}_{\mathcal{D}}=\hat{F}$ and
$[M^\bullet,\phi], [\widetilde{M}^\bullet,\widetilde{\phi}]
\in \hat{F}(R)$ are such that $\Gamma_\mathcal{D}([M^\bullet,\phi])=
\Gamma_\mathcal{D}([\widetilde{M}^\bullet,\widetilde{\phi}])$. Then for all $i$, there are isomorphisms
$f_i: {(R/m_R^i)\hat{\otimes}^{\LL}_R {M}^\bullet} \to 
{(R/m_R^i)\hat{\otimes}^{\LL}_R \widetilde{M}^\bullet}$ in $D^-([[(R/m_R^i)G]])$ such that
$(R/m_R^i)\hat{\otimes}^{\LL}_{R/m_R^{i}}f_{i+1}= f_i$ in $D^-([[(R/m_R^i)G]])$ and 
$\widetilde{\phi}\circ(k\hat{\otimes}^{\LL}_{R/m_R^{i}}f_{i})=\phi$ in $D^-([[kG]])$.

\medskip

The proofs of Claims $G.1$ - $G.3$ are rather similar to the proofs of Claims 1 - 3 in the proof of
Proposition \ref{prop:step4}. 
We use \cite[Lemma 4.2]{bcderived} to be able to
assume that $M^\bullet$ (and in Claim $G.3$ also $\widetilde{M}^\bullet$) 
is a bounded above complex of topologically free pseudocompact $[[RG]]$-modules.
If $\HH^j(V^\bullet)=0$ for $j<n_1$ and $j>n_2$, then
we use \cite[Thm. 2.10]{obstructions} to argue that $\HH^j((R/m_R^i)\hat{\otimes}_R {M}^\bullet)=0$ for 
$j<n_1$ and $j>n_2$. Moreover, we use \cite[Cor. 3.6(i)]{bcderived} to see that there exists a closed 
normal subgroup $\Delta_i$ of finite index in $G$ and a bounded above complex $N_i^\bullet$ of abstractly free 
finitely generated $[(R/m_R^i)(G/\Delta_i)]$-modules such that there is an isomorphism
$(R/m_R^i)\hat{\otimes}_R {M}^\bullet\to \mathrm{Inf}_{G/\Delta_i}^G(N_i^\bullet)$ in 
$K^{-}_{\fin}([[(R/m_R^i)G]])$. In particular, $\prod_{j=n_1}^{n_2}\mathrm{End}_{R/m_R^i}(N_i^j)$ is then
a free $(R/m_R^i)$-module of finite rank. 
The arguments of the remainder of the proofs of Claims $G.1$ - $G.3$ are basically the same as in the proof
of Proposition \ref{prop:step4}. 
\end{rem}

We now return to the situation in the present paper and, in particular, to the assumptions put forward in Section \ref{s:setup}.

\subsection{Special complexes $V^\bullet$}
\label{s:12terms}

In this section we consider several cases of more special complexes $V^\bullet$. Namely,
we consider one-term and two-term complexes, and completely split complexes.
The results are adapted from \cite[Sects. 9 and 11]{bcderived}. 

We first recall the deformation functor associated to a finitely generated $\Lambda$-module 
that was studied in \cite{blehervelez}.

\begin{rem}
\label{rem:deformmodules}
Let $C$ be a finitely generated $\Lambda$-module, and let $R$ be an object of $\hat{\mathcal{C}}$. According to \cite[Sect. 2]{blehervelez}, a lift of $C$ over $R$ is a pair $(L,\lambda)$ consisting of 
a finitely generated $R\Lambda$-module $L$ that is abstractly free as an $R$-module together with a $\Lambda$-module isomorphism $\lambda:k\otimes_R M \to V$.
Two lifts $(L,\lambda)$ and $(L',\lambda')$ of $C$ over $R$ are said to be  isomorphic if there is an $R\Lambda$-module isomorphism 
$f:L\to L'$ with $\lambda'\circ (k\otimes_R f) =\lambda$. A deformation of $C$ over $R$ is an isomorphism class of lifts of $C$ over $R$. We denote the
deformation of $C$ over $R$ represented by $(L,\lambda)$ by $[L,\lambda]$.
The deformation functor $\hat{F}_C : \hat{\mathcal{C}}\to \mathrm{Sets}$ is defined to be the covariant functor
that sends an object $R$ of $\hat{\mathcal{C}}$ to the set $\hat{F}_C(R)$ of all deformations of $C$ over $R$, and that sends
a morphism $\alpha:R\to R'$ in $\hat{\mathcal{C}}$ to the set map
$\hat{F}_C(R)\to \hat{F}_C(R')$ given by $[L,\lambda] \mapsto [R'\otimes_{R,\alpha}L,\lambda_\alpha]$ 
where $\lambda_\alpha$ is the composition $k\otimes_{R'}(R'\otimes_{R,\alpha}L)\cong k\otimes_R L\xrightarrow{\lambda} C$.
Let $F_C:\mathcal{C}\to\mathrm{Sets}$  be the restriction of $\hat{F}_C$ to the full subcategory $\mathcal{C}$ of Artinian rings in $\hat{\mathcal{C}}$.

It was shown in \cite[Prop. 2.1]{blehervelez} that $F_C$ always has a pro-representable 
hull $R(\Lambda,C)$ in $\hat{\mathcal{C}}$, called the versal deformation ring of $C$, and that $\hat{F}_C$ is continuous. Moreover, if $\mathrm{End}_\Lambda(V)= k$,
it was shown that $\hat{F}_C$ is represented by $R(\Lambda,C)$, in which case $R(\Lambda,C)$ is called the universal deformation ring of $C$.
\end{rem}

\begin{prop}
\label{prop:modulecase}
Assume Hypothesis $\ref{hypo:fincoh}$, and let ${F}_{\mathcal{D}}$ and
$\hat{F}_{\mathcal{D}}$ be as in Definition $\ref{def:functordef}$.
Suppose $V^\bullet$ has exactly one non-zero cohomology group $C$, which has finite 
$k$-dimension. Let $\hat{F}_C$ be the deformation functor considered in \cite{blehervelez} $($see Remark $\ref{rem:deformmodules}$$)$.
Then $\hat{F}_{\mathcal{D}}$ and $\hat{F}_C$ are naturally isomorphic, and 
$R(\Lambda,V^\bullet)$ is isomorphic to the versal deformation ring $R(\Lambda,C)$ of $C$.
In particular, $R(\Lambda,V^\bullet)=R^{\fl}(\Lambda,V^\bullet)$.
The groups $\mathrm{Hom}_{D^-(\Lambda)}(V^\bullet,V^\bullet)$
and  $\mathrm{Hom}_{\Lambda}(C,C)$ are isomorphic.
\end{prop}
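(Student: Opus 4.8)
The plan is to construct an explicit natural isomorphism $\Phi\colon\hat{F}_C\to\hat{F}_{\mathcal{D}}$ and then read off all the assertions from it. First I would shift so that $C=\HH^0(V^\bullet)$ sits in degree $0$; by Remark \ref{rem:functor}, replacing $V^\bullet$ by an isomorphic object of $D^-(\Lambda)$ changes $\hat{F}_{\mathcal{D}}$ only up to a natural isomorphism of functors, so without loss of generality $V^\bullet$ is the complex $C$ concentrated in degree $0$. Given $R\in\mathrm{Ob}(\hat{\mathcal{C}})$ and a lift $(L,\lambda)$ of the $\Lambda$-module $C$ over $R$ (so $L$ is a finitely generated $R\Lambda$-module that is abstractly free over $R$, with $\lambda\colon k\otimes_R L\xrightarrow{\sim}C$), Remark \ref{rem:topflat}(iv) shows $L$ is topologically free over $R$; hence the complex $L$ concentrated in degree $0$ has finite pseudocompact $R$-tor dimension at $0$, one has $k\hat{\otimes}^{\LL}_R L=k\otimes_R L$ concentrated in degree $0$ and identified with $C$ via $\lambda$, and $\HH^0(L)=L$ is topologically free. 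Thus $L$, regarded as a complex in degree $0$, is a proflat quasi-lift of $V^\bullet$ over $R$, and I would set $\Phi_R([L,\lambda])=[L,\lambda]$. This makes sense whether $\hat{F}_{\mathcal{D}}=\hat{F}$ or $\hat{F}_{\mathcal{D}}=\hat{F}^{\fl}$, and naturality in $R$ follows because finite generation and $R$-flatness of $L$ give $R'\hat{\otimes}^{\LL}_{R,\alpha}L=R'\otimes_{R,\alpha}L$ for every morphism $\alpha\colon R\to R'$ in $\hat{\mathcal{C}}$ (Remark \ref{rem:topflat}(iii)).

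Next I would check injectivity of $\Phi_R$ together with the $\mathrm{Hom}$-statement. Since $\mathrm{PCMod}(R\Lambda)$ has enough projective objects (Remark \ref{rem:pseudocompact}(ii)), for pseudocompact $R\Lambda$-modules $X,Y$ placed in degree $0$ one has $\mathrm{Hom}_{D^-(R\Lambda)}(X,Y)=\mathrm{Hom}_{R\Lambda}(X,Y)$. Hence an isomorphism $L\to L_1$ in $D^-(R\Lambda)$ compatible with $\lambda,\lambda_1$ is induced by an $R\Lambda$-module isomorphism with the same compatibility, so $[L,\lambda]=[L_1,\lambda_1]$ in $\hat{F}_C(R)$, giving injectivity of $\Phi_R$. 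Taking $R=k$ in the same identification yields $\mathrm{Hom}_{D^-(\Lambda)}(V^\bullet,V^\bullet)=\mathrm{Hom}_{D^-(\Lambda)}(C,C)=\mathrm{Hom}_\Lambda(C,C)$.

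The heart of the proof is surjectivity of $\Phi_R$, i.e. that every deformation of $V^\bullet$ over $R$ comes from a lift of the module $C$. Starting from a quasi-lift $(M^\bullet,\phi)$ of $V^\bullet$ over $R$, I would apply Theorem \ref{thm:derivedresult} with $n_1=n_2=0$ to replace it, up to isomorphism of quasi-lifts, by $(P^\bullet,\psi)$ whose cohomology is concentrated in degree $0$ with $H:=\HH^0(P^\bullet)$ finitely generated over $R$. The canonical truncation then yields an isomorphism $P^\bullet\cong H$ (placed in degree $0$) in $D^-(R\Lambda)$, so $(M^\bullet,\phi)$ is isomorphic as a quasi-lift to $(H,\psi')$ with $\psi'\colon k\hat{\otimes}^{\LL}_R H\to V^\bullet=C$ in $D^-(\Lambda)$. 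Because the target has cohomology only in degree $0$, the pseudocompact $R$-Tor groups $\mathrm{Tor}^R_j(k,H)$ vanish for $j>0$; as $H$ is finitely generated over the complete local Noetherian ring $R$, these coincide with the usual $\mathrm{Tor}$ groups (compute $k\hat{\otimes}^{\LL}_R H$ via the minimal free resolution of $k$ over $R$, whose terms are finitely generated and topologically free, and use Remark \ref{rem:topflat}(iii)), so the local criterion of flatness forces $H$ to be abstractly $R$-free. Then $\psi'$ is an isomorphism of $\Lambda$-modules $k\otimes_R H\xrightarrow{\sim}C$, so $(H,\psi')$ is a lift of the $\Lambda$-module $C$ over $R$ with $\Phi_R([H,\psi'])=[M^\bullet,\phi]$.

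Putting this together, $\Phi$ is an isomorphism of functors $\hat{F}_C\to\hat{F}^{\fl}$, so the natural inclusion $\hat{F}^{\fl}\to\hat{F}$ is also surjective, whence $\hat{F}=\hat{F}^{\fl}$ and both are naturally isomorphic to $\hat{F}_C$; in particular $R(\Lambda,V^\bullet)=R^{\fl}(\Lambda,V^\bullet)$. Since a pro-representable hull of a functor of Artinian rings is unique up to (non-canonical) isomorphism, the isomorphic functors $F_{\mathcal{D}}$ and $F_C$ have isomorphic hulls, i.e. $R(\Lambda,V^\bullet)\cong R(\Lambda,C)$. I expect the freeness of $H$ to be the main obstacle: one has to pass carefully between pseudocompact and ordinary $\mathrm{Tor}$ before invoking the local flatness criterion, and to confirm that Theorem \ref{thm:derivedresult} really does produce a complex with cohomology concentrated in the single degree $0$ in this situation.
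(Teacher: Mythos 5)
Your argument is correct in substance, but it runs along a noticeably different track than the paper's proof. The paper reduces everything to Artinian coefficient rings: it invokes the continuity of both $\hat{F}_{\mathcal{D}}$ (Proposition \ref{prop:step4}) and of $\hat{F}_C$ (from \cite{blehervelez}), and then, over a local Artinian $R$, uses Lemma \ref{lem:lemma3.1} to see that a quasi-lift has cohomology only in degree $0$, and Lemma \ref{lem:lemma3.8} together with Remark \ref{rem:dumbdumb} to see that $\HH^0$ has projective dimension $0$, hence is free because $R$ is Artinian local; the converse direction is the same one-term-complex construction you give. You instead work uniformly over every $R\in\mathrm{Ob}(\hat{\mathcal{C}})$, using Theorem \ref{thm:derivedresult} with $n_1=n_2=0$ to concentrate the cohomology in degree $0$ with $H=\HH^0$ finitely generated over $R$, and then deducing freeness of $H$ from the vanishing of higher (completed) $\mathrm{Tor}^R_j(k,H)$ via the local criterion of flatness. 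What your route buys is that you never need the continuity of $\hat{F}_C$ and you get the identification of deformations over non-Artinian $R$ directly rather than through inverse limits; what it costs is exactly the point you flag yourself: the paper's $k\hat{\otimes}^{\LL}_R-$ is defined by resolving the second variable by projective pseudocompact $R\Lambda$-modules, so computing it "via the minimal free resolution of $k$ over $R$" tacitly uses balancedness of the completed $\mathrm{Tor}$, which the paper never states. This is not a fatal gap, and it can be avoided altogether: since $R\Lambda$ is Noetherian and $H$ is finitely generated, resolve $H$ by finitely generated projective $R\Lambda$-modules; these are finitely generated free as $R$-modules, so by Remark \ref{rem:topflat}(iii) the completed tensor with $k$ agrees with the ordinary one and the resulting cohomology is the ordinary $\mathrm{Tor}^R_j(k,H)$, after which finite generation over the Noetherian local ring $R$ and $\mathrm{Tor}^R_1(k,H)=0$ give freeness. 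With that patch (and the routine verification, which the paper also leaves implicit, that for degreewise $R$-flat bounded above complexes the derived and underived completed tensor products agree), your proof is complete and yields all the stated conclusions, including $\hat{F}=\hat{F}^{\fl}$ and the identification of endomorphism rings.
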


\begin{proof}
The proof of Proposition \ref{prop:modulecase} closely follows the proof of \cite[Prop. 9.1]{bcderived}.
It will suffice to show that the functor $\hat {F}:\hat {\mathcal{C}} \to \mathrm{Sets}$ defined
by the isomorphism classes of quasi-lifts of $V^\bullet$ is naturally isomorphic to the functor 
$\hat{F}_C$ from \cite{blehervelez} (see Remark \ref{rem:deformmodules}).
Because $\hat {F}$ and $\hat{F}_C$
are continuous (see Proposition \ref{prop:step4} and \cite[Prop. 2.1]{blehervelez}), it will
suffice to show that the restrictions $F$ and $F_C$ of these functors to $\mathcal{C}$
are naturally isomorphic.   Without loss of generality we can assume $C = {\HH}^0(V^\bullet)$
and  $V^\bullet$ is the one-term complex with $V^0=C$ and $V^i=0$ for $i\neq 0$
(see Remark \ref{rem:functor}).

If $R$ is an Artinian ring in $\mathrm{Ob}(\mathcal{C})$ and $(M^\bullet,\phi)$ is a quasi-lift
of $V^\bullet$ over $R$, then $M^\bullet$ has non-zero cohomology only in degree $0$
by Lemma \ref{lem:lemma3.1}. Hence $(M^\bullet,\phi)$ is isomorphic to a quasi-lift
$({M'}^\bullet,\phi')$ where ${M'}^0={\HH}^0(M^\bullet)$ and
${M'}^i=0$ otherwise.  Since, by Lemma \ref{lem:lemma3.8}, $M^\bullet$ has finite pseudocompact
$R$-tor dimension at $0$, it follows by Remark \ref{rem:dumbdumb} and Lemma \ref{lem:lemma3.1} that
${M'}^0$ has projective dimension $0$ as abstract $R$-module. 
Hence ${M'}^0$ is an abstractly free $R$-module because $R$ is local Artinian. Therefore
$k \otimes_R {M'}^0=k \hat{\otimes}_R {M'}^0$ is isomorphic to $C = {\HH}^0(V^\bullet)$ since
$k \hat{\otimes}^{\LL}_R M^\bullet$ is isomorphic to $V^\bullet$ in the derived category.
We have now shown that $({M'}^0,{\phi'}^0)$ is a lift of $C$ over $R$, in the sense of \cite{blehervelez}, 
and the isomorphism class of $({M'}^0,{\phi'}^0)$ as a lift of $C$ over $R$ determines the isomorphism class of
$(M^\bullet,\phi)$ as a quasi-lift of $V^\bullet$ over $R$.  
Conversely, suppose  $(L,\lambda)$ is a lift of $C$ over $R$, in the sense of \cite{blehervelez}.
Then $L$ is an abstractly free  
$R$-module of rank equal to $\mathrm{dim}_k\,C$. Since $R$ is Artinian,
this implies that $L$ is a discrete $R\Lambda$-module of finite length, and hence a
pseudocompact $R\Lambda$-module. Thus the complex $L^\bullet$ with
$L^0=L$ and $L^i=0$ for $i\neq 0$ together with the morphism $\psi: k\otimes_R L^\bullet=k\hat{\otimes}_R L^\bullet \to V^\bullet$
in $C^-(\Lambda)$ given by $\psi^0=\lambda$ and $\psi^i=0$ for $i\neq 0$ defines a quasi-lift $(L^\bullet,\psi)$
of $V^\bullet$ over $R$.  This shows $F$ and $F_C$ are naturally isomorphic functors.
\end{proof} 

\begin{rem}
\label{rem:2term}
Suppose $V^\bullet$ has precisely two non-zero cohomology groups. Without
loss of generality, we can assume these groups are $U_0={\HH}^0(V^\bullet)$ and
$U_{-n}={\HH}^{-n}(V^\bullet)$ for some $n > 0$,  both of finite $k$-dimension by Hypothesis
\ref{hypo:fincoh}. 
We will also regard $U_0,U_{-n}$ as complexes concentrated in degree $0$. 
In particular, we obtain a distinguished triangle  in $D^-(\Lambda)$ of the form
\begin{equation}
\label{eq:anotherone}
U_{-n}[n] \xrightarrow{\iota} V^\bullet \xrightarrow{\pi} U_0
\xrightarrow{\beta} U_{-n}[n+1]\,.
\end{equation}
By the triangle axioms, there exists a complex $C(\beta)^\bullet$
in $D^-(\Lambda)$, which is unique up to isomorphism, such that 
$$U_0 \xrightarrow{\beta}  U_{-n}[n+1] \to C(\beta)^\bullet \to U_0[1]$$
is a distinguished triangle in $D^-(\Lambda)$. In other words, 
$V^\bullet \cong C(\beta)^\bullet[-1]$ in $D^-(\Lambda)$.

The statements and proofs of 
\cite[Sect. 9]{bcderived}
can be adapted to our situation. Here 
is a summary of some of the main results:

\begin{enumerate}
\item[(i)] If
the endomorphism rings of $U_0$ and $U_{-n}$ are both given
by scalars, then $\mathrm{Hom}_{D^-(\Lambda)}(V^\bullet,V^\bullet)= k$
if and only if
\begin{enumerate}
\item[(a)] $\mathrm{Ext}^n_{\Lambda}(U_0,U_{-n})=0$, and
\item[(b)] there exists a nontrivial element
$\beta\in\mathrm{Ext}^{n+1}_{\Lambda}(U_0,U_{-n})\cong
\mathrm{Hom}_{D^-(\Lambda)}(U_0,U_{-n}[n+1])$
such that $V^\bullet\cong C(\beta)^\bullet[-1]$
in $D^-(\Lambda)$.
\end{enumerate}

\item[(ii)]
We have the following description of the tangent space of the proflat deformation
functor $\hat{F}^{\fl}$ using Remark \ref{rem:bigthm}(i):

If $n\geq 2$, then $t_{F^{\fl}}=t_F$.
If $n=1$, then $t_{F^{\fl}}$ is the subspace of
$t_F=\mathrm{Ext}^1_{D^-(\Lambda)}(V^\bullet,V^\bullet)=
\mathrm{Hom}_{D^-(\Lambda)}(V^\bullet,V^\bullet[1])$
consisting of those elements $f\in t_F$ satisfying
$\pi[1]\circ f\circ \iota=0$ in $\mathrm{Hom}_{D^-(\Lambda)}(U_{-1}[1],U_0[1])$,
where $\iota$ and $\pi$ are as in (\ref{eq:anotherone}) for $n=1$.

\item[(iii)]
Suppose that $U_{-n}$ and $U_0$ have universal deformation rings $R_{-n}$ and $R_0$
and universal deformations $[X_{-n},\psi_{-n}]$ and $[X_0,\psi_0]$, respectively,
in the sense of \cite{blehervelez}, and that
$$\mathrm{dim}_k\, \mathrm{Ext}^1_{\Lambda}(U_{-n},U_{-n}) +
\mathrm{dim}_k \,\mathrm{Ext}^1_{\Lambda}(U_{0},U_{0}) =
\mathrm{dim}_k \;t_{F^{\fl}}\,.$$
Suppose furthermore that there exists a proflat quasi-lift
$(M^\bullet,\phi)$ of $V^\bullet$ over $R_{-n}\hat{\otimes}_k R_0$
such that
$${\HH}^{-n}(M^\bullet)\cong (R_{-n}\hat{\otimes}_k R_0)\hat{\otimes}_{R_{-n}}X_{-n}
\quad \mbox{ and } \quad
{\HH}^0(M^\bullet)\cong (R_{-n}\hat{\otimes}_k R_0)\hat{\otimes}_{R_0} X_0\,.$$
Then the versal proflat deformation ring $R^{\fl}(\Lambda,V^\bullet)$ is universal and
isomorphic to $R_{-n}\hat{\otimes}_k R_0$.
\end{enumerate}
\end{rem}

\begin{rem}
\label{rem:split}
Suppose $V^\bullet$ is isomorphic in
$D^-(\Lambda)$ to a complex whose differentials are trivial.  Thus
in $D^-(\Lambda)$, $V^\bullet$ is isomorphic to the direct sum
$\bigoplus_i {\HH}^{i}(V^\bullet)[-i]$, where there are only finitely
many non-zero terms in this sum and all terms have finite $k$-dimension by Hypothesis
\ref{hypo:fincoh}.  Without loss of generality,
we can assume that all the differentials of $V^\bullet$ are trivial,
so ${\HH}^{i}(V^\bullet) = V^{i}$ for all $i$.

In this situation, a \emph{split quasi-lift}
of $V^\bullet$ over an object $R$ of
$\hat{\mathcal{C}}$ is a proflat quasi-lift $(M^\bullet,\phi)$ of $V^\bullet$ over $R$ such that $M^\bullet$ is
isomorphic in $D^-(R\Lambda)$ to a complex whose differentials are trivial.
A split deformation is the isomorphism class of a split quasi-lift.
Let $\hat{F}^{\spl} = \hat{F}^{\spl}_{V^\bullet}:\hat{\mathcal{C}} \to \mathrm{Sets}$
be the functor that sends
each object $R$ of $\hat{\mathcal{C}}$ to the set
$\hat{F}^{\spl}(R)$ of all split deformations of $V^\bullet $ over $R$.

The statements and proofs of 
\cite[Sect. 11]{bcderived}
can be adapted to our situation. Here 
is a summary of some of the main results:

\begin{enumerate}
\item[(i)]
The functor $\hat{F}^{\spl}$ is isomorphic to the product of the functors on $\hat{\mathcal {C}}$ 
associated to deformations, in the sense of \cite{blehervelez},
of the non-zero cohomology groups of $V^\bullet$ considered as $\Lambda$-modules.
Moreover, the functor $\hat{F}^{\spl}$  is naturally isomorphic to the functor
$\hat{F}^{\fl}$.

\item[(ii)]
The versal split deformation ring $R^{\spl}(\Lambda,V^\bullet)$ associated to
the split deformation functor $\hat{F}^{\spl}$
is the tensor product $\hat{\bigotimes}_i R(\Lambda,V^i)$ over $k$ of the versal deformation
rings, in the sense of \cite{blehervelez}, of the non-zero cohomology groups of $V^\bullet$.
Suppose $[U(\Lambda,V^i), \phi_i]$ is the versal deformation of $V^i$, in the sense of \cite{blehervelez}, when $V^i={\HH}^i(V^\bullet) \ne \{0\}$.
A versal split deformation of $V^\bullet$ is represented by the direct sum
$$U^{\spl}(\Lambda,V^\bullet) = \bigoplus_i R^{\spl}(\Lambda,V^\bullet)
\hat{\otimes}_{R(\Lambda,V^i)} U(\Lambda,V^i)[-i]$$
where $i$ runs over those integers for which ${\HH}^i(V^\bullet) \ne \{0\}$, together with the morphism
$\phi_U:k\otimes_{R^{\spl}(\Lambda,V^\bullet)}U^{\spl}(\Lambda,V^\bullet) \to V^\bullet$
in $C^-(\Lambda)$ given by $\phi_U^i=\phi_i$ for all $i$ with ${\HH}^i(V^\bullet) \ne \{0\}$ and $\phi_U^i=0$ for all other $i$.

\item[(iii)] 
The natural map on tangent spaces $\tau: F^{\spl}(k[\varepsilon]) \to F(k[\varepsilon])$ may be
identified with the natural inclusion
$$\iota: \bigoplus_i
 \mathrm{Ext}^1_{\Lambda}(V^i, V^i)
\to \mathrm{Ext}^1_{D^{-}(\Lambda)}(V^\bullet,V^\bullet) = \bigoplus_{i,j}\mathrm{Ext}^{1+i-j}_{
\Lambda}(V^i,V^j).
$$

\item[(iv)]
We have a non-canonical surjective continuous $k$-algebra homomorphism 
$f_{\spl}:R(\Lambda,V^\bullet) \to R^{\spl}(\Lambda,V^\bullet)$.
If the versal deformation ring $R(\Lambda,V^i)$ is universal for all $i$,
then $R^{\spl}(\Lambda,V^\bullet)$ is a universal split deformation ring.  If in addition
$f_{\spl}$ is an isomorphism, then $R(\Lambda,V^\bullet)$ is a universal deformation ring for $V^\bullet$.
\end{enumerate}
\end{rem}


\section{Derived equivalences and stable equivalences of Morita type}
\label{s:derivedequivalences}
\setcounter{equation}{0}

In \cite{derivedeq}, the first author proved that if $k$ is a field of positive characteristic, and
$A$ and $B$ are block algebras of finite groups over a complete local commutative 
Noetherian ring with residue field $k$, then a split-endomorphism
two-sided tilting complex (as introduced by Rickard \cite{rickard2}) for the derived categories
of bounded complexes of finitely generated modules over $A$, resp. $B$,
preserves the versal deformation rings of bounded complexes of finitely generated
modules over $kA$, resp. $kB$.

It is the goal of Section \ref{s:derivedeq} to prove an analogous result, Theorem \ref{thm:deformations}, 
when $k$ is  an arbitrary field and $A$ and $B$ are replaced by arbitrary finite dimensional $k$-algebras.
In Section \ref{s:stableeq}, we will then study the connection to stable equivalences of Morita type for
self-injective algebras; see Propositions \ref{prop:stable1} and \ref{prop:stabmordef}. We assume the notation of Section \ref{s:udr}.

If $S$ is a ring,  $S\mbox{-mod}$ denotes
the category of finitely generated left  $S$-modules. 
Let $C^b(S\mbox{-mod})$  be the category of bounded complexes in $S\mbox{-mod}$,
let $K^b(S\mbox{-mod})$ be the homotopy category of $C^b(S\mbox{-mod})$,
and let $D^b(S\mbox{-mod})$  be the derived category of $K^b(S\mbox{-mod})$.


\subsection{Deformations of complexes and derived equivalences}
\label{s:derivedeq}

Recall from Section \ref{s:udr} that we view the finite dimensional $k$-algebra $\Lambda$
as a pseudocompact $k$-algebra with the discrete topology, and every finitely generated
$\Lambda$-module as a pseudocompact $\Lambda$-module with the discrete topology.
In particular, $D^b(\Lambda\mbox{-mod})$ can be identified with a full subcategory of
the derived category $D^-(\Lambda)$ of bounded above complexes of pseudocompact 
$\Lambda$-modules.

Suppose $\Gamma$ is another finite dimensional $k$-algebra, and $R\in\mathrm{Ob}(\hat{\mathcal{C}})$
is arbitrary. Then $R\Lambda$ and $R\Gamma$ are free $R$-modules of finite rank.
Rickard proved in \cite{rickard1} that the derived categories $D^b(R\Lambda\mbox{-mod})$ and
$D^b(R\Gamma\mbox{-mod})$ are equivalent as triangulated categories if
and only if there is a bounded complex $P_R^\bullet$ of finitely
generated $R\Gamma$-$R\Lambda$-bimodules and a bounded complex $Q_R^\bullet$ of finitely
generated $R\Lambda$-$R\Gamma$-bimodules such that
\begin{eqnarray}
\label{eq:2tilting}
Q_R^\bullet \otimes_{R\Gamma}^{\LL}P_R^\bullet &\cong& R\Lambda \quad
\,\mbox{in $D^b((R\Lambda\otimes_R R\Lambda^{op})\mbox{-mod})$, and}\\
P_R^\bullet \otimes^{\LL}_{R\Lambda}Q_R^\bullet
&\cong& R\Gamma \quad
\mbox{in $D^b((R\Gamma\otimes_R R\Gamma^{op})\mbox{-mod})$.} \nonumber
\end{eqnarray}
If $P_R^\bullet$ and $Q_R^\bullet$ exist, then the functors
\begin{eqnarray}
\label{eq:2tiltfunct} 
P_R^\bullet \otimes^{\LL}_{R\Lambda} -: &D^b(R\Lambda\mbox{-mod}) \to
D^b(R\Gamma\mbox{-mod}) &\quad \mbox{ and}\\
Q_R^\bullet \otimes^{\LL}_{R\Gamma} -: &D^b(R\Gamma\mbox{-mod})
\to D^b(R\Lambda\mbox{-mod}) &\nonumber
\end{eqnarray}
are equivalences of derived categories, and $Q_R^\bullet$ is
isomorphic to $\mathbf{R}\mathrm{Hom}_{R\Gamma}(P_R^\bullet,R\Gamma)$ in the
derived category of $R\Lambda$-$R\Gamma$-bimodules. The complexes $P_R^\bullet$
and $Q_R^\bullet$ are called \emph{two-sided tilting complexes} (see \cite[Def. 4.2]{rickard1}).

By \cite[Prop. 3.1]{rickard1}, $P_R^\bullet$ is isomorphic in $D^b(R\Gamma\mbox{-mod})$ 
(resp. in $D^b(R\Lambda^{op}\mbox{-mod})$) to a bounded complex $X^\bullet$ of finitely generated
projective left $R\Gamma$-modules (resp. a bounded complex $Y^\bullet$ of
finitely generated projective right $R\Lambda$-modules). 
Since $R\Gamma$ and $R\Lambda$ are free $R$-modules,
projective bimodules for these algebras 
are projective as left and right modules. Let 
$$T^\bullet: \qquad \cdots \to T^{n-1}\xrightarrow{\delta^{n-1}_T} T^{n}\xrightarrow{\delta^{n}_T}
T^{n+1}\to \cdots$$
be a projective
$R\Gamma$-$R\Lambda$-bimodule resolution of $P_R^\bullet$ such that all terms of $T^\bullet$
are finitely generated projective $R\Gamma$-$R\Lambda$-bimodules. By adding an acyclic complex
of finitely generated projective $R\Gamma$-$R\Lambda$-bimodules to $T^\bullet$ if necessary,
we can find quasi-isomorphisms
\begin{eqnarray}
\label{eq:qissurjective}
f:\quad &T^\bullet \to X^\bullet&\mbox{in $C^-(R\Gamma\mbox{-mod})$ and}\\
g:\quad &T^\bullet \to Y^\bullet&\mbox{in }C^-(R\Lambda^{op}\mbox{-mod})
\nonumber
\end{eqnarray}
that are surjective on terms.
More precisely, we construct the acyclic complex of finitely generated projective $R\Gamma$-$R\Lambda$-bimodules 
to be added to $T^\bullet$ inductively, starting from the right. Without loss of generality, assume $T^i=0=X^i=Y^i$ for $i>0$. 
Let $F^{-1}$ be a finitely generated free $R\Gamma$-$R\Lambda$-bimodule such that there exists a surjective $R\Lambda$-module homomorphism
$\phi_X^{-1}:F^{-1}\to X^{-1}$ and a surjective $R\Gamma$-module homomorphism $\phi_Y^{-1}:F^{-1}\to Y^{-1}$. 
Let $\psi_X^0:F^{-1}\to X^0$ be the composition $\delta_X^{-1}\circ\phi_X^{-1}$, and let 
$\psi_Y^0:F^{-1}\to Y^0$ be the composition $\delta_Y^{-1}\circ\phi_Y^{-1}$. Since $f^0:T^0\to X^0$ and $g^0:T^0\to Y^0$ induce
isomorphisms on the zero-th cohomology groups, it follows that $\tilde{f}^0=(\psi_X^0,f^0):F^{-1}\oplus T^0 \to X^0$ and $\tilde{g}^0=(\psi_Y^0,g^0): F^{-1}\oplus T^0\to Y^0$
are surjective. In other words, we have added the acyclic complex $F^{-1}\xrightarrow{\mathrm{id}}F^{-1}$, concentrated in the degrees $-1$ and $0$, to $T^\bullet$
to ensure that the resulting homomorphisms $\tilde{f}^0$ and $\tilde{g}^0$ are surjective. Inductively, we now work our way to the left and add acyclic complexes of
finitely generated free $R\Gamma$-$R\Lambda$-bimodules of the form $F^{i-1}\xrightarrow{\mathrm{id}}F^{i-1}$, concentrated in the degrees $i-1$ and $i$, to $T^\bullet$,
for $i\le -1$ such that there exists a surjective $R\Lambda$-module homomorphism $\phi_X^{i-1}:F^{i-1}\to X^{i-1}$ and a surjective $R\Gamma$-module homomorphism 
$\phi_Y^{i-1}:F^{i-1}\to Y^{i-1}$. We also let $\psi_X^i:F^{i-1}\to X^i$ be the composition $\delta_X^{i-1}\circ\phi_X^{i-1}$, and we let 
$\psi_Y^i:F^{i-1}\to Y^i$ be the composition $\delta_Y^{i-1}\circ\phi_Y^{i-1}$, to receive surjective $R\Gamma$-module homomorphisms 
$\tilde{f}^i=(\psi_X^i,\phi_X^i,f^i):F^{i-1}\oplus F^i\oplus T^i\to X^i$ and surjective $R\Lambda$-module homomorphisms 
$\tilde{g}^i=(\psi_Y^i,\phi_Y^i,g^i):F^{i-1}\oplus F^i\oplus T^i\to Y^i$ for all $i\le -1$. Replacing $f$ by $\tilde{f}$ and
$g$ by $\tilde{g}$, if necessary, we arrive at quasi-isomorphisms $f$, $g$ as in (\ref{eq:qissurjective}) that are 
surjective on terms.

Since $\mathrm{Ker}(f)$ (resp. $\mathrm{Ker}(g)$) is an acyclic complex of projective
left $R\Gamma$-modules (resp. projective right $R\Lambda$-modules), it splits completely.
If $X^i=0$ and $Y^i=0$ for all $i\le n$, it follows that 
$\mathrm{Ker}(\delta^{n}_T)$ is isomorphic to the kernel of the $n$-th differential of
$\mathrm{Ker}(f)$ (resp. $\mathrm{Ker}(g)$) as a left $R\Gamma$-module (resp. right $R\Lambda$-module).
But the latter kernel is projective as a left $R\Gamma$-module (resp. right $R\Lambda$-module).
Hence we can truncate $T^\bullet$ at $n$:
$$\cdots \to 0\to \mathrm{Ker}(\delta^{n}_T)\to T^n\xrightarrow{\delta^{n}_T}
T^{n+1}\to \cdots$$
to produce a bounded complex that is isomorphic to $P_R^\bullet$ 
in $D^b((R\Gamma\otimes_R R\Lambda^{op})\mbox{-mod})$ with the additional property that all the
terms of this complex are projective as left and as right modules and that all terms, except possibly
the leftmost non-zero term, are actually projective as bimodules.
Similarly, we can assume that all terms of
$Q_R^\bullet$ are projective as left $R\Lambda$-modules and as right
$R\Gamma$-modules and that all  terms, except possibly
the leftmost non-zero term, are actually
projective as $R\Lambda$-$R\Gamma$-bimodules. In particular, we can take
$Q_R^\bullet$ to be the $R\Gamma$-dual
\begin{equation}
\label{eq:lambdadual}
\widetilde{P}_R^\bullet = \mathrm{Hom}_{R\Gamma}(P_R^\bullet,R\Gamma).
\end{equation}
In this situation, (\ref{eq:2tilting}) is equivalent to
\begin{eqnarray}
\label{eq:2tiltbetter} 
\widetilde{P}_R^\bullet \otimes_{R\Gamma}P_R^\bullet 
&\cong& R\Lambda \qquad\,
\mbox{in $D^b((R\Lambda\otimes_R R\Lambda^{op})\mbox{-mod})$, and}\\
P_R^\bullet \otimes_{R\Lambda}\widetilde{P}_R^\bullet
&\cong&  R\Gamma \qquad
\mbox{in $D^b((R\Gamma\otimes_R R\Gamma^{op})\mbox{-mod})$.} \nonumber
\end{eqnarray}

\begin{dfn}
\label{def:nice2sidedtilting}
Let $R\in\mathrm{Ob}(\hat{\mathcal{C}})$. Suppose $P_R^\bullet$ is a bounded complex of finitely 
generated $R\Gamma$-$R\Lambda$-bimodules.
\begin{enumerate}
\item[(a)]
We call $P_R^\bullet$
a \emph{nice two-sided tilting
complex}, if all terms of $P_R^\bullet$ are projective as left $R\Gamma$-modules and
as right $R\Lambda$-modules and (\ref{eq:2tiltbetter}) is satisfied for $\widetilde{P}_R^\bullet$ as in
(\ref{eq:lambdadual}).

\item[(b)]
If $R=k$, then we write $P^\bullet=P_R^\bullet$ and $\widetilde{P}^\bullet = \mathrm{Hom}_{\Gamma}(P^\bullet,\Gamma)$. In other words, $P^\bullet$ is a \emph{nice two-sided tilting complex} if $P^\bullet$ is a 
bounded complex of finitely generated $\Gamma$-$\Lambda$-bimodules such that all terms of
$P^\bullet$ are projective as left $\Gamma$-modules and as right $\Lambda$-modules and such that
$\widetilde{P}^\bullet \otimes_{\Gamma}P^\bullet \cong \Lambda$ in 
$D^b((\Lambda\otimes_k \Lambda^{op})\mbox{-mod})$, and
$P^\bullet \otimes_{\Lambda}\widetilde{P}^\bullet \cong \Gamma$ in $D^b((\Gamma\otimes_k \Gamma^{op})\mbox{-mod})$.
\end{enumerate}
\end{dfn}

\begin{rem}
\label{rem:splitendo}
If $\Lambda$ and $\Gamma$ are symmetric $k$-algebras, then 
$R\Lambda$ and $R\Gamma$ are symmetric $R$-algebras for all
$R\in\mathrm{Ob}(\hat{\mathcal{C}})$, in the sense that $R\Lambda$ (resp. $R\Gamma$),
considered as a bimodule over itself, is isomorphic to its $R$-linear dual
$\mathrm{Hom}_R(R\Lambda,R)$ (resp. $\mathrm{Hom}_R(R\Gamma,R)$). In particular,
the functors $\mathrm{Hom}_{R\Gamma}(-,R\Gamma)$ and $\mathrm{Hom}_R(-,R)$ are
then naturally isomorphic. Therefore, under the assumptions made prior to (\ref{eq:lambdadual}), we may take $Q_R^\bullet$
to be the $R$-dual
\begin{equation}
\label{eq:kdual}
\widecheck{P}_R^\bullet = \mathrm{Hom}_R(P_R^\bullet,R).
\end{equation}
Rickard calls a bounded complex $P_R^\bullet$ of finitely generated
$R\Gamma$-$R\Lambda$-bimodules a \emph{split-endomorphism two-sided tilting
complex}, if all terms of $P_R^\bullet$ are projective as left $R\Gamma$-modules and
as right $R\Lambda$-modules and (\ref{eq:2tiltbetter}) is satisfied with 
$\widetilde{P}_R^\bullet$ replaced by $\widecheck{P}_R^\bullet$ (see \cite[p. 336]{rickard2}).
\end{rem}

\begin{lemma}
\label{lem:lifting} 
Suppose $\Lambda$ and $\Gamma$ are finite dimensional $k$-algebras,
$P^\bullet$ is a nice two-sided tilting complex in 
$D^b((\Gamma\otimes_k\Lambda^{op})\mbox{-$\mathrm{mod}$})$ 
as in Definition $\ref{def:nice2sidedtilting}$, and
$R\in\mathrm{Ob}(\hat{\mathcal{C}})$. Then $P_R^\bullet=R\otimes_k
P^\bullet$ is a nice two-sided tilting complex in 
$D^b((R\Gamma\otimes_R R\Lambda^{op})\mbox{-$\mathrm{mod}$})$.
Moreover, 
\begin{eqnarray}
\label{eq:useful} 
\widetilde{P}_R^\bullet \hat{\otimes}_{R\Gamma}P_R^\bullet
&\cong&  R\Lambda \qquad
\mbox{in $D^-(R\Lambda\otimes_R R\Lambda^{op})$, and}\\
P_R^\bullet \hat{\otimes}_{R\Lambda}\widetilde{P}_R^\bullet 
&\cong& R\Gamma \qquad\,
\mbox{in $D^-(R\Gamma\otimes_R R\Gamma^{op})$.} \nonumber
\end{eqnarray}
In particular, the functors $P_R^\bullet\hat{\otimes}_{R\Lambda}-$ and
$\widetilde{P}_R^\bullet\hat{\otimes}_{R\Gamma}-$ provide quasi-inverse equivalences: 
\begin{eqnarray}
\label{eq:quasiinverses}
P_R^\bullet \hat{\otimes}_{R\Lambda} -: &D^-(R\Lambda) \to
D^-(R\Gamma)\,,& \mbox{and}\\
\widetilde{P}_R^\bullet \hat{\otimes}_{R\Gamma} -: &D^-(R\Gamma)
\to D^-(R\Lambda)\,.& \nonumber
\end{eqnarray}
\end{lemma}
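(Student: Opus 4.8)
The plan is to reduce everything to the two key isomorphisms in \eqref{eq:useful}, after which the statement that $P_R^\bullet$ is a nice two-sided tilting complex is almost immediate and the quasi-inverse equivalences in \eqref{eq:quasiinverses} follow formally. First I would check that $P_R^\bullet = R\otimes_k P^\bullet$ has the right shape: since all terms of $P^\bullet$ are finitely generated $\Gamma$-$\Lambda$-bimodules that are projective as left $\Gamma$-modules and as right $\Lambda$-modules, and $R$ is flat over $k$ (indeed free, being a $k$-algebra), each term $R\otimes_k P^i$ is a finitely generated $R\Gamma$-$R\Lambda$-bimodule that is projective as a left $R\Gamma$-module and as a right $R\Lambda$-module. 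Moreover $\widetilde{P}_R^\bullet = \mathrm{Hom}_{R\Gamma}(P_R^\bullet, R\Gamma)$; using that each $P^i$ is finitely generated projective as a left $\Gamma$-module and $R$ is free over $k$, one has a natural isomorphism $\mathrm{Hom}_{R\Gamma}(R\otimes_k P^i, R\Gamma)\cong R\otimes_k\mathrm{Hom}_\Gamma(P^i,\Gamma)$, so $\widetilde{P}_R^\bullet\cong R\otimes_k\widetilde{P}^\bullet$ as complexes of $R\Lambda$-$R\Gamma$-bimodules.

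Next I would establish the two isomorphisms \eqref{eq:useful}. Starting from the defining isomorphisms in Definition \ref{def:nice2sidedtilting}(b), $\widetilde{P}^\bullet \otimes_\Gamma P^\bullet \cong \Lambda$ in $D^b((\Lambda\otimes_k\Lambda^{op})\mbox{-mod})$ and $P^\bullet\otimes_\Lambda\widetilde{P}^\bullet\cong\Gamma$ in $D^b((\Gamma\otimes_k\Gamma^{op})\mbox{-mod})$, I would apply the functor $R\otimes_k -$. Because $R$ is free over $k$, this functor is exact, and it sends a bounded complex of bimodules projective on one side to a bounded complex of bimodules projective on the same side; hence $R\otimes_k(\widetilde{P}^\bullet\otimes_\Gamma P^\bullet)$ computes $\widetilde{P}_R^\bullet\otimes_{R\Gamma}P_R^\bullet$ up to quasi-isomorphism, and it remains quasi-isomorphic to $R\otimes_k\Lambda = R\Lambda$. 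The same argument on the other side gives $P_R^\bullet\otimes_{R\Lambda}\widetilde{P}_R^\bullet\cong R\Gamma$. This shows that \eqref{eq:2tiltbetter} holds for $P_R^\bullet$ and $\widetilde{P}_R^\bullet$, i.e.\ $P_R^\bullet$ is a nice two-sided tilting complex in $D^b((R\Gamma\otimes_R R\Lambda^{op})\mbox{-mod})$. To pass from the bounded derived categories of finitely generated modules in \eqref{eq:2tiltbetter} to the statements in $D^-$ of pseudocompact modules in \eqref{eq:useful}, I would invoke that the terms of $P_R^\bullet$ and $\widetilde{P}_R^\bullet$ are finitely generated, hence discrete pseudocompact, and that by Remark \ref{rem:topflat}(iii) the functors $-\otimes_{R\Gamma}-$ and $-\hat{\otimes}_{R\Gamma}-$ agree on finitely generated modules; since the terms are projective on the relevant side, the (completed) tensor product already computes the derived tensor product, so no further resolution is needed and the isomorphisms persist in $D^-(R\Lambda\otimes_R R\Lambda^{op})$ and $D^-(R\Gamma\otimes_R R\Gamma^{op})$.

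Finally, for the quasi-inverse equivalences \eqref{eq:quasiinverses}, I would argue as follows. Because all terms of $P_R^\bullet$ are projective as right $R\Lambda$-modules, the functor $P_R^\bullet\hat{\otimes}_{R\Lambda}-$ on $K^-(R\Lambda)$ is exact on short exact sequences of terms and preserves acyclic complexes (an acyclic bounded-above complex of projective right $R\Lambda$-modules being contractible), hence descends to a triangulated functor $D^-(R\Lambda)\to D^-(R\Gamma)$ that already computes $P_R^\bullet\hat{\otimes}^{\LL}_{R\Lambda}-$; likewise for $\widetilde{P}_R^\bullet\hat{\otimes}_{R\Gamma}-$ using right-$R\Gamma$-projectivity of $\widetilde{P}_R^\bullet$. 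Composing and using associativity of the completed tensor product together with \eqref{eq:useful}, one gets natural isomorphisms $\widetilde{P}_R^\bullet\hat{\otimes}_{R\Gamma}(P_R^\bullet\hat{\otimes}_{R\Lambda}-)\cong(\widetilde{P}_R^\bullet\hat{\otimes}_{R\Gamma}P_R^\bullet)\hat{\otimes}_{R\Lambda}-\cong R\Lambda\hat{\otimes}_{R\Lambda}-\cong\mathrm{id}$, and symmetrically in the other order, so the two functors are quasi-inverse equivalences. The main obstacle I anticipate is the bookkeeping needed to justify that $R\otimes_k-$ genuinely computes the derived tensor products and that the one-sided projectivity of the terms lets the ordinary (completed) tensor product stand in for the derived one at every stage; once the compatibility of $\hat{\otimes}$ with $\otimes$ on finitely generated modules (Remark \ref{rem:topflat}(iii)) and the projectivity bookkeeping are in place, the rest is formal manipulation of triangulated functors.
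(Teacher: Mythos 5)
Your proposal is correct and takes essentially the same route as the paper: apply the exact (since $R$ is free over $k$) base-change functor $R\otimes_k-$ to the defining isomorphisms, identify $R\otimes_k\widetilde{P}^\bullet\cong\widetilde{P}_R^\bullet$ and $R\otimes_k(\widetilde{P}^\bullet\otimes_\Gamma P^\bullet)\cong\widetilde{P}_R^\bullet\otimes_{R\Gamma}P_R^\bullet$ termwise, replace $\otimes$ by $\hat{\otimes}$ using finite generation and one-sided projectivity, and then get the quasi-inverse equivalences formally from (\ref{eq:useful}) and associativity, with well-definedness of the functors on $D^-$ handled exactly as in Remark \ref{rem:leftderivedtensor2} (the paper justifies acyclicity preservation via flatness of the terms and the Acyclic Assembly Lemma rather than your contractibility parenthetical, which is the cleaner justification). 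The only cosmetic difference is that the paper also cites \cite[Lemma 4.3]{rickard1} for the tilting property of $P_R^\bullet$, whereas you obtain the defining isomorphisms of Definition \ref{def:nice2sidedtilting} directly by base change.
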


\begin{proof}
We have
$$R\otimes_k(\Lambda\otimes_k \Lambda^{op})\cong R\Lambda\otimes_R R\Lambda^{op}$$
and 
$$R\otimes_k(\Gamma\otimes_k\Gamma^{op})\cong R\Gamma\otimes_R R\Gamma^{op}$$
as pseudocompact $R$-algebras. Therefore,
it follows from \cite[Lemma 4.3]{rickard1} that $P_R^\bullet=R\otimes_kP^\bullet$ is a two-sided tilting
complex in $D^b((R\Gamma\otimes_R R\Lambda^{op})\mbox{-$\mathrm{mod}$})$.
Note that $R\otimes_k\widetilde{P}^\bullet=R\otimes_k\mathrm{Hom}_{\Gamma}(P^\bullet,\Gamma)
\cong \mathrm{Hom}_{R\Gamma}(P_R^\bullet,R\Gamma)=\widetilde{P}_R^\bullet$ 
as complexes of $R\Lambda$-$R\Gamma$-bimodules.
Since all terms of $P^\bullet$ are projective as left $\Gamma$-modules and as right $\Lambda$-modules,
it follows that all terms of $P_R^\bullet$ are projective as left $R\Gamma$-modules and
as right $R\Lambda$-modules.
Since $R\otimes_k(\widetilde{P}^\bullet \otimes_{\Gamma}P^\bullet)
\cong \widetilde{P}_R^\bullet \otimes_{R\Gamma}P_R^\bullet$
in $C^b((R\Lambda\otimes_R R\Lambda^{op})\mbox{-mod})$
and since $R\otimes_k(P^\bullet \otimes_{\Lambda}\widetilde{P}^\bullet )
\cong P_R^\bullet \otimes_{R\Lambda}\widetilde{P}_R^\bullet$ 
in $C^b((R\Gamma\otimes_R R\Gamma^{op})\mbox{-mod})$, we obtain
(\ref{eq:2tiltbetter}). In other words, $P_R^\bullet$ is a nice two-sided tilting complex in 
$D^b((R\Gamma\otimes_R R\Lambda^{op})\mbox{-$\mathrm{mod}$})$.

To prove the remaining statements of Lemma \ref{lem:lifting}, we consider the isomorphisms
in (\ref{eq:2tiltbetter}) more closely. First, we note that since the terms of 
$P_R^\bullet$ are finitely generated projective left $R\Gamma$-modules and finitely generated
projective right $R\Lambda$-modules and since the terms of $\widetilde{P}_R^\bullet$ are finitely 
generated projective left $R\Lambda$-modules and finitely generated
projective right $R\Gamma$-modules, we can replace the tensor products by completed tensor
products. Moreover, since $\widetilde{P}^\bullet \otimes_{\Gamma}P^\bullet\cong  \Lambda$
in $D^b((\Lambda\otimes_k \Lambda^{op})\mbox{-mod})$, it follows that this is also true
in the derived category $D^-(\Lambda\otimes_k \Lambda^{op})$ of bounded above
pseudocompact $\Lambda$-$\Lambda$-bimodules. Similarly, 
$P^\bullet \otimes_{\Lambda}\widetilde{P}^\bullet\cong  \Gamma$ in the derived category 
$D^-(\Gamma\otimes_k \Gamma^{op})$ of bounded above
pseudocompact $\Gamma$-$\Gamma$-bimodules.  Therefore, we obtain
\begin{eqnarray}
\label{eq:oyoyoy} 
R\otimes_k(\widetilde{P}^\bullet \otimes_{\Gamma}P^\bullet )
&\cong&  R\otimes_k\Lambda=R\Lambda \qquad
\mbox{in $D^-(R\Lambda\otimes_R R\Lambda^{op})$, and}\\
R\otimes_k(P^\bullet \otimes_{\Lambda}\widetilde{P}^\bullet)
&\cong& R\otimes_k\Gamma\,=R\Gamma \qquad
\mbox{in $D^-(R\Gamma\otimes_R R\Gamma^{op}).$} \nonumber
\end{eqnarray}
Since 
$$R\otimes_k(\widetilde{P}^\bullet \otimes_{\Gamma}P^\bullet )
\cong \widetilde{P}_R^\bullet \otimes_{R\Gamma}P_R^\bullet
\cong \widetilde{P}_R^\bullet \hat{\otimes}_{R\Gamma}P_R^\bullet$$ 
in $C^-(R\Lambda\otimes_R R\Lambda^{op})$
and since 
$$R\otimes_k(P^\bullet \otimes_{\Lambda}\widetilde{P}^\bullet)
\cong P_R^\bullet \otimes_{R\Lambda}\widetilde{P}_R^\bullet
\cong P_R^\bullet \hat{\otimes}_{R\Lambda}\widetilde{P}_R^\bullet$$
in $C^-(R\Gamma\otimes_R R\Gamma^{op})$, 
the isomorphisms in (\ref{eq:useful}) follow.
This proves that the functors in (\ref{eq:quasiinverses}) are quasi-inverses
between the derived categories of bounded above complexes of pseudocompact 
$R\Lambda$- and $R\Gamma$-modules, completing the proof of Lemma \ref{lem:lifting}.
\end{proof}

\begin{rem}
\label{rem:leftderivedtensor2}
Suppose $\Lambda$, $\Gamma$, $P^\bullet$ and $R$ are as in Lemma \ref{lem:lifting}. 
Then Lemma \ref{lem:lifting} implies, in particular, that $P_R^\bullet$ is a bounded complex 
of finitely generated $R\Gamma$-$R\Lambda$-bimodules whose terms are all projective
as abstract right $R\Lambda$-modules. Hence we can use Remark \ref{rem:topflat} to see that
all terms of $P_R^\bullet$ are projective objects in the category of pseudocompact right 
$R\Lambda$-modules. Therefore, the functor $P_R^\bullet\hat{\otimes}_{R\Lambda}-:K^-(R\Lambda) \to K^-(R\Gamma)$
sends acyclic complexes to acyclic complexes (see, for example, \cite[Acyclic Assembly Lemma 2.7.3]{Weibel}).
This implies that $P_R^\bullet\hat{\otimes}_{R\Lambda}-$ is its own left derived functor, i.e.
$$P_R^\bullet\hat{\otimes}^{\LL}_{R\Lambda}-\;=\; P_R^\bullet\hat{\otimes}_{R\Lambda}-\; :\quad D^-(R\Lambda)\;\to\; D^-(R\Gamma)$$
(see, for example, \cite[Ex. 10.5.5]{Weibel}).
Similarly, we see that
$$\widetilde{P}_R^\bullet\hat{\otimes}^{\LL}_{R\Gamma}-\;=\; \widetilde{P}_R^\bullet\hat{\otimes}_{R\Gamma}-\; :\quad D^-(R\Gamma)\;\to\; D^-(R\Lambda)\,.$$
In particular, we can identify the functors $P^\bullet\hat{\otimes}^{\LL}_\Lambda-$, $P^\bullet\hat{\otimes}_\Lambda-$ and
$P^\bullet\otimes_\Lambda-$ as functors from $D^-(\Lambda)$ to $D^-(\Gamma)$.
\end{rem}

\begin{thm}
\label{thm:deformations}
Suppose $\Lambda$ and $\Gamma$ are finite dimensional $k$-algebras,
and $P^\bullet$ is a nice two-sided tilting complex in 
$D^b((\Gamma\otimes_k\Lambda^{op})\mbox{-$\mathrm{mod}$})$
as in Definition $\ref{def:nice2sidedtilting}$. 
Let $V^\bullet$ be  a complex in $D^-(\Lambda)$ satisfying Hypothesis 
$\ref{hypo:fincoh}$, and let ${V'}^\bullet =P^\bullet\hat{\otimes}^{\LL}_\Lambda
V^\bullet=P^\bullet\otimes_\Lambda V^\bullet$. Then the deformation functors $\hat{F}_{V^\bullet}$ and 
$\hat{F}_{{V'}^\bullet}$ are naturally isomorphic.
In particular, the versal deformation rings $R(\Lambda,V^\bullet)$ and $R(\Gamma,{V'}^\bullet)$ are isomorphic
in $\hat{\mathcal{C}}$, and $R(\Lambda,V^\bullet)$ is a universal deformation ring of $V^\bullet$ if and only if $R(\Gamma,{V'}^\bullet)$ is a 
universal deformation ring of ${V'}^\bullet$.
\end{thm}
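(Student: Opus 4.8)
The plan is to transport quasi-lifts along the equivalences supplied by Lemma \ref{lem:lifting}. For $R\in\mathrm{Ob}(\hat{\mathcal{C}})$, write $\Phi_R=P_R^\bullet\hat{\otimes}_{R\Lambda}-:D^-(R\Lambda)\to D^-(R\Gamma)$ and $\Psi_R=\widetilde{P}_R^\bullet\hat{\otimes}_{R\Gamma}-:D^-(R\Gamma)\to D^-(R\Lambda)$. By Lemma \ref{lem:lifting} and Remark \ref{rem:leftderivedtensor2}, $\Phi_R$ and $\Psi_R$ are mutually quasi-inverse equivalences of triangulated categories that coincide with their own left derived functors, and $\Phi_k(V^\bullet)=P^\bullet\otimes_\Lambda V^\bullet={V'}^\bullet$. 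The first step is a \emph{base-change compatibility}: for every pseudocompact $R$-module $S$ there should be a natural isomorphism
\begin{equation*}
S\hat{\otimes}^{\LL}_R\Phi_R(M^\bullet)\;\cong\;\Phi_{R_S}\!\left(S\hat{\otimes}^{\LL}_R M^\bullet\right)
\end{equation*}
in $D^-(R_S\Gamma)$, where $R_S$ is as in Remark \ref{rem:leftderivedtensor} and $\Phi_{R_S}=(R_S\hat{\otimes}_R P_R^\bullet)\hat{\otimes}_{R_S\Lambda}-$; in particular $\Phi_{R_S}=\Phi_R$ when $S$ is merely a pseudocompact $R$-module, while $R_S\hat{\otimes}_R P_R^\bullet\cong P_S^\bullet$ and $\Phi_{R_S}=\Phi_S$ when $S\in\mathrm{Ob}(\hat{\mathcal{C}})$ carries a morphism from $R$ in $\hat{\mathcal{C}}$. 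To prove this I would replace $M^\bullet$ by a complex $Q^\bullet$ of projective, hence topologically free, pseudocompact $R\Lambda$-modules using the functor $\sigma_R$ of Remark \ref{rem:leftderivedtensor}; then the terms of $Q^\bullet$, and (since $P_R^\bullet$ has terms projective as left $R\Gamma$-modules and $R\Gamma$ is topologically free over $R$) also the terms of $\Phi_R(Q^\bullet)=P_R^\bullet\hat{\otimes}_{R\Lambda}Q^\bullet$, are topologically flat over $R$ by Remark \ref{rem:topflat}. Hence both sides of the asserted isomorphism are computed by ordinary completed tensor products, and it follows from the associativity of $\hat{\otimes}$ (\cite[Sect. 2]{brumer}) together with $P_R^\bullet=R\otimes_k P^\bullet$.

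Next I would show that $\Phi_R$ carries quasi-lifts of $V^\bullet$ over $R$ to quasi-lifts of ${V'}^\bullet$ over $R$. First, ${V'}^\bullet$ satisfies Hypothesis \ref{hypo:fincoh}: by Remark \ref{rem:functor} and Lemma \ref{lem:corollary3.6}(i) one may take $V^\bullet$ to be a bounded complex of finitely generated $\Lambda$-modules, and then ${V'}^\bullet=P^\bullet\otimes_\Lambda V^\bullet$ is a bounded complex of finitely generated $\Gamma$-modules. Now let $(M^\bullet,\phi)$ be a quasi-lift of $V^\bullet$ over $R$ and set ${M'}^\bullet=\Phi_R(M^\bullet)$. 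Applying the base-change isomorphism with $S$ an arbitrary pseudocompact $R$-module, and using that $P_R^\bullet$ is a bounded complex of flat right $R\Lambda$-modules so that $P_R^\bullet\hat{\otimes}_{R\Lambda}-$ is exact and raises the lower end of the cohomological support by at most a fixed integer, one concludes that ${M'}^\bullet$ has finite pseudocompact $R$-tor dimension. Taking $S=k$ in the base-change isomorphism produces an isomorphism $k\hat{\otimes}^{\LL}_R {M'}^\bullet\cong\Phi_k(k\hat{\otimes}^{\LL}_R M^\bullet)\xrightarrow{\Phi_k(\phi)}\Phi_k(V^\bullet)={V'}^\bullet$ in $D^-(\Gamma)$, which I take to be $\phi'$. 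Thus $(\Phi_R(M^\bullet),\phi')$ is a quasi-lift of ${V'}^\bullet$ over $R$. Since $\Phi_R$ is a functor it preserves isomorphisms of quasi-lifts together with their markings, and the base-change isomorphism is natural in $R$, so $[M^\bullet,\phi]\mapsto[\Phi_R(M^\bullet),\phi']$ defines a morphism of functors $\eta:\hat{F}_{V^\bullet}\to\hat{F}_{{V'}^\bullet}$.

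Finally, the same construction applied to $\Psi_R$ yields a morphism of functors $\eta':\hat{F}_{{V'}^\bullet}\to\hat{F}_{V^\bullet}$; here one uses $\Psi_k({V'}^\bullet)\cong V^\bullet$, which comes from $\widetilde{P}^\bullet\otimes_\Gamma P^\bullet\cong\Lambda$ in Definition \ref{def:nice2sidedtilting}. The natural isomorphisms $\Psi_R\circ\Phi_R\cong\mathrm{id}$ and $\Phi_R\circ\Psi_R\cong\mathrm{id}$ of Lemma \ref{lem:lifting}, combined with Remark \ref{rem:functor} (identifying the deformation functors of isomorphic complexes), show that $\eta'\circ\eta$ and $\eta\circ\eta'$ are isomorphisms, so $\eta$ is an isomorphism $\hat{F}_{V^\bullet}\cong\hat{F}_{{V'}^\bullet}$; restriction to $\mathcal{C}$ gives $F_{V^\bullet}\cong F_{{V'}^\bullet}$. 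Since a pro-representable hull of a functor of Artinian rings is unique up to isomorphism in $\hat{\mathcal{C}}$ (\cite[Sect. 2]{Sch}), the versal deformation rings $R(\Lambda,V^\bullet)$ and $R(\Gamma,{V'}^\bullet)$ of Theorem \ref{thm:bigthm}(i) are isomorphic; and $\hat{F}_{V^\bullet}$ is represented by $R(\Lambda,V^\bullet)$ if and only if $\hat{F}_{{V'}^\bullet}$ is represented by $R(\Gamma,{V'}^\bullet)$, i.e. $R(\Lambda,V^\bullet)$ is a universal deformation ring of $V^\bullet$ precisely when $R(\Gamma,{V'}^\bullet)$ is one of ${V'}^\bullet$.

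I expect the base-change compatibility of the first paragraph to be the main obstacle: it is where the pseudocompact bookkeeping must be handled with care — replacing complexes by projective ones via $\sigma_R$, verifying that the terms of $P_R^\bullet$ are flat over $R$ as well as on the appropriate side over $R\Lambda$, and making the associativity isomorphisms for $\hat{\otimes}$ interact correctly with the definition of finite pseudocompact $R$-tor dimension and with reduction modulo $m_R$. The remaining steps are a formal transport of structure through the equivalences, parallel to the block-algebra case treated in \cite{derivedeq}.
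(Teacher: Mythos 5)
Your proposal is correct and follows essentially the same route as the paper's proof: transport quasi-lifts through $P_R^\bullet\hat{\otimes}_{R\Lambda}-$ with $P_R^\bullet=R\otimes_kP^\bullet$, use the right-$R\Lambda$-projectivity of its terms (Remark \ref{rem:leftderivedtensor2}) to preserve finite pseudocompact $R$-tor dimension and to obtain the marking by reducing modulo $m_R$, invoke the quasi-inverse $\widetilde{P}_R^\bullet\hat{\otimes}_{R\Gamma}-$ from Lemma \ref{lem:lifting}, and verify naturality in $R$ via the natural base-change isomorphism. Packaging the base-change compatibility as a separate lemma and phrasing bijectivity as an inverse natural transformation is only a cosmetic reorganization of the argument given in the paper.
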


\begin{proof}
By Remarks \ref{rem:functor} and \ref{rem:leftderivedtensor2},
we may assume, without loss of generality, that $V^\bullet$ is a bounded above complex of  topologically free pseudocompact $\Lambda$-modules.
We use the notation from Lemma \ref{lem:lifting}.
Let $R\in\mathrm{Ob}(\hat{\mathcal{C}})$ and let $(M^\bullet, \phi)$ be a quasi-lift of
$V^\bullet$ over $R$. By Remark \ref{rem:profree}(i), we can assume that the terms of
$M^\bullet$ are topologically free pseudocompact $R\Lambda$-modules. Since $M^\bullet$ has finite
pseudocompact $R$-tor dimension, we can truncate $M^\bullet$ to obtain 
a complex $N^\bullet$ that is isomorphic to $M^\bullet$ in $D^-(R\Gamma)$ such that $N^\bullet$
is a bounded complex of $R\Lambda$-modules, all of which are topologically free as $R$-modules. 

Define ${M'}^\bullet=P_R^\bullet \hat{\otimes}_{R\Lambda}M^\bullet$ and
${N'}^\bullet=P_R^\bullet \hat{\otimes}_{R\Lambda}N^\bullet$,
so  ${M'}^\bullet$ and ${N'}^\bullet$ are isomorphic objects of $D^-(R\Gamma)$.
Since $P_R^\bullet$ is a bounded complex of finitely generated projective right $R\Lambda$-modules and 
since $N^\bullet$ is a bounded complex of topologically free
$R$-modules, it follows that ${N'}^\bullet$ is also a bounded complex of topologically free $R$-modules.
But this means that there exists an integer $n$ such that for all pseudocompact $R$-modules $S$
and all integers $i< n$ we have $\HH^i(S\hat{\otimes}_R{N'}^\bullet)=
\HH^i(S\hat{\otimes}^{\LL}_R{N'}^\bullet)=0$. Therefore ${N'}^\bullet$, and hence ${M'}^\bullet$, both have 
finite pseudocompact $R$-tor dimension.

Next we note that we can view $K^-(\Lambda)$ as the full subcategory of $K^-(R\Lambda)$
consisting of bounded above complexes of pseudocompact $R\Lambda$-modules
on which the maximal ideal $m_R$ of $R$ acts trivially. Moreover, on $K^-(\Lambda)$ the functor 
$P_R^\bullet \hat{\otimes}_{R\Lambda} -$ coincides with the functor 
$P^\bullet \hat{\otimes}_{\Lambda} -:K^-(\Lambda)\to K^-(\Gamma)$. 
Viewing both functors as functors on $K^-(\Lambda)$, their corresponding left derived functors
$L^-(P_R^\bullet \hat{\otimes}_{R\Lambda} -)$ and  $L^-(P^\bullet \hat{\otimes}_{\Lambda} -)= P^\bullet\hat{\otimes}^{\LL}_\Lambda-=
P^\bullet \hat{\otimes}_{\Lambda}-$ coincide as functors from $D^-(\Lambda)$ to $D^-(\Gamma)$.
Define $\phi'=L^-(P_R^\bullet \hat{\otimes}_{R\Lambda}-)(\phi)$, so $\phi'$ is an isomorphism in 
$D^-(\Gamma)$. Since $M^\bullet$ is a bounded above complex of topologically free pseudocompact
$R\Lambda$-modules and since we assumed that $V^\bullet$ is a bounded above complex of 
topologically free pseudocompact $\Lambda$-modules, we obtain 
$\phi'=P_R^\bullet \hat{\otimes}_{R\Lambda}\phi$. Therefore, 
$${M'}^\bullet\hat{\otimes}_R^{\LL}k ={M'}^\bullet\hat{\otimes}_Rk =
(P_R^\bullet \hat{\otimes}_{R\Lambda}M^\bullet)\hat{\otimes}_Rk=
P_R^\bullet \hat{\otimes}_{R\Lambda}(M^\bullet\hat{\otimes}_Rk)
\xrightarrow{\phi'} P_R^\bullet \hat{\otimes}_{R\Lambda}V^\bullet
= {V'}^\bullet$$
which means $({M'}^\bullet,\phi')$ is a quasi-lift of ${V'}^\bullet$. 
It follows that for each $R\in\mathrm{Ob}(\hat{\mathcal{C}})$,
the functor $P_R^\bullet \hat{\otimes}_{R\Lambda}-$ induces a bijection
$\tau_R$ from the set of deformations of $V^\bullet$ over $R$ onto the set of deformations of 
${V'}^\bullet$ over $R$. 

It remains to show that the maps $\tau_R$ are natural with respect to morphisms $\alpha:R\to R'$ in 
$\hat{\mathcal{C}}$. Considering $(M^\bullet, \phi)$ and $({M'}^\bullet, \phi')$ as above, it suffices to
show that $(R'\hat{\otimes}_{R,\alpha}{M'}^\bullet,(\phi')_\alpha)$ and 
$(P_{R'}^\bullet \hat{\otimes}_{R'\Lambda} (R'\hat{\otimes}_{R,\alpha}M^\bullet),P_{R'}^\bullet \hat{\otimes}_{R'\Lambda}(\phi_\alpha))$ 
are isomorphic as quasi-lifts of ${V'}^\bullet$ over $R'$.
Since all the terms of $P_R^\bullet$ are finitely generated projective right $R\Lambda$-modules and
since all the terms of $M^\bullet$ are topologically free as $R$-modules, it follows
that there is a natural isomorphism
$$f\;:\quad {M'}^\bullet \hat{\otimes}_{R,\alpha}R'= 
(P_R^\bullet \hat{\otimes}_{R\Lambda}M^\bullet)\hat{\otimes}_{R,\alpha}R'
\to P_{R'}^\bullet \hat{\otimes}_{R'\Lambda}(M^\bullet\hat{\otimes}_{R,\alpha} R')$$
in $D^-(R'\Gamma)$ (in fact in $C^-(R'\Gamma)$). Moreover, the diagram
$$
\xymatrix{
\left({M'}^\bullet\hat{\otimes}_{R,\alpha}R'\right)\hat{\otimes}_{R'}k
\ar[rr]^(.45){f\hat{\otimes}_{R'}k} \ar[d]_{\cong}& &
\left(P_{R'}^\bullet \hat{\otimes}_{R'\Lambda}(M^\bullet\hat{\otimes}_{R,\alpha} R')\right)\hat{\otimes}_{R'}k
\ar[d]^{\cong}\\
{M'}^\bullet\hat{\otimes}_Rk\ar[rd]_{\phi'} && P_{R'}^\bullet \hat{\otimes}_{R'\Lambda}(M^\bullet\hat{\otimes}_R k)\ar[ld]^{P_{R'}^\bullet \hat{\otimes}_{R'\Lambda}\phi}
\\&{V'}^\bullet&
}$$
commutes in $D^-(\Gamma)$. Hence $(R'\hat{\otimes}_{R,\alpha}{M'}^\bullet,(\phi')_\alpha) \cong (P_{R'}^\bullet \hat{\otimes}_{R'\Lambda} (R'\hat{\otimes}_{R,\alpha}M^\bullet),P_{R'}^\bullet \hat{\otimes}_{R'\Lambda}(\phi_\alpha))$.

This means that the functors $\hat{F}_{V^\bullet}$ and $\hat{F}_{{V'}^\bullet}$ are naturally 
isomorphic, which implies Theorem \ref{thm:deformations}.
\end{proof}


\subsection{Stable equivalences of Morita type for self-injective algebras}
\label{s:stableeq}

We use the notation introduced in Section \ref{s:derivedeq}. Moreover, we assume throughout this section
that both $\Lambda$ and $\Gamma$ are self-injective finite dimensional $k$-algebras.

If $V$ is any finitely generated $\Lambda$-module or $\Gamma$-module, let $\hat{F}_V:\hat{\mathcal{C}}\to\mathrm{Sets}$ be the
deformation functor considered in \cite{blehervelez} (see also Remark \ref{rem:deformmodules}), and let
$F_V$ be its restriction to the full subcategory $\mathcal{C}$ of $\hat{\mathcal{C}}$ consisting of Artinian rings.

We first collect some useful facts, which were proved as Claims 1, 2 and 6
in the proof of \cite[Thm. 2.6]{blehervelez}, only using the assumption that $\Lambda$ is a self-injective
finite dimensional $k$-algebra. 
Note that we need to add the assumption that $M$ (resp. $M_0$) is free over $R$ (resp. $R_0$) in Claims 1 and 2
in the proof of \cite[Thm. 2.6]{blehervelez}. This makes no difference in the overall proof of \cite[Thm. 2.6]{blehervelez}
since these claims were only used under this assumption.

\begin{rem}
\label{rem:josepaper}
Suppose $\Lambda$ is a self-injective finite dimensional $k$-algebra.
Let $R,R_0$ be Artinian rings in $\mathcal{C}$, and let $\pi:R\to R_0$ be a surjection in $\mathcal{C}$.
Let $M$, $Q$ (resp. $M_0$, $Q_0$) be finitely generated $R\Lambda$-modules
(resp. $R_0\Lambda$-modules) that are free over $R$ (resp. $R_0$),
and assume that $Q$ (resp. $Q_0$) is projective. 
Suppose there are
$R_0\Lambda$-module isomorphisms $g:R_0\otimes_{R,\pi} M \to M_0$,
$h:R_0\otimes_{R,\pi} Q\to Q_0$. 
\begin{enumerate}
\item[(i)] 
If $\nu_0\in \mathrm{Hom}_{R_0\Lambda}(M_0, Q_0)$, then there
exists $\nu\in \mathrm{Hom}_{R\Lambda}(M, Q)$ with $\nu_0=
h\circ(R_0\otimes_{R,\pi}\nu)\circ g^{-1}$.
\item[(ii)]
If $\sigma_0\in \mathrm{End}_{\Lambda}(M_0)$  factors through a projective 
$R_0 \Lambda$-module, then there exists $\sigma
\in  \mathrm{End}_{R\Lambda}(M)$ such that $\sigma$ factors through a projective
$R\Lambda$-module and $\sigma_0=g\circ (R_0\otimes_{R,\pi}\sigma)\circ g^{-1}$.
\end{enumerate}
Let $P$ be a finitely generated projective $\Lambda$-module. 
Let $\iota_R:k\to R$ be the unique morphism in $\mathcal{C}$ endowing $R$ with a $k$-algebra structure,
and let $\pi_R:R\to k$ be the morphism from $R$ to its residue field $k$ in $\mathcal{C}$. Then 
$\pi_R\circ \iota_R$ is the identity on $k$, and $P_R=R\otimes_{k,\iota_R} P$ is a
projective $R\Lambda$-module cover of $P$, which is unique up to isomorphism. 
In particular, $(P_R,\pi_{R,P})$ is a lift of $P$ over $R$, where $\pi_{R,P}$ is the natural
isomorphism $k\otimes_{R,\pi_R}(R\otimes_{k,\iota_R}P)\to P$ of $\Lambda$-modules.
\begin{enumerate}
\item[(iii)]
Suppose there is a commutative
diagram of finitely generated $R\Lambda$-modules
\begin{equation}
\label{eq:whatever}
\xymatrix{
0\ar[r]&P_R\ar[d]\ar[r]^{g}&T\ar[d]\ar[r]^{h}& C\ar[d]\ar[r]&0\\
0\ar[r]&P\ar[r]^{\overline{g}}&k\otimes_R T\ar[r]^{\overline{h}}&k\otimes_R C\ar[r]&0}
\end{equation}
in which $T$ and $C$ are free over $R$ and the bottom row arises by tensoring the top row with 
$k$ over $R$ and using the $\Lambda$-module isomorphism $\pi_{R,P}:k\otimes_{R,\pi_R} P_R\to P$. 
Then the top row of $(\ref{eq:whatever})$ splits as a sequence of $R\Lambda$-modules.
\end{enumerate}
\end{rem}

We need the following result, which is a generalization of \cite[Thm. 2.6(iii)]{blehervelez}.

\begin{lemma}
\label{lem:addproj}
Suppose $\Lambda$ is a self-injective finite dimensional $k$-algebra and that 
$V$ and $P$ are finitely generated non-zero $\Lambda$-modules and $P$ is projective. 
Then $P$ has a universal deformation ring $R(\Lambda,P)$ and $R(\Lambda,P)\cong k$.
The deformation functors $\hat{F}_V$ and $\hat{F}_{V\oplus P}$ are naturally isomorphic.
In particular, the versal deformation rings $R(\Lambda,V)$ and $R(\Lambda,V\oplus P)$ are isomorphic
in $\hat{\mathcal{C}}$, and $R(\Lambda,V)$ is universal if and only if $R(\Lambda,V\oplus P)$ is universal. 
\end{lemma}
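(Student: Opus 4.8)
The plan is to prove the two assertions about $P$ and about the functors separately; the concluding statements about $R(\Lambda,V)$ and $R(\Lambda,V\oplus P)$ are then formal consequences.

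\emph{The deformation ring of $P$.} I would first show that for every $R\in\mathrm{Ob}(\hat{\mathcal{C}})$ the set $\hat{F}_P(R)$ consists of the single deformation $[P_R,\pi_{R,P}]$, in the notation of Remark \ref{rem:josepaper}. Since $\hat{F}_P$ is continuous (Remark \ref{rem:deformmodules}) it suffices to treat Artinian $R$. Given a lift $(L,\lambda)$ of $P$ over such an $R$, lift the composite $P_R\twoheadrightarrow P\xrightarrow{\lambda^{-1}}L/m_RL$ through $L\to L/m_RL$ to an $R\Lambda$-homomorphism $\tilde\psi\colon P_R\to L$ (possible because $P$, hence $P_R$, is projective, and $m_R$ is nilpotent). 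Its reduction is an isomorphism, so $\tilde\psi$ is surjective by Nakayama, and since $P_R$ and $L$ are free of the same finite rank over the local ring $R$, $\tilde\psi$ is an isomorphism; one checks $\tilde\psi$ can be chosen so that $\lambda\circ(k\otimes_R\tilde\psi)=\pi_{R,P}$. Hence $\hat{F}_P$ is the one-point functor, which is represented by $k$, so $R(\Lambda,P)\cong k$ and this is a universal deformation ring. (Self-injectivity of $\Lambda$ is not needed for this step.)

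\emph{The natural transformation $\Theta\colon\hat{F}_V\to\hat{F}_{V\oplus P}$.} Set $\Theta([L,\lambda])=[L\oplus P_R,\lambda\oplus\pi_{R,P}]$; naturality in $R$ is immediate since $R'\otimes_{R,\alpha}P_R\cong P_{R'}$ canonically, and by continuity it again suffices to work over Artinian $R$. Injectivity of $\Theta(R)$ is easy: an isomorphism of lifts $f\colon L\oplus P_R\to L'\oplus P_R$ compatible with the reduction isomorphisms must reduce to $\mathrm{id}_{V\oplus P}$, so its upper left block $a\colon L\to L'$ reduces to $\mathrm{id}_V$; then $a$ is surjective by Nakayama, hence an isomorphism (a surjective $R$-endomorphism of a finite free $R$-module), and $a$ is an isomorphism of lifts $(L,\lambda)\to(L',\lambda')$.

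\emph{Essential surjectivity, and the main obstacle.} Given a lift $(N,\mu)$ of $V\oplus P$ over $R$, I would lift the composite $P_R\twoheadrightarrow P\hookrightarrow V\oplus P\xrightarrow{\mu^{-1}}N/m_RN$ along $N\to N/m_RN$ to $g\colon P_R\to N$ (using that $P_R$ is projective over $R\Lambda$), and lift the composite $N/m_RN\xrightarrow{\mu}V\oplus P\twoheadrightarrow P$ to $r\colon N\to P_R$ using Remark \ref{rem:josepaper}(i) with $M=N$ (free over $R$) and $Q=P_R$ (projective). Then $r\circ g$ reduces to $\mathrm{id}_P$, hence is an automorphism of $P_R$; replacing $r$ by $(r\circ g)^{-1}\circ r$ we may assume $r\circ g=\mathrm{id}_{P_R}$, so $g$ is a split monomorphism of $R\Lambda$-modules and $N\cong P_R\oplus C$ with $C=\ker r$ free over $R$ of rank $\dim_kV$ and $k\otimes_RC\cong V$. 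The delicate point is to produce a reduction isomorphism $\lambda\colon k\otimes_RC\to V$ for which $(N,\mu)\cong(C\oplus P_R,\lambda\oplus\pi_{R,P})$ \emph{as lifts}: under the splitting, $\mu$ restricted to $k\otimes_RC$ has the form $(\lambda,\beta)$ for a correction map $\beta\colon k\otimes_RC\to P$, which one removes by lifting $\beta$ to an $R\Lambda$-homomorphism $C\to P_R$ (again via Remark \ref{rem:josepaper}(i), now with $M=C$, $Q=P_R$) and absorbing it into an automorphism of $C\oplus P_R$. I expect this bookkeeping — making the various module isomorphisms produced by Remark \ref{rem:josepaper} assemble into an isomorphism of \emph{lifts}, not merely of underlying modules — to be the part requiring the most care. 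Once $\Theta$ is shown to be an isomorphism of functors, the rest follows formally: naturally isomorphic functors have isomorphic pro-representable hulls, so $R(\Lambda,V)\cong R(\Lambda,V\oplus P)$, and one is representable (equivalently, its deformation ring is universal) exactly when the other is.
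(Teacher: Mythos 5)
Your proof is correct, and it follows the paper's strategy in all essentials: reduce to Artinian $R$ by continuity, send $[M,\phi]\mapsto[M\oplus P_R,\phi\oplus\pi_{R,P}]$, get injectivity from the block decomposition plus Nakayama, and get surjectivity by splitting off a copy of $P_R$ inside a given lift of $V\oplus P$ and then correcting the reduction isomorphism by lifting a map into $P_R$ via Remark \ref{rem:josepaper}(i). The one genuine (if modest) difference is in how the splitting is produced: the paper embeds $P_R$ into the lift $T$, forms the cokernel $C$, checks $C$ is $R$-free, and invokes the splitting criterion of Remark \ref{rem:josepaper}(iii) (the diagram (\ref{eq:whatever})), whereas you construct a retraction $r\colon N\to P_R$ directly by lifting the projection $k\otimes_R N\to P$ through Remark \ref{rem:josepaper}(i) and normalizing so that $r\circ g=\mathrm{id}_{P_R}$; self-injectivity of $\Lambda$ then enters your argument only through (i) (twice), while the paper uses both (i) and (iii). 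Your route is slightly more economical and makes the complement $C=\ker r$ visibly an $R$-free summand at once, at the cost of one extra normalization of $r$; the paper's route localizes the self-injectivity input in the single splitting statement (iii). For the first assertion, your hands-on identification of every lift of $P$ with $(P_R,\pi_{R,P})$ replaces the paper's appeal to $\mathrm{Ext}^1_\Lambda(P,P)=0$ and \cite[Prop.\ 2.1]{blehervelez}; both are valid, and you correctly observe that self-injectivity is not needed there. Only a cosmetic caveat: in your injectivity step the reduction of $f$ is not literally $\mathrm{id}_{V\oplus P}$ but the block-diagonal comparison isomorphism determined by $\lambda,\lambda',\pi_{R,P}$; what you actually use, namely $\lambda'\circ(k\otimes f_{11})=\lambda$, is exactly what makes $f_{11}$ an isomorphism of lifts, as in the paper.
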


\begin{proof}
Since $P$ is a projective $\Lambda$-module, it follows that $\mathrm{Ext}^1_\Lambda(P,P)=0$, which
implies by \cite[Prop. 2.1]{blehervelez} that the versal deformation ring of $P$ is isomorphic to $k$. 
For each $R\in\mathrm{Ob}(\hat{\mathcal{C}})$, let $\iota_R:k\to R$ be the unique morphism in 
$\hat{\mathcal{C}}$ endowing $R$ with a $k$-algebra structure, and let $\pi_R:R\to k$ be the morphism 
from $R$ to its residue field $k$ in $\hat{\mathcal{C}}$. Then $\pi_R\circ \iota_R$ is the identity morphism of $k$. 
This implies that $k$ is the universal deformation ring of $P$.

Let $R\in\mathrm{Ob}(\mathcal{C})$ be Artinian. Define a map
\begin{eqnarray}
\label{eq:bijproj}
F_V(R)&\to&F_{V\oplus P}(R)\,,\\
{[M,\phi]} &\mapsto&[M\oplus P_R,\phi\oplus \pi_{R,P}]\nonumber
\end{eqnarray}
where $P_R=R\otimes_{k,\iota_R}P$ and $\pi_{R,P}:k\otimes_{R,\pi_R} P_R\to P$ are as in 
Remark \ref{rem:josepaper}. Then (\ref{eq:bijproj}) is a well-defined map that is natural with respect to morphisms
$\alpha:R\to R'$ in $\mathcal{C}$, since $\alpha\circ\iota_R=\iota_{R'}$ and
$\pi_{R'}\circ\alpha=\pi_R$. Since the deformation functors $\hat{F}_V$ and $\hat{F}_{V\oplus P}$
are continuous, it suffices to show that the
map (\ref{eq:bijproj}) is bijective for all $R\in\mathrm{Ob}(\mathcal{C})$ to complete the proof of Lemma \ref{lem:addproj}. 

Suppose first that $(M,\phi)$ and $(M',\phi')$ are two lifts of $V$ over $R$ such that
there exists an $R\Lambda$-module isomorphism 
$f=\left(\begin{array}{cc}f_{11}&f_{12}\\f_{21}&f_{22}\end{array}\right): M\oplus P_R\to M'\oplus P_R$ with
$(\phi'\oplus \pi_{R,P})\circ(k\otimes f) = \phi\oplus \pi_{R,P}$. In particular, $\phi'\circ(k\otimes f_{11})=\phi$
and $k\otimes f_{22}$ is the identity morphism on $P$. By Nakayama's Lemma, this implies that $f_{11}$ and $f_{22}$ 
are $R\Lambda$-module isomorphisms, since $M$ and $M'$ are free $R$-modules of the same finite rank.
Therefore, $[M,\phi]=[M',\phi']$ and the map $(\ref{eq:bijproj})$ is injective.

We now show that the map $(\ref{eq:bijproj})$ is surjective. Let $(T,\tau)$ be a lift of $V\oplus P$ over $R$. Since
$P_R$ is a projective $R\Lambda$-module, there exists an $R\Lambda$-module homomorphism $g$ that makes
the following diagram commute
\begin{equation}
\label{eq:need1}
\xymatrix{ P_R\ar[r]^{g} \ar[d]_{\footnotesize{\left(\begin{array}{c}0\\\mathrm{pr}\end{array}\right)}}& T\ar[d]\\
V\oplus P \ar[r]^{\tau^{-1}}&k\otimes_R T}
\end{equation}
where $\mathrm{pr}$ is obtained by first tensoring $P_R$ with
$k$ over $R$ and then using the $\Lambda$-module isomorphism 
$\pi_{R,P}:k\otimes_{R,\pi_R} P_R\to P$. Then $g$ induces an injective $\Lambda$-module homomorphism
$g'$ and a commutative diagram of $\Lambda$-modules 
\begin{equation}
\label{eq:need2}
\xymatrix{P_R/m_R\,P_R\ar[r]^{g'} \ar[d]_{\pi_{R,P}}& T/m_R T\ar@{=}[dd]\\
P\ar[d]_{\iota_P}&\\
V\oplus P&k\otimes_R T\ar[l]^{\tau}}\\
\end{equation}
where $\iota_P: P \to V\oplus P$ is the natural injection and, 
as before, $m_R$ denotes the maximal ideal of $R$. Using Nakayama's Lemma, it follows that $g$
is injective. Letting $C$ be the cokernel of $g$, we obtain a commutative diagram (\ref{eq:whatever}). Since
$P_R$ and $T$ are free $R$-modules and since $g$ and $g'$ (and hence $\overline{g}=g'\circ \pi_{R,P}^{-1}$) are injective, an
elementary Nakayama's Lemma argument shows that $C$ is also free as an $R$-module. 
Therefore, by Remark \ref{rem:josepaper}(iii), the top row of (\ref{eq:whatever}) splits as a sequence of 
$R\Lambda$-modules. Let $j:C\to T$ be an $R\Lambda$-module splitting of $h$. By tensoring with $k$ over $R$,
we obtain a $\Lambda$-module splitting $\overline{j}:k\otimes_RC \to k\otimes_R T$ of $\overline{h}$.
Consider the $R\Lambda$-module isomorphism
$$(j,g):\quad  C\oplus P_R \to T\,.$$
Since $\tau\circ\overline{g}=\tau\circ g'\circ \pi_{R,P}^{-1}=\iota_P$ by (\ref{eq:need2}),
there exists a $\Lambda$-module isomorphism $\xi:k\otimes_R C\to V$ such that
$p_V\circ\tau = \xi\circ \overline{h}$, where $p_V:V\oplus P \to V$ is the natural projection onto $V$. 
Letting $p_P:V\oplus P \to P$ be the natural projection onto $P$, we have
\begin{eqnarray}
\label{eq:needthis4}
p_P\circ\tau\circ(k\otimes_R g) &=& p_P\circ\tau \circ g' \;= \;p_P\circ\iota_P\circ\pi_{R,P}\;=\;\pi_{R,P},\\
p_V\circ\tau\circ(k\otimes_R g) &=& p_V\circ\tau\circ g'\;=\;p_V\circ\iota_P\circ\pi_{R,P}\;=\;0,\nonumber\\
p_V\circ\tau\circ(k\otimes_R j) &=& p_V\circ\tau\circ\overline{j} \;=\;\xi\circ\overline{h}\circ\overline{j}\;=\; \xi,\mbox{ and}\nonumber\\
p_P\circ\tau\circ(k\otimes_R j) &=& p_P\circ\tau\circ\overline{j} \,.\nonumber
\end{eqnarray}
By Remark \ref{rem:josepaper}(i), there exists an $R\Lambda$-module homomorphism $\lambda:C\to P_R$ such that
$$k\otimes_R\lambda=\pi_{R,P}^{-1}\circ p_P\circ\tau\circ\overline{j}\,.$$
Letting $j_1=j-g\circ \lambda$ shows that $j_1$ is also an $R\Lambda$-module splitting of $h$ and
$\overline{j_1}=k\otimes_R j_1$ is also a $\Lambda$-module splitting of $\overline{h}$. Hence, on replacing $j$ by
$j_1$ and using the equations (\ref{eq:needthis4}), we see that
$p_V\circ\tau\circ(k\otimes_R j_1) = \xi$ and
$$p_P\circ\tau\circ(k\otimes_R j_1) = p_P\circ\tau\circ\overline{j}-p_P\circ\tau\circ (k\otimes_Rg)\circ(k\otimes_R\lambda)=0\,.$$
This means that $(j_1,g):  C\oplus P_R \to T$ provides an isomorphism between the lifts
$(C\oplus P_R,\xi\oplus \pi_{R,P})$ and $(T,\tau)$ of $V\oplus P$ over $R$. 
Hence the map (\ref{eq:bijproj}) is bijective for all $R\in\mathrm{Ob}(\mathcal{C})$, which 
completes the proof of Lemma \ref{lem:addproj}. 
\end{proof}

The following definition of stable equivalence of Morita type goes back to Brou\'{e} \cite{broue1}.

\begin{dfn}
\label{def:stabeq} 
Suppose $\Lambda$ and $\Gamma$ are self-injective finite dimensional $k$-algebras.
Let $X$ be a $\Gamma$-$\Lambda$-bimodule and let $Y$ be a $\Lambda$-$\Gamma$-bimodule. 
We say $X$ and $Y$ induce a \emph{stable equivalence of Morita type} between $\Lambda$ and 
$\Gamma$, if $X$ and $Y$ are projective both as left and as right modules, and if 
\begin{eqnarray}
\label{eq:stab}
Y\otimes_{\Gamma}X&\cong& \Lambda\oplus P \quad\mbox{ as $\Lambda$-$\Lambda$-bimodules, and} \\ 
\nonumber
X\otimes_{\Lambda}Y&\cong& \Gamma\oplus Q \quad\mbox{ as $\Gamma$-$\Gamma$-bimodules}, 
\end{eqnarray}
where $P$ is a projective $\Lambda$-$\Lambda$-bimodule, and $Q$ is a projective 
$\Gamma$-$\Gamma$-bimodule.
In particular, $X\otimes_{\Lambda}-$ and $Y\otimes_{\Gamma}-$ induce mutually inverse 
equivalences between the stable module categories $\Lambda\mbox{-\underline{mod}}$ and 
$\Gamma\mbox{-\underline{mod}}$.
\end{dfn}

\begin{rem}
\label{rem:rickardagain}
It follows from a result by Rickard (see \cite[Cor. 5.5]{rickard1} and \cite[Prop. 6.3.8]{kozim}) that a derived 
equivalence between $D^b(\Lambda\mbox{-{mod}})$ and $D^b(\Gamma\mbox{-{mod}})$ induces
a stable equivalence of Morita type between $\Lambda$ and $\Gamma$.

More precisely,
let $K^{b}(\Lambda\mbox{-proj})$ be the full subcategory of $D^b(\Lambda\mbox{-{mod}})$
consisting of all objects isomorphic to bounded complexes of finitely generated projective
$\Lambda$-modules. Then $K^{b}(\Lambda\mbox{-proj})$ is a thick subcategory of
$D^b(\Lambda\mbox{-{mod}})$ and we can build the Verdier quotient
$$D^b(\Lambda\mbox{-{mod}})/K^{b}(\Lambda\mbox{-proj})$$
(see, for example, \cite[Sect. 4.6]{krause2008} for the construction of this quotient).
Rickard proved in \cite[Thm. 2.1]{rickardJPAA1989} that this Verdier quotient is equivalent
as a triangulated category to the stable module category $\Lambda\mbox{-\underline{mod}}$.

Suppose now that there is a derived equivalence between $D^b(\Lambda\mbox{-{mod}})$ and 
$D^b(\Gamma\mbox{-{mod}})$. As in Section \ref{s:derivedeq}, there exists 
a nice two-sided tilting complex $P^\bullet$ (see Definition \ref{def:nice2sidedtilting}(b)
in the case where $R=k$). Following the proof of \cite[Cor. 5.5]{rickard1}, let $T^\bullet$ be a projective 
$\Gamma$-$\Lambda$-bimodule resolution of $P^\bullet$ such that all terms of $T^\bullet$ are
finitely generated projective $\Gamma$-$\Lambda$-bimodules. For large $n>0$, we can truncate
$T^\bullet$ to obtain a bounded complex
$$S^\bullet:\quad \cdots \to 0\to S^{-n}\to  T^{-n+1}\to  T^{-n+2}\to \cdots$$
that is isomorphic to $P^\bullet$ in $D^b((\Gamma\otimes\Lambda^{op})\mbox{-{mod}})$, 
where all terms but $S^{-n}$ are projective $\Gamma$-$\Lambda$-bimodules and 
$S^{-n}$ is projective as a left $\Gamma$-module and as a right $\Lambda$-module.
If we let $X=\Omega_{\Gamma\Lambda}^{-n}(S^{-n})$, the
$(-n)$-th syzygy as a $\Gamma$-$\Lambda$-bimodule, then $S^\bullet$ is isomorphic
to the one-term complex $X$ concentrated in degree 0 in 
$D^b((\Gamma\otimes_k\Lambda^{op})\mbox{-{mod}})/
K^{b}(\Gamma\otimes_k\Lambda^{op})\mbox{-proj})$.
Moreover, we have that $\widetilde{S}^\bullet=\mathrm{Hom}_\Gamma(S^\bullet,\Gamma)$ has the form
$$\widetilde{S}^\bullet:\quad \cdots \to
\widetilde{T}^{n-2}\to \widetilde{T}^{n-1}\to \widetilde{S}^{n}\to 0 \to\cdots$$
where $\widetilde{T}^i=\mathrm{Hom}_\Gamma(T^{-i},\Gamma)$ for $i<n$ and 
$\widetilde{S}^{n}=\mathrm{Hom}_\Gamma(S^{-n},\Gamma)$. 
If we let $Y=\Omega_{\Lambda\Gamma}^{n}(\widetilde{S}^{n})$, the
$n$-th syzygy as a $\Lambda$-$\Gamma$-bimodule, then $\widetilde{S}^\bullet$ is isomorphic
to the one-term complex $Y$ concentrated in degree 0 in 
$D^b((\Lambda\otimes_k\Gamma^{op})\mbox{-{mod}})/K^{b}(\Lambda\otimes_k\Gamma^{op})\mbox{-proj})$.
Using (\ref{eq:2tiltbetter}), we obtain
\begin{eqnarray*}
Y\otimes_\Gamma X &\cong& \widetilde{S}^\bullet\otimes_{\Gamma}S^\bullet\;\cong\;
\widetilde{P}^\bullet \otimes_{\Gamma}P^\bullet \;\cong \;
\Lambda \qquad\,
\mbox{in $D^b((\Lambda\otimes_k\Lambda^{op})\mbox{-mod})/K^{b}((\Lambda\otimes_k\Lambda^{op})\mbox{-proj})$, and}\\
X\otimes_\Lambda Y &\cong& S^\bullet\otimes_{\Gamma}\widetilde{S}^\bullet\;\cong\;
P^\bullet \otimes_{\Lambda}\widetilde{P}^\bullet\;\cong\;\Gamma \qquad
\mbox{in $D^b((\Gamma\otimes_k\Gamma^{op})\mbox{-mod})/K^{b}((\Gamma\otimes_k\Gamma^{op})\mbox{-proj})$.}
\end{eqnarray*}
Using that $D^b((\Lambda\otimes_k\Lambda^{op})\mbox{-{mod}})/
K^{b}((\Lambda\otimes_k\Lambda^{op})\mbox{-proj})$ is equivalent as a triangulated
category to $(\Lambda\otimes_k\Lambda^{op})\mbox{-\underline{mod}}$ and that
$D^b((\Gamma\otimes_k\Gamma^{op})\mbox{-{mod}})/
K^{b}((\Gamma\otimes_k\Gamma^{op})\mbox{-proj})$ is equivalent as a triangulated
category to $(\Gamma\otimes_k\Gamma^{op})\mbox{-\underline{mod}}$, we obtain
that $X$ and $Y$ satisfy (\ref{eq:stab}). In other words, $X$ and $Y$ induce  a stable equivalence 
of Morita type between $\Lambda$ and  $\Gamma$.
\end{rem}

The following result is proved using Theorem \ref{thm:deformations} and
Lemma \ref{lem:addproj}.

\begin{prop}
\label{prop:stable1}
Suppose $\Lambda$ and $\Gamma$ are finite dimensional self-injective $k$-algebras.
Let $P^\bullet$ be a nice two-sided tilting complex in 
$D^b((\Gamma\otimes_k\Lambda^{op})\mbox{-$\mathrm{mod}$})$ such that $P^\bullet$ is isomorphic
in $D^b((\Gamma\otimes_k\Lambda^{op})\mbox{-$\mathrm{mod}$})/
K^{b}((\Gamma\otimes_k\Lambda^{op})\mbox{-$\mathrm{proj}$})$
to a one-term complex $X$ concentrated in degree 0, as in Remark $\ref{rem:rickardagain}$.
Let $V$ be a finitely generated $\Lambda$-module, and let ${V'} = X\otimes_\Lambda V$, so ${V'}$
is a finitely generated $\Gamma$-module. Then 
$R(\Lambda,V)$ and $R(\Gamma,{V'})$ are isomorphic in $\hat{\mathcal{C}}$.
\end{prop}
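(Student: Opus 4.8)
The plan is to deduce the result from Theorem~\ref{thm:deformations} by making the image of $V$ under the derived equivalence explicit and then comparing it, over the self-injective algebra $\Gamma$, with the module $V'$. Regard $V$ as the one-term complex $V^\bullet$ concentrated in degree $0$ and set ${V'}^\bullet=P^\bullet\otimes_\Lambda V^\bullet=P^\bullet\hat{\otimes}^{\LL}_\Lambda V^\bullet$, the two descriptions agreeing by Remark~\ref{rem:leftderivedtensor2}. Since $P^\bullet$ is a nice two-sided tilting complex, Theorem~\ref{thm:deformations} gives $R(\Lambda,V^\bullet)\cong R(\Gamma,{V'}^\bullet)$ in $\hat{\mathcal{C}}$, and Proposition~\ref{prop:modulecase} identifies $R(\Lambda,V^\bullet)$ with $R(\Lambda,V)$. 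Thus it suffices to prove $R(\Gamma,{V'}^\bullet)\cong R(\Gamma,V')$.

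I would next invoke the explicit description of $P^\bullet$ from Remark~\ref{rem:rickardagain}: up to isomorphism in $D^b((\Gamma\otimes_k\Lambda^{op})\mbox{-mod})$ we may replace $P^\bullet$ by a bounded complex $S^\bullet=(0\to S^{-n}\to T^{-n+1}\to\cdots\to T^0\to 0)$ whose terms are projective as left $\Gamma$-modules and as right $\Lambda$-modules, with $T^{-n+1},\dots,T^0$ projective as bimodules, with $S^{-n}\to T^{-n+1}$ injective, and with $X=\Omega^{-n}_{\Gamma\Lambda}(S^{-n})$. Because the terms of $S^\bullet$ are projective (hence flat) as right $\Lambda$-modules, ${V'}^\bullet$ is isomorphic in $D^-(\Gamma)$ to $S^\bullet\otimes_\Lambda V$; this is a bounded complex of $\Gamma$-modules whose leftmost term is $W:=S^{-n}\otimes_\Lambda V$, whose remaining terms $T^i\otimes_\Lambda V$ are projective over $\Gamma$ --- hence, $\Gamma$ being self-injective, projective-injective --- and whose leftmost differential is injective, since the splitting of $S^{-n}\hookrightarrow T^{-n+1}$ as right $\Lambda$-modules (available because $\Lambda$ is self-injective) is preserved by $-\otimes_\Lambda V$. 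Splicing the short exact sequences of $\Gamma$-$\Lambda$-bimodules that compute the iterated cosyzygies of $S^{-n}$ in the same way gives an exact sequence $0\to S^{-n}\to I_1\to\cdots\to I_n\to X\to 0$ with each $I_j$ a projective-injective bimodule; applying $-\otimes_\Lambda V$ keeps it exact and exhibits $V'=X\otimes_\Lambda V$ as the $n$-fold $\Gamma$-cosyzygy $\Omega^{-n}_\Gamma(W)$, up to a projective direct summand.

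It then remains to compare the versal deformation ring of the complex $S^\bullet\otimes_\Lambda V$ with that of the module $\Omega^{-n}_\Gamma(W)$. For this I would peel off the projective-injective tail one term at a time: if a bounded complex $C^\bullet$ of $\Gamma$-modules has leftmost term $W'$ embedded (via the leftmost differential) into a projective-injective term, then deleting $W'$ and replacing that term by its quotient produces a complex quasi-isomorphic to $C^\bullet$ --- so the versal deformation ring is unchanged by Remark~\ref{rem:functor} --- and the new leftmost term is $\Omega^{-1}_\Gamma(W')$ up to a projective summand. Iterating this, using Lemma~\ref{lem:addproj} to discard the projective summands that appear and the facts about self-injective algebras extracted from the proof of \cite[Thm.~2.6]{blehervelez} in Remark~\ref{rem:josepaper}, one arrives at $R(\Gamma,{V'}^\bullet)\cong R(\Gamma,S^\bullet\otimes_\Lambda V)\cong R(\Gamma,\Omega^{-n}_\Gamma(W))\cong R(\Gamma,V')$, which together with $R(\Lambda,V)\cong R(\Gamma,{V'}^\bullet)$ finishes the proof. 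The main obstacle is exactly this last step: ${V'}^\bullet$ and $V'$ coincide only after passing to the stable category $\Gamma\mbox{-\underline{mod}}\simeq D^b(\Gamma\mbox{-mod})/K^b(\Gamma\mbox{-proj})$, and --- in contrast to the situation for modules --- the versal deformation ring of a complex is not a stable invariant, so one must genuinely exploit the self-injectivity of $\Gamma$ and the special shape of $S^\bullet$ from Remark~\ref{rem:rickardagain}, with Lemma~\ref{lem:addproj} controlling the projective summands introduced along the way.
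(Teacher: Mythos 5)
Your overall frame matches the paper's: reduce, via Theorem \ref{thm:deformations} and Proposition \ref{prop:modulecase}, to comparing $R(\Gamma,{V'}^\bullet)$ with $R(\Gamma,V')$, replace $P^\bullet$ by the complex $S^\bullet$ of Remark \ref{rem:rickardagain}, and note that $V'=X\otimes_\Lambda V\cong \Omega^{-n}_\Gamma(W)$ up to projective summands, where $W=S^{-n}\otimes_\Lambda V$. The gap is in the iterated peeling. Each peel is a quasi-isomorphism only when the current leftmost differential is injective: the kernel of your termwise quotient map is the two-term subcomplex $W'\to d(W')$, which is acyclic precisely when $d$ is injective, and after one peel the kernel of the new leftmost differential is exactly the cohomology of the complex in that degree. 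Your first peel is fine (your split-injectivity argument over $\Lambda$ gives it), but ${V'}^\bullet\cong P^\bullet\hat{\otimes}^{\LL}_\Lambda V$ is the image of a module under a derived equivalence and in general has non-zero cohomology in several degrees below $0$; at the lowest such degree the hypothesis of your peeling lemma fails and the iteration stops. Indeed, if the iteration could be carried all the way to degree $0$ it would show that ${V'}^\bullet$ is isomorphic in $D^-(\Gamma)$ to a one-term complex, which is false in general: ${V'}^\bullet$ and $V'$ agree only in $D^b(\Gamma\mbox{-mod})/K^b(\Gamma\mbox{-proj})\cong\Gamma\mbox{-\underline{mod}}$. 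Two further points: ``discarding'' a projective summand from a term of a complex is not covered by Lemma \ref{lem:addproj}, which concerns modules only; and Remark \ref{rem:difference} (the case $\Lambda=k$) shows that the versal deformation ring of a complex is not determined by its image in the Verdier quotient, so no argument that only uses the stable identification can close this step.

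The paper closes the step in a different way, which you would need to adopt: it never compares ${V'}^\bullet$ and $V'$ inside $D^-(\Gamma)$, but transports both objects back to $\Lambda$. By Definition \ref{def:nice2sidedtilting}(b) one has $\widetilde{P}^\bullet\otimes_\Gamma(P^\bullet\otimes_\Lambda V)\cong V$ in $D^-(\Lambda)$, which identifies $R(\Gamma,P^\bullet\otimes_\Lambda V)$ with $R(\Lambda,V)$, while (\ref{eq:stab}) gives $Y\otimes_\Gamma(X\otimes_\Lambda V)\cong V\oplus(P\otimes_\Lambda V)$ in $\Lambda\mbox{-mod}$; combined with $R(\Lambda,V)\cong R(\Lambda,V\oplus(P\otimes_\Lambda V))$ from Lemma \ref{lem:addproj}, this identifies $R(\Gamma,X\otimes_\Lambda V)$ with $R(\Lambda,V)$ as well, the comparison for the module $X\otimes_\Lambda V$ being a module-level transport along the bimodules $(X,Y)$ in the spirit of Proposition \ref{prop:stabmordef} rather than a derived-category identification over $\Gamma$. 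Replacing your peeling step by an argument of this kind is what is missing from your proposal.
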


\begin{proof}
If we view $V$ as a one-term complex concentrated in degree 0, then it follows from
Proposition \ref{prop:modulecase} and Theorem \ref{thm:deformations} that $R(\Lambda,V)\cong
R(\Gamma,P^\bullet\otimes_\Lambda V)$ in $\hat{\mathcal{C}}$.
We have that 
$$P^\bullet\otimes_\Lambda V\cong S^\bullet\otimes_\Lambda V\cong X\otimes_\Lambda V$$
in $D^b(\Gamma\mbox{-{mod}})/K^{b}(\Gamma\mbox{-$\mathrm{proj}$})$, where $S^\bullet$ is as in
Remark \ref{rem:rickardagain}. By  \cite[Thm. 2.1]{rickardJPAA1989},
$$D^b(\Gamma\mbox{-{mod}})/K^{b}(\Gamma\mbox{-$\mathrm{proj}$})\cong \Gamma\mbox{-\underline{mod}}$$
as triangulated categories. 
By Definition \ref{def:nice2sidedtilting}(b) and by (\ref{eq:stab}), we have
\begin{eqnarray*}
\widetilde{P}^\bullet \otimes_{\Gamma}(P^\bullet\otimes_\Lambda V) &\cong& V\qquad\qquad\qquad\;\,
\mbox{in $D^-(\Lambda)$, and}\\
Y\otimes_\Gamma (X\otimes_\Lambda V) &\cong& V\oplus( P\otimes_\Lambda V)\quad
\mbox{in $\Lambda$-mod.}
\end{eqnarray*}
Since moreover $R(\Lambda,V)\cong R(\Lambda, V\oplus (P\otimes_\Lambda V))$ by Lemma 
\ref{lem:addproj}, we obtain that
$$R(\Gamma,P^\bullet\otimes_\Lambda V)\cong R(\Gamma,X\otimes_\Lambda V)$$
in $\hat{\mathcal{C}}$.
Therefore,
$R(\Lambda,V)\cong R(\Gamma,P^\bullet\otimes_\Lambda V)\cong R(\Gamma,X\otimes_\Lambda V)$
in $\hat{\mathcal{C}}$.
\end{proof}

Note that not every stable equivalence of Morita type between self-injective algebras is induced by a 
derived equivalence (see, for example, \cite{dugas} and its references).
Therefore, we next show that an arbitrary stable equivalence of Morita type between self-injective algebras
preserves versal deformation rings.
The arguments are very similar to those used in \cite[Sect. 2.2]{3sim}. 

\begin{prop}
\label{prop:stabmordef}
Suppose $\Lambda$ and $\Gamma$ are self-injective finite dimensional $k$-algebras.
Suppose $X$ is a $\Gamma$-$\Lambda$-bimodule and $Y$ is a $\Lambda$-$\Gamma$-bimodule
that induce a stable equivalence of Morita type between $\Lambda$ and $\Gamma$. Let $V$ be a 
finitely generated $\Lambda$-module, and define $V'=X\otimes_{\Lambda}V$. Then 
the deformation functors $\hat{F}_V$ and $\hat{F}_{V'}$ are naturally isomorphic.
In particular, the versal deformation rings $R(\Lambda,V)$ and $R(\Gamma,V')$ are isomorphic
in $\hat{\mathcal{C}}$, and $R(\Lambda,V)$ is a universal deformation ring of $V$ if and only if 
$R(\Gamma,V')$ is a universal deformation ring of $V'$.
\end{prop}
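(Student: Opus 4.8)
The plan is to imitate the structure of the proof of Theorem \ref{thm:deformations}, but working entirely with modules (one-term complexes) and replacing ``topologically free over $R$'' by ``free over $R$'' throughout, and replacing derived tensor products by ordinary tensor products — which is legitimate because $X$ and $Y$ are projective as one-sided modules. First I would observe that since $X$ is projective as a right $\Lambda$-module (say $X$ is a direct summand of $\Lambda^m$ as a right $\Lambda$-module), for every $R\in\mathrm{Ob}(\hat{\mathcal{C}})$ the bimodule $X_R=R\otimes_k X$ is projective as a right $R\Lambda$-module and finitely generated; likewise $Y_R=R\otimes_k Y$ is finitely generated projective as a right $R\Gamma$-module. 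Moreover, tensoring the isomorphisms in (\ref{eq:stab}) with $R$ over $k$ gives $Y_R\otimes_{R\Gamma}X_R\cong R\Lambda\oplus P_R$ as $R\Lambda$-$R\Lambda$-bimodules and $X_R\otimes_{R\Lambda}Y_R\cong R\Gamma\oplus Q_R$ as $R\Gamma$-$R\Gamma$-bimodules, where $P_R=R\otimes_k P$ and $Q_R=R\otimes_k Q$ are projective bimodules over $R\Lambda$, resp. $R\Gamma$. The key elementary point, which I would verify first, is that if $L$ is a finitely generated $R\Lambda$-module that is free over $R$, then $X_R\otimes_{R\Lambda}L$ is a finitely generated $R\Gamma$-module that is again free over $R$ (because $X_R$ is a summand of a finite free right $R\Lambda$-module, so $X_R\otimes_{R\Lambda}L$ is a summand of $L^m$, hence $R$-projective, hence $R$-free since $R$ is local).

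Next I would define, for each $R\in\mathrm{Ob}(\hat{\mathcal{C}})$, a map on deformations: send a lift $(L,\lambda)$ of $V$ over $R$ to $(X_R\otimes_{R\Lambda}L,\ \lambda')$, where $\lambda'$ is the composite $k\otimes_R(X_R\otimes_{R\Lambda}L)\cong X\otimes_\Lambda(k\otimes_R L)\xrightarrow{X\otimes\lambda} X\otimes_\Lambda V=V'$; the natural isomorphism $k\otimes_R(X_R\otimes_{R\Lambda}L)\cong X\otimes_\Lambda(k\otimes_R L)$ comes from $X_R$ being finitely generated projective as a right $R\Lambda$-module. This is well defined on isomorphism classes and, arguing exactly as in the last two paragraphs of the proof of Theorem \ref{thm:deformations}, is natural with respect to morphisms $\alpha\colon R\to R'$ in $\hat{\mathcal{C}}$, using the natural isomorphism $(X_R\otimes_{R\Lambda}L)\otimes_{R,\alpha}R'\cong X_{R'}\otimes_{R'\Lambda}(L\otimes_{R,\alpha}R')$, which again holds because $X_R$ is finitely generated and projective on the right. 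Thus we get a morphism of functors $\hat{F}_V\to\hat{F}_{V'}$, and symmetrically $Y$ induces $\hat{F}_{V'}\to\hat{F}_{V'\!,\,Y\otimes X}$, i.e.\ a morphism $\hat{F}_{V'}\to\hat{F}_{Y\otimes_\Gamma X\otimes_\Lambda V}$.

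To see that these two morphisms are mutually inverse isomorphisms, I would use (\ref{eq:stab}) in the form $Y_R\otimes_{R\Gamma}X_R\otimes_{R\Lambda}L\cong L\oplus(P_R\otimes_{R\Lambda}L)$; since $P$ is a projective $\Lambda$-$\Lambda$-bimodule, $P_R\otimes_{R\Lambda}L$ is a projective $R\Lambda$-module (being a summand of $(R\Lambda)^N\otimes_{R\Lambda}L=L^N$ after reducing mod the free-over-$R$ summand — more precisely $P$ is a summand of $(\Lambda\otimes_k\Lambda)^N$, so $P_R$ is a summand of $(R\Lambda\otimes_R R\Lambda)^N$, hence $P_R\otimes_{R\Lambda}L$ is a summand of $(R\Lambda)^N$, a projective $R\Lambda$-module). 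So the composite functor on deformations is ``$L\mapsto L\oplus(\text{projective, free over }R)$''. Here is where I would invoke Lemma \ref{lem:addproj}: it states precisely that adding a finitely generated projective $\Lambda$-module does not change the deformation functor (and the proof of Lemma \ref{lem:addproj} works with $R$-free lifts, exactly the setting we are in). Hence the round-trip morphism $\hat{F}_V\to\hat{F}_{Y\otimes_\Gamma X\otimes_\Lambda V}$ is an isomorphism; combined with Remark \ref{rem:functor} (identifying $\hat{F}_{V}$ with $\hat{F}_{V\oplus(\text{proj})}$ via an isomorphism of modules), and the analogous statement for the other composite, we conclude that $\hat{F}_V\to\hat{F}_{V'}$ is an isomorphism of functors. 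The final assertions about $R(\Lambda,V)\cong R(\Gamma,V')$ in $\hat{\mathcal{C}}$ and about universality follow formally, since a versal deformation ring is a pro-representable hull and a natural isomorphism of functors carries a hull to a hull (and a representing object to a representing object).

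I expect the main obstacle to be purely bookkeeping: carefully checking that all the ``reduction mod $m_R$'' and ``base change along $\alpha$'' natural isomorphisms are compatible with the structure maps $\phi$, so that isomorphism classes of lifts really do correspond, and that the projective bimodule appearing in the round trip does reduce to a projective (not merely $R$-free) $R\Lambda$-module so that Lemma \ref{lem:addproj} applies. None of this is conceptually hard given the earlier results, but it requires the same kind of diagram-chasing as in the proof of Theorem \ref{thm:deformations} and in \cite[Sect.\ 2.2]{3sim}.
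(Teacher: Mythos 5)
Your proposal is correct and follows essentially the same route as the paper's proof: define the map on lifts by $X_R\otimes_{R\Lambda}-$, tensor the stable-equivalence isomorphisms (\ref{eq:stab}) up to $R$, use Lemma \ref{lem:addproj} (together with the fact that every lift of the projective module $P\otimes_\Lambda V$, resp. $Q\otimes_\Gamma V'$, is trivial) to absorb the projective summands appearing in the round trips, and check naturality in $R$ as in Theorem \ref{thm:deformations}. The only difference is cosmetic: the paper verifies injectivity and surjectivity of each $\tau_R$ directly, while you deduce bijectivity from both composites being isomorphisms (and your passing appeal to Remark \ref{rem:functor} should really be to the fact that isomorphic modules, e.g. $Y\otimes_\Gamma X\otimes_\Lambda V\cong V\oplus(P\otimes_\Lambda V)$, have isomorphic deformation functors), which is the same content repackaged.
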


\begin{proof}
Let $R\in\mathrm{Ob}(\mathcal{C})$ be Artinian. Then $X_R=R\otimes_kX$ is projective as left 
$R\Gamma$-module and as right $R\Lambda$-module, and $Y_R=R\otimes_k Y$ is projective as left 
$R\Lambda$-module and as right $R\Gamma$-module.
Since $X_R\otimes_{R\Lambda} (Y_R) \cong R\otimes_k(X\otimes_\Lambda Y)$, we have, using 
(\ref{eq:stab}),
\begin{eqnarray*}
Y_R\otimes_{R\Gamma} X_R&\cong& R\Lambda \oplus P_R \quad\mbox{ as 
$R\Lambda$-$R\Lambda$-bimodules, and}\\
X_R\otimes_{R\Lambda} Y_R&\cong &R\Gamma\oplus Q_R \quad\mbox{ as 
$R\Gamma$-$R\Gamma$-bimodules},
\end{eqnarray*}
where $P_R=R\otimes_k P$ is a projective $R\Lambda$-$R\Lambda$-bimodule and
$Q_R=R\otimes_k Q$ is a projective $R\Gamma$-$R\Gamma$-bimodule.

Since $P$  is a projective $\Lambda$-$\Lambda$-bimodule, it follows that $P\otimes_\Lambda V$
is a projective left $\Lambda$-module. By Lemma \ref{lem:addproj} it follows that 
$P\otimes_\Lambda V$ has a universal deformation ring, which is isomorphic to $k$.
In particular, every lift of $P\otimes_\Lambda V$ over $R$ is isomorphic to 
$\left(R\otimes_k (P\otimes_\Lambda V), \pi_{R,P\otimes_\Lambda V}\right)$, where
$\pi_{R,P\otimes_\Lambda V}:k\otimes_R \left(R\otimes_k (P\otimes_\Lambda V)\right)
\to P\otimes_\Lambda V$ is the natural isomorphism of $\Lambda$-modules.

Let now $(M,\phi)$ be a lift of $V$ over $R$. Then $M$ is a finitely generated $R\Lambda$-module.
Define $M'=X_R\otimes_{R\Lambda}M$.
Since $X_R$ is a finitely generated projective right $R\Lambda$-module and since $M$ is a finitely 
generated abstractly free $R$-module, it follows that $M'$ is a finitely generated projective, 
and hence abstractly free, $R$-module. 

Next we note that we can view $\Lambda\mbox{-mod}$ as the full subcategory of $R\Lambda\mbox{-mod}$
consisting of all finitely generated $R\Lambda$-modules on which the maximal ideal $m_R$ of $R$ acts 
trivially. Moreover, on $\Lambda\mbox{-mod}$ the functor $X_R\otimes_{R\Lambda} -$
coincides with the functor $X\otimes_{\Lambda} -$.
Define $\phi'=X_R\otimes_{R\Lambda}\phi$, so $\phi'$ is a $\Gamma$-module isomorphism, since
$X_R$ is projective as a right $R\Lambda$-module. 
Then
\begin{equation}
\label{eq:lala}
M'\otimes_Rk=(X_R\otimes_{R\Lambda}M)\otimes_Rk = X_R\otimes_{R\Lambda}(M\otimes_Rk)
\xrightarrow{\phi'} X_R\otimes_{R\Lambda}V =V'
\end{equation}
which means $(M',\phi')$ is a lift of $V'$ over $R$. We therefore obtain for all $R\in\mathrm{Ob}(\mathcal{C})$ a well-defined map 
\begin{eqnarray*}
\tau_R:\quad F_V(R)&\to&F_{V'}(R)\,,\\
{[M,\phi]}&\mapsto&[M',\phi']=[X_R\otimes_{R\Lambda}M,X_R\otimes_{R\Lambda}\phi]\,.
\end{eqnarray*}

We need to show that $\tau_R$ is bijective.
Arguing as in (\ref{eq:lala}), we see that $(Y_R\otimes_{R\Gamma}M',Y_R\otimes_{R\Gamma}\phi')$
is a lift of $Y\otimes_\Gamma V'\cong V \oplus (P\otimes_\Lambda V)$ over $R$. Moreover, 
\begin{eqnarray}
\label{eq:olala}
(Y_R\otimes_{R\Gamma}M',Y_R\otimes_{R\Gamma}\phi')
&\cong&((R\Lambda\oplus P_R)\otimes_{R\Lambda}M, (R\Lambda\oplus P_R)\otimes_{R\Lambda}\phi) \\
&\cong&(M\oplus (P_R\otimes_{R\Lambda}M), \phi\oplus(P_R\otimes_{R\Lambda}\phi)). \nonumber
\end{eqnarray}
Since $(P_R\otimes_{R\Lambda}M,P_R\otimes_{R\Lambda}\phi)$ is
a lift of the projective $\Lambda$-module $P\otimes_\Lambda V$ over $R$,
it follows from Lemma \ref{lem:addproj} that $\tau_R$ is injective.

Now let $(L,\psi)$ be a lift of $V'=X\otimes_{\Lambda}V$ over $R$.  Then 
$(L',\psi')= (Y_R\otimes_{R\Gamma}L,Y_R\otimes_{R\Gamma}\psi)$ is a lift of 
$V''=Y\otimes_{\Gamma}V'\cong V \oplus (P\otimes_\Lambda V)$ over $R$. By Lemma \ref{lem:addproj}, there exists a lift 
$(M,\phi)$ of $V$ over $R$ such that $(L',\psi')$ is isomorphic to the lift
$\left(M\oplus (R\otimes_k (P\otimes_\Lambda V)),\phi\oplus\pi_{R,P\otimes_\Lambda V}\right)$.
Arguing similarly as in (\ref{eq:olala}), we then have that $(L',\psi')$ is isomorphic to $(M'',\phi'')=
(Y_R\otimes_{R\Gamma}M',Y_R\otimes_{R\Gamma}\phi')$ where $(M',\phi')=(X_R \otimes_{R\Lambda}M,X_R\otimes_{R\Lambda}\phi)$. Therefore, $(X_R\otimes_{R\Lambda}L' ,X_R\otimes_{R\Lambda}\psi')\cong (X_R\otimes_{R\Lambda}M'',X_R\otimes_{R\Lambda}\phi'')$.
Arguing again similarly as in (\ref{eq:olala}) and using Lemma \ref{lem:addproj}, we have 
\begin{eqnarray*}
(X_R\otimes_{R\Lambda}L' ,X_R\otimes_{R\Lambda}\psi') 
&\cong&\left(L\oplus (R\otimes_k (Q\otimes_\Gamma V')),\psi\oplus \pi_{R,Q\otimes_\Gamma V'}\right),
\mbox{ and }\\
(X_R\otimes_{R\Lambda}M'',X_R\otimes_{R\Lambda}\phi'')
&\cong&\left(M' \oplus (R\otimes_k (Q\otimes_\Gamma V')),\phi'\oplus \pi_{R,Q\otimes_\Gamma V'}\right).
\end{eqnarray*}
Thus by Lemma \ref{lem:addproj}, it follows that $(L,\psi)\cong (M',\phi')$, i.e. $\tau_R$ is surjective.

To show that the maps $\tau_R$ are natural with respect to morphisms $\alpha:R\to R'$ in 
$\mathcal{C}$, consider $(M,\phi)$ and $(M',\phi')$ as above.
Since $X_R$ is a projective right $R\Lambda$-module and
$M$ is a free $R$-module, there exists a natural isomorphism
$$f:\quad R' \otimes_{R,\alpha}M'= 
R'\otimes_{R,\alpha}(X_R\otimes_{R\Lambda}M)
\to X_{R'}\otimes_{R'\Lambda}(R'\otimes_{R,\alpha} M)$$
of $R'\Gamma$-modules. It is straightforward to see that $f$ provides an isomorphism between the
lifts $(R'\otimes_{R,\alpha}M',(\phi')_\alpha)$ and 
$(X_{R'} \otimes_{R'\Lambda} (R'\otimes_{R,\alpha}M),X_{R'}\otimes_{R'\Lambda}(\phi_\alpha))$
of $V'$ over $R'$.

Since the deformation functors $\hat{F}_V$ and $\hat{F}_{V'}$ are continuous, this implies that they are naturally isomorphic. Hence the versal deformation rings $R(\Lambda,V)$ and $R(\Gamma,V')$ are isomorphic in $\hat{\mathcal{C}}$.
Moreover, $R(\Lambda,V)$ is universal if and only if  $R(\Gamma,V')$ is universal.
\end{proof}

The following two remarks will be important in Section \ref{s:examples}.

\begin{rem}
\label{rem:stabmor}
Using the notation of Proposition \ref{prop:stabmordef}, suppose that the stable endomorphism ring 
$\underline{\mathrm{End}}_{\Lambda}(V)$ is isomorphic to $k$. Then it follows that $\underline{\mathrm{End}}_{\Gamma}(V')$ is also isomorphic to $k$. 
Because $\Lambda$ and $\Gamma$ are self-injective, this implies by \cite[Thm. 2.6]{blehervelez} that 
the versal deformation rings $R(\Lambda,V)$ and $R(\Gamma,V')$ are in fact universal.
Since $\underline{\mathrm{End}}_{\Gamma}(V')=k$, 
there exists a non-projective indecomposable $\Gamma$-module $V'_0$ (unique up to isomorphism) that is a 
direct summand of  $V'$ with $\underline{\mathrm{End}}_{\Gamma}(V'_0)=k$ and $R(\Gamma,V')\cong R(\Gamma,V'_0)$
(see Lemma \ref{lem:addproj}). 
It follows that $R(\Lambda,V)\cong R(\Gamma,V'_0)$. 
\end{rem}

\begin{rem}
\label{rem:stablegood}
Using the notation of Proposition \ref{prop:stabmordef}, let $G:\Lambda\mbox{-\underline{mod}}\to \Gamma\mbox{-\underline{mod}}$ denote the stable equivalence
of Morita type induced by $X\otimes_\Lambda-$. Denote by $\mathrm{mod}_{\mathcal{P}}(\Lambda)$ (resp. $\mathrm{mod}_{\mathcal{P}}(\Gamma)$)
the full subcategory of $\Lambda\mbox{-mod}$ (resp. $\Gamma\mbox{-mod}$) whose objects are the modules that have no non-zero projective summands,
and denote the correspondence between $\mathrm{mod}_{\mathcal{P}}(\Lambda)$
and $\mathrm{mod}_{\mathcal{P}}(\Gamma)$ that is induced by $G$ again by $G$.
Let
$$0\to A \xrightarrow{\genfrac{(}{)}{0pt}{}{\mbox{\tiny $f$}}{\mbox{\tiny $s$}}} B \oplus P \xrightarrow{(g,t)} C\to 0$$
be an almost split sequence in $\Lambda\mbox{-mod}$ where $A,B,C$ are in $\mathrm{mod}_{\mathcal{P}}(\Lambda)$, $B$ is non-zero 
and $P$ is projective. Then, by \cite[Prop. X.1.6]{ars}, for any morphism $g':G(B)\to G(C)$ with $G(g)=g'$ in 
$\Gamma\mbox{-\underline{mod}}$,  there is an almost split sequence
$$0\to G(A) \xrightarrow{\genfrac{(}{)}{0pt}{}{\mbox{\tiny $f'$}}{\mbox{\tiny $u$}}} G(B)  \oplus P' \xrightarrow{(g',v)} G(C)\to 0$$
in $\Gamma\mbox{-mod}$ where $P'$ is projective and $G(f)=f'$ in $\Gamma\mbox{-\underline{mod}}$.

Suppose now additionally that $\Lambda$ and $\Gamma$ are symmetric algebras with no blocks of Loewy length $2$. Then it follows
by \cite[Cor. X.1.9 and Prop. X.1.12]{ars} that the stable Auslander-Reiten quivers of $\Lambda$ and $\Gamma$ 
are isomorphic stable translation quivers, and $G$ commutes with the syzygy functor $\Omega$. 

If $V$ and $V'$ are as in Proposition \ref{prop:stabmordef}, this means, in particular, that the stable Auslander-Reiten quiver components of $V$ and $V'$
match up, including the relative positions of $V$ and $V'$ in these components. 
We will see in Section \ref{s:examples} how to use this to transfer results about universal deformation rings of $\Lambda$-modules $V$
with $\underline{\mathrm{End}}_{\Lambda}(V)=k$ to results about the universal deformation rings of the corresponding $\Gamma$-modules $V'$.
\end{rem}


\section{A family of examples}
\label{s:examples}
\setcounter{equation}{0}

We use the notation from Section \ref{s:derivedequivalences}. Moreover, we assume that $k$ is an algebraically closed field.
In this section, we consider the derived equivalence classes of the family of symmetric $k$-algebras
$D(3\mathcal{R})$ introduced by Erdmann in \cite{erd}.  In Section \ref{s:family}, we describe these derived equivalence classes,
which were obtained by Holm in \cite[Sect. 3.2]{holm}. In Section \ref{s:exampleresults}, we apply the results from Section
\ref{s:derivedequivalences} together with the results in \cite{blehervelez} and \cite{velez2015} to
obtain universal deformation rings of $\Lambda$-modules for
other algebras $\Lambda$ of dihedral type that are derived equivalent to members of the family $D(3\mathcal{R})$.


\subsection{The derived equivalence classes of the algebras in the family $D(3\mathcal{R})$}
\label{s:family}

In \cite{erd}, Erdmann introduced the symmetric $k$-algebras of dihedral type
$D(3\mathcal{R})^{a,b,c,d}$, for integers $a\ge 1$, $b,c,d\ge 2$, see Figure \ref{fig:algebra3R}.
\begin{figure}[ht] \hrule \caption{\label{fig:algebra3R} The family $D(3\mathcal{R})^{a,b,c,d}=k[3\mathcal{R}]/I_{3\mathcal{R},a,b,c,d}$
for $a\ge 1$; $b,c,d\ge 2$.}
$$3\mathcal{R}=\vcenter{\xymatrix @R=-.1pc {
0&&1\\
\ar@(ul,dl)_{\alpha} \bullet \ar[rr]^{\beta}  &&\bullet\ar[ldddddddd]^{\delta} \ar@(ur,dr)^{\rho} 
\\&&\\&&\\&&\\&&\\&&\\&&\\&2&\\ 
&
\bullet\ar[uuuuuuuul]^{\lambda}
\ar@(ld,rd)_{\xi} & }}$$
$$I_{3\mathcal{R},a,b,c,d}=\langle \alpha\lambda,\lambda\xi,
\xi\delta,\delta\rho,\rho\beta,\beta\alpha,
\alpha^b-(\lambda\delta\beta)^a,\rho^c-(\beta\lambda\delta)^a,\xi^d-(\delta\beta\lambda)^a\rangle.$$
\vspace{1ex}
\hrule
\end{figure}

In \cite[Sect. 3.2]{holm}, Holm determined all algebras of dihedral type with precisely three isomorphism
classes of simple modules that are derived equivalent to $D(3\mathcal{R})^{a,b,c,d}$ for various $a,b\ge  1$; $c,d\ge 2$. 
Note that for $a\ge 1$; $c,d\ge 2$, we define the algebra $D(3\mathcal{R})^{a,1,c,d}$ as in Figure \ref{fig:algebra3R'}.
\begin{figure}[ht] \hrule \caption{\label{fig:algebra3R'} The family $D(3\mathcal{R})^{a,1,c,d}=k[3\mathcal{R'}]/I_{3\mathcal{R}',a,c,d}$
for $a\ge 1$; $c,d\ge 2$.}
$$3\mathcal{R}'=\vcenter{\xymatrix @R=-.1pc {
0&&1\\
\bullet \ar[rr]^{\beta}  &&\bullet\ar[ldddddddd]^{\delta} \ar@(ur,dr)^{\rho} 
\\&&\\&&\\&&\\&&\\&&\\&&\\&2&\\ 
&
\bullet\ar[uuuuuuuul]^{\lambda}
\ar@(ld,rd)_{\xi} & }}$$
$$I_{3\mathcal{R}',a,c,d}= \langle \lambda\xi,\xi\delta,\delta\rho,\rho\beta,
\rho^c-(\beta\lambda\delta)^a,\xi^d-(\delta\beta\lambda)^a\rangle.$$
\vspace{1ex}
\hrule
\end{figure}

Holm showed in \cite[Thm. 3.4]{holm} that no block of a group algebra with dihedral defect groups is derived equivalent to $D(3\mathcal{R})^{a,b,c,d}$
for any $a,b\ge 1$; $c,d\ge 2$. Note that up to derived equivalence, we can order $1\le a\le b\le c\le d$; $2\le c$ in $D(3\mathcal{R})^{a,b,c,d}$.
By \cite[Sect. 3.2]{holm}, there are precisely five additional families of Morita equivalence classes of algebras of dihedral type 
that are derived equivalent to the algebras in the family $D(3\mathcal{R})^{a,b,c,d}$, $a,b\ge 1$; $c,d\ge 2$. 
We list these Morita equivalence classes in Figure \ref{fig:derived3R}. 
\begin{figure}[ht] \hrule \caption{\label{fig:derived3R} Morita equivalence classes of algebras of dihedral type that are derived
equivalent to $D(3\mathcal{R})^{a,b,c,d}$ for various $a,b\ge 1$; $c,d\ge 2$ (see \cite[Sect. 3.2]{holm}).}
\begin{enumerate}
\item[(A)] The family $D(3\mathcal{Q})^{b,c,d}=k[3\mathcal{Q}]/I_{3\mathcal{Q},b,c,d}$,
$b\ge 1$; $c,d\ge 2$, which is derived equivalent to $D(3\mathcal{R})^{1,b,c,d}\;$:
$$3\mathcal{Q}=\vcenter{\xymatrix @R=-.1pc {
0&&1\\
\ar@(ul,dl)_{\alpha} \bullet \ar[rr]^{\beta}  &&\bullet\ar[ldddddddd]^{\delta} \ar@(ur,dr)^{\rho} 
\\&&\\&&\\&&\\&&\\&&\\&&\\&&\\ 
&
\bullet\ar[uuuuuuuul]^{\lambda}\\
&2&}}$$
$$I_{3\mathcal{Q},b,c,d}= \langle \alpha\lambda,\delta\rho,\rho\beta,\beta\alpha,
\alpha^c-(\lambda\delta\beta)^b,\rho^d-(\beta\lambda\delta)^b\rangle.$$
\vspace{2ex}
\item[(B)]
The family $D(3\mathcal{L})^{c,d}=k[3\mathcal{L}]/I_{3\mathcal{L},c,d}$,
$c,d\ge 2$, which is derived equivalent to $D(3\mathcal{R})^{1,1,c,d}\;$:
$$3\mathcal{L}=\vcenter{\xymatrix @R=-.1pc {
0&&1\\
\ar@(ul,dl)_{\alpha} \bullet \ar[rr]^{\beta}  &&\bullet\ar[ldddddddd]^{\delta} 
\\&&\\&&\\&&\\&&\\&&\\&&\\&&\\ 
&
\bullet\ar[uuuuuuuul]^{\lambda}\\
&2&}}$$
$$I_{3\mathcal{L},c,d}=\langle \alpha\lambda,\beta\alpha,
\alpha^d-(\lambda\delta\beta)^c,\delta(\beta\lambda\delta)^c\rangle.$$
\vspace{2ex}
\item[(C)] 
The family $D(3\mathcal{A})_2^{c,d}=k[3\mathcal{A}]/I_{(3\mathcal{A})_2,c,d}$,
$c\ge d\ge 2$, which is derived equivalent to $D(3\mathcal{R})^{1,1,c,d}\;$:
$$3\mathcal{A}=\vcenter{\xymatrix @R=-.1pc {
&0&\\
1\; \bullet \ar@<.8ex>[r]^(.56){\beta} \ar@<1ex>[r];[]^(.44){\gamma}
& \bullet \ar@<.8ex>[r]^(.44){\delta} \ar@<1ex>[r];[]^(.56){\eta} & \bullet\; 2}}$$
$$I_{(3\mathcal{A})_2,c,d}=\langle \gamma\eta,\delta\beta,(\beta\gamma)^c-(\eta\delta)^d\rangle.$$
\vspace{2ex}
\item[(D)] 
The family $D(3\mathcal{B})_2^{b,c,d}=k[3\mathcal{B}]/I_{(3\mathcal{B})_2,b,c,d}$,
$b,c\ge 1$ $(b+c> 2)$; $d\ge 2$ which is derived equivalent to $D(3\mathcal{R})^{1,b,c,d}$ if $c\ge 2$ and to
$D(3\mathcal{R})^{1,c,b,d}$ if $c=1$ (and hence $b\ge 2$)\;:
$$3\mathcal{B}=\vcenter{\xymatrix @R=-.1pc {
&1&0&\\
&\ar@(ul,dl)_{\alpha} \bullet \ar@<.8ex>[r]^{\beta} \ar@<.9ex>[r];[]^{\gamma}
& \bullet \ar@<.8ex>[r]^(.46){\delta} \ar@<.9ex>[r];[]^(.54){\eta} & \bullet\;2}}$$
$$I_{(3\mathcal{B})_2,b,c,d}=\langle \alpha\gamma,\beta\alpha,\gamma\eta,\delta\beta,
\alpha^d-(\gamma\beta)^b,(\beta\gamma)^b-(\eta\delta)^c\rangle.$$
\vspace{2ex}
\item[(E)] 
The family $D(3\mathcal{D})_2^{a,b,c,d}=k[3\mathcal{D}]/I_{(3\mathcal{D})_2,a,b,c,d}$,
$a,b\ge 1$; $c,d\ge 2$ which is derived equivalent to $D(3\mathcal{R})^{a,b,c,d}$:
$$3\mathcal{D}=\vcenter{\xymatrix @R=-.1pc {
&1&0&2\\
&\ar@(ul,dl)_{\alpha} \bullet \ar@<.8ex>[r]^{\beta} \ar@<.9ex>[r];[]^{\gamma}
& \bullet \ar@<.8ex>[r]^(.46){\delta} \ar@<.9ex>[r];[]^(.54){\eta} & \bullet \ar@(ur,dr)^{\xi}}}$$
$$I_{(3\mathcal{D})_2,a,b,c,d}=\langle \alpha\gamma,\beta\alpha,\gamma\eta,\delta\beta,
\eta\xi,\xi\delta,\alpha^c-(\gamma\beta)^a,(\beta\gamma)^a-(\eta\delta)^b,\xi^d-(\delta\eta)^b\rangle.$$
\end{enumerate}
\vspace{1ex}
\hrule
\end{figure}


\subsection{Universal deformation rings for algebras in the derived equivalence classes of members of $D(3\mathcal{R})$}
\label{s:exampleresults}

In \cite{blehertalbott}, \cite{blehervelez} and \cite{velez2015}, the universal deformation rings of certain modules of
$D(3\mathcal{R})^{a,b,c,d}$ were determined for various $a,b\ge 1$; $c,d\ge 2$. Recall that it follows from \cite[Thm. 2.6]{blehervelez}
(see also Remark \ref{rem:stabmor}) that if $\Lambda$ is a self-injective finite dimensional 
$k$-algebra and $V$ is a finitely generated $\Lambda$-module
whose stable endomorphism ring is isomorphic to $k$ then the versal deformation ring $R(\Lambda,V)$ is universal.

\begin{rem}
\label{rem:derivedex1}
In \cite{blehertalbott}, the first author and S. Talbott studied the case $D(3\mathcal{R})^{1,1,2,2}$, which is of polynomial growth, 
and determined all indecomposable $D(3\mathcal{R})^{1,1,2,2}$-modules whose stable endomorphism rings are isomorphic to $k$,
together with their universal deformation rings. 

Using Figure \ref{fig:derived3R}, we obtain the following list of Morita equivalence classes of
algebras of dihedral type that are derived equivalent to
$D(3\mathcal{R})^{1,1,2,2}$:
$$D(3\mathcal{R})^{1,1,2,2}, D(3\mathcal{Q})^{1,2,2}, D(3\mathcal{L})^{2,2},
D(3\mathcal{A})_2^{2,2}, D(3\mathcal{B})_2^{1,2,2}, D(3\mathcal{B})_2^{2,1,2}, D(3\mathcal{D})_2^{1,1,2,2}.$$
By Proposition \ref{prop:stable1}, it follows 
that if $\Lambda$ is any of the algebras in this derived equivalence class and $V$ is an indecomposable $\Lambda$-module
with $\underline{\mathrm{End}}_\Lambda(V)=k$, then $R(\Lambda,V)\cong R(D(3\mathcal{R})^{1,1,2,2},V')$ if $V$
corresponds to $V'$ under the stable  equivalence of Morita type between $\Lambda$ and $D(3\mathcal{R})^{1,1,2,2}$ that is
induced by the derived equivalence. By 
Remark \ref{rem:stablegood}, we see additionally that the stable Auslander-Reiten quiver components of $V$ and $V'$
match up, including the relative positions of $V$ and $V'$ in these components. This reaffirms the results in \cite[Thm. 1.1]{blehertalbott}
(see also  \cite[Props. 3.1-3.3]{blehertalbott}) for these algebras.
\end{rem}

In \cite[Sect. 3]{blehervelez}, the authors studied the algebra $D(3\mathcal{R})^{1,2,2,2}$  and determined all
indecomposable $D(3\mathcal{R})^{1,2,2,2}$-modules whose stable endomorphism rings are isomorphic to $k$,
together with their universal deformation rings. 

Using Proposition \ref{prop:stable1}, Remark \ref{rem:stablegood} and 
\cite[Thm. 1.2]{blehervelez} (see also \cite[Thm. 3.8, Props. 3.9-3.11, Thm. 3.16, Prop. 3.17]{blehervelez}), 
we obtain the following result for all algebras of dihedral type  in the derived equivalence class of $D(3\mathcal{R})^{1,2,2,2}$.

\begin{thm}
\label{thm:derivedex2}
Let $\Lambda$ be one of the following algebras:
\begin{equation}
\label{eq:1222}
D(3\mathcal{R})^{1,2,2,2}, D(3\mathcal{Q})^{2,2,2}, D(3\mathcal{B})_2^{2,2,2}, D(3\mathcal{D})_2^{1,2,2,2}.
\end{equation}
Suppose $\mathfrak{C}$ is a component of the stable Auslander-Reiten quiver $\Gamma_s(\Lambda)$.
\begin{itemize}
\item[(i)] If $\mathfrak{C}$ is one of the two $3$-tubes, then $\Omega(\mathfrak{C})$ is the 
other $3$-tube. There are exactly three $\Omega^2$-orbits of modules
in $\mathfrak{C}$ whose stable endomorphism rings are isomorphic to $k$. 
If $U_0$ is a module that belongs to the boundary of $\mathfrak{C}$, then these three 
$\Omega^2$-orbits are represented by $U_0$, by a successor $U_1$ of $U_0$, and by a 
successor $U_2$ of $U_1$ that does not lie in the $\Omega^2$-orbit of $U_0$. The universal
deformation rings are 
$$R(\Lambda,U_0)\cong R(\Lambda,U_1)\cong k,\quad R(\Lambda,U_2)\cong k[[t]].$$

\item[(ii)] There are infinitely many components of $\Gamma_s(\Lambda)$ of type $\mathbb{Z}
A_\infty^\infty$ that each contain a module whose stable endomorphism ring is isomorphic to $k$.
If $\mathfrak{C}$ is such a component, then $\mathfrak{C}=\Omega(\mathfrak{C})$ and there
are exactly six  $\Omega^2$-orbits $($resp. exactly three $\Omega$-orbits$)$ of modules 
in $\mathfrak{C}$ whose stable endomorphism rings are isomorphic to $k$. These three $\Omega$-orbits are 
represented by a module $V_0$, by a successor $V_1$ of $V_0$ that does not lie in the $\Omega$-orbit of $V_0$, 
and by a successor $V_2$ of $V_1$ that does not lie in the $\Omega^2$-orbit of $V_0$. 
The universal deformation rings are 
$$R(\Lambda,V_0)\cong k[[t]]/(t^2),\quad R(\Lambda,V_1)\cong k,\quad R(\Lambda,V_2)\cong k[[t]].$$

\item[(iii)] There are infinitely many $1$-tubes of $\Gamma_s(\Lambda)$ that each contain a module 
whose stable endomorphism ring is isomorphic to $k$. If $\mathfrak{C}$ is such a component, 
then there is exactly one $\Omega^2$-orbit of modules in $\mathfrak{C}$ whose stable 
endomorphism ring is isomorphic to $k$, represented by a module $W_0$ belonging to the 
boundary of $\mathfrak{C}$. The universal deformation ring of $W_0$ is 
$$R(\Lambda,W_0)\cong k[[t]].$$
\end{itemize}
\end{thm}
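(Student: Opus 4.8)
The plan is to transport the classification of universal deformation rings for $\Gamma_0:=D(3\mathcal{R})^{1,2,2,2}$, which was carried out in \cite{blehervelez}, to each algebra $\Lambda$ in the list \eqref{eq:1222} by means of the derived equivalences recorded in Figure \ref{fig:derived3R}. First I would note that every algebra in \eqref{eq:1222} is a symmetric algebra of dihedral type with no block of Loewy length $2$ (inspect the relations in Figures \ref{fig:algebra3R}, \ref{fig:derived3R} for $a=1$, $b=c=d=2$: all relevant Loewy lengths are at least $3$), and that each is derived equivalent to $\Gamma_0$ by Holm's classification: $D(3\mathcal{Q})^{2,2,2}$, $D(3\mathcal{B})_2^{2,2,2}$, and $D(3\mathcal{D})_2^{1,2,2,2}$ arise from rows (A), (D), (E) of Figure \ref{fig:derived3R} with these parameter values. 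By Rickard's theorem together with Remark \ref{rem:rickardagain}, each such derived equivalence yields a nice two-sided tilting complex $P^\bullet$ (in the case $R=k$) that is isomorphic, in the appropriate Verdier quotient, to a one-term complex $X$ concentrated in degree $0$; the bimodules $X$ and $Y$ then induce a stable equivalence of Morita type between $\Lambda$ and $\Gamma_0$.

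Next I would apply Proposition \ref{prop:stable1} together with Remark \ref{rem:stabmor}. For an indecomposable $\Lambda$-module $V$ with $\underline{\mathrm{End}}_\Lambda(V)\cong k$, the corresponding $\Gamma_0$-module $V'=X\otimes_\Lambda V$ also satisfies $\underline{\mathrm{End}}_{\Gamma_0}(V')\cong k$; since $\Lambda$ and $\Gamma_0$ are self-injective the versal deformation rings in question are universal, and $R(\Lambda,V)\cong R(\Gamma_0,V'_0)$, where $V'_0$ is the unique non-projective indecomposable summand of $V'$. Because $X\otimes_\Lambda-$ and $Y\otimes_{\Gamma_0}-$ are mutually inverse stable equivalences, this sets up a bijection between the relevant modules over $\Lambda$ and over $\Gamma_0$ compatible with universal deformation rings. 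In particular, starting from a $\Gamma_0$-module with known universal deformation ring, one recovers a $\Lambda$-module with the same one.

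It then remains to match the components of $\Gamma_s(\Lambda)$ and $\Gamma_s(\Gamma_0)$. For this I would invoke Remark \ref{rem:stablegood}: since $\Lambda$ and $\Gamma_0$ are symmetric with no block of Loewy length $2$, the stable equivalence of Morita type induces an isomorphism of stable Auslander--Reiten quivers that commutes with $\Omega$ and preserves the relative positions of $V$ and $V'$ in their components. Hence the description of the components of $\Gamma_s(\Gamma_0)$ containing a module with trivial stable endomorphism ring -- the two $3$-tubes interchanged by $\Omega$, the $\mathbb{Z}A_\infty^\infty$ components fixed by $\Omega$, and the $1$-tubes -- together with the $\Omega^2$- and $\Omega$-orbit structure of those modules and the successor relations among them (that is, the modules $U_0,U_1,U_2$, $V_0,V_1,V_2$, $W_0$ of the statement) transfers verbatim to $\Gamma_s(\Lambda)$. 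Combining this with the computations of \cite[Thm.~1.2]{blehervelez} (see also \cite[Thm.~3.8, Props.~3.9--3.11, Thm.~3.16, Prop.~3.17]{blehervelez}) for $\Gamma_0$ gives the asserted isomorphisms $R(\Lambda,U_0)\cong R(\Lambda,U_1)\cong k$, $R(\Lambda,U_2)\cong k[[t]]$, $R(\Lambda,V_0)\cong k[[t]]/(t^2)$, $R(\Lambda,V_1)\cong k$, $R(\Lambda,V_2)\cong k[[t]]$, and $R(\Lambda,W_0)\cong k[[t]]$.

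\textbf{Main obstacle.} The principal difficulty is the bookkeeping in the last step: one must check that the isomorphism of stable Auslander--Reiten quivers really carries the three distinguished $\Omega^2$-orbits in each $3$-tube, the three distinguished $\Omega$-orbits in each $\mathbb{Z}A_\infty^\infty$ component, and the single distinguished $\Omega^2$-orbit in each $1$-tube of $\Gamma_0$ to the analogous orbits for $\Lambda$, including the successor relations $U_0\rightsquigarrow U_1\rightsquigarrow U_2$ and $V_0\rightsquigarrow V_1\rightsquigarrow V_2$ and the non-membership conditions on $\Omega$- and $\Omega^2$-orbits. This amounts to observing that all of these data -- triviality of the stable endomorphism ring, the translation $\tau=\Omega^2$, the shape of a component, and the position within it -- are invariants of the stable module category as a triangulated category, and hence are preserved by the equivalence of Remark \ref{rem:stablegood}. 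A secondary point, already noted above, is verifying the Loewy-length hypothesis of Remark \ref{rem:stablegood} for each algebra in \eqref{eq:1222}.
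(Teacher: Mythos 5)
Your proposal is correct and follows essentially the same route as the paper: the paper deduces Theorem \ref{thm:derivedex2} precisely by combining Holm's derived equivalences from Figure \ref{fig:derived3R}, the induced stable equivalence of Morita type via Remark \ref{rem:rickardagain} and Proposition \ref{prop:stable1} (with Remark \ref{rem:stabmor}), the matching of stable Auslander--Reiten components and positions from Remark \ref{rem:stablegood}, and the computations for $D(3\mathcal{R})^{1,2,2,2}$ in \cite{blehervelez}. Your verification of the symmetric/Loewy-length hypothesis and the orbit bookkeeping is exactly the content the paper leaves implicit, so nothing further is needed.
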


\begin{rem}
\label{rem:moredetail2}
Suppose $\Lambda$ is one of the algebras in (\ref{eq:1222}), and suppose $V$ is a $\Lambda$-module with
$\mathrm{End}_\Lambda(V)=k$. 
\begin{enumerate}
\item[(i)] If $\Lambda=D(3\mathcal{R})^{1,2,2,2}$, then $V$ is one of the modules
$$S_0,S_1,S_2,\begin{array}{c}0\\1\end{array},\begin{array}{c}1\\2\end{array}, \begin{array}{c}2\\0\end{array},
\begin{array}{c}0\\1\\2\end{array}, \begin{array}{c}1\\2\\0\end{array}, \begin{array}{c}2\\0\\1\end{array}.$$
Moreover, $R(\Lambda,V)\cong k$ if  $V$ has composition series length 2 or 3, and
$R(\Lambda,V)\cong k[[t]]/(t^2)$ if $V$ is simple.

\item[(ii)] If $\Lambda=D(3\mathcal{Q})^{2,2,2}$, then $V$ is one of the modules
$$S_0,S_1,S_2,\begin{array}{c}0\\1\end{array},\begin{array}{c}1\\2\end{array}, \begin{array}{c}2\\0\end{array},
\begin{array}{c}0\\1\\2\end{array}, \begin{array}{c}1\\2\\0\end{array}, \begin{array}{c}2\\0\\1\end{array}.$$
Moreover, $R(\Lambda,V)\cong k$ if $V=S_2$ or $V$ has composition series length 2, and
$R(\Lambda,V)\cong k[[t]]/(t^2)$ if $V\in\{S_0, S_1\}$ or $V$ has composition series length 3.

\item[(iii)] If $\Lambda=D(3\mathcal{B})_2^{2,2,2}$, then $V$ is one of the modules
$$S_0,S_1,S_2,\begin{array}{c}0\\1\end{array},\begin{array}{c}1\\0\end{array}, \begin{array}{c}0\\2\end{array}, \begin{array}{c}2\\0\end{array},
\begin{array}{cc}\multicolumn{2}{c}{0}\\1&2\end{array}, \begin{array}{cc}1&2\\ \multicolumn{2}{c}{0}\end{array}.$$
Moreover, $R(\Lambda,V)\cong k$ if $V\in\{S_0,S_2\}$ or $V$ has composition series length 3, and
$R(\Lambda,V)\cong k[[t]]/(t^2)$ if $V=S_1$ or $V$ has composition series length 2.

\item[(iv)] If $\Lambda=D(3\mathcal{D})_2^{1,2,2,2}$, then $V$ is one of the modules
$$S_0,S_1,S_2,\begin{array}{c}0\\1\end{array},\begin{array}{c}1\\0\end{array}, \begin{array}{c}0\\2\end{array}, \begin{array}{c}2\\0\end{array},
\begin{array}{cc}\multicolumn{2}{c}{0}\\1&2\end{array}, \begin{array}{cc}1&2\\ \multicolumn{2}{c}{0}\end{array}.$$
Moreover, $R(\Lambda,V)\cong k$ if $V\in\left\{S_0,\begin{array}{c}0\\1\end{array},\begin{array}{c}1\\0\end{array}\right\}$ or $V$ has 
composition series length 3, and
$R(\Lambda,V)\cong k[[t]]/(t^2)$ if $V\in\left\{S_1,S_2,\begin{array}{c}0\\2\end{array},\begin{array}{c}2\\0\end{array}\right\}$.
\end{enumerate}
\end{rem}

In \cite{velez2015}, the second author considered all possible $a\ge 1$, $b,c,d\ge 2$
and determined all indecomposable $D(3\mathcal{R})^{a,b,c,d}$-modules whose (usual) endomorphism rings are isomorphic to $k$. Moreover,
he looked at their components in the stable Auslander-Reiten quiver of $D(3\mathcal{R})^{a,b,c,d}$ and determined all
modules in these components whose stable endomorphism rings are isomorphic to $k$, together with their universal deformation rings. 

We use \cite[Thm. 1.1(iv)]{velez2015} to obtain a result concerning 3-tubes for all allowed parameters
$a,b,c,d$. By using similar arguments as in the proof of \cite[Prop. 4.4]{velez2015}, we can 
include the case of $D(3\mathcal{R})^{a,1,c,d}$ for $a\ge 1$ and $c,d\ge 2$.

\begin{thm}
\label{thm:derivedex3}
Let $\Lambda$ be one of the following algebras:
\begin{equation}
\label{eq:rstu}
D(3\mathcal{R})^{a,b,c,d}, D(3\mathcal{Q})^{b,c,d}, D(3\mathcal{L})^{c,d}, D(3\mathcal{A})_2^{c,d}, D(3\mathcal{B})_2^{b,c,d}, D(3\mathcal{B})_2^{c,b,d}, D(3\mathcal{D})_2^{a,b,c,d},
\end{equation}
where $a,b\ge 1$, $c,d\ge 2$ are allowed parameters according to Figure $\ref{fig:derived3R}$
such that $\Lambda$ is not of polynomial growth.
Suppose $\mathfrak{T}$ is one of the two $3$-tubes of the stable Auslander-Reiten quiver $\Gamma_s(\Lambda)$.
Then $\Omega(\mathfrak{T})$ is the other $3$-tube, and there are precisely three $\Omega$-orbits of modules in 
$\mathfrak{T}\cup \Omega(\mathfrak{T})$ whose stable endomorphism rings are isomorphic to $k$. 
If $U_0$ is a module that belongs to the boundary of $\mathfrak{T}$, then these three 
$\Omega$-orbits are represented by $U_0$, by a successor $U_1$ of $U_0$, and by a 
successor $U_2$ of $U_1$ that does not lie in the $\Omega^2$-orbit of $U_0$. The universal
deformation rings are 
$$R(\Lambda,U_0)\cong R(\Lambda,U_1)\cong k,\quad R(\Lambda,U_2)\cong k[[t]].$$
\end{thm}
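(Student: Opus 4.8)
The plan is to transfer the corresponding statement for a suitable member $\Lambda_0$ of the family $D(3\mathcal{R})$ to $\Lambda$ by means of a stable equivalence of Morita type. By Figure \ref{fig:derived3R}, each algebra $\Lambda$ occurring in (\ref{eq:rstu}) is derived equivalent to an algebra $\Lambda_0=D(3\mathcal{R})^{a,b,c,d}$ with $a\ge 1$, $b\ge 1$ and $c,d\ge 2$; both $\Lambda$ and $\Lambda_0$ are symmetric, and, since $\Lambda$ is not of polynomial growth, neither is $\Lambda_0$. By \cite[Thm. 1.1(iv)]{velez2015}, together with the extension to the case $b=1$ obtained by arguing as in the proof of \cite[Prop. 4.4]{velez2015}, Theorem \ref{thm:derivedex3} holds for $\Lambda_0$. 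Thus the two $3$-tubes $\mathfrak{T}_0$ and $\Omega(\mathfrak{T}_0)$ of $\Gamma_s(\Lambda_0)$ are interchanged by $\Omega$, and $\mathfrak{T}_0\cup\Omega(\mathfrak{T}_0)$ contains precisely three $\Omega$-orbits of modules with stable endomorphism ring isomorphic to $k$, represented by a module $U_0'$ on the boundary of $\mathfrak{T}_0$, a successor $U_1'$ of $U_0'$, and a successor $U_2'$ of $U_1'$ not lying in the $\Omega^2$-orbit of $U_0'$, with $R(\Lambda_0,U_0')\cong R(\Lambda_0,U_1')\cong k$ and $R(\Lambda_0,U_2')\cong k[[t]]$.

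Next I would realize the derived equivalence suitably. By Remark \ref{rem:rickardagain} (using Theorem \ref{thm:main2}), the derived equivalence between $\Lambda_0$ and $\Lambda$ may be taken to come from a nice two-sided tilting complex, and it induces a stable equivalence of Morita type between $\Lambda_0$ and $\Lambda$ given by bimodules $X$ and $Y$; write $G$ for the induced equivalence $\Lambda_0\mbox{-\underline{mod}}\to\Lambda\mbox{-\underline{mod}}$ given by $X\otimes_{\Lambda_0}-$. Since $\Lambda_0$ and $\Lambda$ are symmetric and, for the parameter ranges in (\ref{eq:rstu}) with $\Lambda$ not of polynomial growth, have no block of Loewy length $2$, Remark \ref{rem:stablegood} applies: $G$ induces an isomorphism of stable translation quivers between $\Gamma_s(\Lambda_0)$ and $\Gamma_s(\Lambda)$, commutes with $\Omega$, and preserves the relative positions of modules in their components. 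In particular $G$ maps the two $3$-tubes of $\Gamma_s(\Lambda_0)$ onto the two $3$-tubes of $\Gamma_s(\Lambda)$; writing $\mathfrak{T}=G(\mathfrak{T}_0)$, the facts that $G$ commutes with $\Omega$ and that $\Omega$ interchanges $\mathfrak{T}_0$ and $\Omega(\mathfrak{T}_0)$ show that $\Omega(\mathfrak{T})=G(\Omega(\mathfrak{T}_0))$ is the other $3$-tube of $\Gamma_s(\Lambda)$ and that $\Omega$ interchanges $\mathfrak{T}$ and $\Omega(\mathfrak{T})$.

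Finally I would transfer the three distinguished modules. As $G$ is an equivalence of stable categories it preserves the property of having stable endomorphism ring isomorphic to $k$, preserves $\Omega$- and $\Omega^2$-orbits, and preserves the arrows of the stable Auslander--Reiten quiver and hence the successor relation. Therefore, for $i=0,1,2$, Remark \ref{rem:stabmor} shows that $X\otimes_{\Lambda_0}U_i'$ has a unique indecomposable non-projective direct summand $U_i$ with $\underline{\mathrm{End}}_\Lambda(U_i)=k$; these three modules lie in $\mathfrak{T}\cup\Omega(\mathfrak{T})$ with $U_0$ on the boundary of $\mathfrak{T}$, with $U_1$ a successor of $U_0$ and with $U_2$ a successor of $U_1$ outside the $\Omega^2$-orbit of $U_0$, they represent exactly the three $\Omega$-orbits in $\mathfrak{T}\cup\Omega(\mathfrak{T})$ of modules with stable endomorphism ring $k$, and Remark \ref{rem:stabmor} gives $R(\Lambda,U_i)\cong R(\Lambda_0,U_i')$ for all $i$. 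Hence $R(\Lambda,U_0)\cong R(\Lambda,U_1)\cong k$ and $R(\Lambda,U_2)\cong k[[t]]$, completing the proof.

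The main obstacle is ensuring that the purely combinatorial data in the statement --- ``boundary of the tube'', ``successor'', ``not in the $\Omega^2$-orbit'' --- is transported faithfully along $G$; this is exactly what Remark \ref{rem:stablegood}, and through it \cite[Cor. X.1.9 and Prop. X.1.12]{ars}, provides, once one records the elementary fact that the algebras in (\ref{eq:rstu}) with $\Lambda$ not of polynomial growth are symmetric with no block of Loewy length $2$. The identification of $\mathfrak{T}=G(\mathfrak{T}_0)$ as a $3$-tube requires no extra argument, since tubes of rank $3$ are determined intrinsically by the stable translation quiver structure.
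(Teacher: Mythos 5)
Your proposal is correct and follows essentially the same route as the paper: the result is first obtained for the representative $D(3\mathcal{R})^{a,b,c,d}$ via \cite[Thm. 1.1(iv)]{velez2015} (with the $b=1$ case handled as in \cite[Prop. 4.4]{velez2015}), and then transferred to the other algebras in the derived equivalence class through the stable equivalence of Morita type induced by a nice two-sided tilting complex, using Proposition \ref{prop:stable1} together with Remarks \ref{rem:rickardagain}, \ref{rem:stabmor} and \ref{rem:stablegood} to match up the $3$-tubes, the $\Omega$-orbits, the boundary/successor positions, and the universal deformation rings.
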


Since
in \cite{velez2015} only those components of the stable Auslander-Reiten quiver of $D(3\mathcal{R})^{a,b,c,d}$,
for all $a\ge 1$, $b,c,d\ge 2$, were studied that contain modules whose (usual) endomorphism rings are isomorphic to $k$,
we can only say something about finitely many components of the stable Auslander-Reiten quiver of type
$\mathbb{Z}A_\infty^\infty$, as far as universal deformation rings are concerned. 
Using \cite[Thm. 1.1(i)-(iii)]{velez2015}, we obtain the following result for all allowed parameters
$a,b,c,d$. By using similar arguments as in the proof of \cite[Props. 4.1-4.3]{velez2015}, we can 
include the case of $D(3\mathcal{R})^{a,1,c,d}$ for $a\ge 1$ and $c,d\ge 2$.

\begin{prop}
\label{prop:derivedexvelez}
Let $\Lambda$ be one of the algebras in $(\ref{eq:rstu})$, where $a,b\ge 1$, $c,d\ge 2$ are allowed parameters 
according to Figure $\ref{fig:derived3R}$ such that $\Lambda$ is not of polynomial growth. 
If the quiver of $\Lambda$ is $3\mathcal{Q}$ or $3\mathcal{B}$ we set $a=1$, and if the quiver of $\Lambda$ is 
$3\mathcal{L}$ or $3\mathcal{A}$ we set $a=b=1$.
Let $\mathfrak{C}$ be a component of $\Gamma_s(\Lambda)$ of type $\mathbb{Z}A_\infty^\infty$ 
containing a module whose stable endomorphism ring is $k$.

\begin{enumerate}
\item[(i)] Suppose $a=1=b$. Then there is at least one component $\mathfrak{C}$ such that the following is true:
There are precisely three $\Omega$-orbits of modules in $\mathfrak{C}\cup\Omega(\mathfrak{C})$ whose stable endomorphism rings are 
isomorphic to $k$, represented by  $V_0,V_1,V_2$ such that $V_1$ is a successor of $V_0$ that does not lie in the $\Omega$-orbit of $V_0$, 
$V_2$ is a successor of $V_1$ that does not lie in the $\Omega^2$-orbit of $V_0$, and
$$R(\Lambda,V_0)\cong k[[t]]/(t^c), \quad R(\Lambda,V_1)\cong k, \quad R(\Lambda,V_2)\cong k[[t]]/(t^d).$$
Moreover, $\mathfrak{C}=\Omega(\mathfrak{C})$ if and only if $c=2$ or $d=2$.

\item[(ii)] Suppose $a=1$ and $b\ge 2$. Then there are at least three components $\mathfrak{C}=\mathfrak{C}_{i,j}$
for $(i,j)\in\{(1,b),(2,c),(3,d)\}$ such that the following is true:
There are precisely three $\Omega$-orbits of modules in $\mathfrak{C}_{i,j}\cup\Omega(\mathfrak{C}_{i,j})$ whose stable endomorphism rings 
are isomorphic to $k$, represented by $V_{i,j,0},V_{i,j,1},V_{i,j,2}$ such that $V_{i,j,1}$ is a successor of $V_{i,j,0}$ that does not lie in the 
$\Omega$-orbit of $V_{i,j,0}$, $V_{i,j,2}$ is a successor of $V_{i,j,1}$ that does not lie in the $\Omega^2$-orbit of $V_{i,j,0}$, and
$$R(\Lambda,V_{i,j,0})\cong k[[t]]/(t^j), \quad R(\Lambda,V_{i,j,1})\cong k, \quad R(\Lambda,V_{i,j,2})\cong k[[t]].$$
Moreover, $\mathfrak{C}_{i,j}=\Omega(\mathfrak{C}_{i,j})$ if and only if $j=2$.

\item[(iii)] Suppose $a\ge 2$ and $b=1$. Then there are at least four components $\mathfrak{C}=\mathfrak{C}_{i,j}$
for $(i,j)\in\{(1,a),(2,a),(3,c),(4,d)\}$ such that the following is true:
There are precisely three $\Omega$-orbits of modules in $\mathfrak{C}_{i,j}\cup\Omega(\mathfrak{C}_{i,j})$ whose stable endomorphism rings 
are isomorphic to $k$, represented by $V_{i,j,0},V_{i,j,1},V_{i,j,2}$ such that $V_{i,j,1}$ is a successor of $V_{i,j,0}$ that does not lie in the 
$\Omega$-orbit of $V_{i,j,0}$, $V_{i,j,2}$ is a successor of $V_{i,j,1}$ that does not lie in the $\Omega^2$-orbit of $V_{i,j,0}$, and
$$R(\Lambda,V_{i,j,0})\cong k[[t]]/(t^j), \quad R(\Lambda,V_{i,j,1})\cong k, \quad R(\Lambda,V_{i,j,2})\cong k[[t]].$$
Moreover, $\mathfrak{C}_{i,j}=\Omega(\mathfrak{C}_{i,j})$ if and only if $j=2$.

\item[(iv)] Suppose $a\ge 2$ and $b\ge 2$. Then there are at least nine components $\mathfrak{C}=\mathfrak{C}_{i,j}$
for $(i,j)\in\{(1,a),(2,a),(3,a),(4,b),(5,c),(6,d),(7,\infty),(8,\infty),(9,\infty)\}$ such that the following is true:
There are precisely three $\Omega$-orbits of modules in $\mathfrak{C}_{i,j}\cup\Omega(\mathfrak{C}_{i,j})$ whose stable endomorphism rings 
are isomorphic to $k$, represented by $V_{i,j,0},V_{i,j,1},V_{i,j,2}$ such that $V_{i,j,1}$ is a successor of $V_{i,j,0}$ that does not lie in the 
$\Omega$-orbit of $V_{i,j,0}$, $V_{i,j,2}$ is a successor of $V_{i,j,1}$ that does not lie in the $\Omega^2$-orbit of $V_{i,j,0}$, and
$$\qquad \,\quad R(\Lambda,V_{i,j,0})\cong \left\{\begin{array}{c@{\quad:\quad}l}k[[t]]/(t^j)&j\neq\infty\\k[[t]]&j=\infty\end{array}\right\}, \quad R(\Lambda,V_{i,j,1})\cong k, \quad R(\Lambda,V_{i,j,2})\cong k[[t]].$$
Moreover, $\mathfrak{C}_{i,j}=\Omega(\mathfrak{C}_{i,j})$ if and only if $j=2$.
\end{enumerate}
\end{prop}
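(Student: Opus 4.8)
The plan is to reduce the whole statement to the computations already carried out for the family $D(3\mathcal{R})$ in \cite{velez2015}, transporting both the stable Auslander--Reiten structure and the universal deformation rings along the derived equivalences recorded in Figure \ref{fig:derived3R}. Each algebra $\Lambda$ in $(\ref{eq:rstu})$ is a symmetric algebra of dihedral type which, by \cite[Sect. 3.2]{holm}, is derived equivalent to a definite algebra $\Delta=D(3\mathcal{R})^{a',b',c',d'}$ whose parameters are read off from Figure \ref{fig:derived3R}: thus $D(3\mathcal{Q})^{b,c,d}$ corresponds to $D(3\mathcal{R})^{1,b,c,d}$, both $D(3\mathcal{L})^{c,d}$ and $D(3\mathcal{A})_2^{c,d}$ to $D(3\mathcal{R})^{1,1,c,d}$, $D(3\mathcal{D})_2^{a,b,c,d}$ to $D(3\mathcal{R})^{a,b,c,d}$, and $D(3\mathcal{B})_2^{b,c,d}$ to $D(3\mathcal{R})^{1,b,c,d}$ if $c\ge 2$ and to $D(3\mathcal{R})^{1,c,b,d}$ if $c=1$. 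Since $\Lambda$ is assumed not to be of polynomial growth, neither is $\Delta$ (derived equivalent algebras of dihedral type have the same representation type). By Remark \ref{rem:rickardagain} the derived equivalence is induced by a nice two-sided tilting complex $P^\bullet$ that is isomorphic, in the relevant Verdier quotient, to a one-term complex $X$ concentrated in degree $0$; by Proposition \ref{prop:stable1} the functor $X\otimes_\Lambda-$ carries any indecomposable non-projective $\Lambda$-module $V$ with $\underline{\mathrm{End}}_\Lambda(V)=k$ to an indecomposable non-projective $\Delta$-module $V'$ with $\underline{\mathrm{End}}_\Delta(V')=k$ and $R(\Lambda,V)\cong R(\Delta,V')$, while Theorem \ref{thm:deformations} supplies the accompanying isomorphisms of versal deformation rings. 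Finally, $\Lambda$ and $\Delta$ are symmetric algebras with no block of Loewy length $2$ (their Loewy length is at least $3$ because $c,d\ge 2$), so by Remark \ref{rem:stablegood} the induced stable equivalence of Morita type commutes with $\Omega$ and gives an isomorphism of stable Auslander--Reiten quivers matching up components together with the relative positions of the distinguished modules inside them.

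With this reduction in place, consider first the case $b'\ge 2$, that is, $\Delta=D(3\mathcal{R})^{a,b,c,d}$ with $a\ge 1$ and $b,c,d\ge 2$. Here \cite[Thm. 1.1(i)--(iii)]{velez2015} describes the components of $\Gamma_s(\Delta)$ of type $\mathbb{Z}A_\infty^\infty$ containing a module with trivial stable endomorphism ring: each such component carries precisely three $\Omega$-orbits of such modules, represented by $V_0,V_1,V_2$ with $V_1$ a successor of $V_0$ outside the $\Omega$-orbit of $V_0$ and $V_2$ a successor of $V_1$ outside the $\Omega^2$-orbit of $V_0$, with $R(\Delta,V_1)\cong k$, $R(\Delta,V_2)\cong k[[t]]$, and $R(\Delta,V_0)$ the truncated (or full) power series ring specified there, and with the stated minimum number of such components according to whether $a$ and $b$ equal $1$ or are $\ge 2$. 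The four subcases $a=b=1$, $a=1<b$, $a\ge 2$ with $b=1$, and $a,b\ge 2$ then match statements (i)--(iv) of the proposition for $\Delta$, and Proposition \ref{prop:stable1} together with Remark \ref{rem:stablegood} transfers the statement verbatim to $\Lambda$.

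The remaining case is the degenerate one $b'=1$, that is, $\Delta=D(3\mathcal{R})^{a,1,c,d}$ with quiver $3\mathcal{R}'$ and ideal $I_{3\mathcal{R}',a,c,d}$ of Figure \ref{fig:algebra3R'}; this occurs exactly when $\Lambda$ has quiver $3\mathcal{R}$ or $3\mathcal{D}$ with second parameter $1$, or quiver $3\mathcal{L}$, $3\mathcal{A}$, or $3\mathcal{B}$ in the configuration that makes the second $D(3\mathcal{R})$-parameter equal to $1$. This family is not covered by the hypotheses of \cite[Thm. 1.1]{velez2015}, so the plan is to rerun the arguments of \cite[Props. 4.1--4.4]{velez2015} for $3\mathcal{R}'$: from the relations one computes the projective covers and the syzygies of the relevant string modules (and, in the $a=1$ subcase, band modules), identifies the almost split sequences, locates the modules with trivial stable endomorphism ring in their $\mathbb{Z}A_\infty^\infty$-components, and computes their universal deformation rings by the same minimal-presentation and obstruction-theoretic bookkeeping used in \cite{velez2015}. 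This yields part (i) when $a=1$ --- where $R(\Delta,V_0)\cong k[[t]]/(t^c)$ and $R(\Delta,V_2)\cong k[[t]]/(t^d)$, which is the source of the truncations in part (i) --- and parts (iii)/(iv) when $a\ge 2$. Transporting to $\Lambda$ is then done once more via Proposition \ref{prop:stable1} and Remark \ref{rem:stablegood}.

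The main obstacle is precisely this degenerate case. Passing from $D(3\mathcal{R})^{a,b,c,d}$ with $b\ge 2$ to $D(3\mathcal{R})^{a,1,c,d}$ is not a formal specialization: removing the loop $\alpha$ at vertex $0$ changes the quiver, the projective covers, the syzygy formulas, and hence the identification of which $\mathbb{Z}A_\infty^\infty$-components carry an endo-trivial module, so these have to be re-derived rather than read off. Everything else --- the reduction along derived and stable equivalences, the invariance of $R(\Lambda,V)$, the universality criterion, and the Auslander--Reiten bookkeeping --- is routine given Proposition \ref{prop:stable1}, Theorem \ref{thm:deformations}, and Remark \ref{rem:stablegood}.
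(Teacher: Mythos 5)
Your proposal is correct and follows essentially the same route as the paper: the paper likewise obtains cases with $b\ge 2$ directly from \cite[Thm. 1.1(i)--(iii)]{velez2015}, handles the degenerate family $D(3\mathcal{R})^{a,1,c,d}$ (quiver $3\mathcal{R}'$) by redoing the arguments of \cite[Props. 4.1--4.3]{velez2015}, and transports everything to the other algebras in the derived equivalence class via the nice two-sided tilting complex, Proposition \ref{prop:stable1} and Remark \ref{rem:stablegood}, exactly as you describe. The only blemish is a case-labeling slip (your list of ``four subcases'' is placed under $b\ge 2$, and the degenerate computation yields parts (i) and (iii) only, not ``(iii)/(iv)''), but this does not affect the substance of the argument.
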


\begin{rem}
\label{rem:infinite}
Suppose $\Lambda$ is one of the algebras in $(\ref{eq:rstu})$, where $a,b\ge 1$, $c,d\ge 2$ are allowed parameters according to 
Figure $\ref{fig:derived3R}$. Moreover assume $\Lambda$ is not of polynomial growth.
Note that by \cite[Lemma 3.15]{holm}, $D(3\mathcal{R})^{a,b,c,d}$ and $D(3\mathcal{R})^{b,a,c,d}$ are derived equivalent for all
$a,b\ge 1$; $c,d\ge 2$. In view of Theorem \ref{thm:derivedex2}(ii), it seems plausible that there are usually infinitely many components of 
the stable Auslander-Reiten quiver of $\Lambda$ of type $\mathbb{Z}A_\infty^\infty$ that contain modules whose stable endomorphism 
rings are isomorphic to $k$.
\end{rem}


\end{document}